\DeclareMathAlphabet{\mathpzc}{OT1}{pzc}{m}{it}
\newcommand{\textcyr}[1]{%
 {\fontencoding{OT2}\fontfamily{wncyr}\fontseries{m}\fontshape{n}\selectfont #1}}
\newcommand{\Sha}{{\mbox{\textcyr{Sh}}}}
\newcommand{\yuan}[1]{{\color{Orange} \sf $\clubsuit\clubsuit\clubsuit$ Yuan: [#1]}}
\def\@tocline#1#2#3#4#5#6#7{\relax
  \ifnum #1>\c@tocdepth % then omit
  \else
    \par \addpenalty\@secpenalty\addvspace{#2}%
    \begingroup \hyphenpenalty\@M
    \@ifempty{#4}{%
      \@tempdima\csname r@tocindent\number#1\endcsname\relax
    }{%
      \@tempdima#4\relax
    }%
    \parindent\z@ \leftskip#3\relax \advance\leftskip\@tempdima\relax
    \rightskip\@pnumwidth plus4em \parfillskip-\@pnumwidth
    #5\leavevmode\hskip-\@tempdima
      \ifcase #1
       \or\or \hskip 1em \or \hskip 2em \else \hskip 3em \fi%
      #6\nobreak\relax
    \hfill\hbox to\@pnumwidth{\@tocpagenum{#7}}\par% <---- \dotfill -> \hfill
    \nobreak
    \endgroup
  \fi}
\newtheorem{lemma}{Lemma}[section]
\newtheorem{theorem}[lemma]{Theorem}
\newtheorem{proposition}[lemma]{Proposition}
\newtheorem{corollary}[lemma]{Corollary}
\newtheorem{conjecture}[lemma]{Conjecture}
\newtheorem{claim*}{Claim}
\newtheorem{definition}[lemma]{Definition}
\newtheorem{notation}[lemma]{Notation}
\theoremstyle{definition}
\newtheorem{remark}[lemma]{Remark}
\newcommand{\PP}{{\mathbb P}}
\newcommand{\F}{{\mathbb F}}
\newcommand{\Q}{{\mathbb Q}}
\newcommand{\R}{{\mathbb R}}
\newcommand{\Z}{{\mathbb Z}}
\newcommand{\NN}{{\mathbb N}}
\newcommand{\calA}{{\mathcal A}}
\newcommand{\calB}{{\mathcal B}}
\newcommand{\calC}{{\mathcal C}}
\newcommand{\calE}{{\mathcal E}}
\newcommand{\calF}{{\mathcal F}}
\newcommand{\calG}{{\mathcal G}}
\newcommand{\calM}{{\mathcal M}}
\newcommand{\calN}{{\mathcal N}}
\newcommand{\calO}{{\mathcal O}}
\newcommand{\calP}{{\mathcal P}}
\newcommand{\calT}{{\mathcal T}}
\newcommand{\calZ}{{\mathcal Z}}
\newcommand{\fraka}{{\mathfrak a}}
\newcommand{\frakc}{{\mathfrak c}}
\newcommand{\frakm}{{\mathfrak m}}
\newcommand{\frakp}{{\mathfrak p}}
\newcommand{\frakq}{{\mathfrak q}}
\newcommand{\frakx}{{\mathfrak x}}
\newcommand{\fraky}{{\mathfrak y}}
\newcommand{\frakA}{{\mathfrak A}}
\newcommand{\frakB}{{\mathfrak B}}
\newcommand{\frakP}{{\mathfrak P}}
\newcommand{\frakS}{{\mathfrak S}}
\newcommand{\scrR}{{\mathscr R}}
\newcommand{\scrS}{{\mathscr S}}
\DeclareMathOperator{\tr}{tr}
\DeclareMathOperator{\Frob}{Frob}
\DeclareMathOperator{\coker}{coker}
\DeclareMathOperator{\rk}{rk}
\DeclareMathOperator{\Char}{char}
\DeclareMathOperator{\im}{im}
\DeclareMathOperator{\End}{End}
\DeclareMathOperator{\Hom}{Hom}
\DeclareMathOperator{\Ext}{Ext}
\DeclareMathOperator{\Aut}{Aut}
\DeclareMathOperator{\Gal}{Gal}
\DeclareMathOperator{\Ind}{Ind}
\DeclareMathOperator{\Cl}{Cl}
\DeclareMathOperator{\Spec}{Spec}
\DeclareMathOperator{\res}{res}
\DeclareMathOperator{\cor}{cor}
\DeclareMathOperator{\Disc}{Disc}
\DeclareMathOperator{\id}{id}
\DeclareMathOperator{\val}{val}
\DeclareMathOperator{\B}{{\mbox{\textcyr{B}}}}
\DeclareMathOperator{\Sur}{Sur}
\DeclareMathOperator{\Hur}{Hur}
\DeclareMathOperator{\nr}{nr}
\DeclareMathOperator{\rDisc}{rDisc}
\DeclareMathOperator{\Nm}{Nm}
\DeclareMathOperator{\Idem}{Idem}
\DeclareMathOperator{\ab}{ab}
\DeclareMathOperator{\Ram}{Ram}
\newcommand{\hZ}{\hat{\mathbb{Z}}}
\numberwithin{equation}{section}
\numberwithin{table}{section}
\newcommand{\mainsectionstyle}{%
  \renewcommand{\@secnumfont}{\bfseries}
  \renewcommand\section{\@startsection{section}{2}%
    \z@{.5\linespacing\@plus.7\linespacing}{-.5em}%
    {\normalfont\bfseries}}%
}
\title{On the Distribution of Class Groups of Abelian Extensions}
\author{Yuan Liu}
\address{Department of Mathematics\\
University of Illinois at Urbana-Champaign \\ 1409 W Green St \\
Urbana, IL 61801 USA}  
\email{yyyliu@illinois.edu}
\begin{document}

	%%%%%%%%%%%%%%%%%%%%%%%%%%%%%%%%%%%%%%%%%%%%%%%%%%%%%%%%%%%%%%%%%%%%%%%%%%%%
	 \begin{abstract}
	 	Given a finite abelian group $\Gamma$, we study the distribution of the $p$-part of the class group $\Cl(K)$ as $K$ varies over Galois extensions of $\Q$ or $\F_q(t)$ with Galois group isomorphic to $\Gamma$. We first construct a discrete valuation ring $e\Z_p[\Gamma]$ for each primitive idempotent $e$ of $\Q_p[\Gamma]$, such that 1) $e\Z_p[\Gamma]$ is a lattice of the irreducible $\Q_p[\Gamma]$-module $e\Q_p[\Gamma]$, and 2) $e\Z_p[\Gamma]$ is naturally a quotient of $\Z_p[\Gamma]$. For every $e$, we study the distribution of $e\Cl(K):=e\Z_p[\Gamma] \otimes_{\Z_p[\Gamma]} \Cl(K)[p^{\infty}]$, and prove that there is an ideal $I_e$ of $e\Z_p[\Gamma]$ such that $e\Cl(K) \otimes (e\Z_p[\Gamma]/I_e)$ is too large to have finite moments, while $I_e \cdot e\Cl(K)$ should be equidistributed with respect to a Cohen--Lenstra type of probability measure. We give conjectures for the probability and moment of the distribution of $I_e\cdot e\Cl(k)$, and prove a weighted version of the moment conjecture in the function field case. Our weighted-moment technique is designed to deal with the situation when the function field moment, obtained by counting points of Hurwitz spaces, is infinite; and we expect that this technique can also be applied to study other bad prime cases. Our conjecture agrees with the Cohen--Lenstra--Martinet conjecture when $p\nmid |\Gamma|$, and agrees with the Gerth conjecture when $\Gamma=\Z/p\Z$. We also study the kernel of $\Cl(K) \to \bigoplus_e e\Cl(K)$, and show that the average size of this kernel is infinite when $p^2\mid |\Gamma|$.
	 \end{abstract}
	%%%%%%%%%%%%%%%%%%%%%%%%%%%%%%%%%%%%%%%%%%%%%%%%%%%%%%%%%%%%%%%%%%%%%%%%%%%%

	\maketitle
	
%	\hypersetup{linkcolor=black}
%	\tableofcontents
%	\hypersetup{linkcolor=blue}

\section{Introduction}\label{sect:intro}
	
	In \cite{Cohen-Lenstra}, Cohen and Lenstra gave a conjecture that predicts the distribution of abelian $p$-groups, for an odd prime $p$, that occur as the $p$-primary part of the class group $\Cl(K)$ of a quadratic number field $K$, as the field $K$ varies. Their conjecture does not hold for the 2-primary part of $\Cl(K)$ for quadratic $K/\Q$, because by Gauss's genus theory, the 2-torsion subgroup of $\Cl(K)$ (which is isomorphic to $\Cl(K)/2\Cl(K)$) is determined by the number of primes ramified in $K/Q$, which implies that the average of $\dim_{\F_2} \Cl(K)[2]$ is infinite (while Cohen--Lenstra heuristics suggest that the average of $\dim_{\F_p}\Cl(K)[p]$ is finite when $p$ is odd). Instead of studying the whole class group, Gerth \cite{Gerth84} considered the part that is not determined by the genus theory, and conjectured that the distribution of the 2-primary part of $2\Cl(K)$ can be predicted by probability measures similar to the ones used in the Cohen--Lenstra heuristics. 
	
	In this paper, we show that the above Cohen--Lenstra--Gerth type of conjectures together with the genus theory can be extended to the family of $\Gamma$-extensions of $\Q$ for any finite abelian group $\Gamma$. Roughly speaking, for a Galois extension $K/\Q$ with $\Gal(K/\Q)\simeq \Gamma$ being abelian, we prove that there is some special quotient of $\Cl(K)$ whose rank is bounded below by the number of primes ramified in a particular way in $K/\Q$; and moreover, we conjecture that the part of $\Cl(K)$ that is not determined by the number of ramified primes should be randomly distributed in the way similar to Cohen--Lenstra, as $K$ varies over all $\Gamma$-extensions of $\Q$.
	
\subsection{Main results}
\hfill

	Throughout the paper, we let $\Gamma$ be a finite abelian group and $p$ a prime number. Let $\Cl(K)(p)$ denote the $p$-primary part of the class group $\Cl(K)$ for a number field $K$. 
	A $\Gamma$-extension of $\Q$ is a Galois extension $K/\Q$ together with a chosen isomorphism $\Gal(K/\Q)\to \Gamma$.
	For a $\Gamma$-extension $K/\Q$, the Galois group $\Gal(K/\Q)\simeq \Gamma$ naturally acts on $\Cl(K)$, so $\Cl(K)(p)$ has a $\Z_p[\Gamma]$-module structure. In order to the study the distribution of $\Cl(K)(p)$, we first need to classify all the $\Z_p[\Gamma]$-modules that could appear as $\Cl(K)(p)$.
	When $p\nmid |\Gamma|$, $\F_p[\Gamma]$ is semisimple, and $\Z_p[\Gamma]$ can be decomposed as the direct product discrete valuation rings whose residue fields are exactly the simple $\F_p[\Gamma]$-modules.
	When $\Gamma\simeq \Z/p\Z$, since the norm map annihilates $\Cl(K)$, $\Cl(K)(p)$, as a $\Z_p[\Gamma]$-module, is annihilated by $\sum_{\gamma \in \Gamma} \gamma$; so $\Cl(K)(p)$ is a module over the discrete valuation ring $\Z_p[\Gamma]/(\sum_{\gamma \in \Gamma} \gamma)$.
	In general, $\Z_p[\Gamma]/(\sum_{\gamma \in \Gamma} \gamma)$ is not a product of discrete valuation rings. 
	
	We will study $\Z_p[\Gamma]$-modules by taking tensor product along projection maps from the ring $\Z_p[\Gamma]$ to a family discrete valuation rings, where this family bijectively corresponds to the set of simple $\Q_p[\Gamma]$-modules. Explicitly, let $\calE$ denote the set of all the primitive idempotents of the ring $\Q_p[\Gamma]$. Then $e\Q_p[\Gamma]$ with $e \in \calE$ is a simple $\Q_p[\Gamma]$-modules, and conversely every simple $\Q_p[\Gamma]$-module can be written in this form. For each $e \in \calE$, we will define a $\Z_p$-lattice of $e\Q_p[\Gamma]$, denoted by $e\Z_p[\Gamma]$, which is a quotient ring of $\Z_p[\Gamma]$ and is a discrete valuation ring. Then
	\begin{equation}\label{eq:eCl-intro}
		e\Cl(K):=e\Z_p[\Gamma] \otimes_{\Z_p[\Gamma]} \Cl(K)(p)
	\end{equation}
	is a module over $e\Z_p[\Gamma]$ and is a quotient of $\Cl(K)(p)$. We will first prove an analogue of the genus theory for $e\Cl(K)$ of any $\Gamma$-extension $K/\Q$.  
	
	Let $\frakm_e$ denote the maximal ideal of $e\Z_p[\Gamma]$. Then by the classfication of modules over discrete valuation rings, $e\Cl(K)$ can be decomposed as
	\begin{equation}\label{eq:intro-eCl-decomp}
		e\Cl(K)\simeq \bigoplus_{i=1}^{\infty} (e\Z_p[\Gamma]/\frakm_e^i)^{\oplus n_i}, \quad n_i \in \Z_{\geq 0} \quad \text{and} \quad \sum_{i=1}^{\infty} n_i < \infty.
	\end{equation}
	For a nonzero proper ideal $I$ of $e\Z_p[\Gamma]$, there is a positive integer $d$ such that $I=\frakm_e^d$, and then, using notation in \eqref{eq:intro-eCl-decomp}, we define
	\[
		\rk_I e\Cl(K):=\sum_{i=d}^{\infty} n_i.
	\]
	A \emph{ramification type for $\Gamma$-extensions} is a pair $(\calG, \calT)$ such that $\calT \leq \calG \leq \Gamma$; and for a $\Gamma$-extension $K/Q$ of global fields, we say a prime $\frakp$ of $Q$ \emph{satisfies the ramification type $(\calG, \calT)$} if the decomposition subgroup and inertia subgroup of $K/Q$ at $\frakp$ are $\calG$ and $\calT$ respectively.
	
	\begin{theorem}[Special case of Theorem~\ref{thm:lb-rank}]\label{thm:main-A}
		Let $Q$ be either $\Q$ or $\F_q(t)$ with $\gcd(q, p|\Gamma|)=1$ and $K$ a $\Gamma$-extension of $Q$ and $e \in \calE$. Assume $I$ is a proper ideal of $e\Z_p[\Gamma]$, and there exists a nontrivial $\gamma \in \Gamma$ such that the $\Z_p[\Gamma]$-module $e\Z_p[\Gamma]/I$ is annihilated by both $1-\gamma$ and $\sum_{j=1}^{|\gamma|} \gamma^j$ (note that every $e\Z_p[\Gamma]$-module is naturally a $\Z_p[\Gamma]$-module via the base change $\Z_p[\Gamma] \to e\Z_p[\Gamma]$). 
		
		Then there exist
		\begin{enumerate}
			\item\label{item:main-A-1}	a nonempty family of ramification types for $\Gamma$-extensions, and
			\item a constant $c$ depending on $\Gamma$, $e$ and $Q$,
		\end{enumerate}
		such that for any $\Gamma$-extension $K/Q$, 
		\[
			\rk_I e\Cl(K) \geq \#\{\frakp \subset Q \mid \text{$\frakp$ satisfies a ramification type in \eqref{item:main-A-1} for $K/Q$} \} -c.
		\]
	\end{theorem}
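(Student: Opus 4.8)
The plan is to prove the inequality pointwise in $K$ by a class field theory construction, with no input from Hurwitz spaces: for each $\Gamma$-extension $K/Q$ I would exhibit a single extension $M/K$ that is unramified everywhere, Galois over $Q$, and such that the $\Z_p[\Gamma]$-module $e\Z_p[\Gamma]\otimes_{\Z_p[\Gamma]}\Gal(M/K)(p)$ has $\rk_I$ at least (the number of ramified primes of the selected types) $-\,c$. Since an everywhere-unramified abelian extension of $K$ that is Galois over $Q$ has $\Gal(M/K)(p)$ a $\Z_p[\Gamma]$-equivariant quotient of the $p$-part of the (narrow, in the number field case) class group of $K$, it follows that $\rk_I e\Cl(K)\geq \rk_I\bigl(e\Z_p[\Gamma]\otimes_{\Z_p[\Gamma]}\Gal(M/K)(p)\bigr)$, which gives the theorem.

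\emph{The genus-type extension.} Set $\Delta=\langle\gamma\rangle$, and let $S_0$ be the finite set of places of $Q$ consisting of the archimedean places together with the finite places above $p|\Gamma|$; when $Q=\F_q(t)$ the hypothesis $\gcd(q,p|\Gamma|)=1$ guarantees that every prime $\frakp\notin S_0$ ramifying in $K/Q$ is tame, and the same holds for $Q=\Q$ since such a prime does not divide $|\Gamma|$. For a tame ramified prime $\frakp\notin S_0$ with inertia $\calT$ and decomposition group $\calG$ in $K/Q$, the local extension $K_{\frakP}/Q_{\frakp}$ is abelian, so $\calT$ is cyclic and $|\calT|\mid \N(\frakp)-1$. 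The family of ramification types in \eqref{item:main-A-1} will consist of those pairs $(\calG,\calT)$ with $\gamma\in\calT$ and satisfying one further condition on $\calG$ ensuring that the $\frakp$-inertia character of $K/Q$ has a power that is the $\frakp$-inertia character of an abelian extension of $Q$ ramified only inside $\{\frakp\}\cup S_0$. For each such $\frakp$ one produces, by class field theory with conductor supported on $\{\frakp\}\cup S_0$ (equivalently, by the Grunwald--Wang theorem in the number field case and by the cyclotomic/Carlitz description of abelian extensions when $Q=\F_q(t)$), an abelian extension $L_{\frakp}/Q$ ramified only inside $\{\frakp\}\cup S_0$ whose $\frakp$-inertia character is a nontrivial $p$-power-order power of that of $K/Q$; this is exactly the condition making $KL_{\frakp}/K$ unramified above $\frakp$, and $KL_{\frakp}/K$ is automatically unramified outside $S_0$ elsewhere. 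Let $M$ be the compositum of $K$ with all the $L_{\frakp}$; a bounded modification (or passage to the narrow class group, discarding the $S_0$-ramification) makes $M/K$ unramified everywhere, and the cost of this modification is the $\Z_p$-rank of a unit or idele-class correction depending only on $\Gamma$ and $Q$.

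\emph{Module structure and counting.} Because $M/Q$ is built from the $\chi_{\frakp}$ and from $K$, the $\Z_p[\Gamma]$-module $\Gal(M/K)(p)$ (with $\Gamma=\Gal(K/Q)$ acting by conjugation) decomposes into contributions indexed by the selected ramified primes, and a prime of type $(\calG,\calT)$ contributes a summand of the form $\Ind_{\calG'}^{\Gamma}$ of a cyclic $\Z_p$-module on which $\gamma$ acts trivially — since $\gamma\in\calT$ fixes the ramification line — and which is annihilated by $|\gamma|$, for a subgroup $\calG'$ determined by $\calG$. Note that the hypotheses force $|\gamma|\in I\subseteq\frakm_e$, hence $p\mid|\gamma|$, so this contribution is genuinely $p$-primary. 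Applying $e\Z_p[\Gamma]\otimes_{\Z_p[\Gamma]}(-)$, by Frobenius reciprocity this induced summand becomes the cyclic $\Z_p$-module tensored over $\Z_p[\calG']$ with the restriction of $e\Z_p[\Gamma]/I$ to $\calG'$, and the assumption that $e\Z_p[\Gamma]/I$ is annihilated by $1-\gamma$ and by $\sum_{j=1}^{|\gamma|}\gamma^{j}$ — equivalently, that it is a $\Z_p[\Gamma/\Delta]$-module killed by $|\gamma|$ — is precisely what one needs to see that, for the selected types, each contributing prime yields a copy of $e\Z_p[\Gamma]/I$, and that these copies are independent modulo $I$. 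This also shows the family of types is nonempty: it contains, for instance, the type $\calG=\calT=\Delta$. Summing over the selected ramified primes and subtracting the finitely many, $K$-independent, correction terms — the global multiplicative relations among the $\chi_{\frakp}$, the generators of $\Cl(Q)(p)$ and of the $S_0$-ray class obstruction, the ramified primes lying in $S_0$, and the unit/idele-class correction from the previous step — yields $\rk_I e\Cl(K)\geq \#\{\text{selected ramified }\frakp\}-c$ with $c=c(\Gamma,e,Q)$.

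\emph{Main obstacle.} The crux is the coordinated choice, in the two steps above, of the family of ramification types \emph{and} of the local characters $\chi_{\frakp}$ so that simultaneously $KL_{\frakp}/K$ is unramified and the induced-module summand is not collapsed to zero (nor into a module killed by $I$) upon applying $e\Z_p[\Gamma]\otimes_{\Z_p[\Gamma]}(-)$; this is where the two annihilation hypotheses on $e\Z_p[\Gamma]/I$ are essential, and treating \emph{all} ramification types at once rather than the clean special case here is what the full Theorem~\ref{thm:lb-rank} must handle. A secondary, purely quantitative, issue is to check that every term entering $c$ is bounded uniformly in $K$; this reduces to the finiteness of $\Cl(Q)(p)$, the boundedness of the Grunwald--Wang defect (trivial for function fields, a finite $2$-primary obstruction for number fields), and the boundedness of the relevant unit rank — all depending only on $\Gamma$, $e$, and $Q$.
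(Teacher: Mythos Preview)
Your construction has a fundamental gap. You build each $L_{\frakp}$ as an \emph{abelian extension of $Q$}. Then $KL_{\frakp}/Q$ is abelian (compositum of abelian extensions of $Q$), so the conjugation action of $\Gamma=\Gal(K/Q)$ on $\Gal(KL_{\frakp}/K)$ is trivial. Consequently $\Gal(M/K)(p)$, as a $\Z_p[\Gamma]$-module, has trivial $\Gamma$-action, and for every nontrivial primitive idempotent $e$ one gets $e\Z_p[\Gamma]\otimes_{\Z_p[\Gamma]}\Gal(M/K)(p)=0$. Your later claim that each prime contributes a summand of the form $\Ind_{\calG'}^{\Gamma}(\text{cyclic})$ is simply inconsistent with the construction you described: an induced module has nontrivial $\Gamma$-action unless $\calG'=\Gamma$, whereas abelian-over-$Q$ extensions never produce one. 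What you have written is essentially the classical genus-field argument, which detects only the $\Gamma$-coinvariant piece of $\Cl(K)$ --- that is, only the trivial idempotent $e_0$.

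To get nontrivial $e$-components one must produce unramified extensions of $K$ that are Galois but \emph{non-abelian} over $Q$, and then prove that the contributions from different ramified primes are independent modulo $I$. You assert this independence (``these copies are independent modulo $I$'') without argument, but it is precisely the hard content of the theorem. The paper does not attempt a direct construction at all; instead it builds a presentation $\varkappa: e\Z_p[\Gamma]^{\oplus r}\rtimes\Gamma \twoheadrightarrow \Gal(e\calE_S^T/Q)$ whose kernel is normally generated by explicit tame-type relators $x_{\frakp}^{\Nm(\frakp)}y_{\frakp}x_{\frakp}^{-1}y_{\frakp}^{-1}$ and $x_{\frakp}^{|\overline{t}_{\frakp}|}$ together with boundedly many wild relators (Corollary~\ref{cor:pres}). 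The existence of such a presentation is obtained via a local-global principle for embedding problems (Lemma~\ref{lem:EP}), which holds once a cohomological obstruction $\B_{\frakS}^{\frakS}(Q,A)$ is killed by enlarging the ramification set at boundedly many primes (Lemma~\ref{lem:frakS}). The annihilation hypotheses on $e\Z_p[\Gamma]/I$ then enter exactly to show, via the semidirect-product computations \eqref{eq:comp-power}--\eqref{eq:comp-comm}, that the tame relators at primes in $\scrR_I(K/Q)$ become trivial after reducing modulo $I$; the lower bound on $\rk_I$ then drops out by counting surviving generators against surviving relations. Your Grunwald--Wang/genus-field machinery does not give access to this relation structure.
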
  
	
	For each $e \in \calE$, if $p\mid |\Gamma|$, then there is a unique smallest ideal $I$ that satisfies the assumption in Theorem~\ref{thm:main-A}, and we let $I_e$ denote that ideal $I$. 
	If $p\nmid |\Gamma|$, then there does not exist a proper ideal $I$ as described in Theorem~\ref{thm:main-A}, and we define $I_e:=e\Z_p[\Gamma]$.
	In the decomposition of $e\Cl(K)/(I\cdot e\Cl(K))$ there are exactly $\rk_{I} e\Cl(K)$ copies of $e\Z_p[\Gamma]/I$, so Theorem~\ref{thm:main-A} provides information about $e\Cl(K)/(I_e \cdot e\Cl(K))$.
	For an extension $K/Q$ of global fields, let $\rDisc K$ denote the norm of the radical of the discriminant ideal $\Disc(K/Q)$.
	For $Q=\Q$ or $\F_q(t)$, let $\calA^+_{\Gamma}(D, Q)$ be the set of isomorphism classes of totally real
	\footnote{When $Q=\F_q(t)$, an extension $K/Q$ is \emph{totally real} if it is completely split at the place $\infty$ of $\F_q(t)$.}
	 $\Gamma$-extensions of $Q$ with $\rDisc K =D$. We prove the following theorem regarding the distribution of $e\Cl(K)$ for $K \in\calA_{\Gamma}^+(D, Q)$.

	\begin{theorem}\label{thm:main-B}
		Let $e\in \calE$. 
		\begin{enumerate}
			\item \textit{(Special case of Theorem~\ref{thm:conductor})} \label{item:main-B-1} Assume $p\mid |\Gamma|$. 				
			\[
				\lim_{X \to \infty} \frac{\sum\limits_{D\leq X}\sum\limits_{K \in \calA^+_{\Gamma}(D, \Q)} \rk_{I_e} e\Cl(K)}{\sum\limits_{D\leq X}\# \calA^+_{\Gamma}(D, \Q)}= \infty.
			\]
				
			\item \label{item:main-B-2} Assume $e$ does not correspond to the trivial representation (that is, $e\neq \frac{\sum_{\gamma \in \Gamma} \gamma}{|\Gamma|}$).
		 Let $M$ be a finite $e\Z_p[\Gamma]$-module, and let $r:=\rk_{\frakm_e}M$. Define a weight function on $\Gamma$-extensions $K/\F_q(t)$ as
			\[
				w_{e,M}(K):=
				\begin{cases}
				\#\Hom_{\Gamma}\left(\Cl(K),(e\Z_p[\Gamma]/I_e)^{\oplus r}\right) & \text{ if }\Sur_{\Gamma}(\Cl(K), (e\Z_p[\Gamma]/I_e)^{\oplus r})\neq \O \\
					0 & \text{ otherwise.}
				\end{cases}
			\]
			Then 
			\footnote{See \S\ref{ss:notation} for our definition of the notation of iterated limit. }
			\begin{equation}\label{eq:ff-weight-moment}
				\lim_{N \to \infty}\lim_{\substack{q \to \infty \\ p\nmid q(q-1) \\ \gcd(q,|\Gamma|)=1}} 
				\frac{\sum\limits_{0\leq n \leq N} \sum\limits_{K \in \calA^+_{\Gamma}(q^n, \F_q(t))} w_{e, M}(K) \#\Sur_{\Gamma}(I_e \cdot e\Cl(K), M)}{\sum\limits_{0\leq n \leq N} \sum\limits_{K \in \calA^+_{\Gamma}(q^n, \F_q(t))}w_{e, M}(K)}=\frac{1}{|M|}.
			\end{equation}
		\end{enumerate}
	\end{theorem}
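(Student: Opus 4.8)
The plan is to treat the two parts separately, since part~\eqref{item:main-B-1} is essentially a consequence of Theorem~\ref{thm:main-A} plus a point count, while part~\eqref{item:main-B-2} is the genuinely new contribution and requires the weighted-moment machinery. For part~\eqref{item:main-B-1}: by Theorem~\ref{thm:main-A} applied with $I=I_e$, for every $\Gamma$-extension $K/\Q$ we have $\rk_{I_e} e\Cl(K) \geq \#\{\frakp : \frakp \text{ satisfies a ramification type in the distinguished family}\} - c$. So it suffices to show that, on average over $K \in \calA^+_\Gamma(D,\Q)$ with $D \leq X$, the number of primes of $\Q$ ramified according to one of those ramification types grows without bound. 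The standard way to see this is to stratify $\calA^+_\Gamma(D,\Q)$ by the discriminant and use that a positive proportion (indeed, a proportion tending to $1$ in a suitable sense) of the conductor/discriminant is built from primes of the relevant ramification type; the expected number of such primes dividing $\rDisc K$ is $\asymp \log\log D$ (or at least $\to\infty$), by a Mertens-type estimate after sorting extensions by their ramified primes. One then invokes the counting results for abelian extensions ordered by $\rDisc$ (a Wright-type or Maki-type count) to make the averaging rigorous; I expect this to be carried out as "Theorem~\ref{thm:conductor}" in the body, so here I would just reduce to it.

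For part~\eqref{item:main-B-2}, the strategy is the function-field/Hurwitz-space approach, modified by the weight $w_{e,M}$ to cure the divergence of the naive moment. The key steps, in order: (i) Translate the left-hand side into a point-count over $\F_q$-points of Hurwitz spaces. The totally real $\Gamma$-extensions of $\F_q(t)$ with $\rDisc K = q^n$ and a prescribed local ramification behavior are parametrized by $\F_q$-points of a Hurwitz scheme $\Hur$ attached to $\Gamma$ and a conjugacy-invariant ramification datum; the numerator, which counts pairs (extension, surjection from $I_e\cdot e\Cl(K)$ onto $M$, weighted by $\#\Hom_\Gamma(\Cl(K), (e\Z_p[\Gamma]/I_e)^{\oplus r})$), becomes a point count on a covering Hurwitz space whose fiber data encodes an $M$-quotient of $I_e\cdot e\Cl(K)$ together with an $(e\Z_p[\Gamma]/I_e)^{\oplus r}$-worth of homomorphisms out of the full class group. (ii) Apply the Lang--Weil / Ellenberg--Venkatesh--Westerland type estimates: as $q \to \infty$, the number of $\F_q$-points of each such space is $q^{\dim} \cdot (\#\{\text{geometric components}\} + O(q^{-1/2}))$. (iii) Compute the relevant component counts group-theoretically. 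The ratio in \eqref{eq:ff-weight-moment} then becomes, in the $q\to\infty$ limit, a purely combinatorial ratio of weighted sums of component counts, indexed by $n \leq N$; the role of the weight $w_{e,M}$ is precisely to make the numerator's component-count grow at the \emph{same} exponential rate in $n$ as the denominator's (whereas without the weight the "moment" space has too many components, reflecting the infinite unweighted moment). (iv) Take $N\to\infty$: the weighted ratio stabilizes, and a generating-function / transfer-matrix computation identifies the limit as $1/|M|$, matching the Cohen--Lenstra prediction for the well-behaved piece $I_e\cdot e\Cl(K)$.

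The main obstacle, and the technical heart of the argument, is step~(iii) together with the design of the weight in step~(i): one must show that the weighted component count of the "moment" Hurwitz space is asymptotically $|M|^{-1}$ times the weighted component count of the base Hurwitz space, as $n\to\infty$. Concretely this is a statement about the homology (in the stable range) of the Hurwitz spaces with the extra $M$- and $(e\Z_p[\Gamma]/I_e)^{\oplus r}$-labels, i.e.\ about $H_0$ of an appropriate arc-complex / braid-group-module, and it requires understanding how the genus-theory constraint (the lower bound on $\rk_{I_e}$ from Theorem~\ref{thm:main-A}) interacts with the random part $I_e\cdot e\Cl(K)$. The weight $w_{e,M}(K)=\#\Hom_\Gamma(\Cl(K),(e\Z_p[\Gamma]/I_e)^{\oplus r})$ is chosen so that summing it over the extra data exactly cancels the "extra" components coming from the unbounded genus rank $r$, leaving the clean factor $1/|M|$. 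I would expect this to rely on a careful bookkeeping of how surjections and homomorphisms lift along the Hurwitz covers, plus the stable-homology input (finiteness and identification of $H_0$) from the Ellenberg--Venkatesh--Westerland circle of ideas, adapted to the abelian group $\Gamma$ with $p \mid |\Gamma|$.
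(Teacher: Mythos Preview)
Your treatment of part~\eqref{item:main-B-1} is correct and matches the paper: reduce to Theorem~\ref{thm:main-A} and invoke the counting result (Theorem~\ref{thm:conductor}, proved in the appendix via Wood's work) that the average number of primes of the relevant ramification type tends to infinity.

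For part~\eqref{item:main-B-2} your high-level strategy (translate to Hurwitz point counts, compare component counts as $q\to\infty$, then let $N\to\infty$) is the right one, but you are missing the central algebraic simplification that makes the argument actually go through. The paper does \emph{not} build a Hurwitz space carrying both a surjection $I_e\cdot e\Cl(K)\twoheadrightarrow M$ and a separate $\Hom$ to $(e\Z_p[\Gamma]/I_e)^{\oplus r}$. Instead, Proposition~\ref{prop:weight-formula} shows the pointwise identity
\[
w_{e,M}(K)\cdot \#\Sur_\Gamma\bigl(I_e\cdot e\Cl(K),\,M\bigr)\;=\;\#\Sur_\Gamma\bigl(\Cl(K),\,H_1\bigr),
\]
where $H_1$ is the $I_e$-closure of $M$ (so $I_eH_1=M$ and $H_1/I_eH_1\simeq (e\Z_p[\Gamma]/I_e)^{\oplus r}$). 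Likewise the denominator becomes, after an inclusion--exclusion replacing $\Hom$ by $\Sur$ asymptotically, $\sum_K \#\Sur_\Gamma(\Cl(K),H_2)$ with $H_2=(e\Z_p[\Gamma]/I_e)^{\oplus r}$. Thus the weighted ratio is reduced, \emph{before} any geometry, to a ratio of two ordinary Hurwitz moments for the groups $G_i=H_i\rtimes\Gamma$. Without this identity it is unclear how you would package ``surjection from $I_e\cdot e\Cl(K)$'' as Hurwitz data at all, since $I_e\cdot e\Cl(K)$ is not directly a Galois group.

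The remaining comparison of $\#\Hur^n_{G_1,c_1}(\F_q)$ and $\#\Hur^n_{G_2,c_2}(\F_q)$ is then handled not by stable homology or a transfer-matrix argument, but by the component-count formula of \cite{LWZB} (their Proposition~12.7 and Corollary~12.9) expressing $\pi_{G,c}(q,n)$ via the lattice-point count $b(G,c,q,n)$. Two group-theoretic lemmas do the work: Proposition~\ref{prop:d-semi} shows that among all extensions of $\Gamma$ by $H_i$ only the split one contributes to the leading term (so $\Ext_\Gamma(H_i)$ collapses to $G_i=H_i\rtimes\Gamma$); and Lemma~\ref{lem:b} shows that $d_{G_1,c_1}(q)=d_{G_2,c_2}(q)$ and $b(G_1,c_1,q,n)=b(G_2,c_2,q,n)$ precisely because $\ker(H_1\to H_2)=M\subseteq I_eH_1$. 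The ratio then equals $\#\Aut(G_2,\iota_2,\pi_2)/\#\Aut(G_1,\iota_1,\pi_1)=|H_2|/|H_1|=1/|M|$. Your intuition that the weight ``equalizes the growth rates'' is correct, but the mechanism is the algebraic identity above, not a cancellation inside a more elaborate Hurwitz construction.
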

	
	The statement \eqref{item:main-B-1} above follows by Theorem~\ref{thm:main-A} and Theorem~\ref{tCond}. The statement \eqref{item:main-B-2} is about a weighted moment of the distribution of $I_e\cdot e\Cl(K)$ in the function field case: it says that a weighted average of $\#\Sur_{\Gamma}(I_e \cdot e\Cl(K), M)$ is $1/|M|$. Comparing that with the moment version of Cohen--Lenstra heuristics, \eqref{eq:ff-weight-moment} suggests, despite the fact that here is a weight function, the distribution of $I_e \cdot e\Cl(K)$ should be analogous to the one in the Cohen--Lenstra heuristics. For a fixed $M$, the weight function $w_{e,M}(K)$ is determined by the bad part of the class group, i.e., $e\Cl(K)/I_e\cdot e\Cl(K)$. Since Theorem~\ref{thm:main-B} shows the bad part is statistically infinite while the good part $I_e \cdot e\Cl(K)$ is statistically finite, it is reasonable to believe that the bad part and the good part are not statistically correlated, and hence applying the weight function should not change the moments.
	So we conjecture that as $K$ varies over totally real $\Gamma$-extensions of $Q=\Q$ or $\F_q(t)$, $I_e\cdot e\Cl(K)$ is distributed according to a probability measure whose $M$-moment is $1/|M|$. In this context the moments are known to determine a unique distribution, so we give both the moment version and probability version of the conjecture for the distribution of $I_e\cdot e\Cl(K)$ in Conjecture~\ref{conj:GG} for every nontrivial idempotent $e$. 
	When $e$ is the trivial primitive idempotent $e_0:=\frac{\sum_{\gamma \in \Gamma} \gamma}{|\Gamma|}$, the above moment result \eqref{eq:ff-weight-moment} does not hold: in Proposition~\ref{prop:trivial-idem}, we prove $|I_{e_0}(e_0\Cl(K))| \leq |\wedge^2 \Gamma_p|$ for any $\Gamma$-extension $K$ of $\F_q(t)$ or $\Q$,  where $\Gamma_p$ is the Sylow $p$-subgroup of $\Gamma$.
	
	In the good prime case (that is $p \nmid |\Gamma|$), it is known that the distributions of $p$-part of class group of $\Gamma$-extensions are different between the cases of $p \mid q-1$ and of $p \nmid q-1$; however, in Gerth's conjecture, the base field $\Q$ contains $\mu_2$. 
	In Theorem~\ref{thm:main-B}\eqref{item:main-B-2}, we only consider the finite fields $\F_q$ that do not contain the $p$th roots of unity, because when $p \nmid q-1$ counting points on Hurwitz spaces is easier (see \S\ref{ss:Hurwitz}). The trade-off is, when $|\Gamma|$ is even and $p=2$, \eqref{eq:ff-weight-moment} is an empty statement. 
	When $p\mid q-1$, the function field moment can still be computed by the method described in \S\ref{ss:Hurwitz}, but one needs to carefully analyze the Schur multipliers associated to the Hurwitz spaces.  
	For example, we study the case that $\Gamma=\Z/2\Z$ and $p=2$, and we show that when $q \equiv 3 \bmod 4$, the weighted moments of the distribution of $2\Cl(K)[2^{\infty}]$ agrees with the actual moment in Gerth's Conjecture (proven by Smith). When $q\equiv 1 \bmod 4$, we show that the weighted moment is different from the $q\equiv 3 \bmod 4$ case. Note that in Smith's result (see Theorem~1.12 in \cite{Smith2022}), he assumed that the base field does not contain $\mu_4$ in order to get the distribution conjectured by Gerth; so our result gives another evidence showing that assumption is necessary.
	
	When $\Gamma:=\Z/2\Z$, there is a unique nontrivial primitive idempotent $e$ of $\Q_2[\Z/2\Z]$. By definition of $I_e$, one can see that $I_e=\frakm_e=(2)$.
	For a quadratic extension $K/\F_q(t)$ with $2\nmid q$ that splits completely at $\infty$, the 2-part of class group is an $e\Z_2[\Z/2\Z]$-module. 
	
	\begin{theorem}\label{thm:Z/2Z}
		Let $M$ be a finite $e\Z_2[\Z/2\Z]$-module for the unique nontrivial primitive idempotent $e$ of $\Q_2[\Z/2\Z]$, and let $w_M(K)$ denote the weight function $w_{e,M}(K)$ defined in Theorem~\ref{thm:main-B}\eqref{item:main-B-2}. For an integer $m$, let $\val_2(m)$ denote the (additive) $2$-adic valuation of $m$. Then for any positive integer $v$,
		\[
			\lim_{N \to \infty} \lim_{\substack{q \to \infty \\ \val_2(q-1)=v}} \frac{\sum\limits_{0 \leq n \leq N} \sum\limits_{K \in \calA^+_{\Z/2\Z}(q^n, \F_q(t))} w_M(K) \#\Sur(2\Cl(K)[2^{\infty}], M)}{\sum\limits_{0 \leq n \leq N} \sum\limits_{K \in \calA^+_{\Z/2\Z}(q^n, \F_q(t))} w_M(K)} = \frac{|(\wedge^2 M) [2^{v-1}]|}{|M|}.
		\]
		In particular, when $v=1$ (i.e., $q \equiv 3 \bmod 4$), the weighted moment on the left-hand side above equals $1/|M|$.
	\end{theorem}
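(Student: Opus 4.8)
For $\Gamma=\Z/2\Z$ and the nontrivial primitive idempotent $e$ of $\Q_2[\Z/2\Z]$ one has $e\Z_2[\Gamma]\cong\Z_2$ with $\sigma$ acting by $-1$, whence $\frakm_e=(2)=I_e$ and $I_e\cdot e\Cl(K)$ is the module $2\Cl(K)[2^\infty]$; writing $r:=\rk_{\frakm_e}M=\dim_{\F_2}M[2]$, the weight $w_M(K)$ equals $\#\Hom_\Gamma(\Cl(K),\F_2^{\oplus r})$ whenever a $\Gamma$-surjection $\Cl(K)\to\F_2^{\oplus r}$ exists and $0$ otherwise. Theorem~\ref{thm:main-B}\eqref{item:main-B-2} is vacuous here (its hypothesis $p\nmid q(q-1)$ is incompatible with $\gcd(q,|\Gamma|)=1$ when $p=2\mid|\Gamma|$), so the whole statement needs new work, including the case $v=1$. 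The plan is to re-run the Hurwitz-space point count of \S\ref{ss:Hurwitz} that underlies Theorem~\ref{thm:main-B}\eqref{item:main-B-2}, now following the cyclotomic character carefully: its effect is trivial exactly when $v=1$ and is otherwise governed by $\mu_{2^v}\subseteq\F_q$. By class field theory I would first identify the numerator and the denominator of the displayed ratio with (weighted) counts of $\F_q$-points of Hurwitz spaces $\HHur_{G,\frakc}$ parametrizing $n$-branch-point covers of $\A^1_{\F_q}$, where $G$ is a finite group extending $\Gamma=\Z/2\Z$ by a $\Z_2[\Gamma]$-module built from $M$ and the $\F_2^{\oplus r}$ recording $w_M$, and $\frakc$ is the ramification datum whose entries are the conjugacy classes of order-$2$ elements of $G$ mapping to the generator of $\Gamma$ (so the $M$-part of the cover is unramified, and everything is tame since $q$ is odd); the split-at-$\infty$ condition defining $\calA^+$ selects a subfamily of components. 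With matching powers of $q$, the displayed ratio becomes, for each fixed $N$, a ratio of sums over $0\le n\le N$ of such point counts.

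For the counting I would invoke the Grothendieck--Lefschetz trace formula together with homological stability for Hurwitz spaces in the uniform-in-$q$ form used in this paper (Conway--Parker / Ellenberg--Venkatesh--Westerland): for $n$ in the stable range, $\#\HHur_{G,\frakc}(\F_q)=N_q\cdot q^{n}+O(q^{n-1/2})$ with implied constant independent of $q$, where $N_q$ is the number of geometric connected components of $\HHur_{G,\frakc}$ fixed by $\Frob_q$. Letting $q\to\infty$ first (along $\val_2(q-1)=v$) makes the powers of $q$ cancel, so for each fixed $N$ the limiting ratio equals $a_N/b_N$, where $a_N$ (resp.\ $b_N$) is the number of $\Frob_q$-fixed geometric components of the degree-$N$ numerator (resp.\ denominator) space; then $\lim_{N\to\infty}$ pushes $N$ into the stable range, where $a_N/b_N$ is eventually constant. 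In that range the geometric components of $\HHur_{G,\frakc}$ form a torsor under a fixed finite abelian group $R_G$ --- the quotient of the Schur multiplier $H_2(G,\Z)$ by the subgroup generated by the images of the entries of $\frakc$, by the relations coming from the auxiliary $\F_2^{\oplus r}$-covers, and by the split-at-$\infty$ condition.

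The crux is to compute $R_G$ and the Galois action on this torsor. A Lyndon--Hochschild--Serre computation of $H_2(G,\Z)$ for the near-abelian $G$ at hand has top graded piece $(\wedge^2M)_\Gamma=\wedge^2M$ (since $\sigma$ acts on $\wedge^2M$ by $(-1)\otimes(-1)=+1$), and its lower pieces are killed in $R_G$ by the order-$2$ ramification datum $\frakc$; together with the passage from $\Cl(K)$ to $I_e\cdot e\Cl(K)=2\Cl(K)[2^\infty]$ --- the factor $I_e=(2)$ --- and the genus relations, $R_G$ for the numerator should be the quotient of $\wedge^2M$ in which the relevant torsion subgroup is shifted down by one step. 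The group $\Gal(\overline{\F_q}/\F_q)$ acts on the component torsor through the cyclotomic character $\Frob_q\mapsto q$ --- the Tate twist arising because $\Frob_q$ raises to the $q$-th power the $2^v$-th roots of unity implicit in the local inertia --- so $\Frob_q$ acts on $\wedge^2M$, and on $R_G$, with linear part equal to multiplication by $q$. A fixed-point and descent count on the torsor then yields $a_N=\#(\wedge^2M)[2^{v-1}]$: the naive $(q-1)$-torsion $\#(\wedge^2M)[2^v]$ is shaved by exactly one power of $2$ by the $I_e=(2)$-reduction. For the denominator the corresponding reduced multiplier is trivial, so $b_N=1$ and the surviving $1/|M|$ is the standard mass normalization --- the same $1/|M|$ that is the good-prime Cohen--Lenstra $M$-moment for totally real quadratic fields. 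Assembling the steps gives the claimed limit $\#(\wedge^2M)[2^{v-1}]/|M|$; for $v=1$ we have $(\wedge^2M)[2^0]=0$, which recovers $1/|M|$ in the $q\equiv3\bmod4$ case.

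The principal obstacle is this crux step. Determining $R_G$ finely enough to see the shift from $2^v$ to $2^{v-1}$ requires controlling simultaneously: (i) the reduction of $H_2(G,\Z)$ by the order-$2$ datum $\frakc$; (ii) the genus relations coming from the split-at-$\infty$ condition and from the auxiliary $\F_2^{\oplus r}$-covers inserted to realize $w_M$; and (iii) the passage $\Cl(K)\rightsquigarrow 2\Cl(K)[2^\infty]$ encoded by $I_e=\frakm_e=(2)$. One then has to show that the affine $\Frob_q$-action on the component torsor has a nonempty fixed locus of size exactly $\#(\wedge^2M)[2^{v-1}]$, uniformly for all sufficiently large $q$ with $\val_2(q-1)=v$. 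A secondary, more routine, point is justifying the stabilization and the interchange of limits implicit in $\lim_{N\to\infty}\lim_{q\to\infty}$ using homological-stability bounds uniform in $q$.
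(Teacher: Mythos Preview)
Your overall strategy---express both numerator and denominator as Hurwitz-space point counts, let $q\to\infty$ to reduce to a component count governed by the reduced Schur multiplier, then identify this multiplier with (a version of) $\wedge^2 M$---is exactly the paper's approach. The paper carries it out by applying Proposition~\ref{prop:weight-formula} to the $I_e$-closure $H$ of $M$ (so $2H=M$), which reduces the weighted moment to the ratio of $\sum\#\Sur_\Gamma(\Cl(K),H)$ to $\sum\#\Sur_\Gamma(\Cl(K),H/2H)$; these are then compared via the explicit lattice-point formula $b(G,c,q,n)$ of \eqref{eq:defofb}.

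There is, however, a genuine misidentification in your crux step. The shift from $2^v$ to $2^{v-1}$ does \emph{not} come from ``the $I_e=(2)$-reduction'' as you claim; it comes from the fact that every element of $c$ has order $2$. Concretely, for $\gamma\in c/G$ one has $W_{q^{-1}}(\gamma)\equiv\widehat{x_\gamma}^{\,q-1}$ up to an odd unit, and since $x_\gamma^2=1$ this equals $(\widehat{x_\gamma}^{\,2})^{(q-1)/2}$. The paper (Lemma~\ref{lem:W}) shows the squares $\widehat{x_\gamma}^{\,2}$ already generate all of $H_2(G,c)$, so $\im W_{q^{-1}}=2^{\val_2((q-1)/2)}H_2(G,c)=2^{v-1}H_2(G,c)$. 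The factor in $b(G,c,q,n)$ then becomes $\#H_2(G,c)[2^v]\cdot[2^v H_2(G,c):2^{v-1}H_2(G,c)]^{-1}=\#H_2(G,c)[2^{v-1}]$. Separately, your LHS computation misses that the \emph{reduced} Schur multiplier $H_2(G,c)$ is $\wedge^2(2H)$, not $\wedge^2 H$: the order-$2$ ramification datum not only kills the lower pieces but also halves the top piece (Lemma~\ref{lem:SchurCover-C2}). These two corrections compensate---since $2H=M$ one gets $H_2(G,c)\simeq\wedge^2 M$---but each for a different reason than you give, and conflating them would make the argument impossible to verify. You also need the equidistribution statement Lemma~\ref{lem:W}\eqref{item:W-2} (that the fibres of $W_{q^{-1}}$ over $\im W_{q^{-1}}/2\im W_{q^{-1}}$ have asymptotically equal size) to pass from $\im W_{q^{-1}}$ to the actual count; your ``torsor'' heuristic elides this. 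Finally, your description of the Frobenius action as ``multiplication by $q$ via the cyclotomic character'' is at best a mnemonic: the paper works entirely through the concrete combinatorics of $W_{q^{-1}}$ and $\nr_{q-1}$, and you would need to do the same to make the argument rigorous.
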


	Define
	\[
		\rho_K: \Cl(K)(p) \longrightarrow \bigoplus_{e \in \calE} e\Cl(K),
	\] 
	and note that $\rho_K$ is obtained by taking tensor product of $\Cl(K)(p)$ with the injective homomorphism 
	$\Z_p[\Gamma] \to \oplus_{e\in \calE} e\Z_p[\Gamma]$.
	The image of $\rho_K$ can be described by Theorem~\ref{thm:main-B} and Proposition~\ref{prop:trivial-idem}, then one may naturally ask about the kernel of $\rho_K$. 
	
	\begin{theorem}\label{thm:main-C}
		Let $Q$ be either $\Q$ or $\F_q(t)$ with $\gcd(q, p|\Gamma|)=1$.
		\begin{enumerate}
			\item\label{item:C-1} If $p\nmid |\Gamma|$ or $\Gamma=\Z/p\Z$, then $\ker \rho_K=1$ for every $\Gamma$-extension $K/Q$. 
			\item\label{item:C-2}(Special case of Theorem~\ref{thm:kernel-limit}) If $p^2 \mid |\Gamma|$, then for every simple $\F_p[\Gamma]$-module $A$, 
			\[
				\lim_{X \to \infty} \frac{\sum\limits_{D \leq X} \sum\limits_{K \in \calA_{\Gamma}^+(D,Q)} \rk_A \ker \rho_K}{ \# \calA^+_{\Gamma}(D, Q)}= \infty,
			\]
			where $\rk_A \ker \rho_K:=\max\{r\in \Z \mid A^{\oplus r} \text{ is a quotient of } \ker\rho_K\}$.
		\end{enumerate}
	\end{theorem}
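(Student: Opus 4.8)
The plan is to treat the two parts by quite different routes: part~\eqref{item:C-1} is an elementary consequence of how $\rho_K$ is built, while part~\eqref{item:C-2} (a special case of Theorem~\ref{thm:kernel-limit}) is deduced from a rank lower bound of the same shape as Theorem~\ref{thm:main-A}, fed into the density input behind Theorem~\ref{thm:main-B}\eqref{item:main-B-1}. For part~\eqref{item:C-1}, recall that $\rho_K$ is $\Cl(K)(p)\otimes_{\Z_p[\Gamma]}(-)$ applied to the injection $\iota\colon\Z_p[\Gamma]\hookrightarrow\bigoplus_{e\in\calE}e\Z_p[\Gamma]$, so $\ker\rho_K$ measures how far $\iota$ is from remaining injective after this base change. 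If $p\nmid|\Gamma|$, then $\Z_p[\Gamma]$ is a maximal order, $\iota$ is already an isomorphism, and $\rho_K$ is an isomorphism for every $K$. If $\Gamma=\Z/p\Z$, write $\calE=\{e_0,e_1\}$ with $e_0$ trivial; then $e_1\Z_p[\Gamma]=\Z_p[\Gamma]/(\Nm_\Gamma)$ for $\Nm_\Gamma=\sum_{\gamma\in\Gamma}\gamma$, and since $\Cl(Q)=1$ for $Q\in\{\Q,\F_q(t)\}$ the element $\Nm_\Gamma$ annihilates $\Cl(K)(p)$; hence the $e_1$-component of $\rho_K$ is the identity map of $\Cl(K)(p)$, so $\rho_K$ is injective and $\ker\rho_K=1$.

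For part~\eqref{item:C-2} the crux is a rank lower bound of exactly the type Theorem~\ref{thm:main-A} produces: for each simple $\F_p[\Gamma]$-module $A$, I would exhibit a nonempty family of ramification types for $\Gamma$-extensions and a constant $c=c(\Gamma,p,A,Q)$ with $\rk_A\ker\rho_K\ge\#\{\frakp\subset Q : \frakp\text{ realizes one of those types for }K/Q\}-c$ for every $\Gamma$-extension $K/Q$. Granting this, part~\eqref{item:C-2} follows exactly as Theorem~\ref{thm:main-B}\eqref{item:main-B-1} follows from Theorem~\ref{thm:main-A} and Theorem~\ref{tCond}: the latter guarantees that the average over $K\in\calA^+_\Gamma(D,Q)$ with $D\le X$ of the number of primes realizing a fixed realizable ramification type tends to $\infty$ with $X$, whence the average of $\rk_A\ker\rho_K$ does too.

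To obtain the lower bound I would combine the base-change description of $\rho_K$ with the genus-theoretic machinery already developed for Theorem~\ref{thm:lb-rank}. Writing $0\to\Z_p[\Gamma]\xrightarrow{\iota}\bigoplus_{e}e\Z_p[\Gamma]\to C\to 0$ with $C$ a finite $\Z_p[\Gamma]$-module depending only on $(\Gamma,p)$, the long exact sequence of $\operatorname{Tor}^{\Z_p[\Gamma]}_\bullet(\Cl(K)(p),-)$ identifies $\ker\rho_K$ with the image of $\operatorname{Tor}_1^{\Z_p[\Gamma]}(\Cl(K)(p),C)$ in $\Cl(K)(p)$, so it suffices to bound below, in terms of ramification, the $A$-rank of that image. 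For this one feeds in the $\Gamma$-equivariant description of $\Cl(K)(p)$ coming from the exact sequences of class field theory for $K/Q$ — in which the divisor group of $K$ decomposes as $\bigoplus_{\frakp}\Z[\Gamma/D_\frakp]$ over the primes $\frakp$ of $Q$ — so that the contribution of each ramified $\frakp$ to $\operatorname{Tor}_1(\Cl(K)(p),C)$, and hence to $\ker\rho_K$, is governed by its inertia subgroup; the family of ramification types in the bound consists precisely of those inertia behaviours forcing an extra copy of $A$, with the contributions of the finitely many ``small'' primes, of $\OO_K^\times$, and of $\Cl(Q)$ absorbed into $c$.

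The main obstacle is this last step: pinning down which inertia subgroups yield a nonzero class in $\ker\rho_K$ after one quotients out the part of $\operatorname{Tor}_1(\Cl(K)(p),C)$ killed in $\bigoplus_e\operatorname{Tor}_1(\Cl(K)(p),e\Z_p[\Gamma])$, and then showing the contributions at distinct ramified primes are genuinely independent so ranks add rather than collapse, all with uniform control of $c$. The hypothesis $p^2\mid|\Gamma|$ is exactly what makes this possible and is visible here: when $p\nmid|\Gamma|$ one has $C=0$, and when $\Gamma=\Z/p\Z$ the module $\Cl(K)(p)$ factors through the discrete valuation ring $\Z_p[\zeta_p]=\Z_p[\Gamma]/(\Nm_\Gamma)$, over which the relevant $\operatorname{Tor}$ vanishes — matching part~\eqref{item:C-1}; only when $p^2\mid|\Gamma|$ is $\Z_p[\Gamma]/(\Nm_\Gamma)$ still non-maximal in a way an $\Nm_\Gamma$-torsion module can detect. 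I would expect the bookkeeping to look somewhat different for $\Gamma=\Z/p^2\Z$ (where $\wedge^2\Gamma_p=0$, so the mechanism is purely the non-maximality of $\Z_p[\Gamma]/(\Nm_\Gamma)$) than for $\Gamma=(\Z/p\Z)^2$ (where $\wedge^2\Gamma_p\ne0$ contributes), even though the final statement is uniform.
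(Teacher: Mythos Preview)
Your part~\eqref{item:C-1} is correct and identical to the paper's argument.

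For part~\eqref{item:C-2}, your high-level plan---a pointwise lower bound $\rk_A\ker\rho_K\ge\#\{\text{primes of certain type}\}-c$, then the density input of Theorem~\ref{tCond}---is the right shape, and is indeed how the paper packages the endgame. But your proposed mechanism for the lower bound, via $\operatorname{Tor}_1^{\Z_p[\Gamma]}(\Cl(K)(p),C)$ and the divisor decomposition, has a real gap at exactly the point you flag as the main obstacle: you need the per-prime contributions to $\operatorname{Tor}_1(\Cl(K)(p),C)$ to survive the quotient by the image of $\bigoplus_e\operatorname{Tor}_1(\Cl(K)(p),e\Z_p[\Gamma])$, and since both groups grow with the ramification there is no a priori reason their difference does. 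The divisor-group description only controls $\Cl(K)$ modulo principal divisors, which entangle the local pieces, so the ``independence of contributions'' you invoke is not automatic and you supply no mechanism to enforce it.

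The paper never computes $\ker\rho_K$ via Tor. It combines two different ingredients. First, the presentation machinery of Section~\ref{sect:presentation} (applied with $R=P_A$) gives a surjection $\varkappa\colon P_A^{\oplus r}\rtimes\Gamma\twoheadrightarrow\Gal(P_A\calE_S^T/Q)$ whose kernel has $A$-rank at most $2r+O(1)$. Second---and this is the idea absent from your proposal---Proposition~\ref{prop:tildeM} is a purely module-theoretic statement: for any finite $P_A$-module $M$ with $\ker\rho_M=0$, one can build (via an iterated $\boxtimes$-construction) a module $\widetilde{M_n}$ of the same $A$-rank surjecting onto $M$ with $\rk_A\ker(\widetilde{M_n}\to M)\ge 2\rk_A M+\rk_{I_{e_1}}e_1M$. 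Applied to $M=\im\theta_S^T$ (which has $\ker\rho_M=0$ by construction), this forces any presentation of $M$ from $P_A^{\oplus r}$ to have kernel of $A$-rank at least $2r+\rk_{I_{e_1}}e_1P_A\calC_S^T(K)$ up to $O(1)$. Comparing with the relation budget $2r+O(1)$ coming from $\varkappa$ yields $\rk_A\ker\theta_S^T\ge\rk_{I_{e_1}}e_1P_A\calC_S^T(K)-O(1)$, and the right side tends to infinity on average by Theorem~\ref{thm:conductor}. So the lower bound arises from a relation-counting mismatch---modules with injective $\rho$ are expensive to present---rather than from tracking per-prime torsion classes.
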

	When $\Gamma=\Gamma' \times \Z/p\Z$ for some nontrivial abelian group $\Gamma'$ with $p \nmid |\Gamma'|$ (i.e., the only case when neither of the assumptions in \eqref{item:C-1} and \eqref{item:C-2} holds), our method cannot help to determine the distribution of $\ker \rho_K$.
	
\subsection{Comparison to previous work}
\hfill

	The theorems above and Conjecture~\ref{conj:GG} agree with Cohen--Lenstra--Martinet heuristics (when $p\nmid |\Gamma|$) and the Gerth conjecture (when $\Gamma=\Z/p\Z$), and we will explain that in \S\ref{ss:CLM} and \S\ref{ss:Gerth}.

\subsubsection{Comparing to the Cohen--Lenstra--Martinet heuristics}\label{ss:CLM}

	Cohen and Martinet \cite{Cohen-Martinet} generalized the Cohen--Lenstra heuristics to the situation of $\Gamma$-extensions of $Q$ for an arbitrary number field $Q$ as a base field and an arbitrary finite group $\Gamma$. In particular, when $p\nmid |\Gamma|$ and $Q=\Q$, as $K$ varies over all totally real $\Gamma$-extensions of $\Q$, they conjectured that the probability that $\Cl(K)(p) \simeq H$ is inversely proportional to $|\Aut_{\Gamma}(H)||H|$ for any $\Z_p[\Gamma]$-module $H$ with $H^{\Gamma}=1$ (see \cite[Theorem~1.1]{Wang-Wood}). 
	
	Assume $p \nmid |\Gamma|$ and $\Gamma$ is abelian. For every idempotent $e$ of $\Q_p[\Gamma]$, $p$ does not divide the denominator of $e$, so $\Z_p[\Gamma]=e\Z_p[\Gamma] \oplus (1-e) \Z_p[\Gamma]$. It follows that $\Z_p[\Gamma] = \bigoplus_{e \in \calE} e\Z_p[\Gamma]$ and $M=\bigoplus_{e \in \calE} eM$ for any $\Z_p[\Gamma]$-module $M$. For every $\gamma \in \Gamma$, because $p\nmid |\gamma|$, there is no nonzero $\Z_p[\Gamma]$-module that is annihilated by both $1-\gamma$ and $\sum_{j=1}^{|\gamma|} \gamma^j$. So the proper ideal $I$ described in Theorem~\ref{thm:main-A} does not exist, and hence we previously defined $I_e:=e\Z_p[\Gamma]$ when $p \nmid |\Gamma|$. In Theorem~\ref{thm:main-B}\eqref{item:main-B-2}, the weight function has value constantly 1, so \eqref{eq:ff-weight-moment} proves the moment of the Cohen--Lenstra--Martinet conjecture in this case under a large $q$ limit.
	Moreover, since $\Cl(K)(p)=\bigoplus_{e\in \calE} e\Cl(K)(p)$, Conjecture~\ref{conj:GG} agrees with the Cohen--Lenstra--Martinet conjecture.

\subsubsection{Comparing to the Gerth conjecture}\label{ss:Gerth}

	Assume $Q=\Q$ and $\Gamma=\Z/p\Z$ with a generator $\gamma$. Let $R$ denote the ring $\Z_p[\Gamma]/ (\sum_{j=1}^p \gamma^j)$, which is a local ring where the maximal ideal is generated $1-\gamma$. The norm map annihilates $\Cl(K)$, so $\Cl(K)(p)$ is an $R$-module. By the genus theory, the $\Gamma$-coinvariant of $\Cl(K)(p)$, which is $\Cl(K)(p)/(1-\gamma)\Cl(K)(p)$, is an $\F_p$-vector space whose rank is determined by the number of primes ramified in $K/\Q$. Gerth \cite{Gerth84, Gerth86} proposed conjecture about the distribution of $(1-\gamma)\Cl(K)(p)$, and Gerth's conjecture is proven by Smith, Koymans and Pagano \cite{Smith2022, Koymans-Pagano}.
	
	Consider the ring $\Q_p[\Gamma]$. There are two isomorphism classes of irreducible $\Q_p[\Gamma]$-modules: the trivial one $V_0:=\Q_p$ and the nontrivial one $V_1:=\Q_p[\Gamma]/\Q_p$, corresponding to the idempotents $e_0:=\frac{\sum_{j=1}^p \gamma^j}{p}$ and $e_1:=1-e_0$ respectively. By definition of $e\Z_p[\Gamma]$, one see that
	\[
		e_0\Z_p[\Gamma]\simeq \Z_p[\Gamma]/(1-\gamma) \quad \text{and}\quad e_1\Z_p[\Gamma] \simeq \Z_p[\Gamma]/(\sum_{j=1}^p \gamma^j).
	\]
	 Note that if a finite $\Z_p[\Gamma]$-module is annihilated by both $1-\gamma$ and $\sum_{j=1}^p \gamma^j$, then it must be isomorphic to $\F_p^{\oplus r}$ for some $r \in \NN$. So $I_{e_0}=\frakm_{e_0}=pe_0\Z_p[\Gamma]$ and $I_{e_1}=\frakm_{e_1}=(1-\gamma)e_1\Z_p[\Gamma]$. Theorem~\ref{thm:main-A} (together with the explicit description of the family of ramification type given in Theorem~\ref{thm:lb-rank}) says that the rank $\rk_{\F_p} \Cl(K)(p)/(1-\gamma)\Cl(K)(p)$ has a lower bound determined by the number of primes ramified in $K/\Q$. Comparing to the genus theory result, Theorem~\ref{thm:main-A} only gives a lower bound of the rank, but is strong enough to imply that the average of the rank is infinite. Since the norm map is zero on $\Cl(K)$, $\Cl(K)(p)$ is an $e_1\Z_p[\Gamma]$-module, so we have $e_1\Cl(K)=\Cl(K)$ and $I_{e_1}e_1\Cl(K)=\frakm_{e_1} e_1\Cl(K)=(1-\gamma)\Cl(K)(p)$. So Conjecture~\ref{conj:GG} agrees with the Gerth conjecture in the totally real case, and Theorem~\ref{thm:main-B} proves a weighted version of the moment conjecture in the function field and totally real case (under $q\to \infty$).

\subsection{Methods and outline of the paper}
\hfill
	
	Theorem~\ref{thm:main-A} is proved by studying the presentation of Galois group with restricted ramification, which generalizes the method in the author's previous work \cite{Liu2022b}. The basic idea is: if $e\Cl(K)$ can be presented by generators and relations using only the local information, then one can estimate $\rk_I e\Cl(K)$ since the relations are in a particular form (in the form of tame local relations). For example, when $p=3$ and $\Gamma=\Z/3\Z$, if $K/\Q$ is a tamely ramified $\Z/3\Z$-extension, then by \cite[Theorem~4.3]{Liu2022b}, there is a surjective homomorphism
	\begin{equation}\label{eq:pres-example}
		\varphi: e\Z_3[\Gamma]^{\oplus r} \rtimes \Gamma \longrightarrow \Cl(K)(3) \rtimes \Gal(K/\Q)
	\end{equation}
	where $e$ is the nontrivial idempotent of $\Q_3[\Gamma]$, and $r$ is one less than the number of primes ramified in $K/\Q$; and $\ker \varphi$ is generated by relations 
	\[
		x_{\ell}^{-1} y_{\ell}^{-1} x_{\ell} y_{\ell}, \quad \ell \in \{\text{prime numbers ramified in $K/\Q$}\},
	\]
	where $x_{\ell}$ has order 3 and $x_{\ell} \not\in e\Z_p[\Gamma]^{\oplus r}$. Then one see that all the relators are contained in $\frakm_e \cdot (e\Z_3 [\Gamma])^{\oplus r}$, and it follows immediately that $\rk_{\frakm_e} \Cl(K)(3) = r$. 
	The method in \cite{Liu2022b} uses the local-global principle for central embedding problems, so it can be applied to study pro-$p$ extensions. In general situation, working only with central embedding problems is not enough; and also, when we change the base field to an arbitrary global field, the local-global principle of embedding problem could fail. Therefore a nice presentation as \eqref{eq:pres-example} usually does not exist.
	
	For the general case, we show that the local-global principle of embedding problem with restricted ramification holds if the associated cohomology invariant $\B$ vanishes (see Lemma~\ref{lem:EP}). When the invariant $\B$ does not vanish, we can relax the ramification restriction at finitely many primes to make $\B$ vanish (see Lemma~\ref{lem:frakS}). Then, after applying the local-global principle, we obtain a presentation of the maximal Galois group with the relaxed ramification restriction. By carefully estimating the number of those ``relaxed'' primes and comparing that to the generator rank (e.g., the number $r$ in \eqref{eq:pres-example}), we obtain a presentation similar to \eqref{eq:pres-example}. Although we cannot give all the relations in that presentation explicitly, we show that all but a bounded number of the relations are in the form of tame local relations, which is sufficient to conclude Theorem~\ref{thm:main-A}. 
	Theorem~\ref{thm:main-B}\eqref{item:main-B-1} follows by Theorem~\ref{thm:main-A}, and the proof of Theorem~\ref{thm:main-C} uses the presentations described above and the properties of projection maps $M \to eM$.
	
	The proof of Theorem~\ref{thm:main-B}\eqref{item:main-B-2} utilizes the method of counting $\F_q$-points on the Hurwitz spaces, which has been previously used in proving the function field case of Cohen--Lenstra heurstics and its generalizations (\cite{EVW}, \cite{Boston-Wood}, \cite{LWZB}, etc.). For an $e\Z_p[\Gamma]$-module $H$, by counting points on appropriate Hurwitz spaces, one can compute the average of $\#\Sur_{\Gamma}(e\Cl(K), H)$. We prove in Proposition~\ref{prop:weight-formula} that, if $H$ and $M:=I_eH$ have the same rank, then
	\[
		\#\Sur_{\Gamma}(e\Cl(K), H) = \#\Sur_{\Gamma} (I_e \cdot e\Cl(K), M) \cdot w_{e, M}(K);
	\]
	and then we prove \eqref{eq:ff-weight-moment} by comparing the number of points on the Hurwitz spaces that correspond to $\#\Sur_{\Gamma}(e\Cl(K), H)$ and $\#\Sur_{\Gamma}(e\Cl(K), H/M)$.	
	
	We define the ring $e\Z_p[\Gamma]$ and prove basic properties of $e\Z_p[\Gamma]$-modules in Section~\ref{sec:module}. In Section~\ref{sec:MainResults}, we establish the statements of the main results of the paper in the most general form; and we show that Theorem~\ref{thm:main-A}, Theorem~\ref{thm:main-B}\eqref{item:main-B-1} and Thereom~\ref{thm:main-C} follow from those main results. In Section~\ref{sect:B}, we study the cohomology invariant $\B$. In Section~\ref{sect:A-rk-bounds}, we estimate the generator rank of the presentation of Galois groups with restricted ramification, which will be used in the proofs of Theorem~\ref{thm:lb-rank} (general form of Theorem~\ref{thm:main-A}) and Theorem~\ref{thm:kernel-limit} (general form of Theorem~\ref{thm:main-C}). In Section~\ref{sect:presentation}, we prove the local-global principle for embedding problems and apply it to construct the desired presentations. Then we prove the main results Theorem~\ref{thm:lb-rank} and Theorem~\ref{thm:kernel-limit} in Sections~\ref{sect:proof-main} and \ref{sect:Proof-kernel-limit} respectively. In Sections~\ref{sect:ff-moment-prep} and \ref{sect:ff-moment-proof}, we prove the function field weighted moment result Thereom~\ref{thm:main-B}\eqref{item:main-B-2}; and in Section~\ref{sect:Z/2Z}, we prove Theorem~\ref{thm:Z/2Z}. Finally, in Section~\ref{sect:conj}, we compute the probability measure that is determined by the moment in \eqref{eq:ff-weight-moment} without the weight function, and state our conjecture about the probability and moment for the distribution of $I_e\cdot e\Cl(K)$.

\subsection{Notation}\label{ss:notation}
\hfill

	In this paper, groups are always finite or profinite groups, and subgroups are topologically closed subgroups. For a group $G$, we let $G^{\ab}$ denote the abelianization of $G$. For two elements $a,b \in G$, we write $a^b:=b^{-1}ab$ and $[a,b]:=a^{-1}b^{-1}ab$. 
	For a group $G$, we write $G(p)$ for the pro-$p$ completion of $G$. For an abelian group $G$, we let $G[p^{\infty}]$ denote the Sylow $p$-subgroup of $G$.
	If $H$ is a group with a continuous $G$-action, then the semidirect product $H \rtimes G$ is the group with underlying set $\{(h,g)\mid h \in H, g \in G\}$ and the multiplication $(h_1,g_1)(h_2,g_2)=(h_1g_1(h_2),g_1g_2)$. We write $\Hom_G$, $\Sur_G$, and $\Aut_G$ to represent the sets of $G$-equivariant homomorphisms, surjections, and automorphisms. If $M$ is a $G$-module, $M^G$ and $M_G$ are the $G$-invariant and $G$-coinvariant of $H$ respectively. 
	
	For a ring $R$, an ideal $I$ of $R$ and an $R$-module $M$, we denote the modules $M[I]:=\{x \in M \mid Ix=0\}$ and $M_{/I}:=M/IM$. Let
	\[
		\calM_R:=\{ \text{isomorphism classes of finite simple $R$-modules}\}.
	\]
	For a field $k$, we write $\overline{k}$ for a fixed choice of separable closure of $k$, and denote $G_k:=\Gal(\overline{k}/k)$. For a global field $k$ and a prime $\frakp$ of $k$, denote by $k_{\frakp}$ the completion of $k$ at $\frakp$. We fix an embedding $\overline{k} \hookrightarrow \overline{k_{\frakp}}$, then we have an injection $\eta: G_{k_{\frakp}} \hookrightarrow G_k$. Let $\calG_{\frakp}(k):=\im(\eta)$ and $\calT_{\frakp}(k)$ be the image of the inertia subgroup of $G_{k_{\frakp}}$ under the map $\eta$. When the choice of $k$ is clear, we denote $\calG_{\frakp}(k)$ and $\calT_{\frakp}(k)$ by $\calG_{\frakp}$ and $\calT_{\frakp}$. 
	For a Galois extension $K/k$, let $\calG_{\frakp}(K/k)$ and $\calT_{\frakp}(K/k)$ be the images of $\calG_{\frakp}(k)$ and $\calT_{\frakp}(k)$ under the quotient map $G_{k} \twoheadrightarrow \Gal(K/k)$.

	Throughout the paper, we let $Q$ be a global field, $\Gamma$ a finite abelian group, and $p$ a prime number such that $\Char Q$ does not divide $p|\Gamma|$. Let $\Gamma_p$ denote the Sylow $p$-subgroup of $\Gamma$, and $\Gamma'$ the maximal prime-to-$p$ subgroup of $\Gamma$; so $\Gamma=\Gamma_p\times \Gamma'$. 		
	For a function $f(x,y)$ of two variables $x$ and $y$, if 
	\[
		\lim_{x \to \infty} \limsup_{y \to \infty} f(x,y) = \lim_{x \to \infty} \liminf_{y \to \infty} f(x,y)=C,
	\]
	then we write 
	\[
		\lim_{x \to \infty} \lim_{y \to \infty} f(x,y) =C.
	\]

\subsection*{Acknowledgement}
	
	The author would like to thank Melanie Matchett Wood for motivating her to study the question of generalizing Gerth's conjectures and for comments on an early draft, and Peter Koymans for writing the appendix. The author would like to thank Stephanie Chan, Carlo Pagano, Nick Rome, and Alexander Smith for helpful conversations. The author was partially supported by NSF grant DMS-2200541, and part of the work was supported by NSF DMS-1928930 while the author was in residence at the Simons Laufer Mathematical Sciences Institute in Berkeley, California.

\section{Structure of $\Z_p[\Gamma]$-modules}\label{sec:module}

	 The ring $\Q_p[\Gamma]$ is semisimple. By the Krull-Schmidt theorem, $\Q_p[\Gamma]$ has the unique decomposition property; and moreover, each simple $\Q_p[\Gamma]$-module is isomorphic to $e \Q_p[\Gamma]$ for some primitive idempotent $e$. In particular,
	 \[
	 	\Q_p[\Gamma]=\bigoplus_{e \in \calE} e \Q_p[\Gamma],
	 \]
	 where 
	 \[
	 	\calE:=\{\text{primitive idempotents of $\Q_p[\Gamma]$}\}.
	\]
	The ring $\Z_p[\Gamma]$ can be uniquely decomposed as a direct sum of indecomposible modules, as we discuss below.
	For $A\in \calM_{\F_p[\Gamma']}$, there is a unique (up to isomorphism) projective $\Z_p[\Gamma']$-module $P$ such that $P/pP\simeq A$, and we define $P_A:=\Z_p[\Gamma_p] \otimes_{\Z_p} P$. 
	By \cite[Proposition 42(a) and \S15.7(c)]{Serre}, every projective $\Z_p[\Gamma]$-module is isomorphic to $P_A$ for some $A\in \calM_{\F_p[\Gamma]}$, $\Z_p[\Gamma]$ can be decomposed as 
	\begin{equation}\label{eq:PA}
		\Z_p[\Gamma]= \bigoplus_{A\in \calM_{\F_p[\Gamma]}} P_A,
	\end{equation}
	and each $P_A$ is a projective indecomposible $\Z_p[\Gamma]$-module. In particular, $\calM_{\F_p[\Gamma]}=\calM_{\F_p[\Gamma']}$.
	
	\begin{definition}\label{def:eM}
		Let $e$ be an idempotent of the ring $\Q_p[\Gamma]$. Define
		\[
			e\Z_p[\Gamma]:= \{ex \mid x \in \Z_p[\Gamma] \} \subset \Q_p[\Gamma],
		\]
		which is naturally a $\Z_p[\Gamma]$-module and a commutative ring with multiplicative identity $e$. For a $\Z_p[\Gamma]$-module $M$, define an $e\Z_p[\Gamma]$-module
		\[
			eM:= e\Z_p[\Gamma] \otimes_{\Z_p[\Gamma]} M.
		\]
	\end{definition}
	
	There is a natural surjective ring homomorphism
	\begin{eqnarray}
		\Z_p[\Gamma] & \longrightarrow & e\Z_p[\Gamma] \label{eq:to-e} \\
		x &\longmapsto & ex. \nonumber
	\end{eqnarray}
	As a $\Z_p[\Gamma]$-module, $e\Z_p[\Gamma]$ can also be defined as a quotient of $\Z_p[\Gamma]$ using the following lemma.

	\begin{lemma}\label{lem:prop_eM}
		Let $e$ be a primitive idempotent of $\Q_p[\Gamma]$. The following are equivalent.
		\begin{enumerate}
			\item\label{item:prop_eM-1} $M\simeq e\Z_p[\Gamma]$.
			\item\label{item:prop_eM-2} $M$ is a quotient module of $\Z_p[\Gamma]$ such that $\ker(\Z_p[\Gamma] \to M) =(1-e) \Q_p[\Gamma] \cap \Z_p[\Gamma]$. In other words, $M$ is the image of $\Z_p[\Gamma]$ under the quotient map $\Q_p[\Gamma] \to e\Q_p[\Gamma]$.
\		\item\label{item:prop_eM-3} $M$ is a quotient module of $\Z_p[\Gamma]$ satisfying both of the following conditions
			\begin{enumerate}
				\item \label{item:peM-1} $M$ is free as a $\Z_p$-module.
				\item \label{item:peM-2} $M \otimes_{\Z_p}\Q_p \simeq e\Q_p[\Gamma]$.
			\end{enumerate}
		\end{enumerate}
	\end{lemma}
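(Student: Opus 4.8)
The plan is to prove the cycle of equivalences $\eqref{item:prop_eM-1} \Rightarrow \eqref{item:prop_eM-2} \Rightarrow \eqref{item:prop_eM-3} \Rightarrow \eqref{item:prop_eM-1}$, using that $e$ is an idempotent of $\Q_p[\Gamma]$ so that multiplication by $e$ is a well-defined idempotent endomorphism of $\Q_p[\Gamma]$ splitting off the summand $e\Q_p[\Gamma]$.

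First, for $\eqref{item:prop_eM-1} \Rightarrow \eqref{item:prop_eM-2}$: by Definition~\ref{def:eM}, $e\Z_p[\Gamma] = \{ex \mid x \in \Z_p[\Gamma]\}$ is exactly the image of $\Z_p[\Gamma]$ under the surjection \eqref{eq:to-e}, equivalently under the projection $\Q_p[\Gamma] \twoheadrightarrow e\Q_p[\Gamma]$, $y \mapsto ey$, restricted to $\Z_p[\Gamma]$. So I need only identify the kernel. An element $x \in \Z_p[\Gamma]$ maps to $0$ iff $ex = 0$ iff $x = (1-e)x \in (1-e)\Q_p[\Gamma]$, and since $x \in \Z_p[\Gamma]$ this says $x \in (1-e)\Q_p[\Gamma] \cap \Z_p[\Gamma]$; conversely any such $x$ satisfies $ex = e(1-e)x' = 0$. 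Transporting along the given isomorphism $M \simeq e\Z_p[\Gamma]$ gives \eqref{item:prop_eM-2}. The implication $\eqref{item:prop_eM-2} \Rightarrow \eqref{item:prop_eM-1}$ is immediate from the same identification, so \eqref{item:prop_eM-1} and \eqref{item:prop_eM-2} are interchangeable.

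Next, $\eqref{item:prop_eM-2} \Rightarrow \eqref{item:prop_eM-3}$: if $M = e\Z_p[\Gamma] \subset e\Q_p[\Gamma] \subset \Q_p[\Gamma]$, then $M$ is a submodule of the finite-dimensional $\Q_p$-vector space $\Q_p[\Gamma]$ that is also finitely generated over $\Z_p$ (being a quotient of $\Z_p[\Gamma]$), hence $M$ is a finitely generated torsion-free $\Z_p$-module, hence $\Z_p$-free, giving \eqref{item:peM-1}. For \eqref{item:peM-2}, tensoring the inclusion $e\Z_p[\Gamma] \hookrightarrow e\Q_p[\Gamma]$ with $\Q_p$ and using that $e\Z_p[\Gamma]$ spans $e\Q_p[\Gamma]$ over $\Q_p$ (as $e \cdot \Q_p[\Gamma] = \Q_p \cdot (e\Z_p[\Gamma])$) yields $M \otimes_{\Z_p} \Q_p \simeq e\Q_p[\Gamma]$.

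The substantive step is $\eqref{item:prop_eM-3} \Rightarrow \eqref{item:prop_eM-2}$, and this is where I expect the main obstacle: one must show the kernel of a quotient map $\pi : \Z_p[\Gamma] \twoheadrightarrow M$ with $M$ $\Z_p$-free and $M \otimes \Q_p \simeq e\Q_p[\Gamma]$ is forced to be $(1-e)\Q_p[\Gamma] \cap \Z_p[\Gamma]$. The idea is to tensor the exact sequence $0 \to \ker\pi \to \Z_p[\Gamma] \to M \to 0$ with $\Q_p$; since $M$ is $\Z_p$-free the sequence stays exact, giving $\ker\pi \otimes \Q_p \hookrightarrow \Q_p[\Gamma]$ with quotient $M \otimes \Q_p \simeq e\Q_p[\Gamma]$. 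Because $\Q_p[\Gamma] = e\Q_p[\Gamma] \oplus (1-e)\Q_p[\Gamma]$ is a direct sum decomposition and $e\Q_p[\Gamma]$ is \emph{simple} (here primitivity of $e$ is essential), the only submodule of $\Q_p[\Gamma]$ with quotient isomorphic to $e\Q_p[\Gamma]$ is $(1-e)\Q_p[\Gamma]$ itself — any complement to a fixed simple summand in a semisimple module with no repeated isotypic copy is unique. (If $e\Q_p[\Gamma]$ occurred with multiplicity $>1$ this step would genuinely fail, which is the reason for the primitivity hypothesis; I should check that a primitive idempotent of the commutative ring $\Q_p[\Gamma]$ indeed cuts out a module occurring with multiplicity one, which follows from $\Q_p[\Gamma]$ being a product of fields.) Hence $\ker\pi \otimes \Q_p = (1-e)\Q_p[\Gamma]$ inside $\Q_p[\Gamma]$, so $\ker\pi \subseteq (1-e)\Q_p[\Gamma] \cap \Z_p[\Gamma]$. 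For the reverse inclusion and equality, note $(1-e)\Q_p[\Gamma] \cap \Z_p[\Gamma]$ is the kernel of the map in \eqref{item:prop_eM-2}, whose image $e\Z_p[\Gamma]$ has the same $\Z_p$-rank as $M$ (both have $\Q_p$-rank $\dim_{\Q_p} e\Q_p[\Gamma]$), and $\ker\pi$ is a $\Z_p$-direct summand of $\Z_p[\Gamma]$ since $M$ is free; a saturated submodule contained in another saturated submodule of the same rank must equal it, giving $\ker\pi = (1-e)\Q_p[\Gamma] \cap \Z_p[\Gamma]$ as desired.
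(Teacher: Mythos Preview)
Your proof is correct and follows essentially the same route as the paper's: identify the kernel of $x\mapsto ex$ to get $\eqref{item:prop_eM-1}\Leftrightarrow\eqref{item:prop_eM-2}$, read off $\eqref{item:prop_eM-3}$ from the definition, and for $\eqref{item:prop_eM-3}\Rightarrow\eqref{item:prop_eM-2}$ tensor the short exact sequence with $\Q_p$ to force $\ker\pi\otimes\Q_p=(1-e)\Q_p[\Gamma]$, then pass back to $\Z_p[\Gamma]$ by a rank-and-torsion-freeness argument. The only cosmetic difference is in that last step: the paper observes that $M\twoheadrightarrow e\Z_p[\Gamma]$ has finite kernel and both sides are $\Z_p$-free, while you phrase the same fact as ``two saturated submodules of equal rank, one inside the other, must coincide''; these are the same argument. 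Your explicit remark that primitivity (hence multiplicity one in the commutative semisimple ring $\Q_p[\Gamma]$) is what pins down the kernel after tensoring is a point the paper leaves implicit.
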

	
	\begin{proof}
		By definition of $e\Z_p[\Gamma]$, \eqref{item:prop_eM-1} implies \eqref{item:prop_eM-3}. 
		The kernel of the surjection \eqref{eq:to-e} is $(1-e)\Q_p[\Gamma] \cap \Z_p[\Gamma]$, so \eqref{item:prop_eM-1} and \eqref{item:prop_eM-2} are equivalent.
				
		Suppose $\pi: \Z_p[\Gamma] \to M$ is a surjection such that $M$ satisfies both \eqref{item:peM-1} and \eqref{item:peM-2}. Because $\Q_p$ is a flat $\Z_p$-module, by taking tensor product, $\pi$ gives
		\[
			1 \longrightarrow \ker \pi \otimes_{\Z_p} \Q_p \longrightarrow \Q_p[\Gamma] \longrightarrow M \otimes_{\Z_p}\Q_p \longrightarrow 1.
		\]
		By \eqref{item:peM-2}, it follows that $\ker \pi \otimes_{\Z_p} \Q_p$ is $(1-e) \Q_p[\Gamma]$. Since $\ker \pi$ is a submodule of $\Z_p[\Gamma]$, it is $\Z_p$-free, so $\ker \pi$ embeds into $\ker \pi \otimes_{\Z_p} \Q_p$, and hence $\ker \pi \subseteq \Z_p[\Gamma] \cap (1-e)\Q_p[\Gamma]$. By comparing the $\Z_p$-ranks, $\ker \pi$ is a submodule of $\Z_p[\Gamma] \cap (1-e)\Q_p[\Gamma]$ of finite index, so $M \twoheadrightarrow \Z_p[\Gamma]/(\Z_p[\Gamma] \cap (1-e) \Q_p[\Gamma])$ has finite kernel. Finally, since both $M$ and $\Z_p[\Gamma]/(\Z_p[\Gamma] \cap (1-e) \Q_p[\Gamma])$ are $\Z_p$-free, $\ker \pi = \Z_p[\Gamma] \cap (1-e)\Q_p[\Gamma]$, so $M$ is isomorphic to $e\Z_p[\Gamma]$.
	\end{proof}
	
	The following lemma shows that each $e\Z_p[\Gamma]$ is a quotient of $P_A$ for a unique $A$.
	
	\begin{lemma}\label{lem:e-A}
		For each $e\in \calE$, there is a unique simple $\F_p[\Gamma]$-module $A$ such that the quotient map $\Z_p[\Gamma]\to e\Z_p[\Gamma]$ in Lemma~\ref{lem:prop_eM} factors through $\Z_p[\Gamma] \to P_A$. In particular, $e\Z_p[\Gamma]$ is a local ring and its quotient by the maximal ideal is isomorphic to $A$.
	\end{lemma}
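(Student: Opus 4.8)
The plan is to exploit the decomposition \eqref{eq:PA} together with the characterization in Lemma~\ref{lem:prop_eM}. First I would observe that the surjection $\pi\colon\Z_p[\Gamma]\to e\Z_p[\Gamma]$ from \eqref{eq:to-e} is split by no idempotent of $\Z_p[\Gamma]$ in general, but that $\Z_p[\Gamma]=\bigoplus_{A}P_A$ expresses the source as a direct sum of indecomposable projective modules. Composing $\pi$ with each inclusion $P_A\hookrightarrow\Z_p[\Gamma]$ gives maps $\pi_A\colon P_A\to e\Z_p[\Gamma]$ whose images sum to all of $e\Z_p[\Gamma]$. Since $e\Z_p[\Gamma]$ is, by Lemma~\ref{lem:prop_eM}\eqref{item:peM-2}, a $\Z_p$-lattice in the \emph{simple} $\Q_p[\Gamma]$-module $e\Q_p[\Gamma]$, tensoring with $\Q_p$ shows that $e\Q_p[\Gamma]$ is a quotient of $P_A\otimes\Q_p$ for at least one $A$; and because $e\Q_p[\Gamma]$ is simple while $\Q_p[\Gamma]=\bigoplus_A(P_A\otimes\Q_p)$ is a decomposition into (not necessarily simple) summands, exactly one summand $P_A\otimes\Q_p$ can contain a copy of $e\Q_p[\Gamma]$, by the Krull--Schmidt uniqueness for the semisimple ring $\Q_p[\Gamma]$. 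Hence there is a unique $A$ with $\pi_A$ nonzero mod $p$, and I would argue $\pi_A$ itself is surjective: its image is a $\Z_p$-submodule of $e\Z_p[\Gamma]$ that generates $e\Q_p[\Gamma]$ over $\Q_p$, and by Nakayama (applied after checking $\pi_A\bmod p$ is onto, see below) it is everything.

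The second step is to pin down $A$ as the residue module. By \cite[\S15.7(c)]{Serre} (already invoked in the excerpt), $P_A/pP_A\simeq A$ and more precisely $P_A$ has $A$ as its unique simple quotient, i.e.\ $P_A$ is the projective cover of $A$. So to show the quotient map $\Z_p[\Gamma]\to e\Z_p[\Gamma]$ factors through $P_A$, it suffices to show $e\Z_p[\Gamma]$ is a quotient of $P_A$ and of no other $P_{A'}$; the first follows from surjectivity of $\pi_A$, and the second from the fact that $\Hom_{\Z_p[\Gamma]}(P_{A'},e\Z_p[\Gamma])$ reduces mod $p$ into $\Hom_{\F_p[\Gamma]}(A',e\Z_p[\Gamma]/pe\Z_p[\Gamma])$, which forces any such map to have image in $pe\Z_p[\Gamma]$ unless $A'\simeq A$ — here one uses that $e\Z_p[\Gamma]/pe\Z_p[\Gamma]$ has a unique simple quotient. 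That $e\Z_p[\Gamma]/pe\Z_p[\Gamma]$ is simple (hence equal to $A$) is the crux: since $e\Z_p[\Gamma]$ is a quotient of the \emph{indecomposable} projective $P_A$, and $P_A/\Rad P_A\simeq A$, any proper quotient of $P_A$ that is $\Z_p$-free of the right rank must also have $A$ as its top, so $e\Z_p[\Gamma]/\frakm\simeq A$ where $\frakm$ is the image of $\Rad P_A$; it remains to see $\frakm=pe\Z_p[\Gamma]$ need not hold, rather $\frakm$ is \emph{a} maximal ideal and $e\Z_p[\Gamma]/\frakm\simeq A$.

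Finally, for the "local ring" assertion: $e\Z_p[\Gamma]$ is a commutative ring, finitely generated and free as a $\Z_p$-module (Lemma~\ref{lem:prop_eM}\eqref{item:peM-1}), hence a complete semilocal ring; it is local iff it has a unique maximal ideal, and I would deduce this from the fact that $e\Z_p[\Gamma]\otimes\Q_p=e\Q_p[\Gamma]$ is a \emph{field} (a simple commutative $\Q_p$-algebra), so $e\Z_p[\Gamma]$ is an order in a $p$-adic field, and any such order that is additionally a quotient of $\Z_p[\Gamma]$ — equivalently, monogenic/determined as above — has connected spectrum because its mod-$p$ reduction, being a quotient of the local ring $P_A/pP_A$'s endomorphisms... more directly: a finite $\Z_p$-algebra is local iff $A\otimes\F_p$ modulo its nilradical is a field, and here that reduction is $A$, a simple $\F_p[\Gamma]$-module which (as $\Gamma$ is abelian and we may pass to $\Gamma'$) is a finite field, hence a field. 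Therefore $e\Z_p[\Gamma]$ is local with residue ring $A$. I expect the main obstacle to be the bookkeeping in the uniqueness of $A$ — specifically, rigorously ruling out that $e\Q_p[\Gamma]$ appears as a constituent of more than one $P_{A'}\otimes\Q_p$ — which requires carefully matching the central idempotents $e\in\calE$ of $\Q_p[\Gamma]$ with the blocks of $\Z_p[\Gamma]$ via \cite{Serre}, and checking that the block containing $e$ is precisely the one whose projective indecomposable is $P_A$ for the claimed $A$.
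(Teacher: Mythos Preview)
Your overall strategy matches the paper's: use the decomposition $\Z_p[\Gamma]=\bigoplus_A P_A$ and the simplicity of $e\Q_p[\Gamma]$ to single out a unique $A$, then deduce locality from the structure of $P_A$. However, your execution has one genuine error and one unnecessary detour.

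The error: you write ``$P_A/pP_A\simeq A$'', but this is false whenever $\Gamma_p\neq 1$. By the paper's definition $P_A=\Z_p[\Gamma_p]\otimes_{\Z_p}P$ with $P/pP\simeq A$, so $P_A/pP_A\simeq \F_p[\Gamma_p]\otimes_{\F_p}A$, which has $A$ as its unique simple \emph{quotient} (since $\Z_p[\Gamma_p]$ is local with residue field $\F_p$) but is not itself simple. You later say the right thing (``$P_A/\Rad P_A\simeq A$''), but the earlier misstatement contaminates your argument about $\Hom_{\Z_p[\Gamma]}(P_{A'},e\Z_p[\Gamma])$ reducing mod $p$ to $\Hom_{\F_p[\Gamma]}(A',\cdot)$, which is not the correct reduction.

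The detour: you try to prove surjectivity of $\pi_A$ via Nakayama after checking things mod $p$. The paper's route is cleaner and avoids this entirely. Since $e\Q_p[\Gamma]$ appears in exactly one $P_A\otimes\Q_p$, the map $P_B\otimes\Q_p\to e\Q_p[\Gamma]$ is zero for $B\neq A$; then because $e\Z_p[\Gamma]$ is $\Z_p$-free (Lemma~\ref{lem:prop_eM}\eqref{item:peM-1}), it embeds in $e\Q_p[\Gamma]$, so $\pi_B=0$ integrally for $B\neq A$. This immediately gives both the factorization through $P_A$ and the surjectivity of $\pi_A$, with no Nakayama needed. For locality, the paper then simply observes that $P_A$, being $\Z_p[\Gamma_p]\otimes P$ with $\Z_p[\Gamma_p]$ local, has a unique maximal proper $\Z_p[\Gamma]$-submodule, hence so does its quotient $e\Z_p[\Gamma]$; since $\Z_p[\Gamma]$-submodules of $e\Z_p[\Gamma]$ coincide with ideals, $e\Z_p[\Gamma]$ is local with residue $A$. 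Your order-in-a-$p$-adic-field argument would work, but it presupposes exactly the uniqueness of the simple top that the $P_A$ structure already hands you.
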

	\begin{proof}
		Because $\Gamma$ is abelian, the direct sum decomposition of $\Q_p[\Gamma]$ as irreducible modules is unique \cite[Lemma~1.8.2]{Benson}, and in particular, irreducible modules in this decomposition are pairwisely non-isomorphic. So there is a unique $A$ such that $\Q_p[\Gamma] \to e\Q_p[\Gamma]$ factors through the quotient map $\Q_p[\Gamma] \to P_A \otimes_{\Z_p} \Q_p$. For all $B\in \calM_{\F_p[\Gamma]}$ such that $B\neq A$, the image of the submodule $P_B\otimes_{\Z_p}\Q_p \subset \Q_p[\Gamma]$ in  $e\Q_p[\Gamma]$ is zero, then because $e\Z_p[\Gamma]$ is $\Z_p[\Gamma]$-free, we have $P_B\subset \ker(\Z_p[\Gamma] \to e\Z_p[\Gamma])$. So $\Z_p[\Gamma]\to e\Z_p[\Gamma]$ factors through $P_A$ as desired.
		
		Recall $P_A=\Z_p[\Gamma_p]\otimes_{\Z_p} P$ for the projective $\Z_p[\Gamma']$-module $P$ satisfying $P/pP\simeq A$. Since $\Z_p[\Gamma_p]$ is a local ring with residue field $\F_p$, $P_A$ has a unique maximal proper submodule and the quotient of $P_A$ by the maximal ideal is isomorphic to $A$. So $e\Z_p[\Gamma]$ also has a unique maximal proper $\Z_p[\Gamma]$-submodule, as it is a quotient of $P_A$. Passing along the ring morphism $\Z_p[\Gamma] \to e\Z_p[\Gamma]$ sending $1\mapsto e$, the $\Z_p[\Gamma]$-submodules of $e\Z_p[\Gamma]$ are exactly the ideals of the ring $e\Z_p[\Gamma]$. So $e\Z_p[\Gamma]$ as a ring has a unique maximal ideal, and then it is a local ring.
	\end{proof}

	\begin{notation}
		\begin{enumerate}
			\item For a primitive idempotent $e$ of the ring $\Q_p[\Gamma]$, let $\frakm_e$ be the maximal ideal of the local ring $e\Z_p[\Gamma]$.
			\item For a simple $\F_p[\Gamma]$-module $A$, define the following set
			\[
				\Idem(A):=\{\text{primitive idempotents $e$ of $\Q_p[\Gamma]$ such that $e\Z_p[\Gamma]/\frakm_e \simeq A$}\}.
			\]
		\end{enumerate}
	\end{notation}

\subsection{Properties of $e\Z_p[\Gamma]$}
\hfill

	In this subsection, we collect basic properties of the ring $e\Z_p[\Gamma]$ for every primitive idempotent $e$ of $\Q_p[\Gamma]$. Throughout, we assume $e$ is a primitive idempotent, i.e., $e \in \calE$.
	
	\begin{lemma}\label{lem:max-ideal}
		For each $e \in \calE$, there exists a cyclic quotient $C$ of $\Gamma$ such that the $\Gamma$-action on $e\Z_p[\Gamma]$ factors through $C$ and $C$ acts faithfully on $e\Z_p[\Gamma]$. The existence of $C$ defines a bijective correspondence between $\calE$ and the set of all cyclic quotients of $\Gamma$. Moreover, the maximal ideal $\frakm_e$ is described as follows.
		\begin{enumerate}
			\item\label{item:max-ideal-1}
				If $p\nmid |C|$, then $\frakm_e=p(e\Z_p[\Gamma])$.
			
			\item\label{item:max-ideal-2}
				If $p\mid |C|$, then $\frakm_e=(1-\gamma)e\Z_p[\Gamma]$, where $\gamma\in \Gamma$ is a preimage of a generator of the Sylow $p$-subgroup of $C$.
		\end{enumerate} 
	\end{lemma}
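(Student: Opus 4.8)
The plan is to build everything on the dictionary established in Lemma~\ref{lem:e-A}: the quotient map $\Z_p[\Gamma]\to e\Z_p[\Gamma]$ factors through a unique $P_A=\Z_p[\Gamma_p]\otimes_{\Z_p}P$, with $P$ the projective cover over $\Z_p[\Gamma']$ of $A$. First I would establish the cyclic quotient $C$. Since $\Gamma$ is abelian, the simple $\Qbar_p[\Gamma]$-modules are one-dimensional characters $\chi\colon\Gamma\to\Qbar_p^{\times}$, and a primitive idempotent $e$ of $\Q_p[\Gamma]$ corresponds to a $\Gal(\Qbar_p/\Q_p)$-orbit of such characters, all of which have the same kernel $N\trianglelefteq\Gamma$; set $C:=\Gamma/N$, which is cyclic because it embeds (via any $\chi$ in the orbit) into $\Qbar_p^{\times}$ and is finite. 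The $\Gamma$-action on $e\Q_p[\Gamma]$, hence on the lattice $e\Z_p[\Gamma]$, factors through $C$ and is faithful there since $N$ is exactly the common kernel. For the bijection with cyclic quotients: the assignment $e\mapsto C$ is injective because $e\Q_p[\Gamma]$ is recovered from $C$ as (one of the) faithful simple constituents, and it is surjective because any cyclic quotient $C=\Gamma/N$ gives $\Q_p[C]$ as a direct factor of $\Q_p[\Gamma]$ whose faithful primitive idempotents pull back to elements of $\calE$ mapping to $C$; a clean way to phrase the count is that both sides have cardinality $\sum_{C}\#\{\text{faithful primitive idempotents of }\Q_p[C]\}$, and for a cyclic $p'$-by-$p$ group this is exactly $1$ per $C$ (the faithful part of $\Q_p[C]$ is a field), which I would verify by the elementary structure of $\Q_p[C]$.

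For the description of $\frakm_e$, I would reduce to computing the maximal ideal of $e\Z_p[C]$, since $e\Z_p[\Gamma]\simeq e\Z_p[C]$ as rings once the action is known to factor through $C$ faithfully. Write $C=C_p\times C'$ with $C_p$ the Sylow $p$-subgroup. In case \eqref{item:max-ideal-1}, $p\nmid|C|$, so $\Z_p[C]$ is a product of unramified DVRs (étale over $\Z_p$), $e\Z_p[C]$ is the unramified extension of $\Z_p$ cut out by $e$, and its maximal ideal is $(p)$; pulling back, $\frakm_e=p\,e\Z_p[\Gamma]$. In case \eqref{item:max-ideal-2}, $p\mid|C|$: here $e\Z_p[C]\simeq P\otimes_{\Z_p}\Z_p[C_p]$ where $P$ is the unramified $\Z_p$-algebra attached to the faithful idempotent of $\Q_p[C']$ (a DVR with uniformizer $p$), and $\Z_p[C_p]\simeq\Z_p[x]/(x^{p^a}-1)$ for $|C_p|=p^a$, which is a local ring with maximal ideal $(p,x-1)=(x-1)$ — indeed $p\in(x-1)$ since $p=\prod_{\zeta}(1-\zeta)\cdot(\text{unit})$ modulo working in the quotient, or more cleanly because $\Z_p[\mu_{p^a}]$ is totally ramified with uniformizer $1-\zeta_{p^a}$ and the other factors of $\Z_p[C_p]$ are handled by induction on $a$. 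Tensoring, the maximal ideal of $e\Z_p[C]$ is generated by $1-\gamma$ for $\gamma$ a generator of $C_p$, since the $P$-direction contributes no new maximal ideal (its uniformizer $p$ already lies in $(1-\gamma)$). Lifting $\gamma$ to $\Gamma$ gives the stated generator, and $\frakm_e=(1-\gamma)e\Z_p[\Gamma]$.

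The main obstacle I anticipate is not any single hard step but getting the bookkeeping of the two "directions'' ($C_p$ versus $C'$, equivalently ramified versus unramified) cleanly synchronized — in particular verifying carefully that in case \eqref{item:max-ideal-2} the prime $p$ is genuinely absorbed into the ideal $(1-\gamma)$ so that the single generator $1-\gamma$ suffices (rather than needing $(p,1-\gamma)$). This comes down to the classical fact that in $\Z_p[\mu_{p^a}]$ one has $p=u\,(1-\zeta_{p^a})^{\varphi(p^a)}$ for a unit $u$, combined with an induction on $a$ to strip off the non-faithful cyclotomic factors of $\Z_p[C_p]$; I would isolate this as the one computational point and handle the rest by the structural identifications above. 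I would also need the small observation that an unramified DVR extension tensored with a totally ramified DVR extension (over the same $\Z_p$) is again a DVR whose maximal ideal is generated by the ramified uniformizer alone, which follows because the residue extension is separable so the compositum is again a local ring with the expected ramification.
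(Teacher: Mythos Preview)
Your overall approach is sound and differs from the paper's, but there is a concrete error in your identification in case~\eqref{item:max-ideal-2}. You write $e\Z_p[C]\simeq P\otimes_{\Z_p}\Z_p[C_p]$, but the right-hand side is precisely the block $P_A$ of $\Z_p[C]$ (cf.\ Lemma~\ref{lem:e-A}), which properly surjects onto $e\Z_p[C]$. In $P\otimes_{\Z_p}\Z_p[C_p]$ the maximal ideal is $(p,1-\gamma)$ and is \emph{not} principal: modulo $(1-\gamma)$ one gets $P\otimes_{\Z_p}\Z_p\simeq P$, in which $p\neq 0$, so your claim ``$(p,x-1)=(x-1)$'' fails there. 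Your parenthetical hints (``modulo working in the quotient'', ``strip off the non-faithful cyclotomic factors by induction on $a$'') point toward the fix: the correct identification is $e\Z_p[C]\simeq P\otimes_{\Z_p}\Z_p[\zeta_{p^a}]$, the image of $P_A$ in the faithful cyclotomic factor. Once you have that, your ramification argument goes through verbatim, since $\Z_p[\zeta_{p^a}]/\Z_p$ is totally ramified with uniformizer $1-\zeta_{p^a}$ and $P/\Z_p$ is unramified, so the tensor product is a DVR with uniformizer $1-\gamma$ and $p$ is absorbed as you say.

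For comparison, the paper avoids any explicit cyclotomic identification in case~\eqref{item:max-ideal-2}. It first observes $(1-\gamma)\subset\frakm_e$ (since $\Gamma_p$ acts trivially on the residue field), then uses irreducibility of $e\Q_p[\Gamma]$ together with faithfulness of the $C$-action to show that the norm element $\sum_{i=1}^{p}\sigma^i$ (for $\sigma\in C$ of order~$p$) annihilates $e\Z_p[\Gamma]$. In the quotient $H=e\Z_p[\Gamma]/(1-\gamma)$, $\sigma$ acts trivially (it is a power of the image of $\gamma$), so $0=\sum\sigma^i=p$ on $H$; thus $H$ is an $\F_p[C/\langle\gamma\rangle]$-module, hence semisimple, and being a quotient of a local ring forces it to be simple. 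This argument is more intrinsic and sidesteps the bookkeeping you flagged as your main obstacle; your structural route is equally valid once the target ring is correctly named, and has the virtue of making the DVR structure transparent. For the existence of $C$ and the bijection, your character-orbit argument is essentially equivalent to the paper's Schur's-lemma argument (the image of $\Gamma$ in $\End_\Gamma(e\Q_p[\Gamma])$ lands in the units of a field, hence is cyclic).
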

	
	\begin{proof}
		By Schur's lemma, the endomorphism ring $\End_{\Gamma}(e\Q_p[\Gamma])$ of the irreducible $\Q_p[\Gamma]$-module $e\Q_p[\Gamma]$ is a finite dimensional division algebra over $\Q_p$. Because $\Gamma$ is finite abelian, the image of $\Gamma \to \End_{\Gamma}(e\Q_p[\Gamma])$ is a torsion subgroup of the center of $\End_{\Gamma}(e\Q_p[\Gamma])$, and hence is a torsion subgroup of the multiplicative group of a field extension of $\Q_p$. So the image of $\Gamma\to \End_{\Gamma}(e\Q_p[\Gamma])$ is a finite cyclic group, and then the $\Gamma$-action on $e\Z_p[\Gamma] \subset e\Q_p[\Gamma]$ factors through a finite cyclic quotient of $\Gamma$. Let $C$ be the smallest such cyclic quotient, so that $C$ acts faithfully on $e\Z_p[\Gamma]$. By the representation theory of cyclic groups, there is a unique irreducible $\Q_p[C]$-module with faithful $C$-action, so $e\Q_p[\Gamma]$ is isomorphic to this unique irreducible module. Because $e\Q_p[\Gamma] \not\simeq e'\Q_p[\Gamma]$ for any $e'\in \calE$ with $e'\neq e$ (by \cite[Proposition~1.7.2]{Benson}), the map from $\frakc: \calE \to \{\text{cyclic quotients of }\Gamma\}$ that sends $e$ to its associated $C$ is an injection. This map is also surjective, since for any cyclic quotient $C$ of $\Gamma$, an irreducible $\Q_p[C]$-module is naturally an irreducible $\Q_p[\Gamma]$-module. Thus, the map $\frakc$ gives a bijective correspondence.
				
		If $p\nmid |C|$, by \cite[Proposition~43(ii)]{Serre}, as $e\Z_p[\Gamma]$ is a $\Z_p$-lattice of $e\Q_p[\Gamma]$ and $e\Q_p[\Gamma]$ is an irreducible $\Q_p[C]$-module, $e\Z_p[\Gamma]/  p(e\Z_p[\Gamma])$ is an irreducible $\F_p[C]$-module, so the maximal ideal of $e\Z_p[\Gamma]$ is generated by $p$. 
		If $p\mid |C|$, then by \cite[\S 15.7.(a)]{Serre}, the Sylow $p$-subgroup of $C$ acts trivially on the quotient of $e\Z_p[\Gamma]$ by its maximal ideal, so $(1-\gamma)e\Z_p[\Gamma]$ is contained in the maximal ideal. Let $\sigma$ be an element of $C$ whose order is $p$. Consider the map
		\begin{eqnarray*}
			\alpha: e\Z_p[\Gamma] &\longrightarrow& e\Z_p[\Gamma] \\
			x &\longmapsto& \sum_{i=1}^p \sigma^i(x)
		\end{eqnarray*}
		which is a homomorphism of $\Z_p[\Gamma]$-modules because $\Gamma$ is abelian. Then since $e\Q_p[\Gamma]$ is irreducible, the homomorphism $\hat{\alpha}: e\Q_p[\Gamma]\to e\Q_p[\Gamma]$ obtained by taking tensor product of $\Q_p$ along $\alpha$ is either zero or an isomorphism. Because $\sigma$ acts trivially on $\im\alpha$, it also acts trivially on $\im\hat{\alpha}$. Thus, the assumption that $C$ acts faithfully on $e\Z_p[\Gamma]$ implies $\im \hat{\alpha}=0$, so $\im \alpha=0$. Thus, $\sum_{i=1}^p \sigma^i$ annihilates $e\Z_p[\Gamma]$. Then, about the module $H:=e\Z_p[\Gamma]/(1-\gamma)$, we know that $\sigma$ acts trivially on $H$ and $\sum_{i=1}^p \sigma^i$ annihilates $H$. So $H$ has exponent $p$, and hence it is an $\F_p[C/\langle \gamma\rangle]$-module. Finally, because $\F_p[C/\langle{\gamma}\rangle]$ is semisimple (as $p\nmid |C/\langle \gamma \rangle|$) and $H$ is a quotient of local ring, $H$ is simple, which shows that the maximal ideal of $e\Z_p[\Gamma]$ is $(1-\gamma)e\Z_p[\Gamma]$.
	\end{proof}
	
		The following lemma provides more information about the bijective correspondence in Lemma~\ref{lem:max-ideal}. 
	
	\begin{lemma}\label{lem:gamma-ann}
		For every $\gamma \in \Gamma$, exactly one of $1-\gamma$ and $\sum_{j=1}^{|\gamma|} \gamma^j$ annihilates $e\Z_p[\Gamma]$. Moreover, for each simple $\F_p[\Gamma]$-module $A$, the map
		\begin{equation}\label{eq:idem-cyclicquot}
			\Idem(A) \longrightarrow \{ \text{cyclic quotients of $\Gamma_p$}\}
		\end{equation}
		sending $e$ to the quotient of $\Gamma_p$ by the maximal subgroup of $\Gamma_p$ that acts trivially on $e\Z_p[\Gamma]$ is a bijection.
	\end{lemma}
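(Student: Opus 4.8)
The plan is to analyze both claims using the structure theory already developed, especially Lemma~\ref{lem:max-ideal} and the bijection $\frakc \colon \calE \to \{\text{cyclic quotients of }\Gamma\}$.

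\textbf{The first claim.} For a fixed $\gamma \in \Gamma$ of order $|\gamma|$, I would consider the cyclic quotient $C$ associated to $e$ via $\frakc$. Write $\gammabar$ for the image of $\gamma$ in $C$. The key observation is that $1-\gamma$ annihilates $e\Z_p[\Gamma]$ if and only if $\gamma$ acts trivially on $e\Z_p[\Gamma]$, which (since $C$ acts faithfully) happens exactly when $\gammabar = 1$. On the other hand, I claim $\sum_{j=1}^{|\gamma|}\gamma^j$ annihilates $e\Z_p[\Gamma]$ precisely when $\gammabar \neq 1$. Indeed, if $\gammabar \neq 1$, then on the irreducible $\Q_p[C]$-module $e\Q_p[\Gamma]$, the element $\gammabar$ acts as a nontrivial root of unity $\zeta$ (the $\Gamma$-action factors through the embedding $C \hookrightarrow \End_\Gamma(e\Q_p[\Gamma])^\times$, landing in the group of roots of unity of a field), and $\sum_{j=1}^{|\gamma|}\zeta^j = \zeta\cdot\frac{\zeta^{|\gamma|}-1}{\zeta-1} = 0$ since $\zeta^{|\gamma|}=1$ but $\zeta \neq 1$; so the endomorphism $\sum_{j=1}^{|\gamma|}\gamma^j$ is zero on $e\Q_p[\Gamma]$, hence on the lattice $e\Z_p[\Gamma]$. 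Conversely, if $\gammabar = 1$ then $\sum_{j=1}^{|\gamma|}\gamma^j$ acts as multiplication by $|\gamma|$, which is a nonzero element of the domain $e\Z_p[\Gamma]$, so it does not annihilate. This gives exactly one of the two possibilities in every case, proving the dichotomy. (A small point to handle: $1-\gamma$ and $\sum_{j=1}^{|\gamma|}\gamma^j$ cannot both annihilate, since their sum on the trivial action would be $1 + |\gamma| - 1 \cdot$ something; cleaner to note that if $1-\gamma$ kills $e\Z_p[\Gamma]$ then $\gamma$ acts trivially, so $\sum \gamma^j$ acts by $|\gamma| \neq 0$.)

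\textbf{The second claim.} Here I would combine Lemma~\ref{lem:e-A}, which says each $e$ gives a well-defined $A \in \calM_{\F_p[\Gamma]}$ with $e\Z_p[\Gamma]/\frakm_e \simeq A$ (so $\Idem(A)$ partitions $\calE$), with the cyclic-quotient bijection. Fix $A \in \calM_{\F_p[\Gamma]} = \calM_{\F_p[\Gamma']}$; since $\Gamma = \Gamma_p \times \Gamma'$, the $\Gamma'$-module structure of $A$ is governed by a fixed character, and $\Gamma_p$ acts trivially on $A$ (Lemma~\ref{lem:max-ideal}\eqref{item:max-ideal-2} and the proof there show the Sylow $p$-part of $C$ acts trivially on $e\Z_p[\Gamma]/\frakm_e$). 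For $e \in \Idem(A)$ with associated cyclic quotient $C = \frakc(e)$, decompose $C = C_p \times C'$ into its $p$-part and prime-to-$p$ part; then $C'$ is determined by $A$ (it is the cyclic quotient of $\Gamma'$ through which the relevant character of $\Gamma'$ factors, same for all $e \in \Idem(A)$), and the only remaining freedom is $C_p$, a cyclic quotient of $\Gamma_p$. I would identify $C_p$ with the quotient of $\Gamma_p$ by the maximal subgroup acting trivially on $e\Z_p[\Gamma]$ — this is exactly the description in the statement, since the subgroup of $\Gamma_p$ acting trivially on $e\Z_p[\Gamma]$ is the kernel of $\Gamma_p \to C$. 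So the map in \eqref{eq:idem-cyclicquot} is just the composite of $\frakc|_{\Idem(A)}$ with "project the cyclic quotient to its $p$-part", and injectivity follows from injectivity of $\frakc$ together with the fact that $C'$ is fixed. Surjectivity follows because, given any cyclic quotient $C_p$ of $\Gamma_p$, the product $C_p \times C'$ is a cyclic quotient of $\Gamma = \Gamma_p \times \Gamma'$ (the orders are coprime), hence equals $\frakc(e)$ for a unique $e$, and that $e$ lies in $\Idem(A)$ because its residue field is the simple $\F_p[\Gamma]$-module on which $\Gamma_p$ acts trivially and $\Gamma'$ acts through the character defining $A$, i.e.\ $A$ itself.

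\textbf{Main obstacle.} The routine dichotomy in the first claim is easy; the genuine care goes into the bookkeeping of the second claim — specifically, verifying cleanly that the prime-to-$p$ part $C'$ of the cyclic quotient is constant across $\Idem(A)$ and equals the "$\Gamma'$-part" of $A$, and that $e\Z_p[\Gamma]/\frakm_e \simeq A$ translates exactly into this constraint. The cleanest route is probably to use \eqref{eq:PA} and Lemma~\ref{lem:e-A}: $e\Z_p[\Gamma]$ is a quotient of $P_A = \Z_p[\Gamma_p] \otimes_{\Z_p} P$ with $P/pP \simeq A$, so as a $\Z_p[\Gamma']$-module it is built from $A$'s lift $P$, pinning down the $\Gamma'$-action (hence $C'$) once and for all, while the $\Gamma_p$-action is whatever faithful cyclic action survives — giving the $C_p$ parameter. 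I expect this identification of "which quotient of $P_A$ one takes" with "which cyclic quotient of $\Gamma_p$" to be the step requiring the most attention, though it is essentially a repackaging of Lemma~\ref{lem:max-ideal}.
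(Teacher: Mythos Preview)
Your proof is correct, and for the first claim it is essentially the same as the paper's (you compute directly with the root-of-unity action on the field $e\Q_p[\Gamma]$; the paper phrases this as the decomposition $\Q_p[\Gamma]=(1-\gamma)\Q_p[\Gamma]\oplus(\sum_j\gamma^j)\Q_p[\Gamma]$ and notes the simple module lives in exactly one summand).

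For the second claim your route genuinely differs. The paper first treats the special case $\Gamma=\Gamma_p$ on its own: it uses that the Grothendieck group of $\Q_p[\Gamma_p]$-modules is generated by the $\Ind_C^{\Gamma_p}\Q_p$ to deduce that distinct $e_1,e_2\in\Idem(\F_p)$ are separated by some $\gamma$, giving injectivity, and gets surjectivity from the existence of a faithful irreducible $\Q_p[C']$-module for each cyclic quotient $C'$. Only then does it pass to general $\Gamma=\Gamma_p\times\Gamma'$ by setting up the tensor decomposition $e\Q_p[\Gamma]\simeq f\Q_p[\Gamma_p]\otimes_{\Q_p}(P\otimes\Q_p)$, yielding a bijection $\Idem(A)\leftrightarrow\{\text{primitive idempotents of }\Q_p[\Gamma_p]\}$ that reduces to the special case. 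You instead bypass the separate $p$-group analysis entirely by reusing the bijection $\frakc\colon\calE\to\{\text{cyclic quotients of }\Gamma\}$ already established in Lemma~\ref{lem:max-ideal}: you observe that the prime-to-$p$ part $C'$ of $\frakc(e)$ is pinned down by $A$ (via $e\Z_p[\Gamma]$ being a quotient of $P_A=\Z_p[\Gamma_p]\otimes P$, forcing the $\Gamma'$-action to factor through the same cyclic quotient as $P$, with faithfulness recovered from the residue field $A$), so that \eqref{eq:idem-cyclicquot} is just $\frakc|_{\Idem(A)}$ followed by projection to the $p$-part. Your approach is more economical and avoids the Grothendieck-group argument; the paper's approach has the advantage of making the tensor-factor structure $e\Q_p[\Gamma]\simeq f\Q_p[\Gamma_p]\otimes(P\otimes\Q_p)$ explicit, which is conceptually useful elsewhere.
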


	\begin{proof}
		First, note that if $\gamma$ acts trivially on a $\Q_p[\Gamma]$-module, then $\sum_{j=1}^{|\gamma|} \gamma^j$ acts as multiplication by $|\gamma|$ on this module, which gives an automorphism. So there is no nonzero module that is annihilated by both $1-\gamma$ and $\sum_{j=1}^{|\gamma|} \gamma^j$. Then because $\Q_p[\Gamma]=(1-\gamma)\Q_p[\Gamma] \oplus (\sum_{j=1}^{|\gamma|} \gamma^j)\Q_p[\Gamma]$, where the two direct summands are annihilated by $\sum_{j=1}^{|\gamma|} \gamma^j$ and $1-\gamma$ respectively, the simple module $e\Q_p[\Gamma]$ is a submodule of exactly one of these two summands, so it is annihilated by exactly one of $\sum_{j=1}^{|\gamma|} \gamma^j$ and $1-\Gamma$. Then the first claim in the lemma follows by $e\Z_p[\Gamma]\otimes_{\Z_p} \Q_p \simeq e\Q_p[\Gamma]$. 

		Consider the case when $\Gamma$ is an abelian $p$-group. The Grothendieck group of $\Q_p[\Gamma]$-modules is generated by $\Ind_C^{\Gamma}\Q_p$ where $C$ runs over all cyclic subgroups of $\Gamma$ (for example, one may show that by following the proof of \cite[Theorem~30]{Serre} with $\Q$ replaced with $\Q_p$ and using the fact that $\Gal(\Q_p(\zeta_{p^m})/\Q_p)\simeq (\Z/p^m\Z)^\times$). Since $\Gamma$ is abelian, $\Ind_C^{\Gamma} \Q_p \simeq \Q_p[\Gamma]^{C}=(\sum_{j=1}^{|\gamma|} \gamma^j)\Q_p[\Gamma]$ for a generator $\gamma$ of the cyclic subgroup $C$. So, for $e_1\neq e_2$ in $\Idem(\F_p)$, there must be an element $\gamma \in \Gamma$ acting trivially on exactly one of $e_1$ and $e_2$. So, the map \eqref{eq:idem-cyclicquot} is injective. On the other hand, if $C'$ is a cyclic quotient of $\Gamma$, then $\Q_p[C']$ contains an irreducible faithful $\Q_p[C']$-module, so $C'$ is in the image of \eqref{eq:idem-cyclicquot}, and hence \eqref{eq:idem-cyclicquot} is surjective.
		
		Consider the general case: $\Gamma=\Gamma_p\times \Gamma'$ where $\Gamma_p$ is the Sylow $p$-subgroup of $\Gamma$. For a simple $\F_p[\Gamma]$-module $A$, recall that $P_A=\Z_p[\Gamma_p] \otimes_{\Z_p} P$, where $P$ is the unique projective $\Z_p[\Gamma']$-module such that $P/pP\simeq A$, and recall that $P_A \otimes \Q_p = \oplus_{e \in \Idem(A)} e\Q_p[\Gamma]$. So there is a bijective correspondence between $\Idem(A)$ and the set of primitive idempotent of $\Q_p[\Gamma_p]$, defined by sending $e\in \Idem(A)$ to the primitive idempotent $f$ of $\Q_p[\Gamma_p]$ such that $e\Q_p[\Gamma]=f\Q_p[\Gamma_p] \otimes_{\Q_p} (P \otimes \Q_p)$. Since $\Gamma_p$ acts trivially on $P$, a subgroup of $\Gamma_p$ acts trivially on $e\Z_p[\Gamma]$ if and only if it acts trivially on $f\Q_p[\Gamma_p]$, so the bijectivity of \eqref{eq:idem-cyclicquot} follows by the special case above.
	\end{proof}
	
	\begin{proposition}\label{prop:dvr}
		The local ring $e\Z_p[\Gamma]$ is a complete discrete valuation ring.
	\end{proposition}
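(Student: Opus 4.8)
The plan is to verify the three defining properties of a complete discrete valuation ring: (i) $e\Z_p[\Gamma]$ is a local ring, (ii) its maximal ideal $\frakm_e$ is principal, generated by a non-nilpotent element, with $\bigcap_n \frakm_e^n = 0$, and (iii) it is $\frakm_e$-adically complete. Lemma~\ref{lem:e-A} already gives that $e\Z_p[\Gamma]$ is local, so the only genuine content lies in exhibiting a uniformizer and checking completeness. I would split into the two cases of Lemma~\ref{lem:max-ideal}.

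First I would dispatch the case $p \nmid |C|$. Here Lemma~\ref{lem:max-ideal}\eqref{item:max-ideal-1} says $\frakm_e = p\cdot e\Z_p[\Gamma]$, so $p$ is a uniformizer. Since $e\Z_p[\Gamma]$ is a finitely generated free $\Z_p$-module (it is a $\Z_p$-lattice in $e\Q_p[\Gamma]$ by Lemma~\ref{lem:prop_eM}), we have $\bigcap_n p^n\, e\Z_p[\Gamma] = 0$ and $e\Z_p[\Gamma]$ is $p$-adically complete (a finite free module over the complete ring $\Z_p$ is complete, and the $p$-adic and $\frakm_e$-adic topologies coincide). A Noetherian local domain whose maximal ideal is principal and nonzero is a DVR; the domain property follows because $e\Z_p[\Gamma] \subset e\Q_p[\Gamma]$ is an integral domain (indeed $e\Q_p[\Gamma]$ is a field, being the irreducible faithful $\Q_p[C]$-module for cyclic $C$, hence a cyclotomic-type field extension of $\Q_p$).

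For the case $p \mid |C|$, Lemma~\ref{lem:max-ideal}\eqref{item:max-ideal-2} gives $\frakm_e = (1-\gamma)\,e\Z_p[\Gamma]$ for $\gamma$ a lift of a generator of the Sylow $p$-subgroup of $C$, so $\pi := e(1-\gamma)$ is a candidate uniformizer. The key point to verify is that $\pi$ is not a zero-divisor, equivalently not nilpotent: this follows since $e\Q_p[\Gamma] = e\Z_p[\Gamma]\otimes_{\Z_p}\Q_p$ is a field on which $C$ acts faithfully, so $1-\gamma$ acts invertibly there, hence $\pi \ne 0$ and is a non-zero-divisor in the $\Z_p$-torsion-free ring $e\Z_p[\Gamma]$. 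Again $e\Z_p[\Gamma]$ is Noetherian (finitely generated over $\Z_p$) and local with principal nonzero maximal ideal, hence a DVR, and $\bigcap_n \frakm_e^n = 0$ by Krull's intersection theorem for Noetherian local rings. Completeness: I would argue that the $\frakm_e$-adic topology on $e\Z_p[\Gamma]$ is the same as the $p$-adic topology — one inclusion is clear since $p \in \frakm_e$, and for the other one notes that $\frakm_e^N \subseteq p\, e\Z_p[\Gamma]$ for $N$ the $\frakm_e$-adic length of $e\Z_p[\Gamma]/p\,e\Z_p[\Gamma]$ (a finite-length module over the DVR-candidate, or directly since $e\Z_p[\Gamma]/p$ is a finite local ring) — and then completeness follows from $p$-adic completeness of the finite free $\Z_p$-module $e\Z_p[\Gamma]$.

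The main obstacle, such as it is, is confirming that $e\Z_p[\Gamma]$ is an integral domain (equivalently that the uniformizer is a non-zero-divisor): this rests on the fact that $e\Q_p[\Gamma]$ is a field, which in turn uses that $e$ is a \emph{primitive} idempotent and $\Gamma$ is abelian, so that $e\Q_p[\Gamma]$ is a simple commutative $\Q_p$-algebra, hence a field. Once that is in hand, the DVR conclusion is the standard characterization "Noetherian local domain with principal nonzero maximal ideal," and completeness is inherited from $\Z_p$ via the finite-free module structure together with the comparison of the $p$-adic and $\frakm_e$-adic topologies.
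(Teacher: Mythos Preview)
Your proposal is correct and follows essentially the same route as the paper: integral domain because $e\Z_p[\Gamma]$ sits inside the field $e\Q_p[\Gamma]$, DVR by the ``Noetherian local domain with principal maximal ideal'' criterion via Lemma~\ref{lem:max-ideal}, and completeness inherited from $\Z_p$ by comparing the $p$-adic and $\frakm_e$-adic topologies. The only difference is cosmetic: you split into the two cases of Lemma~\ref{lem:max-ideal}, whereas the paper treats them uniformly (the lemma already guarantees $\frakm_e$ is principal in both cases, so no case distinction is needed), and the paper cites a Stacks Project lemma for the topology comparison rather than writing it out.
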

	
	\begin{proof}
		Since $e\Q_p[\Gamma]$ has no nonzerodivisor and $e\Z_p[\Gamma] \subset e\Q_p[\Gamma]$, $e\Z_p[\Gamma]$ is an integral domain. Then by Lemma~\ref{lem:max-ideal}, $e\Z_p[\Gamma]$ is a Noetherian local domain whose maximal ideal is principal, so it is a discrete valuation domain. By definition of $e\Z_p[\Gamma]$, one see that it is completed with respect to the ideal $p(e\Z_p[\Gamma])$, so by \cite[\href{https://stacks.math.columbia.edu/tag/0319}{Lemma 0319}]{stacks-project} it is complete with respect to its maximal ideal. Therefore, $e\Z_p[\Gamma]$ is a complete discrete valuation ring.
	\end{proof}

\subsection{Structure of $e\Z_p[\Gamma]$-modules and decomposition of $\Z_p[\Gamma]$-modules as $e\Z_p[\Gamma]$-modules}
\hfill

	Because $e\Z_p[\Gamma]$ is a discrete valuation ring, the $e\Z_p[\Gamma]$-modules can be classified using the lemma below.
		
	\begin{lemma}\label{lem:DVR-module}
		\begin{enumerate}
			\item\label{item:DVR-1}
				Every finitely generated $e\Z_p[\Gamma]$-module is isomorphic to a finite direct sum of modules of the form $e\Z_p[\Gamma]/\frakm_e^k$ for positive integers $k$.
			\item\label{item:DVR-2}
				For any nonzero ideal $I$ of $e\Z_p[\Gamma]$ and any finite $e\Z_p[\Gamma]$-module $H$, $H[I]$ is isomorphic to $H_{/I}$ as $e\Z_p[\Gamma]$-module. 
			\item \label{item:DVR-3}
				For any positive integer $n$ and any $e\Z_p[\Gamma]$-submodule $H$ of $e\Z_p[\Gamma]^{\oplus n}$ of finite index, we have $H \simeq e\Z_p[\Gamma]^{\oplus n}$.
		\end{enumerate}
	\end{lemma}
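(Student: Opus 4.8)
The plan is to deduce all three parts from Proposition~\ref{prop:dvr}: write $R := e\Z_p[\Gamma]$, which is a (complete) discrete valuation ring, hence in particular a principal ideal domain whose only nonzero prime ideal is $\frakm_e = (\pi)$ for a uniformizer $\pi$, and every nonzero ideal of $R$ equals $\frakm_e^k = (\pi^k)$ for a unique $k \geq 1$. With this in hand, \eqref{item:DVR-1} is immediate from the structure theorem for finitely generated modules over a PID, which writes such a module as a direct sum of a free part and cyclic torsion modules $R/\frakm_e^k$; in the applications of this lemma the relevant module is finite, so no free summand occurs and one lands in exactly the stated form.

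For \eqref{item:DVR-2}, since $I \neq 0$ we have $I = \frakm_e^d$ with $d \geq 1$, and by \eqref{item:DVR-1} we may reduce (both $H \mapsto H[I]$ and $H \mapsto H_{/I}$ commute with finite direct sums) to the case $H = R/\frakm_e^k$ with $k$ finite. A one-line computation with $\pi$ then gives $H[\frakm_e^d] = \pi^{\max(k-d,0)}R/\pi^k R \simeq R/\frakm_e^{\min(d,k)}$ and $H_{/\frakm_e^d} = R/(\frakm_e^d + \frakm_e^k) = R/\frakm_e^{\min(d,k)}$, and summing over the cyclic summands of $H$ proves $H[I] \simeq H_{/I}$.

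For \eqref{item:DVR-3}, a submodule of the free $R$-module $R^{\oplus n}$ over the PID $R$ is free of some rank $r \leq n$; tensoring the inclusion $H \hookrightarrow R^{\oplus n}$ with $\Frac(R)$ and using that $R^{\oplus n}/H$ is finite (hence torsion, so it dies after $-\otimes_R \Frac(R)$) forces $r = n$, whence $H \simeq R^{\oplus n}$. Alternatively one can invoke Smith normal form over the PID $R$: there is a basis $x_1,\dots,x_n$ of $R^{\oplus n}$ and nonzero $d_1,\dots,d_n \in R$ with $d_1 x_1,\dots,d_n x_n$ a basis of $H$, so $H = \bigoplus_i R d_i x_i \simeq R^{\oplus n}$ since $R$ is a domain.

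I do not expect a genuine obstacle here: once Proposition~\ref{prop:dvr} has established that $e\Z_p[\Gamma]$ is a discrete valuation ring, all three statements are standard facts about modules over a PID, the only actual computation being the elementary identification of $H[\frakm_e^d]$ with $H_{/\frakm_e^d}$ for cyclic $H$ used in \eqref{item:DVR-2}.
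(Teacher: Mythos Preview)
Your proposal is correct and follows essentially the same approach as the paper: both deduce everything from Proposition~\ref{prop:dvr}, with \eqref{item:DVR-1} and \eqref{item:DVR-2} coming straight from the structure theory of modules over a DVR. For \eqref{item:DVR-3} the paper takes a slightly different route---it uses \eqref{item:DVR-2} to show $H_{/\frakm_e}$ has rank $n$ (hence $H$ is $n$-generated by Nakayama), then concludes via $\Z_p$-freeness---whereas you invoke the PID fact that submodules of free modules are free and compare ranks over $\Frac(R)$; both arguments are standard and equally short.
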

	
	\begin{proof}
		The statements~\eqref{item:DVR-1} follows by Proposition~\ref{prop:dvr} and the classification of finite modules over discrete valuation rings; and \eqref{item:DVR-1} implies \eqref{item:DVR-2}.
		
		Let $H$ be a submodule of $e\Z_p[\Gamma]^{\oplus n}$ of finite index. There exists a positive integer $m$ such that $(\frakm_e^{m-1})^{\oplus n} \subset H$. Then $M:=H/(\frakm_e^m)^{\oplus n}$ is a finite module. By \eqref{item:DVR-2}, $H_{/\frakm_e} = M_{/\frakm_e} \simeq M[\frakm_e]\simeq \frakm_e^{\oplus n}$, so $H$ is a $n$-generated module. Since $H$ has finite index in $e\Z_p[\Gamma]^{\oplus}$, $H$ is a free $\Z_p$-module whose rank is the same as the $\Z_p$-rank of $e\Z_p[\Gamma]^{\oplus n}$, so $H\simeq e\Z_p[\Gamma]^{\oplus n}$.
	\end{proof}

	\begin{definition}\label{def:rank}
		Define the following notation of ranks of $\Z_p[\Gamma]$-modules.
		\begin{itemize}
			\item For a simple $\F_p[\Gamma]$-module $A$ and a finitely generated $\Z_p[\Gamma]$-module $H$, the \emph{$A$-rank of $H$}, denoted by $\rk_A H$, is the maximal interger $r$ such that $A^{\oplus r}$ is a quotient of $H$. 
			\item For a nonzero proper ideal $I$ of $e\Z_p[\Gamma]$, let $d$ be the integer such that $I=\frakm_e^d$, and let $A:= e\Z_p[\Gamma]/\frakm_e$.
			Then, for a finitely generated $e\Z_p[\Gamma]$-module $H$, the \emph{$I$-rank of $H$}, denoted by $\rk_I H$, is defined to be $\rk_A (\frakm_e^{d-1}H)$.
		\end{itemize}
	\end{definition}
	\begin{remark}
		Throughout this paper, for an elementary abelian $p$-group $M$, whether it is a $\F_p[\Gamma]$-module or not, we let $\rk_{\F_p}M$ denote the rank of $M$ as an $\F_p$-module. When we want to refer to the $A$-rank of a $\F_p[\Gamma]$-module $M$ for $A=\F_p$, we will always write ``$\rk_A M$ for $A=\F_p$''.
	\end{remark}
	For a finitely generated $\Z_p[\Gamma]$-module $H$ and a simple $\F_p[\Gamma]$-module $A$,  
	\begin{equation}\label{eq:e-rk}
		\rk_A H=\frac{\dim_{\F_p} \Hom_{\Z_p[\Gamma]}(H, A)}{\dim_{\F_p} \End_{\Z_p[\Gamma]}(A)}.
	\end{equation}
	For an $e\Z_p[\Gamma]$-module $H$, there is a filtration 
	\[
		H \supset \frakm_e H \supset \frakm_e^2 H \supset \ldots.
	\]
	From the above definition, for each positive integer $i$,
	\[
		\faktor{\frakm_e^{i-1} H}{\frakm_e^i H} \simeq \left(\faktor{e\Z_p[\Gamma]}{\frakm_e}\right)^{\oplus \rk_{\frakm_e^i} H}.
	\]
	Therefore, if $I \subsetneq J$ are two ideals of $e\Z_p[\Gamma]$, then $\rk_I H \geq \rk_J H$ for any $e\Z_p[\Gamma]$-module $H$. Moreover, the isomorphism class of $H$ is uniquely determined by its $I$-ranks for all ideals $I$.

	\begin{notation}\label{not:idem}
		For any $\Z_p[\Gamma]$-module $M$ and $e \in \calE$, let 
		\[
			\rho_{M, e}: M \longrightarrow e M
		\]
		denote the quotient map obtained by taking tensor product of $M$ with $\Z_p[\Gamma] \twoheadrightarrow e \Z_p[\Gamma]$, and denote
		\[
			 \rho_{M} = \bigoplus_{e \in \calE} \rho_{M, e} : M \longrightarrow \bigoplus_{e \in \calE} e M.
		\]
	\end{notation}

	When $p\nmid |\Gamma|$, the map $\rho_M$ is always an isomorphism because $\Z_p[\Gamma] \simeq \bigoplus_{e \in \calE} e\Z_p[\Gamma]$ by \cite[Proposition~43]{Serre}.
	When $p \mid |\Gamma|$, $P_A \to \bigoplus_{e \in \Idem(A)} e\Z_p[\Gamma]$ is not an isomorphism because $P_A$ is indecomposible but $\bigoplus_{e \in \Idem(A)} e\Z_p[\Gamma]$ is not. The map $\rho_M$ is not necessarily surjective or injective: for example, assume $\Gamma=\Z/3\Z$ is generated by an element $\gamma$. Consider the module $M$ such that: $M$ is isomorphic to $\Z/9\Z$ as a group and $\gamma(x)=x^4$ for every $x \in M$. There are two primitive idempotents $e_0=(\sum_{i=1}^9 \gamma^i)/9$ and $e_1=1-e_0$. One can check that $e_0 M \simeq e_1 M = \F_3$, so $\rho_M$ is neither surjective or injective.

	We end this section with the following lemma about simple $\F_p[\Gamma]$-modules for abelian $\Gamma$.

	\begin{lemma}\label{lem:End=A}
		For a simple $\F_p[\Gamma]$-module $A$,
		\[
			\dim_{\F_p}A=\dim_{\F_p}\End_{\Gamma}(A).
		\]
	\end{lemma}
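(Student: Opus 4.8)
The plan is to reduce to the case where $\Gamma$ is a finite cyclic group, since a simple $\F_p[\Gamma]$-module factors through a cyclic quotient. Indeed, since $\Gamma$ is abelian and $A$ is simple, $A$ is one-dimensional over its endomorphism field, but that is precisely the statement we want to prove; so instead I would argue as follows. First, because $\F_p[\Gamma]$ acts on $A$ through its image, replacing $\Gamma$ by a suitable quotient $C$ we may assume $\Gamma = C$ acts faithfully on $A$, and then $\F_p[C] \to \End_{\F_p}(A)$ realizes $A$ as a faithful module over the commutative ring $\F_p[C]$. Since $A$ is simple, the annihilator of $A$ in $\F_p[C]$ is a maximal ideal $\frakm$, but by faithfulness $\frakm = 0$, so $\F_p[C]$ is itself a field; call it $L$. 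Then $A$ is a vector space over $L = \F_p[C]$, and being simple it is one-dimensional, i.e.\ $A \simeq L$ as an $\F_p[C]$-module.

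Next I would identify $\End_{\Gamma}(A)$. Any $\F_p[C]$-linear endomorphism of $A \simeq L$ is multiplication by an element of $L$ (since $L$ is a field acting on the one-dimensional space $L$ by its own multiplication), so $\End_{\Gamma}(A) = \End_{\F_p[C]}(L) \simeq L$. Therefore
\[
\dim_{\F_p} \End_{\Gamma}(A) = \dim_{\F_p} L = \dim_{\F_p} A,
\]
which is exactly the claimed identity. The key input is just that $\F_p[C]$ modulo the annihilator of a faithful simple module is a field, together with the fact that a simple module over a field is one-dimensional; no structure theory beyond commutative algebra is needed.

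The main obstacle — really the only point requiring care — is justifying that the annihilator of $A$ in $\F_p[\Gamma]$ (equivalently in $\F_p[C]$) is a maximal ideal and hence, after passing to the faithful quotient $C$, is zero. This follows because $A$ simple implies $\F_p[\Gamma]/\mathrm{Ann}(A)$ embeds in $\End_{\F_p}(A)$ as a commutative subring acting simply and faithfully on $A$; a commutative ring with a faithful simple module is a field (its nonzero elements act as automorphisms, hence are units). Once this is in hand the rest is formal. As a cross-check, this is consistent with the explicit description in Lemma~\ref{lem:max-ideal} and Lemma~\ref{lem:gamma-ann}: the simple $\F_p[\Gamma]$-module attached to $e$ is $e\Z_p[\Gamma]/\frakm_e$, whose $\F_p$-dimension equals the residue degree of the DVR $e\Z_p[\Gamma]$, which is also the $\F_p$-dimension of its endomorphism ring.
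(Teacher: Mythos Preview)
Your overall strategy is sound and, once fixed, gives a more elementary proof than the paper's, but there is a genuine gap in the step ``by faithfulness $\frakm = 0$, so $\F_p[C]$ is itself a field.'' A faithful \emph{group} action of $C$ on $A$ does not force a faithful \emph{ring} action of $\F_p[C]$. For instance, take $p=2$, $C=\Z/3\Z$ with generator $\gamma$, and $A=\F_4$ with $\gamma$ acting by a primitive cube root of unity: then $C$ acts faithfully, yet $1+\gamma+\gamma^2 \in \F_2[C]$ annihilates $A$, so $\frakm \neq 0$ and $\F_2[C] \simeq \F_2 \times \F_4$ is not a field. Your last paragraph correctly argues that $\F_p[\Gamma]/\mathrm{Ann}(A)$ is a field, but this does not imply $\mathrm{Ann}_{\F_p[C]}(A)=0$; you have conflated the group-theoretic reduction (to $C$) with the ring-theoretic one (to $\F_p[\Gamma]/\mathrm{Ann}(A)$).

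The fix is simple: drop the passage to a cyclic quotient entirely and set $L := \F_p[\Gamma]/\mathrm{Ann}(A)$ from the start. As you note, $L$ is a field because $\F_p[\Gamma]$ is commutative and $A$ is simple; then $A$ is a one-dimensional $L$-vector space, and $\End_{\Gamma}(A)=\End_L(A)\simeq L$, giving $\dim_{\F_p} A = \dim_{\F_p} L = \dim_{\F_p} \End_{\Gamma}(A)$. This corrected argument is genuinely different from the paper's, which instead invokes \cite[Remark~5.2]{Liu-Wood} to identify $\dim_{\F_p}A / \dim_{\F_p}\End_{\Gamma}(A)$ as the largest $m$ with $A^{\oplus m}$ cyclic over $\Z_p[\Gamma]$, and then reads off $m=1$ from the decomposition \eqref{eq:PA}. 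Your route is self-contained commutative algebra and avoids the external reference; the paper's route ties the statement to the structure of $\Z_p[\Gamma]$ already developed in the section.
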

	\begin{proof}
		By \cite[Remark~5.2]{Liu-Wood}, $\frac{\dim_{\F_p} A}{\dim_{\F_p} \End_{\Gamma}(A)}$ is the maximal number $m$ such that $A^{\oplus m}$ can be generated by one element as a $\Z_p[\Gamma]$-module, i.e., it is the maximal number $m$ such that $A^{\oplus m}$ is a quotient module of $\Z_p[\Gamma]$. By the decomposition \eqref{eq:PA}, we have $m=1$.
	\end{proof}

\section{Main results and outline of the paper}\label{sec:MainResults}
	
	In this section, we list definitions and notations that will be used throughout the paper, and list the main theorems in the most general form.
	
	Let $\Gamma$ be a finite abelian group and $p$ a prime. 
	Let $Q$ be a global field whose characteristics does not divide $p|\Gamma|$. For a finite group $G$, a \emph{$G$-extension of $Q$} is a surjective homomorphism $G_Q \to G$, and equivalently, is a pair $(K, \iota)$ where $K/Q$ is a Galois extension and $\iota$ is an isomorphism $\Gal(K/Q) \to G$. We will omit $\iota$ from the notation when the isomorphism is not explicitly used. Two $G$-extensions $(K_1, \iota_1)$ and $(K_2, \iota_2)$ of $Q$  are isomorphic if there exists an isomorphism $\phi:K_1 \to K_2$ fixing $Q$ such that the induced isomorphism $\phi_*:\Gal(K_1/Q) \to \Gal(K_2/Q)$ satisfies $\iota_1=\iota_2 \circ \phi_*$. 
	For a set $S$ of primes of $Q$ and an extension $K/Q$, let $S(K)$ denote the set of all primes of $K$ lying above primes in $S$.
	When $K$ is a number field, let $S_p(K)$ denote the set of all primes of $K$ that lies above the prime $(p)$ of $\Q$.
	Throughout this paper, we always let $S$ and $T$ denote two finite sets of primes of $Q$.  
	Let $Q_S^T$ denote the maximal extension of $Q$ that is unramified away from $S$ and split completely at primes in $T$, and for an extension $K$ of $Q$, let $K_S^T:=K_{S(K)}^{T(K)}$. Then denote 
	\[
		G_S^T(Q):=\Gal(Q_S^T/Q) \quad \text{and}  \quad G_S^T(K):=\Gal(K_S^T/K).
	\]
	For a $\Gamma$-extension $K/Q$, we define
	\begin{eqnarray*}
		E_S^T(K) &:=& \text{the maximal abelian $p$-extension of $K$ that is contained in $K_S^T$}, \\
		C_S^T(K) &:=& \Gal(E_S^T(K)/K)=G_S^T(K)^{\ab}(p).
	\end{eqnarray*}
	Then the short exact sequence
	\[
		1 \longrightarrow C_S^T(K) \longrightarrow \Gal(E_S^T(K)/Q) \longrightarrow \Gal(K/Q) \longrightarrow 1
	\]
	defines a natural $\Gamma$-action on $C_S^T(K)$ via the conjugation of $\Gal(E_S^T(K)/Q)$, which defines a $\Z_p[\Gamma]$-module structure on $C_S^T(K)$. 
		Let $\infty$ denote the set of primes of $Q$ that lie above the unique archimedean prime of $\Q$ when $Q$ is a number field and lie above the unique infinite place of $\F_q(t)$ when $Q$ is an extension of $\F_q(t)$. Denote
		\[
			\Cl(K):=C_{\O}^{\{\infty\}}(K) \quad \text{and} \quad \Cl_T(K):=C_{\O}^T(K) .
		\]
	The $S$-unit group is $\calO_{K,S} = \{x \in K \mid v_{\frakp}(x) \geq 0 \text{ for all } \frakp \not\in S(K)\}$, and denote $\calO_{K}=\calO_{K,\O}$. Then $\calO_K:=\calO_{K,\O}$ is the ring of integers when $K$ is a number field, and is the finite field of constant when $K$ is a function field.

	Let $e$ be a primitive idempotent of $\Q_p[\Gamma]$. Retain the notation from Section~\ref{sec:module}. We let 
	\[
		e C_S^T(K):= e\Z_p[\Gamma] \otimes_{\Z_p[\Gamma]} C_S^T(K);
	\]
	in particular, $e C_S^T(K)$ is a quotient $\Z_p[\Gamma]$-module of $C_S^T(K)$. We let $e E_S^T(K)$ denote the subfield of $E_S^T(K)$ fixed by $\ker(C_S^T(K) \to eC_S^T(K))$, so $\Gal(eE_S^T(K)/ K)$ is $eC_S^T(K)$. Note that $e E_S^T(K)$ is Galois over $Q$. 
	We define 
	\[
		\rho_S^T(K, e) : C_S^T(K) \longrightarrow eC_S^T(K) \quad \text{and} \quad \rho_S^T(K) = \bigoplus_{e \in \calE} \rho_S^T(K, e):  C_S^T(K) \longrightarrow \bigoplus_{e \in \calE} e C_S^T(K)
	\]
	to be the maps $\rho_{M,e}$ and $\rho_M$ in Notation~\ref{not:idem} for the module $M=C_S^T(K)$.

	Recall that $e\Z_p[\Gamma]$ is a discrete valuation ring with the maximal ideal $\frakm_e$.

	\begin{definition}\label{def:thresfold-ideal}
		For each idempotent $e\in \calE$, define an ideal $I_e$ of $e\Z_p[\Gamma]$ as 
		\[
			I_e:= \bigcap_{ 1\neq \gamma \in \Gamma} \rho_{\Z_p[\Gamma], e}\bigg( \big(1-\gamma\, ,\,  \,\sum_{j=1}^{|\gamma|} \gamma^j \big)\bigg),
		\]
		where $( 1-\gamma, \sum_{j=1}^{|\gamma|}\gamma^j )$ is the ideal of $\Z_p[\Gamma]$ generated by $1-\gamma$ and $\sum_{j=1}^{|\gamma|}\gamma^j$.
	\end{definition}
	
	\begin{lemma}\label{lem:Ie-proper}
		If $\gamma\in \Gamma$ is a nontrivial element such that $\rho_{\Z_p[\Gamma], e}((1-\gamma, \sum_{j=1}^{|\gamma|} \gamma^j))$ is a proper ideal of $e\Z_p[\Gamma]$, then $p \mid |\gamma|$. In particular, the ideal $I_e$ is proper if and only if $p\mid |\Gamma|$. 
	\end{lemma}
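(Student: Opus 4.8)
The plan is to analyze when the ideal $\rho_{\Z_p[\Gamma], e}\bigl((1-\gamma, \sum_{j=1}^{|\gamma|}\gamma^j)\bigr)$ of $e\Z_p[\Gamma]$ is proper, for a fixed nontrivial $\gamma \in \Gamma$. By Lemma~\ref{lem:gamma-ann}, exactly one of $1-\gamma$ and $\sum_{j=1}^{|\gamma|}\gamma^j$ annihilates $e\Z_p[\Gamma]$; call the one that annihilates it the ``null'' generator and the other the ``active'' generator. Then $\rho_{\Z_p[\Gamma],e}\bigl((1-\gamma, \sum_{j=1}^{|\gamma|}\gamma^j)\bigr)$ is the principal ideal of $e\Z_p[\Gamma]$ generated by the image of the active generator. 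So the ideal is proper if and only if the active generator maps into $\frakm_e$, the maximal ideal. First I would dispose of the case $p \nmid |\gamma|$: if $1-\gamma$ annihilates $e\Z_p[\Gamma]$, then the active generator is $\sum_{j=1}^{|\gamma|}\gamma^j$, which acts as multiplication by $|\gamma|$ on $e\Z_p[\Gamma]$ (since $\gamma$ acts trivially); as $p \nmid |\gamma|$, this is a unit, so the ideal is all of $e\Z_p[\Gamma]$. If instead $\sum_{j=1}^{|\gamma|}\gamma^j$ annihilates $e\Z_p[\Gamma]$, the active generator is $1-\gamma$, and I would argue that $\gamma$ acts on the residue field $A = e\Z_p[\Gamma]/\frakm_e$ through a group of order dividing $|\gamma|$ which is prime to $p$; since $\Gamma$ acts on $A$ via its quotient $\Gamma_p$ by Lemma~\ref{lem:max-ideal}\eqref{item:max-ideal-2} (the prime-to-$p$ part acts trivially on the residue field, as it does on $P_A$ modulo its radical), $\gamma$ acts trivially on $A$, hence $1-\gamma \in \frakm_e$ — wait, this would make the ideal proper. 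Let me reconsider: actually the cleaner route is to note that $e\Z_p[\Gamma]/(1-\gamma)e\Z_p[\Gamma]$ is nonzero precisely when $\gamma \ne 1$ in the faithful cyclic quotient $C$ attached to $e$ by Lemma~\ref{lem:max-ideal}, and when $p \nmid |\gamma|$ the element $1-\gamma$ is in fact a unit of $e\Z_p[\Gamma]$ because the cyclotomic-type factor $1 - \zeta$ (with $\zeta$ a primitive $|\gamma|$-th root of unity, $|\gamma|$ prime to $p$) is a unit in $\Z_p[\zeta]$; this is the key local computation.

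The main structural claim is thus: for nontrivial $\gamma$, the image ideal is proper $\iff$ $p \mid |\gamma|$. Given the case analysis above, the forward direction ``proper $\Rightarrow p \mid |\gamma|$'' is the contrapositive of ``$p \nmid |\gamma| \Rightarrow$ unit ideal'', which I would establish by the cyclotomic unit argument: either the active generator is $|\gamma|$ (a $p$-adic unit) or it is $1-\gamma$ where $\gamma$ has prime-to-$p$ order acting on the DVR $e\Z_p[\Gamma]$ — and since the minimal polynomial of the action of $\gamma$ divides $X^{|\gamma|}-1$, which is separable mod $p$, $e\Z_p[\Gamma]$ is (a quotient corresponding to) $\Z_p[\zeta_d]$ for some $d \mid |\gamma|$ with $d \ne 1$ in the relevant case, and $1 - \zeta_d$ is a unit of $\Z_p[\zeta_d]$ when $p \nmid d$. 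For the reverse direction, if $p \mid |\gamma|$ and (say) $1-\gamma$ is the active generator, then $\gamma$ has order divisible by $p$ in $C$, so its image in $C$ has nontrivial $p$-part, and by Lemma~\ref{lem:max-ideal}\eqref{item:max-ideal-2} a generator of the $p$-part of $C$ gives $\frakm_e$, forcing $1-\gamma \in \frakm_e$ (since $\gamma^{p\text{-part}}$ acts trivially on the residue field); alternatively if $\sum \gamma^j$ is active, then $\gamma$ acts trivially on $e\Z_p[\Gamma]$ yet $p \mid |\gamma|$ means $\sum_{j=1}^{|\gamma|}\gamma^j$ acts as $|\gamma| \in p\Z_p \subseteq \frakm_e$. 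Either way the image is in $\frakm_e$, hence proper.

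Finally, for the ``in particular'' statement about $I_e = \bigcap_{1 \ne \gamma} \rho_{\Z_p[\Gamma],e}\bigl((1-\gamma,\sum\gamma^j)\bigr)$: if $p \nmid |\Gamma|$, then every nontrivial $\gamma$ has $p \nmid |\gamma|$, so every term in the intersection is the unit ideal $e\Z_p[\Gamma]$, whence $I_e = e\Z_p[\Gamma]$ is improper. If $p \mid |\Gamma|$, pick $\gamma$ of order $p$ (which exists in a finite abelian group of order divisible by $p$); the corresponding term is proper by the above, and since $e\Z_p[\Gamma]$ is a DVR its ideals are totally ordered, so the intersection (being contained in at least one proper ideal) is proper. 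I expect the main obstacle to be pinning down precisely the identification of $e\Z_p[\Gamma]$ with the correct cyclotomic ring $\Z_p[\zeta_d]$ and the bookkeeping of which of $1-\gamma$ versus $\sum\gamma^j$ is the ``active'' generator relative to the faithful cyclic quotient $C$ from Lemma~\ref{lem:max-ideal} and the cyclic quotient of $\Gamma_p$ from Lemma~\ref{lem:gamma-ann} — but all of this is available from the already-proven lemmas, so the argument should be short once the dictionary is set up.
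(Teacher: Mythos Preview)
Your argument for the forward implication (proper $\Rightarrow p\mid|\gamma|$) via cyclotomic units is correct, and your treatment of the ``in particular'' clause is fine once restricted to $\gamma$ of order exactly $p$. However, the full biconditional you assert --- ``for nontrivial $\gamma$, the image ideal is proper $\iff p\mid|\gamma|$'' --- is false. For a counterexample take $p$ odd, $\Gamma=\Z/2\Z\times\Z/p\Z$, let $e$ be the idempotent whose associated cyclic quotient (Lemma~\ref{lem:max-ideal}) is $C=\Z/2\Z$, and let $\gamma$ be a generator of $\Gamma$. Then $|\gamma|=2p$, yet $\gamma$ acts as $-1$ on $e\Z_p[\Gamma]\cong\Z_p$, so $e(1-\gamma)=2$ is a unit and the image ideal is all of $e\Z_p[\Gamma]$. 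The gap in your reverse-direction sketch is the step ``$\gamma$ has order divisible by $p$ in $C$'': this does not follow from $p\mid|\gamma|$ in $\Gamma$. Fortunately the lemma only claims the forward direction, and for the ``in particular'' you invoke the converse only for $\gamma$ of order exactly $p$, where (by inspection of your two subcases) the argument does go through.

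The paper's route is much shorter and avoids the null/active case split, Lemma~\ref{lem:gamma-ann}, and any cyclotomic-unit computation. The image ideal is proper if and only if both generators lie in $\frakm_e$, i.e.\ both $1-\gamma$ and $\sum_{j=1}^{|\gamma|}\gamma^j$ annihilate the residue field $A=e\Z_p[\Gamma]/\frakm_e$. The first condition says $\gamma$ acts trivially on $A$; but then the second acts on $A$ as multiplication by $|\gamma|$, which annihilates the nonzero $\F_p$-module $A$ only if $p\mid|\gamma|$. For the converse in the ``in particular'', the paper takes any nontrivial $\gamma\in\Gamma_p$ and uses that $\Gamma_p$ acts trivially on any simple $\F_p[\Gamma]$-module, so the same two-line check shows both generators annihilate $A$, whence $I_e\subseteq\frakm_e$. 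Working modulo $\frakm_e$ rather than in $e\Z_p[\Gamma]$ itself is the simplification you were looking for.
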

	
	\begin{proof}
		Let $A:=e\Z_p[\Gamma]/\frakm_e$, and let $\gamma$ be as described in the lemma. Then both $1-\gamma$ and $\sum_{j=1}^{|\gamma|} \gamma^j$ annihilate $A$. So $\gamma$ acts trivially on $A$, and then $\sum_{j=1}^{|\gamma|} \gamma^j(x)=|\gamma|x$ for any $x \in A$, which implies that $|\gamma|$ must be divisible by $p$. 
		
		By Definition~\ref{def:thresfold-ideal}, there exists $\gamma \in \Gamma$ such that $I_e= \rho_{\Z_p[\Gamma], e}((1-\gamma, \sum_{j=1}^{|\gamma|} \gamma^j))$. So if $I_e$ is proper then $p \mid |\Gamma|$. On the other hand, 
		if $p \mid |\Gamma|$, then $\Gamma_p$ acts trivially on $A$. Then for a nontrivial element $\gamma \in \Gamma_p$, both $1-\gamma$ and $\sum_{j=1}^{|\gamma|} \gamma^j$ annihilate $A$, so $I_e \subseteq \frakm_e$. 
	\end{proof}

	\begin{definition}\label{def:dagger}
		Let $(K,\iota)$ be a $\Gamma$-extension of $Q$ and $e \in \calE$. Given an ideal $I$ of $e\Z_p[\Gamma]$, we let $\scrR_I(K/Q)$ denote the set of primes of $Q$ satisfying the following conditions.
		\begin{enumerate}
			\item \label{item:dagger-RA} $\frakp \not\in S_p(Q)$.
			
			\item \label{item:dagger-1}  As a subgroup of $\Gamma$, the inertia subgroup $\iota(\calT_{\frakp}(K/Q))$ can be generated by a nontrivial element $\gamma \in \Gamma$, such that the image of the ideal 
			\[
				\left(1-\gamma, \, \sum_{j=1}^{|\gamma|} \gamma^j \right) \subset \Z_p[\Gamma]
			\]
			is contained in $I$ under the quotient map $\rho_{e,\Z_p[\Gamma]}: \Z_p[\Gamma] \to e\Z_p[\Gamma]$.
			
			\item \label{item:dagger-2} As a subgroup of $\Gamma$, the decomposition subgroup $\iota(\calG_{\frakp}(K/Q))$ acts trivially on $e\Z_p[\Gamma]/I$.
		\end{enumerate}
		Note that, by definition of $I_e$ in Definition~\ref{def:thresfold-ideal}, if $I \subset I_e$, then $\scrR_I(K/Q)$ is empty.
	\end{definition}
	
	\begin{remark}
	\begin{itemize}
		\item Because $\Gamma$ is assumed to be abelian, the inertia (resp. decomposition) subgroup of $\Gal(K/Q)$ at $\frakp$ does not depend on the choice of primes of $K$ lying above $\frakp$.
		\item Because $e\Z_p[\Gamma]$ is a discrete valuation ring, by definition of $I_e$, there exist elements $\gamma\in \Gamma$ such that the image of the ideal $(1-\gamma, \sum_{j=1}^{|\gamma|} \gamma^j)$ is $I_e$. For such an element $\gamma$, $1-\gamma$ annihilates $e\Z_p[\Gamma]/I_e$, so the subgroup $\langle \gamma \rangle$ of $\Gamma$ acts trivially on $e\Z_p[\Gamma]/I_e$.
	\end{itemize}
	\end{remark}

	\begin{theorem}\label{thm:lb-rank}
		Let $e\in \calE$ and $I$ be a proper ideal of $e\Z_p[\Gamma]$ such that $I_e \subseteq I$ (so $p\mid |\Gamma|$ by Lemma~\ref{lem:Ie-proper}). For any $\Gamma$-extension $K$ of $Q$, there is a lower bound of the $I$-rank of $eC_S^T(K)$:	
		\begin{equation}\label{eq:lb-rank}
			\rk_{I} eC_S^T(K) \geq \#\scrR_I(K/Q)- c,
		\end{equation}
		where $c$ is a constant depending on $Q$, $S$, $T$, $\Gamma$ and $e$, but not on the field $K$.
	\end{theorem}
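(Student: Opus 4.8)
The plan is to realize $eC_S^T(K)$ as a quotient of an explicit ``tame'' presentation of a Galois group with restricted ramification, and then to read off the lower bound on $\rk_I$ by observing that almost all the defining relations land in $\frakm_e$ times the free part of the presentation. First I would pass from $C_S^T(K)$ to $eC_S^T(K)$ using the ring surjection $\Z_p[\Gamma]\to e\Z_p[\Gamma]$ and the fact (Definition~\ref{def:eM}, Notation~\ref{not:idem}) that $eC_S^T(K)$ is the $\Gamma$-equivariant quotient of $C_S^T(K)$ by $(1-e)\Q_p[\Gamma]\cap\Z_p[\Gamma]$; equivalently $eE_S^T(K)/K$ is the maximal subextension on which $\Gamma$ acts through the module $e\Z_p[\Gamma]$. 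I would enlarge $S$ to $S'$, adding the finitely many primes in $S_p(Q)$ and the finitely many ``bad'' primes where the cohomological obstruction $\B$ (studied in Section~\ref{sect:B}) to the local-global principle for the relevant embedding problems is nonzero — this enlargement is controlled by a constant depending only on $Q,S,T,\Gamma,e$ and is exactly where the constant $c$ comes from. For the primes in $\scrR_I(K/Q)$ themselves, conditions~\eqref{item:dagger-1} and \eqref{item:dagger-2} of Definition~\ref{def:dagger} guarantee that the local contribution at such a prime is of the prescribed tame shape.

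Next I would invoke the local-global principle for embedding problems with restricted ramification (Lemma~\ref{lem:EP}, to appear) together with the relaxation lemma (Lemma~\ref{lem:frakS}) to obtain a presentation
\[
  1 \longrightarrow R \longrightarrow F \longrightarrow G_{S'}^{T}(eE_S^T(K)/K\text{-type}) \longrightarrow 1
\]
in which $F$ is an extension of $\Gamma$ by a free $e\Z_p[\Gamma]$-module of rank $r$ (the generator rank estimated in Section~\ref{sect:A-rk-bounds}), $r$ exceeds $\#\scrR_I(K/Q)$ up to a bounded error, and the relations $R$ are — apart from a bounded number coming from the enlargement $S'\setminus S$ and from $T$ and from global units — precisely the tame local relations $x_{\frakp}^{-1}y_{\frakp}^{-1}x_{\frakp}y_{\frakp}$ (or rather their analogues) attached to the primes $\frakp\in\scrR_I(K/Q)$, where $x_{\frakp}$ generates the inertia and, by Definition~\ref{def:dagger}\eqref{item:dagger-1}, lies outside the free $e\Z_p[\Gamma]$-part, so that the corresponding relator lies in $\frakm_e$ times the free module. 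Abelianizing and tensoring to $e\Z_p[\Gamma]$, I get $eC_S^T(K)$ presented as $e\Z_p[\Gamma]^{\oplus r}$ modulo a submodule $N$ with $N\subseteq \frakm_e\cdot e\Z_p[\Gamma]^{\oplus r}$ plus a bounded number of extra generators; more precisely, using condition~\eqref{item:dagger-2} to ensure the decomposition group acts trivially on $e\Z_p[\Gamma]/I$, the tame relators at $\scrR_I$-primes lie in $I\cdot e\Z_p[\Gamma]^{\oplus r}$ modulo $\frakm_e$-torsion controlled by the generator, so that after quotienting by a bounded-rank submodule we have a surjection $(e\Z_p[\Gamma]/I)^{\oplus (\#\scrR_I(K/Q)-c)}\twoheadrightarrow$ a quotient of $\frakm_e^{d-1}(eC_S^T(K))/I$-type piece. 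By Lemma~\ref{lem:DVR-module} and the definition of $\rk_I$ (Definition~\ref{def:rank}), this forces $\rk_I eC_S^T(K)\ge \#\scrR_I(K/Q)-c$.

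The main obstacle — and the step I expect to require the most care — is the bookkeeping of the generator rank versus the number of tame relations: one must show that the number of ``relaxed'' primes introduced to kill $\B$, together with the contributions from $T$, from $S_p$, and from the unit group $\calO_{K,S'}^\times$, is bounded by a constant independent of $K$, while simultaneously showing the free rank $r$ of the $e\Z_p[\Gamma]$-module in the presentation is at least $\#\scrR_I(K/Q)$ up to such a constant. This is where the faithful-cyclic-quotient description of $e\Z_p[\Gamma]$ (Lemma~\ref{lem:max-ideal}) and the structure of $\frakm_e$ (cases \eqref{item:max-ideal-1}, \eqref{item:max-ideal-2}) are used: the shape of $\frakm_e$ dictates exactly which local behavior at a prime $\frakp$ forces the inertia generator out of the free part, hence which ramified primes actually contribute to the $I$-rank. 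A secondary subtlety is that the local-global principle and the relaxation step both operate at the level of the full Galois group $G_{S'}^{T}$ rather than its $e$-part, so one must track how passing to the $e$-quotient interacts with abelianization; here one uses that $\Gamma$ is abelian and that tensoring with the flat-on-the-free-part ring $e\Z_p[\Gamma]$ is exact enough on the relevant pieces, as developed in Section~\ref{sec:module}. Once these counting estimates are in place, the inequality \eqref{eq:lb-rank} follows formally.
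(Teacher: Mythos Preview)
Your outline has the right shape, but there is a genuine gap in the counting. You assert that the number of relaxed primes introduced to kill $\B$ is bounded independently of $K$, but this is not justified: $\B$ is computed over $L=Q(A,\mu_p)$ (Lemma~\ref{lem:B-QvsL}), the class group of $L$ enters (Lemma~\ref{lem:B-empty}), and $L$ varies with $K$. The paper never bounds $\B$. Instead the same quantity $\dim_{\F_p}\B_{\scrS\backslash T}^{\scrS\cup T}(Q,A)/\dim_{\F_p}\End_\Gamma(A)$ appears both as a \emph{lower bound} for the generator rank $r$ (Proposition~\ref{prop:e-rk-arbitrary} applied with $\scrS=S\cup\scrR_I(K/Q)$, giving $r\ge\dim\B/\dim\End+\#\scrR_I(K/Q)\backslash T-c_4$) and as an \emph{upper bound} for the relaxed-prime count $\#S'=\#\big(\frakS\backslash(\bigcup_\ell S_\ell(Q)\cup\scrS\cup T)\big)$ (Lemma~\ref{lem:frakS}); in the final estimate $\rk_I e\calC_S^T\ge r-\#S'-c_5$ the $\B$-terms cancel. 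Your proposed accounting---bound each piece separately---would require an estimate on $\B$ that is simply not available.

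Two further omissions. First, you skip the passage to a maximal split subextension (Definition~\ref{def:max-split}, \S\ref{ss:presentaiton-split}): the sequence $1\to eC_S^T(K)\to\Gal(eE_S^T(K)/Q)\to\Gamma\to1$ need not split, so one cannot directly write down a surjection $e\Z_p[\Gamma]^{\oplus r}\rtimes\Gamma\twoheadrightarrow\Gal(eE_S^T(K)/Q)$; the paper replaces $eC_S^T$ by the split quotient $e\calC_S^T$ at bounded cost (Lemma~\ref{lem:rank-maxsplit}) before producing $\varkappa$ in Corollary~\ref{cor:pres}. Second, your mechanism for why the $\scrR_I$-relators land in $I\cdot e\Z_p[\Gamma]^{\oplus r}$ (``$x_\frakp$ lies outside the free part'') is the special $\Gamma=\Z/p\Z$ picture from the introduction and does not carry over. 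In the actual proof one reduces modulo $I$ and computes \emph{both} relator types of Corollary~\ref{cor:pres}\eqref{item:pres-2} explicitly (equations~\eqref{eq:comp-power} and~\eqref{eq:comp-comm}); they vanish because condition~\eqref{item:dagger-2} of Definition~\ref{def:dagger} forces $\overline s_\frakp,\overline t_\frakp$ to act trivially on $e\Z_p[\Gamma]/I$, while condition~\eqref{item:dagger-1} forces both $1-\overline t_\frakp$ and $\sum_j\overline t_\frakp^{\,j}$ to annihilate it.
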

	
	We will prove Theorem~\ref{thm:lb-rank} in Section~\ref{sect:proof-main}. The following corollary is an immediate consequence of Theorem~\ref{thm:lb-rank}.
	\begin{corollary}\label{cor:lb-rank}
		Let $e\in \calE$ and $I$ be a proper ideal of $e\Z_p[\Gamma]$ such that $I_e \subseteq I$.
		Assume $\calF$ is a family of $\Gamma$-extensions of $Q$, and there is an invariant $H(K)\in \R$ defined for every $K\in \calF$ such that the set 
		\[
			\calB_{\calF}(X):=\{K \in \calF \mid H(K) \leq X\}
		\]
		is finite for every $X \in \Z_{\geq 0}$.
		If
		\begin{equation}\label{eq:avg-np}
			\lim_{X \to \infty} \frac{ \sum_{K \in \calB_{\calF}(X)} \#\scrR_I(K/Q)}{\# \calB_{\calF}(X)} = \infty,
		\end{equation}
		then
		\[
			\lim_{X \to \infty} \frac{ \sum_{K \in \calB_{\calF}(X)} \rk_{I} eC_S^T(K)}{\# \calB_{\calF}(X)} = \infty.
		\]
	\end{corollary}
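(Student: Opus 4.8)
The plan is simply to average the pointwise lower bound of Theorem~\ref{thm:lb-rank} over the finite set $\calB_{\calF}(X)$ and let $X\to\infty$. First I would apply Theorem~\ref{thm:lb-rank} to each $\Gamma$-extension $K\in\calB_{\calF}(X)$, obtaining
\[
	\rk_{I} eC_S^T(K) \geq \#\scrR_I(K/Q)-c,
\]
with one and the same constant $c$ for all such $K$. The crucial point is that $c$ depends only on $Q$, $S$, $T$, $\Gamma$ and $e$, and not on $K$ — this is exactly the uniform form in which Theorem~\ref{thm:lb-rank} is stated, and it is what makes the averaging go through.

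Next I would sum these inequalities over the finitely many $K\in\calB_{\calF}(X)$ and divide by $\#\calB_{\calF}(X)$, which is legitimate since $\calB_{\calF}(X)$ is finite, and is positive for $X$ large (the hypothesis \eqref{eq:avg-np} presupposes that the averages it refers to are defined, hence that $\calB_{\calF}(X)$ is eventually nonempty). This yields
\[
	\frac{\sum_{K\in\calB_{\calF}(X)}\rk_{I}eC_S^T(K)}{\#\calB_{\calF}(X)} \;\geq\; \frac{\sum_{K\in\calB_{\calF}(X)}\#\scrR_I(K/Q)}{\#\calB_{\calF}(X)} - c.
\]

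Finally I would take $X\to\infty$: by hypothesis \eqref{eq:avg-np} the first term on the right-hand side tends to $+\infty$, and subtracting the fixed constant $c$ does not affect this, so the left-hand side tends to $+\infty$ as well, which is precisely the assertion of the corollary. There is no real obstacle here; all of the substance is contained in Theorem~\ref{thm:lb-rank}, and the only subtlety to flag is the uniformity of $c$ across the family, which is already built into that theorem's statement.
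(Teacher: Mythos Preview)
Your proposal is correct and matches the paper's approach: the paper simply declares this corollary ``an immediate consequence of Theorem~\ref{thm:lb-rank}'' without spelling out a proof, and the averaging argument you give is exactly that immediate consequence. The only point of substance, as you correctly flag, is the uniformity of $c$ in $K$, which is part of the statement of Theorem~\ref{thm:lb-rank}.
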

	
	When $Q$ is a number field and the extensions are ordered by the absolute norm of the radical of the discriminant ideal (which is the product of ramified primes if $Q=\Q$), then \eqref{eq:avg-np} holds.
	
	\begin{definition}\label{def:calA}
		Given a global field $Q$, for an extension $K/Q$, let $\rDisc K$ denote the absolute norm of the radical of the discriminant ideal $\Disc(K/Q)$. We say \emph{a family of sets of $\Gamma$-extensions $\{ \calA_{\Gamma}(X,Q) \mid X \in \Z\}$ satisfies ramification restriction at finitely many primes} if there exists 
		\begin{enumerate}
			\item a finite set $\calZ$ of primes of $Q$, and
			\item for each $\frakp \in \calZ$, there is a set $U_{\frakp}$ of Galois \'etale algebra over $Q_{\frakp}$ of Galois group $\Gamma$,
		\end{enumerate}
		such that
		\[
			\calA_{\Gamma}(X,Q)=\{\text{$\Gamma$-extensions }K/Q \mid \rDisc(K/Q)\leq X \text{ and } K_{\frakp} \in U_{\frakp}, \forall \frakp \in \calZ\}.
		\]
		Here $K_{\frakp}:= \prod_{\frakP \mid \frakp} K_{\frakP}$, where the product is taken over all primes $\frakP$ of $K$ lying above $\frakp$, is naturally a Galois \'etale algebra over $Q$ with Galois group $\Gal(K/Q)\simeq\Gamma$.
		
	\end{definition}
	
	\begin{theorem}\label{thm:conductor}
		Let $Q$ be a number field. Let $e\in \calE$ and $I$ be a proper ideal of $e\Z_p[\Gamma]$ such that $I_e \subseteq I$. Assume $\calA_{\Gamma}(X, Q), X\in \Z$ satisfies ramification restriction at finitely many primes and is non-empty when $X$ is sufficiently large. Then
		\[
			\lim_{X \to \infty} \frac{\sum\limits_{K\in \calA_{\Gamma}(X,Q)}\, \rk_{I} eC_S^T(K)}{ \# \calA_{\Gamma}(X,Q)}= \infty.
		\]
	\end{theorem}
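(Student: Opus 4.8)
The plan is to deduce Theorem~\ref{thm:conductor} from Corollary~\ref{cor:lb-rank} applied to the family $\calF:=\bigcup_X\calA_\Gamma(X,Q)$ with the invariant $H(K):=\rDisc(K/Q)$. Since the local conditions at $\calZ$ are fixed, $\calA_\Gamma(X,Q)$ is increasing in $X$, so $\calB_\calF(X)=\calA_\Gamma(X,Q)$, which is finite by Hermite--Minkowski and nonempty for $X$ large by hypothesis. Hence the whole task reduces to verifying the hypothesis \eqref{eq:avg-np} of that corollary: that the average of $\#\scrR_I(K/Q)$ over $K\in\calA_\Gamma(X,Q)$ tends to $\infty$ as $X\to\infty$.

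\emph{Choosing a ramification type.} Since $I_e\subseteq I$ is proper, Lemma~\ref{lem:Ie-proper} gives $p\mid|\Gamma|$, and by the remark after Definition~\ref{def:dagger} there is a nontrivial $\gamma\in\Gamma$ with $p\mid|\gamma|$ such that $\rho_{\Z_p[\Gamma],e}\big((1-\gamma,\sum_{j=1}^{|\gamma|}\gamma^j)\big)=I_e\subseteq I$; then $\langle\gamma\rangle$ acts trivially on $e\Z_p[\Gamma]/I_e$, hence on $e\Z_p[\Gamma]/I$. Therefore every prime $\frakp\notin S_p(Q)\cup\calZ$ whose inertia and decomposition subgroups in $K/Q$ are both equal to $\langle\gamma\rangle$ (under the chosen identification $\Gal(K/Q)\simeq\Gamma$) lies in $\scrR_I(K/Q)$. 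So it is enough to show that, on average over $K\in\calA_\Gamma(X,Q)$, the number of primes ramified in $K/Q$ in this one tame way tends to $\infty$ --- the familiar genus-theory mechanism --- and I would make this quantitative by a twisting and counting argument.

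\emph{Adding ramification.} Put $n=|\gamma|$, and fix by Chebotarev a density-$\delta_0>0$ set $\Sigma$ of primes $\frakp$ of $Q$ with $\frakp\notin S_p(Q)\cup\calZ$ that split completely in $Q(\zeta_{nB})/Q$, the fixed integer $B$ chosen large enough to absorb the class group and unit group of $Q$. For $\frakp\in\Sigma$, class field theory supplies at least a fixed number $c_0>0$ of characters $\psi\colon G_Q\to\langle\gamma\rangle$ that are tamely, totally ramified at $\frakp$ with trivial residue extension there and are unramified and trivial at every prime of $\calZ$ (for $n=2$ one instead allows $\psi$ an extra ramified prime, or appeals directly to classical genus theory). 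Given a $\Gamma$-extension $\chi\colon G_Q\to\Gamma$ unramified at such a $\frakp$ with $\Frob_\frakp(\chi)\in\langle\gamma\rangle$, the twist $\chi\psi$ is again surjective onto $\Gamma$ (for all but a bounded proportion of the $\psi$), has $\rDisc(\chi\psi)=\rDisc(\chi)\cdot N\frakp$, the same local components at $\calZ$, and inertia $=$ decomposition $=\langle\gamma\rangle$ at $\frakp$; and distinct $(\chi,\psi)$ give distinct $\chi\psi$. This yields, for each $\frakp\in\Sigma$,
\[
\#\{K\in\calA_\Gamma(X,Q)\mid \frakp\in\scrR_I(K/Q)\}\ \geq\ c_1\cdot\#\{K'\in\calA_\Gamma(X/N\frakp,Q)\mid \Frob_\frakp(K'/Q)\in\langle\gamma\rangle,\ \frakp\nmid\rDisc K'\}
\]
for a constant $c_1>0$. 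Summing over $\frakp\in\Sigma$ with $N\frakp\leq Y=Y(X)$ for a slowly growing $Y$, interchanging the order of summation, applying Chebotarev for each $K'$ to bound the inner prime count from below, and using a crude lower bound $\#\calA_\Gamma(Y',Q)\gg_\epsilon (Y')^{1-\epsilon}$ (itself obtained by iterating the same twisting construction from the nonempty base family), I would conclude that the resulting double sum is $\geq c_2(\log Y)\,\#\calA_\Gamma(X,Q)$, so the average is $\gg\log Y\to\infty$, which is \eqref{eq:avg-np}. Corollary~\ref{cor:lb-rank} then finishes the proof.

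\emph{Main obstacle.} The delicate point is uniformity in the counting: one must control $\#\{K\in\calA_\Gamma(X,Q):\frakp\in\scrR_I(K/Q)\}$ and the Chebotarev counts $\#\{\frakp\leq Y:\Frob_\frakp(K'/Q)\in\langle\gamma\rangle\}$ uniformly as $\frakp$ and $K'$ vary over large families, which in a purely unconditional argument forces $Y=Y(X)$ to grow slowly and requires careful error bookkeeping. A more robust route is to invoke a Wright-type asymptotic $\#\calA_\Gamma(X,Q)\asymp X(\log X)^{b}$ for abelian extensions ordered by $\rDisc$ with fixed local conditions, and then run the analogous Tauberian analysis on the Dirichlet series $\sum_{K\in\calF}\#\scrR_I(K/Q)\,\rDisc(K/Q)^{-s}$, which has a singularity at $s=1$ one logarithm worse than that of $\sum_{K\in\calF}\rDisc(K/Q)^{-s}$ because of the extra factor $\sum_{\frakp\in\Sigma}N\frakp^{-s}$; this gives average $\asymp\log\log X$. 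In either approach one must also keep careful track of the facts that $\rDisc$ records each ramified prime exactly once (so that twisting multiplies $\rDisc$ by precisely $N\frakp$), that $p\mid|\gamma|$ confines the usable primes to a positive-density set $\Sigma$, and that twisting by $\psi$ preserves both surjectivity onto $\Gamma$ and the prescribed behaviour at $\calZ$.
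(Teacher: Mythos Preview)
Your reduction to Corollary~\ref{cor:lb-rank} and your choice of the ramification type $(\langle\gamma\rangle,\langle\gamma\rangle)$ are exactly what the paper does. The divergence is entirely in how one verifies the counting hypothesis~\eqref{eq:avg-np}. The paper first partitions $\calA_{\Gamma}(X,Q)$ into finitely many subfamilies $\calA_{\Gamma}^{t}(X,Q)$ indexed by tuples $t=(t_{\frakp})_{\frakp\in\calZ}\in\prod_{\frakp\in\calZ}U_{\frakp}$ (this step is needed because Definition~\ref{def:calA} allows each $U_{\frakp}$ to be a set, not a single local algebra), and then invokes Theorem~\ref{tCond} of the Appendix for each nonempty $\calA_{\Gamma}^{t}$. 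That theorem in turn applies Wood's local-probability result \cite[Theorem~2.1]{Wood2010}: for each prime $\frakq$ in a large finite set $T$ with $N\frakq\equiv 1\bmod|\Gamma|$, the proportion of $\varphi$ with the prescribed ramification type at $\frakq$ is $\geq c/N\frakq$, and summing over $\frakq\in T$ gives an average exceeding any prescribed $B$.

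Your twisting argument aims at the same inequality $\#\{K:\frakq\in\scrR_I(K/Q)\}\gg N\frakq^{-1}\cdot\#\calA_{\Gamma}(X,Q)$ by a more hands-on construction, but several of the claimed steps are not literally correct and would take real work to repair. The existence of characters $\psi:G_Q\to\langle\gamma\rangle$ totally ramified at $\frakp$, unramified elsewhere, \emph{and} trivial at every place of $\calZ$ is not guaranteed by your condition on $\Sigma$; one has to control both the class group of $Q$ and the Frobenii at $\calZ$ simultaneously, and your parameter $B$ does not obviously do this. The assertion that ``distinct $(\chi,\psi)$ give distinct $\chi\psi$'' is false in general: if $\chi_1\psi_1=\chi_2\psi_2$ then $\psi_2\psi_1^{-1}=\chi_1\chi_2^{-1}$ is an everywhere-unramified character with image in $\langle\gamma\rangle$, and there are $\#\Hom(\Cl_Q,\langle\gamma\rangle)$ such characters; so fibers are bounded but not singletons. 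The surjectivity of $\chi\psi$ onto $\Gamma$ can fail on more than a ``bounded proportion'' of the $\psi$'s when $c_0$ is itself small. Finally, deducing the crude lower bound $\#\calA_{\Gamma}(Y',Q)\gg (Y')^{1-\epsilon}$ by ``iterating the same twisting construction'' is circular as written. Your own ``more robust route'' via Wright/Wood-type asymptotics is exactly what the paper's Appendix carries out, and is the clean way to close these gaps.
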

	
	Theorem~\ref{thm:main-A} follows by applying Theorem~\ref{thm:lb-rank} to $Q=\Q$ or $\F_q(t)$ and $S=\O$, $T=\{\infty\}$; and Theorem~\ref{thm:main-B}\eqref{item:main-B-1} is a special case of Theorem~\ref{thm:conductor} because $\cup_{D\leq X} \calA_{\Gamma}^+(D,\Q)$, $X \in \Z$ satisfies ramification restriction at only $\infty$.
	
	\begin{proof}[Proof of Theorem~\ref{thm:conductor}]
		By definition of $I_e$ in Definition~\ref{def:thresfold-ideal}, there exists a nontrivial element $\gamma \in \Gamma$ such that $I_e=\rho_{\Z_p[\Gamma], e} ((1-\gamma, \sum_{j=1}^{|\gamma|} \gamma^j))$. Let $\Gamma_0$ be the cyclic subgroup of $\Gamma$ generated by $\gamma$. By Definition~\ref{def:dagger}, if a prime $\frakp \not \in S_p(Q)$ and $\calT_{\frakp}(K/Q)=\calG_{\frakp}(K/Q)=\Gamma_0$, then $\frakp \in \scrR_{I}$. So 
		\[
			\#\scrR_I(K/Q) \geq \#\{\frakp \subset Q \mid \calT_{\frakp}(K/Q) = \calG_{\frakp}(K/Q)= \Gamma_0\}-[Q:\Q].
		\]
		For every tuple $t=(t_{\frakp})_{\frakp \in \calZ} \in \prod_{\frakp \in \calZ} U_{\frakp}$, we define $\calA^t_{\Gamma}(X, Q):= \{K \in \calA_{\Gamma}(X,Q) \mid K_{\frakp}=t_{\frakp}, \forall \frakp \in \calZ \}$. If $\calA_{\Gamma}^t(X,Q)$ is not empty when $X$ is large, then by Corollary~\ref{cor:lb-rank} and Theorem~\ref{tCond}, we have
		\[
			\lim_{X \to \infty} \frac{\sum\limits_{K\in \calA^t_{\Gamma}(X,Q)}\, \rk_{I} eC_S^T(K)}{ \# \calA^t_{\Gamma}(X,Q)}= \infty.
		\]
		The proof is completed, noting that $\prod_{\frakp \in \calZ}U_{\frakp}$ must be a finite set since there are only finitely many Galois \'etale algebra over $Q_{\frakp}$ of Galois group $\Gamma$. 
	\end{proof}
	
	\begin{remark}
		When all the $\Gamma$-extensions of $Q$ are ordered by absolute discriminant, then the condition \eqref{eq:avg-np} can fail: for example, when $Q=\Q$, $\Gamma=\Z/6\Z$ and $p=3$, as discussed in Appendix Remark~\ref{rmk:disc}. However, if $\ell$ is the minimal prime divisor of $|\Gamma|$, and there exists $\gamma \in \Gamma$ of order $\ell$ such that $I_e=\rho_{\Z_p[\Gamma],e}((1-\gamma, \sum_{j=1}^{\ell} \gamma^j))$,  then by the same argument in the above proof and applying Theorem~\ref{thm:A.2}, one can show that Theorem~\ref{thm:conductor} still holds when ordering by absolute discriminant.	
	\end{remark}
	
	Writing $A:=e\Z_p[\Gamma]/\frakm_e$, by \eqref{eq:e-rk}, for any positive integer $d$,
	\[
		\rk_{\frakm_e^d} eC_S^T(K)=\rk_A \frakm_e^{d-1} \cdot eC_S^T(K)=\frac{\log_p (\#\Sur_{\Gamma}(\frakm_e^{d-1} \cdot eC_S^T(K), A)+1)}{\dim_{\F_p} \End_{\Z_p[\Gamma]} (A)}.
	\]
	So Theorem~\ref{thm:conductor} implies that, for any ideal $I \supsetneq I_e$ of $e\Z_p[\Gamma]$,
	\begin{equation}\label{eq:lb-rank-moment}
		\lim_{X \to \infty} \frac{\sum_{K \in \calA_{\Gamma}(X,Q)} \#\Sur_{\Gamma}(I \cdot eC_S^T(K), A)}{\#\calA_{\Gamma}(X, Q)}=\infty.
	\end{equation}
	On the other hand, from the proof of Theorem~\ref{thm:lb-rank}, one will see that there exists a lower bound of the rank in terms of the number of primes ramified in $K/Q$ as \eqref{eq:lb-rank} if and only if $I\subseteq I_e$. In fact, when $I\subseteq I_e$, one should not expect \eqref{eq:lb-rank-moment} to hold, c.f., Theorem~\ref{thm:main-B}\eqref{item:main-B-2}.

	Finally, we state the general form of Theorem~\ref{thm:main-C}.
	
	\begin{theorem}\label{thm:kernel-limit}
		Let $Q$ be a number field. Assume $p^2 \mid |\Gamma|$, and $\calA_{\Gamma}(X, Q), X\in \Z$ satisfies ramification restriction at finitely many primes and is non-empty when $X$ is sufficiently large. Then for every simple $\F_p[\Gamma]$-module $A$, 
		\[
			\lim_{X \to \infty} \frac{\sum\limits_{K\in \calA_{\Gamma}(X,Q)}\, \rk_{A} \ker\rho_S^T(K)}{ \# \calA_{\Gamma}(X,Q)}= \infty.
		\]
	\end{theorem}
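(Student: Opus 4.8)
The plan is to mirror the proofs of Theorem~\ref{thm:lb-rank} and Theorem~\ref{thm:conductor}. First I would prove a pointwise estimate
\[
\rk_A\ker\rho_S^T(K)\ \ge\ \#\{\frakp\subset Q\,:\,\frakp\text{ has a suitable ramification type }(\calG,\calT)\text{ in }K/Q\}\ -\ c,
\]
with $(\calG,\calT)$ depending only on $\Gamma$, $p$, $A$ and $c=c(Q,S,T,\Gamma,A)$ independent of $K$; then, exactly as Theorem~\ref{thm:conductor} is deduced from Corollary~\ref{cor:lb-rank}, I would apply Theorem~\ref{tCond} fiber by fiber over $\calA_\Gamma(X,Q)$ to conclude that the average of the right-hand side diverges. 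The pointwise bound is to come from the presentation machinery of Sections~\ref{sect:A-rk-bounds}--\ref{sect:presentation}: setting $S'=S\cup\Ram(K/Q)$, one uses Lemma~\ref{lem:frakS} to relax ramification at a number of primes bounded independently of $K$ (so that the obstruction $\B$ vanishes) and then Lemma~\ref{lem:EP} to obtain a presentation of the $\Z_p[\Gamma]$-module $C_S^T(K)$ whose generating module is free over $\Z_p[\Gamma]$, with essentially one generator per prime of $S'$, and whose relation submodule is generated by tame local relations together with at most $c$ further relations. This is the same input used for Theorem~\ref{thm:lb-rank}; the new point is to read the kernel of $\rho_S^T(K)$ off of it, using the properties of the projection maps from Notation~\ref{not:idem}.

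The type $(\calG,\calT)$ is to be chosen so that, combined with the rest of the presentation, its tame local relation exposes the non-injectivity of $\rho$. Using $p^2\mid|\Gamma|$: when $\Gamma_p$ contains an element $\gamma$ of order $p^2$ I would take $\calT=\calG=\langle\gamma\rangle$, and when $\Gamma_p$ is elementary abelian of rank $\ge 2$ I would take $\calT$ of order $p$ and $\calG/\calT$ cyclic of order $p$ with Frobenius acting through $\Gamma_p$. In either case the aim is that such primes force $C_S^T(K)$ to receive a fixed ``model'' module $M_0$ — of $\Z_p$-exponent $p^2$, with a distinguished $\gamma$ acting by $1+p$ in the relevant block $P_A$ of $\Z_p[\Gamma]=\prod_A P_A$ — for which one checks by hand, exactly as in the $\Z/3\Z$-computation of \S\ref{sect:intro} and the example following Notation~\ref{not:idem}, that $\ker\rho_{M_0}=\bigcap_{e\in\calE}K_eM_0\ne 0$ (here $K_e:=\ker(\Z_p[\Gamma]\to e\Z_p[\Gamma])$; the point is that every $K_e$ acts on $M_0$ as multiplication by $p$, so the intersection is $pM_0$), and that $A$ occurs in $\ker\rho_{M_0}$ with the multiplicity dictated by~\eqref{eq:e-rk}. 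An algebraic lemma — the kernel analogue of the rank computation in the proof of Theorem~\ref{thm:lb-rank} — then shows that each prime of type $(\calG,\calT)$, through its free generator and its tame local relation, raises $\rk_A\ker\rho_S^T(K)$ by at least one, up to a defect bounded in terms of $\Gamma$, $A$, $S$, $T$: the $\le c$ exceptional relations and the finite cokernel of $\rho_{\Z_p[\Gamma]}\colon\Z_p[\Gamma]\to\bigoplus_{e\in\calE}e\Z_p[\Gamma]$ cost only a bounded amount of $A$-rank. Summing over such primes and subtracting the bounded defects yields the displayed inequality, and the conclusion follows as above.

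The hard step is this last algebraic lemma, relating the relation submodule $R\subset F=\Z_p[\Gamma]^{\oplus n}$ to $\rk_A\ker\rho_{F/R}$. In the proof of Theorem~\ref{thm:lb-rank} one worked inside a single discrete valuation ring $e\Z_p[\Gamma]$ (Proposition~\ref{prop:dvr}), where the relevant relations manifestly lie in $\frakm_e\cdot(\text{free})$; here, by contrast, one must control $\bigcap_{e\in\calE}(K_eF+R)$, equivalently the relevant $\mathrm{Tor}_1^{\Z_p[\Gamma]}$, which is precisely where the failure of $\Z_p[\Gamma]\to\bigoplus_{e\in\calE}e\Z_p[\Gamma]$ to be surjective — a failure localized in the ``wild'' blocks $P_A$, those whose associated cyclic quotient of $\Gamma$ has order divisible by $p^2$ — becomes visible. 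Establishing that the tame relations at the good primes are ``concentrated'' enough that this intersection picks up a full copy of $A$ per good prime, while the bounded number of exceptional relations and the fixed index $[\bigoplus_{e\in\calE}e\Z_p[\Gamma]:\rho_{\Z_p[\Gamma]}(\Z_p[\Gamma])]$ subtract only a constant, is the crux; it is also here that the exact choice of $(\calG,\calT)$ must be pinned down. A secondary technical point is to make that choice and the verification $\ker\rho_{M_0}\ne 0$ uniform over all finite abelian $\Gamma$ with $p^2\mid|\Gamma|$ and all simple $\F_p[\Gamma]$-modules $A$; in particular the case $p=2$ with $\Gamma_2\simeq(\Z/2\Z)^2$ forces the residue-degree type and requires slightly different local arithmetic.
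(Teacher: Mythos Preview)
Your scaffolding (presentation via \S\ref{sect:presentation}, tame relations, averaging via Theorem~\ref{tCond}) matches the paper's, but the central step you flag as ``the crux'' is a genuine gap, and the paper fills it by a different mechanism than your model-module idea. The difficulty with your approach is that $C_S^T(K)$ is not a direct sum of copies of your $M_0$: even if each good prime contributes a generator and a tame relation of the right shape, the relations interact globally, and the claim that $\bigcap_{e\in\calE}(K_eF+R)$ picks up one copy of $A$ per good prime does not follow from the local form of each relation. In Theorem~\ref{thm:lb-rank} the analogous step works because one passes to a quotient $(e\Z_p[\Gamma]/I)^{\oplus r}$ and checks the relations \emph{vanish} there; here you need the relations to \emph{survive} in a controlled way inside $\bigcap_eK_eF$, which is a much more delicate intersection-theoretic statement that your model $M_0$ (a single cyclic module) does not address.

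The paper's route avoids this prime-by-prime bookkeeping entirely. It proves an abstract result (Proposition~\ref{prop:tildeM}) valid for \emph{any} finite $P_A$-module $M$ with $\ker\rho_M=0$: by assembling $M$ from its pieces $e_iM$ via an iterated tower of fiber products $\widetilde{M_i}\to M_i$ (using the operation $\boxtimes$ of Lemma~\ref{lem:comm_quot}), one shows that any surjection $P_A^{\oplus r}\twoheadrightarrow M$ with $r=\rk_AM$ has kernel of $A$-rank at least $2r+\rk_{I_{e_1}}e_1M$. Applied to $M=\im\theta_S^T(K)$ and compared against the actual relation count from Corollary~\ref{cor:pres} (at most $2r_{\frakS}+D$, by \eqref{eq:relation-r-m}) together with the bound $r_{\frakS}-r_S^T\le C_1$ of Lemma~\ref{lem:frakS-ST}, this yields
\[
\rk_A\ker\theta_S^T(K)\ \ge\ \rk_{I_{e_1}}e_1P_A\calC_S^T(K)\ -\ \text{const},
\]
which reduces the theorem to Theorem~\ref{thm:conductor} (already proven). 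A short snake-lemma argument then passes from $\ker\theta_S^T$ to $\ker\rho_S^T$. So the missing idea in your plan is precisely this fiber-product comparison: rather than exhibiting kernel elements directly, one bounds from below the number of relations any presentation of $\im\theta_S^T$ \emph{must} have, and observes that the arithmetic presentation has too few.
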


	\begin{remark}
		Theorems~\ref{thm:conductor} and \ref{thm:kernel-limit} can be generalized to function fields if one can recover the results in Appendix~\ref{ss:appendix}.
	\end{remark}
	
	The proof of Theorem~\ref{thm:kernel-limit} will be given in Section~\ref{sect:Proof-kernel-limit}. We end this section with proving Theorem~\ref{thm:main-C}.

	\begin{proof}[Proof of Theorem~\ref{thm:main-C}]
		The statement~\eqref{item:C-2} in Theorem~\ref{thm:main-C} follows directly from Theorem~\ref{thm:kernel-limit}, because $\rho_K=\rho_{\O}^{\{\infty\}}(K)$ and $\cup_{D\leq X} \calA_{\Gamma}^+(D,Q)$ satisfies ramification restriction at finitely many primes (in fact, at only $\infty$). So it suffices to prove \eqref{item:C-1}.
		
		When $p \nmid |\Gamma|$, $\Z_p[\Gamma]=\bigoplus_{e \in \calE} e\Z_p[\Gamma]$, so $M=\bigoplus_{e \in \calE} eM$ for any finite module $M$, and hence $\ker \rho_{K}=0$. For the rest, assume  $\Gamma = \Z/p\Z$, and let $\gamma$ be a generator of $\Gamma$. Since $\Cl(Q)=0$ when $Q=\Q$ or $\F_q(t)$, the norm map annihilates the class group, so $\sum_{i=1}^p \gamma^i$ annihilates the $\Z_p[\Gamma]$-module $\Cl(K)(p)$. Note that $e_1:=1-\frac{\sum_{i=1}^p \gamma^i}{p}$ is a primitive idempotent, and $e_1\Z_p[\Gamma]=\Z_p[\Gamma]/(\sum_{i=1}^p \gamma^i)$ by Lemma~\ref{lem:prop_eM}\eqref{item:prop_eM-2}. So the desired result follows by $\Cl(K)(p)=e\Cl(K)$.
	\end{proof}

\section{Cohomological invariant {$\B_{S \backslash T}^{S \cup T} (Q, A)$}.} \label{sect:B}

	Let $Q$ be a global field and $p$ be a prime number such that $p \neq \Char(Q)$.
	Associated to a finite $\F_p[G_Q]$-module $A$, there is a cohomological invariant $\B_{S\backslash T}^{S\cup T}(Q, A)$ (defined in \cite[Definition~3.1]{Liu2022b}), which is the cokernel of the following composite map
	\begin{equation}\label{eq:def-B}
		\prod_{\frakp \in S\backslash T} H^1(\calG_{\frakp}, A) \times \prod_{\frakp \not \in S\cup T} H^1_{\nr}(\calG_{\frakp}, A) \xhookrightarrow{\quad} \prod_{\frakp} H^1(\calG_{\frakp}, A) \xrightarrow{\quad} \prod_{\frakp} H^1(\calG_{\frakp}, A')^{\vee} \xrightarrow{\quad} H^1(G_Q, A')^{\vee}.
	\end{equation}
	Here $\calG_{\frakp}$ is the absolute Galois group of the local field $Q_{\frakp}$, $A'$ is $\Hom(A, \overline{Q}^{\times})$, $M^{\vee}$ is the Pontryagin dual of a module $M$, and $H^1_{\nr}(\calG_{\frakp}, A):=\ker(H^1(\calG_{\frakp},A)\to H^1(\calT_{\frakp},A)^{\calG_{\frakp}})$ is the unramified cohomology group. The second and the third terms in the maps are products over all primes of $Q$. The first map is the natural embedding, the second map is the product of isomorphisms obtained by the local Tate duality, and the last map is the Pontryagin dual of the product of restriction maps.

	\begin{lemma}\label{lem:B}
		Let $L$ be a Galois extension of $Q$ such that $p \nmid [L:Q]$. Then $\Gal(L/Q)$ acts on $\B_{S\backslash T(L)}^{S\cup T (L)} (L, A)$ via the conjugation action on cohomology groups, and
		\[
			\B_{S\backslash T}^{S\cup T}(Q, A) \simeq \B_{S\backslash T(L)}^{S\cup T (L)} (L, A)^{\Gal(L/Q)}.
		\]
	\end{lemma}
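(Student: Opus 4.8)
The plan is to exploit the fact that the whole definition of $\B_{S\backslash T}^{S\cup T}(Q,A)$ in \eqref{eq:def-B} is built out of Galois cohomology groups $H^1(G_Q,A)$, $H^1(G_Q,A')^\vee$, and their local analogues, and that passing from $Q$ to a finite extension $L$ with $p\nmid [L:Q]$ behaves well with respect to all these ingredients because $A$ and $A'$ are $p$-torsion. First I would record the basic transfer fact: since $A$ is killed by $p$ and $p\nmid[L:Q]$, the composite $\Cor\circ\Res$ on $H^i(G_Q,A)$ (and on each $H^i(\calG_\frakp,A)$) is multiplication by $[L:Q]$, which is an automorphism; hence the restriction map identifies $H^i(G_Q,A)$ with $H^i(G_L,A)^{\Gal(L/Q)}$ (the $\Gal(L/Q)$-fixed part of the inflation-restriction / Hochschild--Serre sequence), and similarly for the duals $H^1(-,A')^\vee$, where one gets the coinvariants but invariants and coinvariants agree here because averaging by $\frac{1}{[L:Q]}\sum_{\sigma}\sigma$ is available. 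The same applies prime-by-prime: for a prime $\frakP$ of $L$ over $\frakp$, $H^1(\calG_{\frakP},A)$ is the fixed part of $H^1(\calG_\frakP,A)$ under the local Galois group, and $\prod_{\frakP\mid\frakp}H^1(\calG_\frakP,A)$ with its $\Gal(L/Q)$-action (permuting the primes and acting on cohomology) has invariants $H^1(\calG_\frakp,A)$ by Shapiro/Frobenius reciprocity.

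Next I would check that each of the three maps in \eqref{eq:def-B} is $\Gal(L/Q)$-equivariant and compatible with the restriction maps from the $Q$-version to the $L$-version. The middle map is the product of local Tate duality isomorphisms, which are canonical hence equivariant; the last map is the Pontryagin dual of the product of localization maps $H^1(G_L,A')\to\prod_\frakP H^1(\calG_\frakP,A')$, manifestly functorial in the field; and the first inclusion is the inclusion of the Selmer-type local conditions, which I must verify is respected: the unramified subgroup $H^1_{\nr}(\calG_{\frakP},A)$ restricts into $H^1_{\nr}(\calG_\frakp,A)$ and conversely (using $p\nmid[L_\frakP:Q_\frakp]$ again, and the fact that $T(L)$, $S(L)$ are exactly the primes over $T$, $S$), and the "fully split at $T$" and "ramified only in $S$" conditions match up. So the defining sequence for $Q$ is the $\Gal(L/Q)$-invariants of the defining sequence for $L$.

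Finally I would conclude by taking cokernels. The key point is exactness: since $\Gal(L/Q)$ has order prime to $p$ and everything in sight is a $p$-group, the functor $(-)^{\Gal(L/Q)}$ is exact on $\F_p[\Gal(L/Q)]$-modules (Maschke), so it commutes with taking the cokernel of the composite map. Hence $\coker$ of the $Q$-sequence equals $\coker$ of the $L$-sequence with $\Gal(L/Q)$-invariants applied, i.e. $\B_{S\backslash T}^{S\cup T}(Q,A)\simeq \B_{S\backslash T(L)}^{S\cup T(L)}(L,A)^{\Gal(L/Q)}$, and the $\Gal(L/Q)$-action on the latter is by definition the conjugation action on cohomology. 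The main obstacle I anticipate is the bookkeeping in the second paragraph: making sure that the local conditions defining the product in the first term of \eqref{eq:def-B} (the split-at-$T$, unramified-outside-$S$ conditions, and the precise indexing of primes of $L$ over those of $Q$) are matched correctly under restriction so that one genuinely gets the invariants and not merely a map between the two sequences; everything else is the standard "averaging kills prime-to-$p$ cohomology" argument, which is routine once the diagram is set up.
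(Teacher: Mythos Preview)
Your proposal is correct and follows essentially the same approach as the paper: both arguments identify the $Q$-version of the defining sequence \eqref{eq:def-B} with the $\Gal(L/Q)$-invariants of the $L$-version via restriction/corestriction (using $p\nmid[L:Q]$), check compatibility of the local conditions, Tate duality, and the global dual map, and then use exactness of $(-)^{\Gal(L/Q)}$ on $p$-torsion modules to pass to cokernels. The paper's proof is precisely the ``bookkeeping'' you anticipate, written out as four explicit commutative diagrams (one for the local restriction/corestriction, one for the global dual map, one for the unramified subgroups via the snake lemma, and one for Tate duality via functoriality of the Tate spectral sequence, with a separate case for archimedean primes); your phrase ``canonical hence equivariant'' for Tate duality is the only place where the paper supplies a genuine reference (\cite[Theorem~(2.5.3) and p.~122--123]{NSW}) rather than just an assertion.
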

	
	\begin{proof}
		Fix a prime $\frakp$ of $Q$ and a prime $\frakP$ of $L$ lying above $\frakp$, and denote 
		\[
			\Delta:=\Gal(L_{\frakP}/Q_{\frakp}). 
		\]
		Let $\calG_{\frakp}:=\calG_{\frakp}(Q)$, $\calT_{\frakp}:=\calT_{\frakp}(Q)$, $\calG_{\frakP}:=\calG_{\frakP}(L)$ and $\calT_{\frakP}:=\calG_{\frakP}(L)$. Note that $\calG_{\frakP} \unlhd \calG_{\frakp}$ and $\calT_{\frakP} \unlhd \calT_{\frakp}$ by our definition in Section~\ref{ss:notation}.
				
		Because of $p\nmid |\Delta|$,
		the following restriction map and corestriction map 
		\begin{equation}\label{eq:B-1}
			H^1(\calG_{\frakp}, A) \xrightarrow{\,\res\,} H^1(\calG_{\frakP}, A)^{\Delta} \quad \text{and} \quad H^1(\calG_{\frakP}, A)_{\Delta} \xrightarrow{\,\cor\,} H^1(\calG_{\frakp}, A)
		\end{equation}
		are isomorphisms.
		Therefore, by taking product of all primes above $\frakp$, one obtain the following commutative diagram by \cite[Proposition~(1.5.6)]{NSW}
		\begin{equation}\label{eq:B-2}\begin{tikzcd}
			H^1(\calG_{\frakp}, A')^{\vee} \arrow["\res^{\vee}"]{r} \arrow["\cor^{\vee}", "\sim"']{d} & H^1(G_Q, A')^{\vee} \arrow["\cor^{\vee}", "\sim"']{d} \\
			\left(\prod\limits_{\frakP\mid \frakp} H^1(\calG_{\frakP}, A')^{\vee} \right)^{\Gal(L/Q)} \arrow["\res^{\vee}"]{r} & \left(H^1(G_L, A')^{\vee} \right)^{\Gal(L/Q)}.
		\end{tikzcd}\end{equation}
		For the unramified cohomology groups, consider the diagram
		\[\begin{tikzcd}
			H_{\nr}^1(\calG_{\frakp}, A) \arrow["\res"]{d} \arrow[hook, "\inf"]{r} & H^1(\calG_{\frakp}, A) \arrow[two heads, "\res"]{r} \arrow["\res"]{d} & H^1(\calT_{\frakp}, A)^{\calG_{\frakp}} \arrow["\res"]{d} \\
			H_{\nr}^1(\calG_{\frakP}, A) \arrow[hook, "\inf"]{r} & H^1(\calG_{\frakP}, A) \arrow[two heads, "\res"]{r} & H^1(\calT_{\frakP}, A)^{\calG_{\frakP}},
		\end{tikzcd}\]
		where the two horizontal restriction maps are surjective because $\calG_{\frakp}/\calT_{\frakp}$ and $\calG_{\frakP}/\calT_{\frakP}$ are both isomorphic to $\hZ$, the right square commutes by the definition of restriction map, and the left square commutes by applying \cite[Proposition~(1.5.5)(i)]{NSW} to $\calT_{\frakP} \unlhd \calG_{\frakP} \unlhd \calG_{\frakp}$. Since $p\nmid |\Delta|$, the middle and the right vertical restriction maps are injective and send the upper entries isomorphically to the $\Delta$-invariant of the lower entries. So by the snake lemma, the diagram implies an isomorphism
		\begin{equation}\label{eq:B-3}
			H_{\nr}^1(\calG_{\frakp}, A) \xrightarrow{\,\res\,} H_{\nr}^1(\calG_{\frakP}, A)^{\Delta}.
		\end{equation}

		Next, we study how the Tate Duality is compatible with base field change between $Q_\frakp$ and $L_{\frakP}$. First, assume $\frakp$ is nonarchimedean. Because the Tate Duality for nonarchimedean primes \cite[Theorem~(7.2.6)]{NSW} is a special case of the Tate spectral sequence \cite[Theorem~(2.5.3)]{NSW}, which is functorial in the sense that it is well-behaved under taking open subgroups. By \cite[p.122-123]{NSW} and the fact that $p \nmid |\Delta|$, we have the following commutative diagram   
		\begin{equation}\label{eq:B-4}\begin{tikzcd}
			H^1(\calG_{\frakp}, A) \arrow["\res", "\sim"']{d} \arrow["\text{TD}", "\sim"']{r} & H^1(\calG_{\frakp}, A')^{\vee} \arrow["\cor^{\vee}", "\sim"']{d} \\
			H^1(\calG_{\frakP}, A)^{\Delta} \arrow["\text{TD}", "\sim"']{r} & \left( H^1(\calG_{\frakP}, A')^{\vee}\right)^{\Delta}.
		\end{tikzcd}\end{equation}
		For an archimedean prime $\frakp$, if $\calG_{\frakp}\neq \calG_{\frakP}$, then $[L:Q]$ is even and hence $p$ is odd, in which case, every entry in \eqref{eq:B-4} is zero; otherwise, $\calG_{\frakp} = \calG_{\frakP}$ and the diagram \eqref{eq:B-4} obviously commutes. So for any prime $\frakp$ (archimedean or not), the commutative diagram \eqref{eq:B-4} always holds.
		
		Finally, comparing the definition of $\B_{S\backslash T}^{S\cup T}(Q,A)$ and $\B_{S\backslash T(L)}^{S\cup T(L)}(L, A)$ in \eqref{eq:def-B}, the desired isomorphism in the lemma follows by \eqref{eq:B-1}, \eqref{eq:B-2}, \eqref{eq:B-3} and \eqref{eq:B-4}. 
	\end{proof}

	\begin{lemma}\label{lem:B-QvsL}
		Let $L:=Q(A, \mu_p)$ denote the minimal trivializing extension of $Q$ for the modules $A$ and $\mu_p$, and $S$, $T$ be finite sets of primes of $Q$. Let $r_S^T(L,A)$ be the maximal integer such that $\B_{S\backslash T(L)}^{S\cup T(L)}(L, \F_p)$ has a $\Gal(L/Q)$-equivariant quotient isomorphic to $(A^{\vee})^{\oplus r_S^T(L,A)}$. Then 
		\[
			\B_{S\backslash T}^{S \cup T} (Q, A)\simeq \End_{G_Q} (A^{\vee}) ^{\oplus r_S^T(L, A)}.
		\]
	\end{lemma}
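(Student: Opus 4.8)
The plan is to pass to the field $L=Q(A,\mu_p)$, over which the coefficient module becomes trivial, compute $\B$ there, and then descend to $Q$ by taking $\Gal(L/Q)$-invariants, exploiting that $\F_p[\Gal(L/Q)]$ is semisimple. The first step is to invoke Lemma~\ref{lem:B}; this is legitimate because $p\nmid[L:Q]$ in the situations we care about ($A$ is simple over $\F_p[\Gamma]$, on which $\Gamma$ acts through a prime-to-$p$ quotient, and $\Gal(Q(\mu_p)/Q)$ has order dividing $p-1$). Lemma~\ref{lem:B} then gives a $\Gal(L/Q)$-equivariant isomorphism
\[
\B_{S\backslash T}^{S\cup T}(Q,A)\;\simeq\;\B_{S\backslash T(L)}^{S\cup T(L)}(L,A)^{\Gal(L/Q)},
\]
so it suffices to understand the right-hand side.

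Second, I would establish a $\Gal(L/Q)$-equivariant isomorphism
\[
\B_{S\backslash T(L)}^{S\cup T(L)}(L,A)\;\simeq\;\B_{S\backslash T(L)}^{S\cup T(L)}(L,\F_p)\otimes_{\F_p}A ,
\]
with $\Gal(L/Q)$ acting diagonally on the right. Indeed, since $L=Q(A,\mu_p)$, both $A$ and $A'=\Hom(A,\Qbar^\times)$ restrict to trivial $G_L$-modules, and on trivial $G_L$-modules the functor $B\mapsto\B_{S\backslash T(L)}^{S\cup T(L)}(L,B)$ is additive and $\F_p$-linear: forming $H^1(\calG_{\frakp},-)$, the unramified subgroups $H^1_{\nr}(\calG_{\frakp},-)$, Pontryagin duals, the restriction maps, and the local Tate duality isomorphisms in \eqref{eq:def-B} all commute with finite direct sums and with $-\otimes_{\F_p}A$. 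Hence this functor is naturally isomorphic to $\B_{S\backslash T(L)}^{S\cup T(L)}(L,\F_p)\otimes_{\F_p}-$. The resulting isomorphism is $\Gal(L/Q)$-equivariant: the only extra twist appearing in the $A$-version, namely the copy of $\mu_p$ hidden in $A'\simeq A^\vee\otimes_{\F_p}\mu_p$ over $L$, is already present in the $\F_p$-version since $\F_p'=\mu_p$, and it cancels against the dual.

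Finally, since $p\nmid[L:Q]$ the group algebra $\F_p[\Gal(L/Q)]$ is semisimple, and for any finite $\F_p[\Gal(L/Q)]$-module $V$ there is a natural isomorphism $(V\otimes_{\F_p}A)^{\Gal(L/Q)}\simeq\Hom_{\F_p[\Gal(L/Q)]}(A^\vee,V)$. Applying this with $V=\B_{S\backslash T(L)}^{S\cup T(L)}(L,\F_p)$ and using that a $\Gal(L/Q)$-equivariant quotient is then a direct summand, the number $r:=r_S^T(L,A)$ is exactly the multiplicity of the simple module $A^\vee$ as a summand of $V$; writing $V\simeq(A^\vee)^{\oplus r}\oplus W$ with $\Hom_{\F_p[\Gal(L/Q)]}(A^\vee,W)=0$, one gets
\[
\Hom_{\F_p[\Gal(L/Q)]}(A^\vee,V)\;\simeq\;\End_{\F_p[\Gal(L/Q)]}(A^\vee)^{\oplus r}\;=\;\End_{G_Q}(A^\vee)^{\oplus r},
\]
the last equality because $G_Q$ acts on $A^\vee$ through $\Gal(L/Q)$. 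Chaining the three displayed isomorphisms gives $\B_{S\backslash T}^{S\cup T}(Q,A)\simeq\End_{G_Q}(A^\vee)^{\oplus r_S^T(L,A)}$.

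The step I expect to cost the most care is the second one: verifying that the \emph{entire} diagram defining $\B$ over $L$ — in particular the local Tate duality isomorphisms together with the Pontryagin dual of the global restriction map, and the bookkeeping of the $\mu_p$-twist carried by $A'$ — is $\Gal(L/Q)$-equivariantly obtained from the $\F_p$-version by $-\otimes_{\F_p}A$. Everything else is formal representation theory over the semisimple ring $\F_p[\Gal(L/Q)]$, and the identity $(V\otimes_{\F_p}A)^{\Gal(L/Q)}\simeq\Hom_{\F_p[\Gal(L/Q)]}(A^\vee,V)$ is exactly what converts ``multiplicity of $A^\vee$ as a quotient'' into ``$\End_{G_Q}(A^\vee)$-rank''.
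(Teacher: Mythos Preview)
Your proposal is correct and follows essentially the same route as the paper: invoke Lemma~\ref{lem:B} to reduce to $L$, identify $\B(L,A)\simeq\B(L,\F_p)\otimes A$ equivariantly (the paper does this via explicit cup-product isomorphisms and a Tate-duality diagram check, which is exactly the ``most care'' step you flag), then extract the $A^\vee$-multiplicity by semisimplicity. The only cosmetic difference is that the paper passes through $(V\otimes A)^{\Gal(L/Q)}\simeq\bigl((V\otimes A)^\vee\bigr)^{\Gal(L/Q)}\simeq\Hom_{\Gal(L/Q)}(V,A^\vee)$ via a double-dual, whereas you use the cleaner natural identification $V\otimes A\simeq\Hom(A^\vee,V)$ to land on $\Hom_{\Gal(L/Q)}(A^\vee,V)$; both yield $\End_{G_Q}(A^\vee)^{\oplus r}$.
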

	
	\begin{proof}
		By Lemma~\ref{lem:max-ideal}, the Sylow $p$-subgroup of $\Gamma$ acts trivially on $A$, so $[Q(A):Q]$ is prime to $p$. Also, $[Q(\mu_p):Q]$ is prime to $p$, so $L=Q(A)Q(\mu_p)$ is an abelian extension of $Q$ of degree prime to $p$. 
		Because $G_L$ acts trivially on $A$ and $A'$, for any prime $\frakP$ of $L$, $G_{\frakP}$ acts trivially on $A$, so the cup product induces the following $\Gal(L/Q)$-equivariant isomorphisms
		\[
			H^1(G_{\frakP}, \F_p) \otimes A \xrightarrow{\, \sim \,}H^1(G_{\frakP}, A) \quad \text{and} \quad H_{\nr}^1(G_{\frakP}, \F_p) \otimes A \xrightarrow{\, \sim \,}H_{\nr}^1(G_{\frakP}, A).
		\]
		For $G$ being either $G_{\frakP}$ or $G_L$, for the same reason, we have a $\Gal(L/Q)$-equivariant isomorphism
		\[
			H^1(G, \mu_p) \otimes \Hom(A, \F_p) \xrightarrow{\,\sim\,} H^1(G, A')
		\]
		defined by the cup product associated to the bilinear map
		\begin{eqnarray*}
			\mu_p \times \Hom(A, \F_p) &\longrightarrow& \Hom(A, \mu_p) \\
			(\xi, f) &\longmapsto& (x \mapsto \xi^{f(x)}).
		\end{eqnarray*}
		So we have functorial isomorphisms
		\begin{eqnarray*}
			H^1(G, A')^{\vee} &\simeq& \Hom ( H^1(G, \mu_p) \otimes \Hom(A, \F_p) , \F_p) \\
			&\simeq & \Hom\left( H^1(G, \mu_p), \Hom(A, \F_p)^{\vee} \right) \\
			&\simeq & \Hom \left( H^1(G, \mu_p), A \right) \\
			&\simeq & H^1(G, \mu_p)^{\vee} \otimes A,
		\end{eqnarray*}
		where the second isomorphism follows by the Tensor-Hom adjunction. Moreover, one can check that the diagram
		\[\begin{tikzcd}
			H^1(\calG_{\frakP}, A) \arrow["\sim"]{d} \arrow["\text{TD}"]{rr} && H^1(\calG_{\frakP}, A')^{\vee} \arrow["\sim"]{d} \\
			H^1(\calG_{\frakP}, \F_p) \otimes A \arrow["\text{TD}\otimes \id"]{rr} && H^1(\calG_{\frakP}, \mu_p)^{\vee} \otimes A
		\end{tikzcd}\]
		commutes. So by definition of $\B_{S\backslash T(L)}^{S\cup T (L)}(L, A)$, we obtain a $\Gal(L/Q)$-equivariant isomorphism $\B_{S\backslash T(L)}^{S\cup T(L)}(L, A)\simeq \B_{S\backslash T(L)}^{S \cup T(L)}(L, \F_p) \otimes A$.
		
		By Lemma~\ref{lem:B},
		\begin{eqnarray*}
			\B_{S\backslash T}^{S\cup T}(Q, A) &\simeq& \B_{S\backslash T(L)}^{S \cup T(L)} (L, A)^{\Gal(L/Q)} \\
			&\simeq & \left( \B_{S\backslash T(L)}^{S \cup T(L)}(L, \F_p) \otimes A \right)^{\Gal(L/Q)} \\
			&\simeq& \Hom\left(\B_{S\backslash T(L)}^{S \cup T(L)}(L, \F_p) \otimes A , \F_p\right)^{\Gal(L/Q)}\\
			&\simeq & \Hom_{\Gal(L/Q)} \left( \B_{S\backslash T(L)}^{S \cup T(L)}(L, \F_p), A^{\vee} \right)\\
			&\simeq& \End_{\Gal(L/Q)}(A^{\vee})^{\oplus r_S^T(L,A)}.
		\end{eqnarray*}
		Here, the third isomorphism uses the fact that $\F_p[\Gal(L/Q)]$ is semisimple and $M^{\Gal(L/Q)} \simeq (M^{\vee})^{\Gal(L/Q)}$ for any $\F_p[\Gal(L/Q)]$-module.  
		Then the proof is completed.
	\end{proof}

		\begin{lemma}\label{lem:es-B}
		Let $k$ be a Galois extension of $Q$, $S_1\subset S_2$ and $T$ finite sets of primes of $Q$, and $A$ a finite $\F_p[\Gal(k_{S_1}^T/Q)]$-module. Then there exists a $\Gal(k/Q)$-equivariant exact sequence
		\[
			H^1(G_{S_1}^T(k), A) \hookrightarrow H^1(G_{S_2}^T(k), A) \rightarrow \bigoplus_{\frakP \in S_2 \backslash (S_1\cup T) (k)} H^1(\calT_{\frakP}, A)^{\calG_{\frakP}} \rightarrow \B_{S_1\backslash T(k)}^{S_1 \cup T(k)}(k,A) \twoheadrightarrow \B_{S_2 \backslash T(k)}^{S_2 \cup T(k)}(k,A).
		\]
	\end{lemma}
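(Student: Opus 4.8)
The plan is to deduce the sequence from the Poitou--Tate duality nine-term sequence over $k$ together with an elementary comparison of the local conditions defining $\B_{S_1\backslash T(k)}^{S_1\cup T(k)}(k,A)$ and $\B_{S_2\backslash T(k)}^{S_2\cup T(k)}(k,A)$. Every object in the asserted sequence carries a natural $\Gal(k/Q)$-action (conjugation on the Galois cohomology groups, together with the permutation of primes on the local direct sum), and every map I build will be assembled from inflation, restriction to inertia, local Tate duality, and Poitou--Tate, all of which are functorial; so $\Gal(k/Q)$-equivariance will be automatic and I will not comment on it further. I also handle the archimedean places exactly as in the proof of Lemma~\ref{lem:B}.

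For the left segment: since $G_{S_2}^T(k)\twoheadrightarrow G_{S_1}^T(k)$, inflation gives the injection $H^1(G_{S_1}^T(k),A)\hookrightarrow H^1(G_{S_2}^T(k),A)$. Restricting a class first to each decomposition subgroup $\calG_{\frakP}$ of $G_{S_2}^T(k)$ and then to the inertia subgroup $\calT_{\frakP}$ produces a map to $\bigoplus_{\frakP\in S_2\backslash(S_1\cup T)(k)}H^1(\calT_{\frakP},A)^{\calG_{\frakP}}$; the target is a direct sum because a class in $H^1(G_{S_2}^T(k),A)$ is unramified outside $S_2$, and the components over $S_1$ and over $T$ are omitted (the latter vanish automatically, as the decomposition subgroup in $G_{S_2}^T(k)$ is trivial at $T$). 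A class lies in the kernel iff it is unramified at every prime above $S_2\backslash S_1$; using that $A$ is unramified outside $S_1$ and split at $T$, so that each such $\calT_{\frakP}$ acts trivially on $A$, this says exactly that the extension of $k$ the class cuts out lies in $k_{S_1}^T$, i.e.\ the class is inflated from $G_{S_1}^T(k)$; here one invokes the standard fact that $\Gal(k_{S_2}^T/k_{S_1}^T)$ is topologically generated as a normal subgroup by the inertia subgroups above $S_2\backslash S_1$.

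For the right segment: because $S_1\subseteq S_2$, a term-by-term inspection of the defining local conditions shows that $P_1:=\prod_{\frakp\in S_1\backslash T}H^1(\calG_{\frakp},A)\times\prod_{\frakp\notin S_1\cup T}H^1_{\nr}(\calG_{\frakp},A)$ sits inside the analogous group $P_2$ for $S_2$, with quotient $P_2/P_1\cong\bigoplus_{\frakp\in S_2\backslash(S_1\cup T)}H^1(\calG_{\frakp},A)/H^1_{\nr}(\calG_{\frakp},A)$, and the inflation--restriction sequence for $\calT_{\frakp}\lhd\calG_{\frakp}$ (with $\calG_{\frakp}/\calT_{\frakp}\cong\hZ$, so the relevant $H^2$ vanishes) identifies this canonically with $\bigoplus_{\frakp}H^1(\calT_{\frakp},A)^{\calG_{\frakp}}$. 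Since $\B_{S_i\backslash T(k)}^{S_i\cup T(k)}(k,A)$ is by definition the cokernel of $P_i\to H^1(G_k,A')^{\vee}$ and the target does not depend on $i$, the map $\B_{S_1\backslash T(k)}^{S_1\cup T(k)}(k,A)\to\B_{S_2\backslash T(k)}^{S_2\cup T(k)}(k,A)$ is surjective with kernel $\operatorname{im}(P_2)/\operatorname{im}(P_1)$, which is precisely the image of the natural map $\bigoplus_{\frakp}H^1(\calT_{\frakp},A)^{\calG_{\frakp}}\cong P_2/P_1\to\B_{S_1\backslash T(k)}^{S_1\cup T(k)}(k,A)$.

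It remains to match the two pieces at the middle spot: the image of $H^1(G_{S_2}^T(k),A)\to\bigoplus_{\frakp}H^1(\calT_{\frakp},A)^{\calG_{\frakp}}$ equals the kernel of $\bigoplus_{\frakp}H^1(\calT_{\frakp},A)^{\calG_{\frakp}}\to\B_{S_1\backslash T(k)}^{S_1\cup T(k)}(k,A)$. This is where I use the Poitou--Tate sequence over $k$, whose relevant exactness is that the kernel of ${\prod}'_{\frakp}H^1(\calG_{\frakp},A)\to H^1(G_k,A')^{\vee}$ is the image of the global localization map $H^1(G_k,A)\to{\prod}'_{\frakp}H^1(\calG_{\frakp},A)$. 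Given a tuple that dies in $\B_{S_1}$, lift it to an element of the restricted product supported on $S_2\backslash(S_1\cup T)$; it dies in $\B_{S_1}$ iff, after subtracting a suitable element of $P_1$, the result is the localization of some global class $z_0\in H^1(G_k,A)$, and unwinding the local conditions (the difference lies in $H^1_{\nr}$ away from $S_2\cup T$ and vanishes at $T$) forces $z_0$ to be unramified outside $S_2$ and split completely at $T$, hence inflated from a class in $H^1(G_{S_2}^T(k),A)$ mapping to the given tuple; conversely the localization of any class inflated from $G_{S_2}^T(k)$ lies in $P_2$, so its image in $\B_{S_1}$ vanishes. Splicing the three segments gives the stated exact sequence. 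The main obstacle is precisely this last step: carrying out the Poitou--Tate diagram chase while keeping the ``unramified outside $S_i$'' and ``split completely at $T$'' conditions in lockstep with the local conditions built into the definition of $\B$, and treating the archimedean and $p$-adic primes correctly.
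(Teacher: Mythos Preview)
Your argument is correct and is essentially the standard route. The paper itself does not give a proof: it simply records that the lemma is a direct generalization of \cite[Lemma~8.4]{Liu-presentation} and that the same proof works once one adjusts the sets of primes over which the local products are taken. Your use of Poitou--Tate exactness for the middle spot, together with the comparison $P_1\subset P_2$ of the local Selmer conditions defining $\B$ (and the identification $P_2/P_1\cong\bigoplus_{\frakP}H^1(\calT_{\frakP},A)^{\calG_{\frakP}}$ via inflation--restriction for $\calT_{\frakP}\lhd\calG_{\frakP}$ with $\calG_{\frakP}/\calT_{\frakP}\cong\hZ$), is exactly the argument that reference carries out in the special case; you have reproduced it with the correct bookkeeping for the $T$-split condition.
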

	
	\begin{proof}
		This lemma is a generalization of Lemma 8.4 in \cite{Liu-presentation} and the proof is the same, despite that one need to appropriately change the sets of primes that the product of local cohomology groups is taken over in the proof of \cite[Lemma~8.4]{Liu-presentation}.
	\end{proof}

	\begin{lemma}\label{lem:frakS}
		Let $A$ be a finite simple $\F_p[G_Q]$-module such that $\Gal(Q(A)/Q)$ is abelian . Let $S$ and $T$ be two sets of primes of $Q$.
		Then there exists a set $\frakS$ of primes of $Q$ such that 
		\begin{enumerate}
			\item\label{item:frakS-1} $S \subset \frakS$, and $S_{\ell}(Q) \subset \frakS$ for all $\ell \mid p|\Gamma|$,
%			\item\label{item:frakS-2} $S_{\ell}(Q) \subset \frakS$ for every prime $\ell \mid (p|\Gamma|)$,
			\item\label{item:frakS-3} $\B_{\frakS \backslash T}^{\frakS \cup T}(Q, A)=0$,
			\item\label{item:frakS-4} the set $\frakS \backslash (\bigcup_{\ell\mid p|\Gamma|}S_{\ell}(Q) \cup S \cup T)$ has cardinality 
			\[
				\frac{\dim_{\F_p} \B_{S\backslash T}^{S \cup T}(Q, A)}{\dim_{\F_p}\End_{\Gamma}(A)}.
			\]
		\end{enumerate}
	\end{lemma}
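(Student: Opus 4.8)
\emph{Proof plan.} Write $d:=\dim_{\F_p}\End_{G_Q}(A)$ (this is the quantity written $\dim_{\F_p}\End_{\Gamma}(A)$ in the statement, the $G_Q$-action on $A$ factoring through the abelian group $\Gal(Q(A)/Q)$), put $L:=Q(A,\mu_p)$, and set $S_0:=S\cup\bigcup_{\ell\mid p|\Gamma|}S_\ell(Q)$. Since $A$ is a simple module over the abelian group $\Gal(Q(A)/Q)$, that group has order prime to $p$, so $[L:Q]$ is prime to $p$ and Lemmas~\ref{lem:B} and~\ref{lem:B-QvsL} apply; by Lemma~\ref{lem:B-QvsL}, $\dim_{\F_p}\B_{S'\backslash T}^{S'\cup T}(Q,A)$ is a multiple of $d$ for \emph{every} finite set $S'$ of primes, so $n:=\dim_{\F_p}\B_{S\backslash T}^{S\cup T}(Q,A)/d$ is a nonnegative integer, equal to the cardinality required in the last assertion of the lemma. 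The plan is to take $\frakS=S_0\cup\{\frakp_1,\dots,\frakp_n\}$, adding auxiliary primes $\frakp_1,\dots,\frakp_n$ to $S_0$ one at a time and controlling the effect on $\B$ via Lemma~\ref{lem:es-B}. (After harmlessly enlarging $S$ we may assume $S$ contains the finitely many primes ramified in $Q(A)$ and that $T$ splits completely in $Q(A)$, so that $A$ is a $\Gal(Q_{S'}^T/Q)$-module for every $S'\supseteq S_0$ and Lemma~\ref{lem:es-B} is available with $k=Q$.) Applying that lemma to pairs $S_1\subset S_2$ shows that enlarging the ramification set only shrinks $\B$, so $\dim_{\F_p}\B_{S_0\backslash T}^{S_0\cup T}(Q,A)=md$ for some integer $0\le m\le n$.

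The main step is to choose $\frakp_1,\dots,\frakp_m$ outside $S_0\cup T$, each split completely in $L$, so that with $S_i:=S_0\cup\{\frakp_1,\dots,\frakp_i\}$ one has $\B_{S_m\backslash T}^{S_m\cup T}(Q,A)=0$. If $\frakp$ splits completely in $L$ then $\calT_{\frakp}$ acts trivially on $A$ and $\frakp\nmid p$, so $H^1(\calT_{\frakp},A)\cong\Hom_{\mathrm{cont}}(\calT_{\frakp},A)$ has $\F_p$-dimension $\dim_{\F_p}A=d$ by Lemma~\ref{lem:End=A}; hence, by Lemma~\ref{lem:es-B}, adding such a $\frakp$ to $S_i$ changes $\dim_{\F_p}\B$ by at most $d$, and, being a difference of multiples of $d$ (Lemma~\ref{lem:B-QvsL}), by exactly $0$ or $d$. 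So at each stage $i<m$ it suffices to find $\frakp_{i+1}$, split completely in $L$ and outside the finite set $S_i\cup T$, for which the natural map $H^1(\calT_{\frakp_{i+1}},A)^{\calG_{\frakp_{i+1}}}\to\B_{S_i\backslash T}^{S_i\cup T}(Q,A)$ is nonzero. This is a standard Chebotarev argument: since $\B_{S_i\backslash T}^{S_i\cup T}(Q,A)\ne 0$, its Pontryagin dual is a nonzero $\Gal(L/Q)$-stable subgroup of $H^1(G_Q,A')$ consisting of classes unramified outside $S_i$; a nonzero such class $f$ restricts (injectively, as $p\nmid[L:Q]$) to a nonzero homomorphism $G_L\to A'$, so $f(\tau)\ne 0$ for some $\tau$ in the relevant $p$-elementary abelian extension of $L$, and Chebotarev density produces a prime $\frakp_{i+1}$ outside $S_i\cup T$ whose Frobenius over that extension is conjugate to $\tau$; then $f$ is ramified at $\frakp_{i+1}$, the displayed map is nonzero, and $\dim_{\F_p}\B$ drops by $d$. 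Splitting completely in $L$ forces $\frakp_{i+1}\notin\bigcup_{\ell\mid p|\Gamma|}S_\ell(Q)$. After $m$ steps $\B_{S_m\backslash T}^{S_m\cup T}(Q,A)=0$.

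If $m<n$, I would then pad with any distinct further primes $\frakp_{m+1},\dots,\frakp_n$ of $Q$ lying outside $S_m\cup T$ (hence outside $\bigcup_{\ell\mid p|\Gamma|}S_\ell(Q)$); by Lemma~\ref{lem:es-B} each such addition keeps $\B$ equal to $0$, since a group surjecting onto $0$ is $0$. Setting $\frakS:=S_0\cup\{\frakp_1,\dots,\frakp_n\}$, the inclusion $S\cup\bigcup_{\ell\mid p|\Gamma|}S_\ell(Q)=S_0\subseteq\frakS$ is the first assertion, $\B_{\frakS\backslash T}^{\frakS\cup T}(Q,A)=0$ the second, and since $\frakp_1,\dots,\frakp_n$ are distinct and all lie outside $\bigcup_{\ell\mid p|\Gamma|}S_\ell(Q)\cup S\cup T$ while $S_0$ lies inside it, the set $\frakS\setminus\bigl(\bigcup_{\ell\mid p|\Gamma|}S_\ell(Q)\cup S\cup T\bigr)=\{\frakp_1,\dots,\frakp_n\}$ has cardinality $n=\dim_{\F_p}\B_{S\backslash T}^{S\cup T}(Q,A)/\dim_{\F_p}\End_{\Gamma}(A)$, which is the third assertion.

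The step I expect to be the main obstacle is the Chebotarev argument in the second paragraph: arranging that \emph{each} auxiliary prime reduces $\dim_{\F_p}\B$ by the full amount $d$ rather than by less. Here the module-theoretic content of Lemma~\ref{lem:B-QvsL} is indispensable --- it forces every dimension drop to be a multiple of $d$, so that a mere nonvanishing of the local-to-$\B$ map already yields the full drop --- together with the computation $\dim_{\F_p}H^1(\calT_{\frakp},A)=\dim_{\F_p}A=d$ for $\frakp$ split in $L$ (via Lemma~\ref{lem:End=A}) and the inductive use of Chebotarev density to keep the restriction map nonzero on the successively smaller, but still nonzero, dual groups at each stage.
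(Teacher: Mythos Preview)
Your overall strategy matches the paper's: add primes one at a time, using Lemma~\ref{lem:es-B} together with the $d$-divisibility coming from Lemma~\ref{lem:B-QvsL} to force each addition to lower $\dim_{\F_p}\B$ by exactly $d$. The paper locates each new prime by a surjectivity argument rather than Chebotarev: working over $L=Q(A,\mu_p)$ and using \cite[Theorem~(9.1.15)(ii)]{NSW}, it exhibits a $\Gal(L/Q)$-equivariant surjection
\[
\bigoplus_{\frakP\not\in S\cup T'(L)} H^1(\calT_{\frakP},A)^{\calG_{\frakP}}\twoheadrightarrow \B_{S\backslash T(L)}^{S\cup T(L)}(L,A),
\]
with $T'=T\cup\Ram(Q(A)/Q)\cup S_p(Q)$; nonvanishing of $\B$ then immediately produces a prime $\frakp\not\in S\cup T'$ whose inertial cohomology hits $\B$ nontrivially. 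Your Chebotarev route reaches the same endpoint, but two steps need repair.

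The serious one is the ``harmless enlargement'' of $S$: it is not harmless, because enlarging $S$ shrinks $\B_{S\backslash T}^{S\cup T}(Q,A)$ and thus changes the target count $n$ in item~\eqref{item:frakS-4} (and the assumption ``$T$ splits completely in $Q(A)$'' cannot be arranged by touching $S$ at all). You introduced this only so that Lemma~\ref{lem:es-B} applies with $k=Q$; the fix the paper uses is to apply Lemma~\ref{lem:es-B} with $k=L$ instead, where $A$ is automatically a $\Gal(L_{S_1}^T/Q)$-module, and then descend via $\Gal(L/Q)$-invariants and Lemma~\ref{lem:B}. No modification of $S$ or $T$ is needed, and your padding device then works as written. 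The second issue is in the Chebotarev paragraph: the conclusion ``$f$ is ramified at $\frakp_{i+1}$'' is wrong, since $f\in\B^{\vee}$ is unramified outside $S_i\cup T$ by construction. What Chebotarev actually gives is $f(\Frob_{\frakp_{i+1}})\ne 0$, i.e.\ $f|_{\calG_{\frakp_{i+1}}}\ne 0$; since $H^1_{\nr}(\calG_{\frakp},A')$ and $H^1_{\nr}(\calG_{\frakp},A)$ are exact annihilators under local Tate duality, this is what forces the map $H^1(\calT_{\frakp_{i+1}},A)^{\calG_{\frakp_{i+1}}}\to\B$ to be nonzero. Finally, ``$\frakp$ splits completely in $L$'' only guarantees $\frakp\notin S_p(Q)$, not $\frakp\notin S_\ell(Q)$ for the other $\ell\mid|\Gamma|$; those finitely many primes must be excluded by hand.
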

	
	\begin{proof}
		Let $L:=Q(A,\mu_p)$ and let $\Ram(Q(A)/Q)$ denote the set of primes of $Q$ ramified in $Q(A)/Q$.
		Let $T'=T \cup\Ram(Q(A)/Q)\cup S_p(Q)$. 			
		Consider the following diagram of $\Gal(L/Q)$-modules. 
		\[\begin{tikzcd}
			\prod\limits_{\frakP \in S \backslash T (L)} H^1(\calG_{\frakP}, A) \times \prod\limits_{\frakP \not \in S \cup T(L)} H_{\nr}^1(\calG_{\frakP}, A) \arrow[hook]{d} \arrow["\alpha"]{r} & H^1(G_L, A')^{\vee}\arrow[equal]{d} \arrow[two heads]{r} & \B_{S\backslash T(L)}^{S \cup T(L)}(L, A) \\
			\prod'\limits_{\frakP \not \in T(L)} H^1(\calG_{\frakP}, A) \times \prod\limits_{\frakP \in T'\backslash(S\cup T)(L)} H_{\nr}^1(\calG_\frakP, A)\arrow[two heads]{d} \arrow[two heads]{r} & H^1(G_L, A')^{\vee} & \\
			\bigoplus\limits_{\frakP \not \in S \cup T'(L)} H^1(\calT_{\frakP}, A)^{\calG_{\frakP}}
		\end{tikzcd}\]
		The first row is from definition of $\B_{S\backslash T(L)}^{S \cup T(L)}(L, A)$.
		In the second row, the product $\prod'_{\frakP \not \in T(L)} H^1(\calG_{\frakP},A)$ is the restricted product, consisting of all elements in $\prod_{\frakP \not \in T(L)} H^1(\calG_{\frakP},A)$ such that the image under the restriction map $H^1(\calG_{\frakP}, A) \to H^1(\calT_{\frakP},A)$ is nonzero at only finitely many primes $\frakP$.
		Since $\Gal(Q(A)/Q)$ is abelian, $L$ is an abelian extension of $Q$, so $H^1(G_L,A')\to \prod_{\frakP \not\in T'(L)}H^1(\calG_{\frakP}, A')$ is injective by \cite[Theorem~(9.1.15)(ii)]{NSW},
		 and therefore the second row is surjective. By the snake lemma, we obtain a surjection 
		\[
			\bigoplus_{\frakP \not \in S \cup T'(L)} H^1(\calT_{\frakP}, A)^{\calG_{\frakP}} \xtwoheadrightarrow{} \B_{S\backslash T(L)}^{S\cup T(L)}(L, A).
		\]

		By Lemma~\ref{lem:B}, 
		if $\B_{S\backslash T}^{S \cup T}(Q, A)\neq 0$, then there exists a prime $\frakp \not\in S \cup T'(Q)$ such that the image of $(\oplus_{\frakP \in \frakp(L)}H^1(\calT_{\frakP}, A)^{\calG_\frakP})^{\Gal(L/Q)}$ in $\B_{S \backslash T}^{S\cup T}(Q, A)$ is nontrivial. If we enlarge $S$ by including $\frakp$, then the cokernel of $\alpha$ gets smaller; in other words, the map $\beta: \B_{S \backslash T}^{S \cup T}(Q, A) \twoheadrightarrow \B_{S \cup\{\frakp\} \backslash T}^{S \cup \{\frakp\} \cup T}(Q,A)$ (obtained by taking $\Gal(L/Q)$-equivariant of the last map in the exact sequence in Lemma~\ref{lem:es-B} for $S_1=S(L)$, $S_2=S\cup \{\frakp\}(L)$ and $k=L$) has nontrivial kernel. By Lemma~\ref{lem:B-QvsL}, we have
		\begin{equation}\label{eq:beta>}
			\dim_{\F_p} \ker \beta \geq \dim_{\F_p}\End_{G_Q}(A^{\vee}).
		\end{equation}
		For every prime $\frakP$ of $L$ lying above $\frakp$, since $\frakp \not\in T'$, $\frakp$ is unramified in $Q(A)/Q$ and the residue characteristic of $\frakp$ is prime to $p$, so $\calT_{\frakp}$ acts trivially on $A$ and $\calT_{\frakP}/p\calT_{\frakP}$ is isomorphic to $\Z/p\Z$ as groups. Then
		\begin{equation}\label{eq:beta<}
			\dim_{\F_p} \left(\bigoplus_{\frakP \in \frakp(L)} H^1(\calT_{\frakP}, A)^{\calG_{\frakP}} \right)^{\Gal(L/Q)} = \dim_{\F_p} H^1(\calT_{\frakp}, A)^{\calG_{\frakp}}=\dim_{\F_p}\Hom_{\calG_{\frakp}}(\calT_{\frakp}, A) \leq \dim_{\F_p} A.
		\end{equation}
		Then by Lemma~\ref{lem:End=A}, \eqref{eq:beta<} and \eqref{eq:beta>}, we have $\dim_{\F_p} \ker \beta = \dim_{\F_p} \End_{\Gamma}(A^{\vee}) = \dim_{\F_p} A$.
		So including an appropriate prime in $S$ can reduce the $\dim_{\F_p} \B$ by at least $\dim_{\F_p}\End_{G_Q}(A^{\vee})=\dim_{\F_p}\End_{G_Q}(A)$. By repeating this process, and finally including $\bigcup_{\ell \mid p|\Gamma|} S_{\ell}(Q)$ in $\frakS$, we obtain a set $\frakS$ satisfying all of \eqref{item:frakS-1}, \eqref{item:frakS-3} and \eqref{item:frakS-4}. 
	\end{proof}

\section{Bounds of $A$-rank of $C_S^T(K)$}\label{sect:A-rk-bounds}

	Let $K$ be a $\Gamma$-extension of $Q$. In this section, we will estimate the $A$-rank of $C_S^T(K)$ for any simple $\F_p[\Gal(K/Q)]$-module $A$. For a group $G$ and an $\F_p[G]$-module $M$, denote
	\[
		h^i(G,M):= \dim_{\F_p} H^i(G,M).
	\]
	When $G$ is a subgroup of $H$, for an $\F_p[H]$-module $M$, $H$ acts on $H^i(G,M)$ by conjugation, and we denote
	\[
		h^i(G,M)^H:=\dim_{\F_p} H^i(G, M)^H.
	\]

	\begin{definition}\label{def:RA}
		Let $(K,\iota)$ be a $\Gamma$-extension of $Q$, and $A$ a simple $\F_p[\Gamma]$-module. We let $\scrR_A(K/Q)$ denote the set of primes of $Q$ satisfying the following conditions.
		\begin{enumerate}
			\item \label{item:dagger-0} The inertia subgroup $\calT_{\frakp}(K/Q)$ of $\Gal(K/Q)$ at $\frakp$ has order divisible by $p$.
			\item \label{item:dagger-3} $\frakp \not \in S_p(Q)$. 
			\item \label{item:RA} As a subgroup of $\Gamma$ via the isomorphism $\iota:\Gal(K/Q) \simeq \Gamma$, the decomposition subgroup $\calG_{\frakp}(K/Q)$ of $\Gal(K/Q)$ acts trivially on $A$.		
		\end{enumerate}
	\end{definition}
	
	Comparing this definition with Definition~\ref{def:dagger}, $\scrR_I$ is a subset of $\scrR_A$ for any ideal $I$ of $e\Z_p[\Gamma]$ when $e\Z_p[\Gamma]/\frakm_e \simeq A$.
	
	\begin{lemma}\label{lem:base_change}
		Let $(K,\iota)$ be a $\Gamma$-extension of $Q$ and $A$ a simple $\F_p[\Gamma]$-module. Then $A$ is an $\F_p[\Gal(K/Q)]$-module by $\iota: \Gal(K/Q) \to \Gamma$. Denote $L:=Q(A,\mu_p)$ and $\scrS_A:=S\cup \scrR_A(K/Q)$. Then there exists a constant $c_0$ depending on $\#T$, $Q$, $\Gamma$, $p$ and the $\Gamma$-module structure of $A$ such that
		\[
			|\rk_A C_S^T(K)-\frac{h^1(G^T_{\scrS_A}(L), A)^{\Gal(L/Q)}}{\dim_{\F_p} \End_{G_Q}(A)}| \leq c_0.
		\] 
	\end{lemma}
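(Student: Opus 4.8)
Since $\iota$ realizes $A$ as an $\F_p[\Gal(K/Q)]$-module, the $G_Q$-action on $A$ factors through $\Gal(K/Q)\simeq\Gamma$; in particular $\End_{G_Q}(A)=\End_{\Z_p[\Gamma]}(A)$, and $G_K$ acts trivially on $A$. Hence $\Hom_{\mathrm{cont}}(G_S^T(K),A)=H^1(G_S^T(K),A)$ with trivial coefficients, and, taking into account the conjugation action of $\Gal(K/Q)$ on $C_S^T(K)=G_S^T(K)^{\ab}(p)$, one gets $\Hom_{\Z_p[\Gamma]}(C_S^T(K),A)=H^1(G_S^T(K),A)^{\Gal(K/Q)}$. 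So by \eqref{eq:e-rk},
\[
\rk_A C_S^T(K)=\frac{h^1(G_S^T(K),A)^{\Gal(K/Q)}}{\dim_{\F_p}\End_{G_Q}(A)},
\]
and it suffices to bound $\bigl|h^1(G_S^T(K),A)^{\Gal(K/Q)}-h^1(G_{\scrS_A}^T(L),A)^{\Gal(L/Q)}\bigr|$ independently of $K$. I would first absorb the passage to invariants: applying the five-term inflation–restriction sequence to $1\to G_S^T(K)\to\Gal(K_S^T/Q)\to\Gamma\to1$ replaces $h^1(G_S^T(K),A)^{\Gal(K/Q)}$ by $h^1(\Gal(K_S^T/Q),A)$ up to an error controlled by $h^1(\Gamma,A)$ and $h^2(\Gamma,A)$, which depend only on $\Gamma$ and $A$; similarly on the $L$-side, noting that $[L:Q]\mid|\Gamma'|(p-1)$ is prime to $p$ (because $\Gamma_p$ acts trivially on $A$ by Lemma~\ref{lem:max-ideal}), so $\Gal(L/Q)$-invariants are exact and contribute no error.

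\textbf{Step 2: compare ramification sets via the $\B$-exact sequence and a Poitou–Tate count.} The remaining discrepancy is purely one of allowed ramification: over $K$ the field $K_S^T$ is ramified over $Q$ at $S\cup\Ram(K/Q)$, whereas on the $L$-side one only permits ramification at $\scrS_A=S\cup\scrR_A(K/Q)$. I would use the exact sequence of Lemma~\ref{lem:es-B} (after taking invariants by the prime-to-$p$ group $\Gal(L/Q)$, which is exact) to move between ramification sets place by place, comparing also with $G_\Sigma^{T'}(Q)$ for $\Sigma$ a suitable finite set containing $S\cup S_p(Q)\cup\bigcup_{\ell\mid|\Gamma|}S_\ell(Q)$. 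The key local computation: for a prime $\frakp$ ramified in $K/Q$ with $\frakp\notin S_p(Q)$ and $\frakp\notin\scrR_A(K/Q)$, the local term $H^1(\calT_\frakP,A)^{\calG_\frakP}$, a subquotient of $\Hom_{\calG_\frakP}(\calT_\frakP^{(p)},A)$ with $\calT_\frakP^{(p)}\simeq\mu_p$ as $\calG_\frakP$-module, must vanish — by Definition~\ref{def:RA}, either $p\nmid|\calT_\frakp(K/Q)|$ so the $p$-part of the tame inertia is trivial, or $\calG_\frakp(K/Q)$ does not act trivially on $A$ so $\Hom_{\calG_\frakP}(\mu_p,A)=0$. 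Thus exactly the primes of $\scrR_A(K/Q)$ survive, and their local contributions are matched between the $K$-side and the $L$-side once the trivializing field $L\ni(\mu_p,A)$ is reached, so they cancel from the difference. What is left is a sum of local terms $\dim H^1(G_{L_v},A)$ over the fixed finite set $\Sigma\cap(\text{exceptional part})$ — each bounded by $\dim A\cdot([L_v:\Q_p]+2)$ and hence uniformly bounded since $[L:\Q]$ and $\#\Sigma$ (restricted to the fixed part) are bounded — together with the global terms $h^0(G_\Sigma(L),A)$, $h^0(G_\Sigma(L),A')$, $h^2(G_\Sigma^{T'}(L),A)$ from the Poitou–Tate nine-term sequence, bounded by $\dim A$ and $\dim A'$, and an analogous term $h^1(G_\Sigma(L),A')^{\Gal(L/Q)}$ to which the same argument applies with $A$ replaced by its dual $A'=A^\vee\otimes\mu_p$ (again trivial over $L$). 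One must also track the finitely many places of $Q$ that fail to split in $K(\mu_p)/K$ and those above $p$ where $K(\mu_p)/K$ ramifies when relating $K$ to its compositum with $L$; these, too, lie in the fixed exceptional set. Collecting all bounds gives the constant $c_0$, depending only on $\#T$, $Q$, $\Gamma$, $p$ and the $\Gamma$-module structure of $A$.

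\textbf{Main obstacle.} The routine parts are Step 1 and the Poitou–Tate/Euler-characteristic bounds over $L$. The genuinely delicate point is the place-by-place bookkeeping in Step 2: one has to show that, among the possibly unboundedly many primes ramified in $K/Q$, it is precisely those in $\scrR_A(K/Q)$ that must be re-admitted as ramification over $L$ to keep $h^1$ unchanged, that the others contribute nothing to $A$-valued cohomology, and that the contributions of the $\scrR_A(K/Q)$ primes cancel cleanly between the two sides — all while carefully coordinating the sets $S$, $T$, $S_p(Q)$, $\bigcup_{\ell\mid|\Gamma|}S_\ell(Q)$ and the auxiliary fields $K$, $L$, $K(\mu_p)$ so that no hidden dependence on $K$ slips into the error term. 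The $\B$-invariant and Lemma~\ref{lem:es-B} (rather than a bare Poitou–Tate sequence) are the right tools here precisely because they isolate this discrepancy term.
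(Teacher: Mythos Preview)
Your Step 1 is correct and essentially parallel to the paper's opening move (the paper works over $D=Q(A)$ rather than $Q$, but this is cosmetic). The gap is in Step 2, and it is genuine.

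First, your key local claim does not hold. You assert that for $\frakp\in\Ram(K/Q)\setminus(\scrR_A\cup S_p)$ the term $H^1(\calT_{\frakP},A)^{\calG_{\frakP}}$ vanishes, but both branches of your argument fail. In the branch ``$p\nmid|\calT_{\frakp}(K/Q)|$, so the $p$-part of the tame inertia is trivial'': the group $\calT_{\frakP}$ appearing in Lemma~\ref{lem:es-B} is the \emph{absolute} inertia, whose tame pro-$p$ quotient is always $\Z_p(1)$, regardless of $|\calT_{\frakp}(K/Q)|$. In the branch ``$\calG_{\frakp}(K/Q)$ acts nontrivially on $A$, so $\Hom_{\calG_{\frakP}}(\mu_p,A)=0$'': nontriviality of the action on $A$ does not force $\Hom_{\calG_{\frakp}}(\mu_p,A)=0$; that would require the character by which $\calG_{\frakp}$ acts on $A$ to differ from the cyclotomic character, which is not implied by Definition~\ref{def:RA}. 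More structurally, Lemma~\ref{lem:es-B} compares $G_{S_1}^T(k)$ with $G_{S_2}^T(k)$ over a fixed base $k$, but the object $\Gal(K_S^T/Q)$ you need to control is not of that form: at primes of $\Ram(K/Q)\setminus S$ it allows only the ramification already present in $K/Q$, not arbitrary ramification. So there is no direct way to plug $\Gal(K_S^T/Q)$ into that exact sequence.

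The paper's proof is genuinely different from what you sketch and supplies exactly the missing mechanism. It passes to $D=Q(A)$, defines $E$ as the maximal $A^{\oplus r}$-subextension of $K_S^T/D$ and $F_D$ as the corresponding subextension of $D_{\scrS_A}^T/D$, and proves by a direct field-theoretic argument that $E/D$ is unramified outside $\scrS_A(D)\cup S_p(D)$. The key input is not Lemma~\ref{lem:es-B} but the cyclicity of tame inertia: at a prime $\frakP\in\Ram_p(K/D)\setminus S_p(D)$ ramified in $E/D$, the embedding $\Gal(EK/D)\hookrightarrow\Gal(E/D)\times\Gal(K/D)$ together with $\exp(\Gal(E/D))=p$ forces $\calT_{\frakP}(EK/K)=1$, hence $\calT_{\frakP}(E/D)\hookrightarrow\calT_{\frakP}(K/D)$, and then abelianity of $\Gamma$ forces $\calG_{\frakp}(K/Q)$ to act trivially on $A$, i.e.\ $\frakp\in\scrR_A(K/Q)$. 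This is the precise reason the set $\scrR_A$ is the right one, and it is not visible from the local cohomology terms alone.
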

		
	\begin{proof}
		Let $D:=Q(A)$. So $D$ is contained in $K \cap L$ and $p\nmid [D:Q]$. By applying the Hochschild--Serre exact sequence to the short exact sequence $1 \to G_S^T(K) \to \Gal(K_S^T/D)\to \Gal(K/D) \to 1$ and the module $A$, we obtain an exact sequence of $\F_p[\Gal(D/Q)]$-modules
		\begin{equation}\label{eq:HS-D}
			H^1(\Gal(K/D), A) \hookrightarrow H^1(\Gal(K_S^T/D), A) \to H^1(G_S^T(K), A)^{\Gal(K/D)} \to H^2(\Gal(K/D), A).
		\end{equation}
		Because $p\nmid[D:Q]$, taking $\Gal(D/Q)$-invariant is an exact functor on $\F_p[\Gal(D/Q)]$-modules. So by taking $\Gal(D/Q)$-invariants on \eqref{eq:HS-D} it follows that 
		\begin{eqnarray}
			&&-h^1(\Gal(K/D), A)^{\Gal(D/Q)} \nonumber\\
			&\leq&h^1(G_S^T(K),A)^{\Gal(K/Q)}-h^1(\Gal(K_S^T/D),A)^{\Gal(D/Q)}  \nonumber\\
			&\leq& h^2(\Gal(K/D), A)^{\Gal(D/Q)} -h^1(\Gal(K/D), A)^{\Gal(D/Q)}, \label{eq:ineq-D1}
		\end{eqnarray}
		where both the first and the last lines are determined by the $\Gamma$-module structure of $A$. By a similar argument, one see that $h^1(G_{\scrS_A}^T(L), A)^{\Gal(L/Q)} - h^1(\Gal(L_{\scrS_A}^T/D), A)^{\Gal(D/Q)}$ is bounded (above and below) by constants determined by only the $\Gamma$-module structure of $A$. 
		
		Since the degree of $L=D(\mu_p)$ over $D$ is prime to $p$ and $\Gal(L/D)$ acts trivially on $A$, 
		\begin{eqnarray}
			H^1(\Gal(L_{\scrS_A}^T/D), A)^{\Gal(D/Q)} &=& \Hom_{\Gal(D/Q)}(\Gal(L_{\scrS_A}^T/D), A) \nonumber\\
			&=& \Hom_{\Gal(D/Q)}(\Gal(D_{\scrS_A}^T/D), A). \label{eq:ineq-D2}
		\end{eqnarray} 
		Let $F_D/D$ be the maximal abelian subextension of $D_{\scrS_A}^T/D$ such that $\Gal(F_D/D)$ is $\Gal(D/Q)$-equivariant isomorphic to a direct product of $A$. Let $E/D$ be the maximal abelian subextension of $K_S^T/D$ such that $\Gal(E/D)$ is $\Gal(D/Q)$-equivariant isomorphic to a direct product of $A$.
		In other words, $F_D$ (resp. $E$) is the subfield fixed by the intersection of kernels of all $\Gal(D/Q)$-equivariant surjections from $\Gal(D_{\scrS_A}^T)^{\ab}$ to $A$ (resp. from $\Gal(K_S^T/D)^{\ab}$ to $A$).

		Let $\Ram_p(K/D)$ be the set of primes of $D$ at which the inertia subgroup of $K/D$ has order divisible by $p$. Then by definition of $E$, we see that $E/D$ is unramified outside $S(D) \cup \Ram_p(K/D)$. 
		Let $\frakP \in \Ram_p(K/D)$ be a prime that is ramified in $E/D$, and assume $\frakP \not\in S_p(D)$. Because the inertia subgroup at a tamely ramified prime is cyclic, the inertia subgroups $\calT_{\frakP}(K/D)$ and $\calT_{\frakP}(E/D)$ are both cyclic. Then as $\Gal(E/D)$ is elementary abelian-$p$, any element of $\Gal(K/D)$ of order divisible by $p$ cannot be lifted to an element of $\Gal(EK/D)$ with larger order. Thus, $EK/K$ must be unramified at primes above $\frakP$, and equivalently, $\calT_{\frakP}(E/D)$ embeds into $\calT_{\frakP}(K/D)$. Let $\frakp$ be the prime of $Q$ lying below $\frakP$. Since $\Gamma$ is abelian, the conjugation action of $\calG_{\frakp}(K/Q)$ on $\calT_{\frakp}(K/Q)$ is trivial. Then we see that $\calG_{\frakp}(EK/Q)$ acts trivially on $\calT_{\frakp}(EK/Q)$, and hence $\calG_{\frakp}(K/Q)$ acts trivially on $A$ because $\calT_{\frakP}(E/D)\subset \Gal(E/D)\simeq A^{\oplus r}$ for some $r$. So we conclude that $\frakp \in \scrR_A(K/Q)$. In summary, we proved above that if a prime $\frakP$ is ramified in $E/D$ and $\frakP \not\in S_p(D) \cup S(D)$, then $\frakp \in \scrR_A(K/Q)$.
		
		So, $E/D$ is unramified outside $\scrS_A(D)\cup S_p(D)$. Thus, the quotient of $\Gal(E/D)$ by its decomposition subgroups at primes in $T(D)$ and inertia subgroups at primes in $S_p(D)$ is a quotient of $G_{\scrS_A}^T(D)$, and hence 
		\begin{equation}\label{eq:bound-E2}
			h^1(\Gal(E/D),A)^{\Gal(D/Q)} \leq h^1(\Gal(F_D/D),A)^{\Gal(D/Q)}+k_1\cdot \#T(D)+k_2\cdot \#S_p(D),
		\end{equation}
		where $k_1$ is the maximum of $\dim_{\F_p}\Hom(\calG_{\frakP},A)$ for $\frakP \in T(D)$ and $k_2$ is the maximum of $\dim_{\F_p}\Hom(\calT_{\frakP},A)$ for $\frakP\in S_p(D)$. Although $k_1$ and $k_2$ are defined in terms of the primes of $T(D)$ and $S_p(D)$, because the generator ranks of $\calG_{\frakP}(p)$ and $\calT_{\frakP}(p)$ are determined by the degree of $D_{\frakP}$ over the base local field ($\Q_\ell$ or $\F_q((t))$, depending on what $Q$ and $\frakP$ are) \cite[Theorems (7.5.3) and (7.5.11)]{NSW}, both $k_1$ and $k_2$ are bounded above by a constant depending on $Q$, $\Gamma$, $p$, and the module structure of $A$.
		
		Considering $F_D/D$, by the same reason, since $\Gal(F_D/D)$ is elementary abelian-$p$, any element of $\Gal(K/D)$ of order divisible by $p$ cannot be lifted to an element of $\Gal(F_DK/D)$ of larger order. If a prime $\frakP$ of $D$ is tamely ramified in both $F_D/D$ and $K/D$ such that $\calT_{\frakP}(K/D)$ has order divisible by $p$, then $F_DK/K$ is unramified at every prime above $\frakP$. 
		Therefore, by definition of $D$ and $\scrS_A$, $F_DK/K$ is unramified outside $S(K)\cup S_p(K)$ and splits completely at $T(K)$, which shows that after taking quotient of $\Gal(F_DK/D)$ by appropriate inertia subgroups of primes in $S_p(K)$ we obtain a subfield of $G_S^T(K)$. So 
		\begin{equation}\label{eq:bound-E3}
			h^1(\Gal(F_D/D),A)^{\Gal(D/Q)} \leq h^1(\Gal(F_DK/D),A)^{\Gal(D/Q)} \leq h^1(\Gal(E/D),A)^{\Gal(D/Q)} + k_3 \cdot \# S_p(K),
		\end{equation}
		where $k_3$ is the maximum of $\dim_{\F_p} \Hom(\calG_{\frakP},A)$ for $\frakP \in S_p(K)$, and $k_3$ and $\#S_p(K)$ are bounded above by constants depending on $Q$, $\Gamma$ and $p$.
		
		By \eqref{eq:bound-E2} and \eqref{eq:bound-E3}, 
		\begin{eqnarray*}
			&&h^1(\Gal(K_S^T/D), A)^{\Gal(D/Q)}-h^1(\Gal(D_{\scrS_A}^T/D), A)^{\Gal(D/Q)} \\
			&=&h^1(\Gal(E/D),A)^{\Gal(D/Q)}-h^1(\Gal(F_D/D),A)^{\Gal(D/Q)}
		\end{eqnarray*}
		is bounded above and below by constants depending on $\#T$, $Q$, $\Gamma$, $p$ and the $\Gamma$-module structure of $A$.
		Then the proposition follows by the argument from \eqref{eq:ineq-D1} to \eqref{eq:ineq-D2}, and the formula \eqref{eq:e-rk}.
	\end{proof}

	The following lemma generalizes \cite[Proposition~(10.7.2)]{NSW}.
	
	\begin{lemma}\label{lem:B-empty}
		Retain the notation from above and let $L$ be $Q(A,\mu_p)$. Then 
		\[
			\B_{\O}^{T(L)}(L, \F_p)^{\vee} \simeq \faktor{\calO_{L,T(L)}^{\times}}{\calO_{L,T(L)}^{\times p}} \oplus \Cl_{T(L)}(L)_{/p}
		\]
		as $\F_p[\Gal(L/Q)]$-modules.
	\end{lemma}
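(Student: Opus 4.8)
The plan is to dualize the definition~\eqref{eq:def-B} of $\B$ into a Kummer–Selmer group, following the proof of \cite[(10.7.2)]{NSW}, and to carry the $\Gal(L/Q)$-action along mechanically. Here $A=\F_p$ has trivial $G_L$-action and $A'=\mu_p$, so (since $S=\O$) the map defining $\B_{\O}^{T(L)}(L,\F_p)$ has source $\prod_{\frakP\notin T(L)}H^1_{\nr}(\calG_\frakP,\F_p)$ and target $H^1(G_L,\mu_p)$, and $\B$ is its cokernel. Taking Pontryagin duals and applying local Tate duality prime by prime identifies $\B_{\O}^{T(L)}(L,\F_p)^\vee$ with the kernel of the map $H^1(G_L,\mu_p)\to\prod_{\frakP\notin T(L)}H^1(\calG_\frakP,\mu_p)/W_\frakP$, where $W_\frakP\subset H^1(\calG_\frakP,\mu_p)$ is the annihilator of $H^1_{\nr}(\calG_\frakP,\F_p)$ under the local pairing. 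The key local point is that for every nonarchimedean $\frakP$ (including those above $p$) the subgroup $W_\frakP$ equals the image of $\calO_{L_\frakP}^\times$ in $L_\frakP^\times/(L_\frakP^\times)^p$: for $\frakP\nmid p$ this is the standard exact-annihilator property of unramified subgroups, while for $\frakP\mid p$ one uses the reciprocity-map description $\langle a,\chi\rangle=\chi(\rho_\frakP(a))$ of the pairing together with $\rho_\frakP(\calO_{L_\frakP}^\times)=$ inertia, so that $[a]$ kills every unramified character exactly when $p\mid v_\frakP(a)$; at archimedean $\frakP$ the convention $H^1_{\nr}=0$ makes $W_\frakP$ the whole group and imposes no condition. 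Via Kummer theory $H^1(G_L,\mu_p)\cong L^\times/(L^\times)^p$ this yields, $\Gal(L/Q)$-equivariantly (the Kummer isomorphism is functorial in $G_L$, and $\Gal(L/Q)$ merely permutes the primes over a fixed prime of $Q$),
\[
\B_{\O}^{T(L)}(L,\F_p)^\vee\;\cong\;V_T:=\bigl\{[a]\in L^\times/(L^\times)^p\ :\ p\mid v_\frakP(a)\ \text{for every finite prime}\ \frakP\notin T(L)\bigr\}.
\]

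Next I would split off the unit and class-group parts of $V_T$. Sending $[a]\in V_T$ to the class in $\Cl(\calO_{L,T(L)})$ of the unique fractional ideal $\frakb$ of $\calO_{L,T(L)}$ with $\frakb^p=(a)$ is a well-defined $\Gal(L/Q)$-equivariant surjection onto $\Cl(\calO_{L,T(L)})[p]$, with kernel exactly the image of $\calO_{L,T(L)}^\times$, giving a short exact sequence of $\F_p[\Gal(L/Q)]$-modules
\[
0\longrightarrow\calO_{L,T(L)}^\times/(\calO_{L,T(L)}^\times)^p\longrightarrow V_T\longrightarrow\Cl(\calO_{L,T(L)})[p]\longrightarrow 0.
\]
By class field theory $\Cl_{T(L)}(L)=C_\O^{T(L)}(L)$ is the $p$-part of the $T(L)$-ideal class group $\Cl(\calO_{L,T(L)})$, so $\Cl(\calO_{L,T(L)})[p]=\Cl_{T(L)}(L)[p]$. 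Since $A$ is a simple $\F_p[\Gamma]$-module its $\Gamma$-action factors through a prime-to-$p$ quotient (Lemma~\ref{lem:max-ideal}), hence $[Q(A):Q]$ is prime to $p$; together with $[Q(\mu_p):Q]\mid p-1$ this gives $p\nmid[L:Q]$, so $\F_p[\Gal(L/Q)]$ is semisimple and the sequence splits, giving $V_T\cong\calO_{L,T(L)}^\times/(\calO_{L,T(L)}^\times)^p\oplus\Cl_{T(L)}(L)[p]$. Finally, because $p\nmid[L:Q]$ the ring $\Z_p[\Gal(L/Q)]$ is a finite product of complete discrete valuation rings each unramified over $\Z_p$ (cf. the decomposition~\eqref{eq:PA} and Proposition~\ref{prop:dvr}), so $p$ is a uniformizer in each factor and therefore $M[p]\cong M/pM$ as $\Z_p[\Gal(L/Q)]$-modules for every finite module $M$; applying this to $M=\Cl_{T(L)}(L)$ replaces $\Cl_{T(L)}(L)[p]$ by $\Cl_{T(L)}(L)_{/p}$ and completes the proof.

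I expect the main obstacle to be making Step~1 fully rigorous: the restricted-product bookkeeping in the infinite products of local cohomology groups, and especially the verification that at primes above $p$ (and at archimedean primes) the annihilator $W_\frakP$ is the full local unit image rather than the strictly smaller unramified subgroup. This is precisely what makes the \emph{full} $T(L)$-units $\calO_{L,T(L)}^\times$ (with no conditions at places over $p$ or at infinity) appear in the answer, so it is the one place where the computation genuinely differs from a naive ``unramified everywhere'' Selmer group; I would handle it through the reciprocity-law formula for the local Tate pairing, exactly as in the proof of \cite[(10.7.2)]{NSW}. Steps~2 and~3 are then routine, the only nontrivial inputs being Maschke semisimplicity (a consequence of Lemma~\ref{lem:max-ideal}) and the elementary fact that $M[p]\cong M/pM$ over an unramified $\Z_p$-order.
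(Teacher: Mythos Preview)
Your proposal is correct and follows essentially the same route as the paper: the paper simply asserts (citing \cite[(10.7.2)]{NSW}) that $\B_{\O}^{T(L)}(L,\F_p)^\vee$ is the group $V_{\O}^T(L)=W/L^{\times p}$ with $W=\{a\in L^\times: a\in U_\frakP L_\frakP^{\times p}\ \forall\,\frakP\notin T(L)\}$---which is exactly your $V_T$---and then runs the same ideal-class map $a\mapsto\fraka$ with $(a)=\fraka^p$, the same short exact sequence, and the same semisimplicity splitting. Your Step~1 spells out in full the local-duality computation that the paper leaves to the citation, and your justification of $M[p]\cong M/pM$ via the DVR structure of $\Z_p[\Gal(L/Q)]$ is more explicit than the paper's one-line appeal; neither of these is a genuine departure.
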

	
	\begin{proof}
	The group $\B_{\O}^{T(L)}(L, \F_p)$ is the pontryagin dual of $V_{\O}^{T}(L):=W/L^{\times p}$ with 
		\[
			W:=\left\{ a\in L^{\times} \, :\, a\in U_{\frakP}L_{\frakP}^{\times p} \text{ for all } \frakP \not \in T(L) \right\},
		\]
		where $U_{\frakP}$ is the group of units of $\calO_{L_{\frakP}}$.
		Consider the homomorphism
		\begin{eqnarray*}
			W &\longrightarrow& \Cl_{T(L)}(L)[p] \\
			a &\longmapsto& \fraka \text{  with }(a)=\fraka^p.
		\end{eqnarray*}
		This homomorphism is equivariant under the action by $\Gal(L/Q)$, and induces a map from $V_{\O}^T(L) \to \Cl_{T(L)}(L)[p]$ with kernel equal to $\calO_{L,T(L)}^{\times} / \calO_{L,T(L)}^{\times p}$. The lemma follows since $\F_p[\Gal(L/Q)]$ is semisimple and $\Cl_{T(L)}(L)[p]\simeq_{\Gal(L/Q)} \Cl_{T(L)}(L)_{/p}$.
	\end{proof}

	\begin{proposition}\label{prop:e-rk}
		Retain the notation from above. There exists a constant $c_1$ depending on $\Gamma$, $p$, $Q$, $S$, $T$ and the $\Gamma$-module structure of $A$ such that
		\[
			|\rk_A C_S^T(K) - \frac{\dim_{\F_p} \B_{\scrS_A \backslash T}^{\scrS_A\cup T}(Q, A)+ \sum\limits_{\frakp \in \scrS_A \backslash T} h^1(\calT_{\frakp}, A)^{\calG_{\frakp}} }{\dim_{\F_p}\End_{\Gamma}(A)}| \leq c_1.
		\]
	\end{proposition}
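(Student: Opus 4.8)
The plan is to substitute the estimate of Lemma~\ref{lem:base_change} and then rewrite $h^1(G_{\scrS_A}^T(L),A)^{\Gal(L/Q)}$ by means of the excision sequence of Lemma~\ref{lem:es-B}, taken with empty everywhere-unramified part, so that the contribution of the class group of $L$ cancels up to a bounded error. Fix the simple $\F_p[\Gamma]$-module $A$ and a $\Gamma$-extension $(K,\iota)$ of $Q$, and put $L:=Q(A,\mu_p)$ and $\scrS_A:=S\cup\scrR_A(K/Q)$ as in Lemma~\ref{lem:base_change}. Since $\Gamma$ acts on $A$ through a subgroup of $\End_{\Gamma}(A)^{\times}$, a group of order prime to $p$, the Sylow $p$-subgroup of $\Gamma$ acts trivially on $A$; hence $p\nmid[Q(A):Q]$, so $p\nmid[L:Q]$, and $[L:Q]$ is bounded purely in terms of $|\Gamma|$ and $p$. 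Moreover $G_L$ acts trivially on $A$.

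First I would apply Lemma~\ref{lem:es-B} with $k=L$, $S_1=\O$, $S_2=\scrS_A$ to obtain the $\Gal(L/Q)$-equivariant exact sequence
\[
0\to H^1(G_\O^T(L),A)\to H^1(G_{\scrS_A}^T(L),A)\to\bigoplus_{\frakP\in\scrS_A\backslash T(L)}H^1(\calT_{\frakP},A)^{\calG_{\frakP}}\to\B_\O^{T(L)}(L,A)\to\B_{\scrS_A\backslash T(L)}^{\scrS_A\cup T(L)}(L,A)\to0.
\]
Because $p\nmid[L:Q]$ the functor $(-)^{\Gal(L/Q)}$ is exact, so the alternating sum of the $\F_p$-dimensions of the $\Gal(L/Q)$-invariants of these five terms vanishes. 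Lemma~\ref{lem:B} identifies $\B_\O^{T(L)}(L,A)^{\Gal(L/Q)}\simeq\B_\O^T(Q,A)$ and $\B_{\scrS_A\backslash T(L)}^{\scrS_A\cup T(L)}(L,A)^{\Gal(L/Q)}\simeq\B_{\scrS_A\backslash T}^{\scrS_A\cup T}(Q,A)$, and the restriction--corestriction argument from the proof of Lemma~\ref{lem:B} (the isomorphism \eqref{eq:B-3} together with its analogue for the $\calG_{\frakp}$-invariant quotient $H^1(\calT_{\frakp},A)^{\calG_{\frakp}}$, valid since for $\frakP\mid\frakp$ the local degree divides $[L:Q]$ and is prime to $p$) gives $\bigl(\bigoplus_{\frakP\mid\frakp}H^1(\calT_{\frakP},A)^{\calG_{\frakP}}\bigr)^{\Gal(L/Q)}\simeq H^1(\calT_{\frakp},A)^{\calG_{\frakp}}$ for each $\frakp$. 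Collecting terms,
\[
h^1(G_{\scrS_A}^T(L),A)^{\Gal(L/Q)}=\dim_{\F_p}\B_{\scrS_A\backslash T}^{\scrS_A\cup T}(Q,A)+\sum_{\frakp\in\scrS_A\backslash T}h^1(\calT_{\frakp},A)^{\calG_{\frakp}}+\Bigl(h^1(G_\O^T(L),A)^{\Gal(L/Q)}-\dim_{\F_p}\B_\O^T(Q,A)\Bigr).
\]

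The next point is that the last bracket, although it depends on $K$ through $L$, is $K$-independently bounded. Since $G_\O^T(L)$ acts trivially on $A$, one has $h^1(G_\O^T(L),A)^{\Gal(L/Q)}=\dim_{\F_p}\Hom_{\Gal(L/Q)}(\Cl_T(L)_{/p},A)$. On the other hand, combining Lemma~\ref{lem:B}, the isomorphism $\B_\O^{T(L)}(L,A)\simeq\B_\O^{T(L)}(L,\F_p)\otimes A$ from the proof of Lemma~\ref{lem:B-QvsL}, and Lemma~\ref{lem:B-empty},
\[
\dim_{\F_p}\B_\O^T(Q,A)=\dim_{\F_p}\Hom_{\Gal(L/Q)}\bigl(\calO_{L,T(L)}^{\times}/\calO_{L,T(L)}^{\times p},A\bigr)+\dim_{\F_p}\Hom_{\Gal(L/Q)}(\Cl_T(L)_{/p},A).
\]
Hence the bracket equals $-\dim_{\F_p}\Hom_{\Gal(L/Q)}\bigl(\calO_{L,T(L)}^{\times}/\calO_{L,T(L)}^{\times p},A\bigr)$, whose absolute value is at most $\dim_{\F_p}\bigl(\calO_{L,T(L)}^{\times}/\calO_{L,T(L)}^{\times p}\bigr)$; by Dirichlet's unit theorem this is bounded in terms of $[L:Q]$ and $\#T(L)\le[L:Q]\cdot\#T$, hence in terms of $\Gamma$, $p$, $Q$, $T$ alone.

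Putting this together with Lemma~\ref{lem:base_change} and the triangle inequality, and using $\dim_{\F_p}\End_{G_Q}(A)=\dim_{\F_p}\End_{\Gamma}(A)$ (the $G_Q$-action on $A$ factors through $\Gamma$), yields the asserted inequality with a constant $c_1$ depending only on $\Gamma$, $p$, $Q$, $S$, $T$ and the $\Gamma$-module structure of $A$. The hard part is the previous paragraph: one has to recognize that the $K$-dependence entering through $L=Q(A,\mu_p)$ in the everywhere-unramified quantities $h^1(G_\O^T(L),A)^{\Gal(L/Q)}$ and $\dim_{\F_p}\B_\O^T(Q,A)$ is the \emph{same} class-group term on both sides, so that it cancels and leaves only a unit-group contribution of bounded $\F_p$-rank; everything else is a formal computation with the exact sequence of Lemma~\ref{lem:es-B} and the Galois-descent lemmas of Section~\ref{sect:B}.
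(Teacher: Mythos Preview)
Your proof is correct and follows essentially the same approach as the paper's: apply Lemma~\ref{lem:es-B} with $S_1=\O$, $S_2=\scrS_A$, $k=L$, take $\Gal(L/Q)$-invariants, identify the terms via Lemmas~\ref{lem:B}, \ref{lem:B-QvsL}, \ref{lem:B-empty}, and observe that the class-group contributions from $H^1(G_\O^T(L),A)^{\Gal(L/Q)}$ and $\B_\O^T(Q,A)$ cancel, leaving only the bounded $T$-unit term. The paper then invokes Lemma~\ref{lem:base_change} exactly as you do.
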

	
	\begin{proof}
		Applying Lemma~\ref{lem:es-B} to $S_1=\O$, $S_2=\scrS_A$, and $k=L$ gives the $\Gal(L/Q)$-equivariant sequence
		\begin{equation}\label{eq:es-B-L}
			H^1(G_{\O}^T(L), A) \hookrightarrow H^1(G_{\scrS_A}^T(L), A) \to \bigoplus_{\frakP \in \scrS_A \backslash T(L)} H^1(\calT_{\frakP}, A)^{\calG_{\frakP}} \to \B_{\O}^{T(L)}(L, A) \twoheadrightarrow \B_{\scrS_A \backslash T(L)}^{\scrS_A \cup T(L)}(L, A).
		\end{equation}
		Since $p\nmid[L:Q]$, taking $\Gal(L/Q)$-invariants is an exact functor, so we obtain an exact sequence of $\F_p$-modules after taking $\Gal(L/Q)$-invariants of \eqref{eq:es-B-L}. Note that 
		\[
			H^1(G_{\O}^T(L),A)^{\Gal(L/Q)}=\Hom_{\Gal(L/Q)}(G_{\O}^T(L), A)=\Hom_{\Gal(L/Q)}(\Cl_{T(L)}(L), A).
		\]
		For each prime $\frakp$ of $Q$, let $\frakp(L)$ denote the primes of $L$ above $\frakp$. 
		Because $p\nmid [L:Q]$, for any $\frakP \in \frakp(L)$, $\calT_{\frakP}$ is a normal subgroup of $\calT_{\frakp}$ of index prime to $p$, so by the Hochschild--Serre exact sequence, we have
		\[
			H^1(\calT_{\frakp}, A) \simeq H^1(\calT_{\frakP}, A)^{\calT_\frakp}.
		\]
		Therefore,
		\[
			\left(\bigoplus_{\frakP \in \frakp(L)} H^1(\calT_{\frakP}, A)^{\calG_{\frakP}} \right)^{\Gal(L/Q)}= \left(H^1(\calT_{\frakP}, A)^{\calG_{\frakP}}\right)^{\calG_{\frakp}(L/Q)} = H^1(\calT_{\frakP}, A)^{\calG_{\frakp}} =H^1(\calT_{\frakp}, A)^{\calG_{\frakp}}.
		\]
		By Lemmas~\ref{lem:B}
, \ref{lem:B-QvsL} and \ref{lem:B-empty}, we have $\B_{\scrS_A\backslash T(L)}^{\scrS_A \cup T(L)}(L,A)^{\Gal(L/Q)} \simeq \B_{\scrS_A \backslash T}^{\scrS_A \cup T}(Q, A)$ and 
		\begin{eqnarray*}
			\B_{\O}^{T(L)}(L,A)^{\Gal(L/Q)} &\simeq& \B_{\O}^T(Q, A) \\
			 &\simeq& \Hom_{\Gal(L/Q)}\left(\B_{\O}^{T(L)}(L, \F_p), A^{\vee} \right) \\
			&=& \Hom_{\Gal(L/Q)}\left(\Cl_{T(L)}(L),A\right) \oplus \Hom_{\Gal(L/Q)}\left(\calO_{L,T(L)}^{\times}, A\right).
		\end{eqnarray*}
		Now we have evaluated the $\Gal(L/Q)$-invariants of terms in \eqref{eq:es-B-L}, from which we have
		\begin{eqnarray}
			h^1(G_{\scrS_A}^T(L), A)^{\Gal(L/Q)} &=& \dim_{\F_p} \B_{\scrS_A \backslash T}^{\scrS_A \cup T}(Q, A) + \sum_{\frakp \in \scrS_A \backslash T} h^1(\calT_{\frakp}, A)^{\calG_{\frakp}}  \nonumber \\
			&& - \dim_{\F_p} \Hom_{\Gal(L/Q)}\left( \calO_{L,T(L)}^{\times}, A \right). \label{eq:calO}
		\end{eqnarray}
		Note that $[L:Q]$ can be bounded from above by a constant depending on only $\Gamma$ and $Q$, but not on the choice of $K$ and how $G_Q$ acts on $A$. So, the last term in \eqref{eq:calO}, which is at most $\dim_{\F_p} (\calO^{\times}_{L,T(L)}/\calO^{\times p}_{L,T(L)}) \cdot \dim_{\F_p}A$, can be bounded a constant depending only on $\Gamma$, $Q$, $T$ and the module structure of $A$. 
		Finally, the lemma follows from Lemma~\ref{lem:base_change}.
	\end{proof}

	\begin{proposition}\label{prop:e-rk-arbitrary}
		For any set $\scrS$ satisfying $S\subseteq \scrS \subseteq \scrS_A$, 
		\[
			\rk_A C_S^T(K) \geq \frac{\dim_{\F_p} \B_{\scrS \backslash T}^{\scrS\cup T}(Q, A)+ \sum\limits_{\frakp \in \scrS \backslash T} h^1(\calT_{\frakp}, A)^{\calG_{\frakp}} }{\dim_{\F_p}\End_{\Gamma}(A)} - c_1,
		\]
		where $c_1$ is the constant in Proposition~\ref{prop:e-rk}.
	\end{proposition}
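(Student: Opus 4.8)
The plan is to rerun the cohomological computation from the proof of Proposition~\ref{prop:e-rk} with the arbitrary intermediate set $\scrS$ in place of $\scrS_A$, and then pass from $\scrS$ up to $\scrS_A$ by a single inflation map, after which Lemma~\ref{lem:base_change} does the rest.

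First I would set $L:=Q(A,\mu_p)$; by Lemma~\ref{lem:max-ideal} the Sylow $p$-subgroup of $\Gamma$ acts trivially on $A$, so $[L:Q]$ is prime to $p$ and $(-)^{\Gal(L/Q)}$ is exact on $\F_p[\Gal(L/Q)]$-modules. Applying Lemma~\ref{lem:es-B} with $k=L$, $S_1=\O$, $S_2=\scrS$ and taking $\Gal(L/Q)$-invariants of the five-term exact sequence produced there yields --- exactly as in the proof of Proposition~\ref{prop:e-rk}, via Lemma~\ref{lem:B} for the $\B$-terms, the identification $H^1(G_\O^T(L),A)^{\Gal(L/Q)}=\Hom_{\Gal(L/Q)}(\Cl_{T(L)}(L),A)$, and the local identity $\bigl(\bigoplus_{\frakP\mid\frakp}H^1(\calT_{\frakP},A)^{\calG_{\frakP}}\bigr)^{\Gal(L/Q)}=H^1(\calT_{\frakp},A)^{\calG_{\frakp}}$ --- an exact sequence of $\F_p$-vector spaces
\[
\Hom_{\Gal(L/Q)}(\Cl_{T(L)}(L),A)\hookrightarrow H^1(G_\scrS^T(L),A)^{\Gal(L/Q)}\to\bigoplus_{\frakp\in\scrS\setminus T}H^1(\calT_{\frakp},A)^{\calG_{\frakp}}\to\B_\O^T(Q,A)\twoheadrightarrow\B_{\scrS\setminus T}^{\scrS\cup T}(Q,A).
\]
Taking the alternating sum of dimensions and substituting $\dim_{\F_p}\B_\O^T(Q,A)=\dim_{\F_p}\Hom_{\Gal(L/Q)}(\Cl_{T(L)}(L),A)+\dim_{\F_p}\Hom_{\Gal(L/Q)}(\calO_{L,T(L)}^{\times},A)$ (Lemmas~\ref{lem:B}, \ref{lem:B-QvsL}, \ref{lem:B-empty}, as in Proposition~\ref{prop:e-rk}) would give the exact identity
\[
h^1(G_\scrS^T(L),A)^{\Gal(L/Q)}=\dim_{\F_p}\B_{\scrS\setminus T}^{\scrS\cup T}(Q,A)+\sum_{\frakp\in\scrS\setminus T}h^1(\calT_{\frakp},A)^{\calG_{\frakp}}-\dim_{\F_p}\Hom_{\Gal(L/Q)}(\calO_{L,T(L)}^{\times},A).
\]

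Next, since $\scrS\subseteq\scrS_A$ one has $L_\scrS^T\subseteq L_{\scrS_A}^T$, so inflation furnishes a $\Gal(L/Q)$-equivariant injection $H^1(G_\scrS^T(L),A)\hookrightarrow H^1(G_{\scrS_A}^T(L),A)$, hence $h^1(G_\scrS^T(L),A)^{\Gal(L/Q)}\le h^1(G_{\scrS_A}^T(L),A)^{\Gal(L/Q)}$. Feeding this, together with Lemma~\ref{lem:base_change} (which gives $h^1(G_{\scrS_A}^T(L),A)^{\Gal(L/Q)}\le\dim_{\F_p}\End_{\Gamma}(A)\,(\rk_A C_S^T(K)+c_0)$, using $\End_{G_Q}(A)=\End_{\Gamma}(A)$), into the displayed identity, I would obtain
\[
\rk_A C_S^T(K)\ge\frac{\dim_{\F_p}\B_{\scrS\setminus T}^{\scrS\cup T}(Q,A)+\sum_{\frakp\in\scrS\setminus T}h^1(\calT_{\frakp},A)^{\calG_{\frakp}}}{\dim_{\F_p}\End_{\Gamma}(A)}-c_0-\frac{\dim_{\F_p}\Hom_{\Gal(L/Q)}(\calO_{L,T(L)}^{\times},A)}{\dim_{\F_p}\End_{\Gamma}(A)}.
\]
The two negative terms depend only on $\Gamma$, $p$, $Q$, $T$ and the $\Gamma$-module structure of $A$, not on $\scrS$ or $K$; and specializing $\scrS=\scrS_A$ shows this is exactly the lower half of the two-sided estimate of Proposition~\ref{prop:e-rk}, so the constant $c_1$ there may be (and is) chosen $\ge c_0+\dim_{\F_p}\Hom_{\Gal(L/Q)}(\calO_{L,T(L)}^{\times},A)/\dim_{\F_p}\End_{\Gamma}(A)$, which finishes the argument.

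The only point I expect to need care is checking that the five-term exact sequence of Lemma~\ref{lem:es-B} and the ensuing dimension identity hold uniformly in $\scrS$ --- which is immediate, since $S_1=\O$ is fixed and $\scrS$ enters only as $S_2$ --- so there is no genuine obstacle here: all the ramification-theoretic content (comparing $\rk_A C_S^T(K)$ with a cohomology group over $L$, and in particular the analysis bounding the auxiliary fields $E$ and $F_D$) has already been absorbed into Lemma~\ref{lem:base_change} and is used only at the endpoint $\scrS=\scrS_A$.
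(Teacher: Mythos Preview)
Your proof is correct. The route differs from the paper's in one respect: the paper obtains the inequality $\rk_A C_S^T(K)\ge h^1(G_{\scrS}^T(L),A)^{\Gal(L/Q)}/\dim_{\F_p}\End_{\Gamma}(A)-c_0$ by reopening the proof of Lemma~\ref{lem:base_change} with $\scrS$ in place of $\scrS_A$ and observing that the field-theoretic inequality giving the relevant direction survives (since primes in $\scrS\setminus S\subseteq\scrR_A(K/Q)$ still satisfy the ramification constraints needed there). You instead keep Lemma~\ref{lem:base_change} as a black box at $\scrS_A$, and bridge from $\scrS$ to $\scrS_A$ by the inflation injection $H^1(G_{\scrS}^T(L),A)\hookrightarrow H^1(G_{\scrS_A}^T(L),A)$ (which is just the first map in Lemma~\ref{lem:es-B} with $S_1=\scrS$, $S_2=\scrS_A$). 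Both arguments land on the same displayed identity for $h^1(G_{\scrS}^T(L),A)^{\Gal(L/Q)}$ and the same constant $c_1=c_0+\dim_{\F_p}\Hom_{\Gal(L/Q)}(\calO_{L,T(L)}^{\times},A)/\dim_{\F_p}\End_{\Gamma}(A)$. Your version is a bit more modular since it avoids re-examining which of the two inequalities in Lemma~\ref{lem:base_change} persists under the substitution; the paper's version is more direct once that check is made.
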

	
	\begin{proof}
		Repeating the proof of Lemma~\ref{lem:base_change} by replacing $\scrS_A$ with $\scrS$, one see that the inequality \eqref{eq:bound-E2} still holds (but \eqref{eq:bound-E3} might fail), so
		\[
			\rk_AC_S^T(K) \geq \frac{h^1(G_{\scrS_A}^T(L), A)^{\Gal(L/Q)}}{\dim_{\F_p}\End_{G_Q}(A)}-c_0.
		\]
		Then following the proof of Proposition~\ref{prop:e-rk}, one obtain the lower bound for $\rk_AC_S^T(K)$ in the proposition.
	\end{proof}

\section{Embedding Problems and Presentations}\label{sect:presentation}

\subsection{Embedding problems}
\hfill
	
	\begin{lemma}\label{lem:EP}
		Let $k$ be a finite Galois extension of $Q$ and $p$ a prime number such that $p\neq \Char(Q)$. Let $\rho:\widetilde{G} \to G$ be a surjection of profinite groups such that $M:=\ker \rho$ is a finite abelian $p$-group, and let $\varphi: G_Q \to G$ be a homomorphism. For each prime $\frakp$ of $Q$, let $\varphi_{\frakp}$ be defined by restricting $\varphi$ to $\calG_{\frakp}$. Consider the global and local embedding problems below.
		\[\begin{tikzcd}
			 & & G_Q \arrow["\varphi"]{d} \arrow[dashed, "\psi"']{dl} \\
			 M \arrow[hook]{r} & \widetilde{G} \arrow["\rho", two heads]{r} & G
		\end{tikzcd}
		\quad \quad \quad \quad
		\begin{tikzcd}
			& & \calG_{\frakp} \arrow["\varphi_{\frakp}"]{d} \arrow[dashed, "\psi_{\frakp}"']{dl}\\
			M \arrow[hook]{r} & \widetilde{G} \arrow["\rho", two heads]{r} & G
		\end{tikzcd}\]
		Assume that $M$ is a simple $\F_p[G_Q]$-module, where the $G_Q$-action on $M$ is defined via $\varphi$ and the conjugation of $\widetilde{G}$. Let $S$ be a set of primes of $Q$ such that $M$, with the above $G_Q$-action, satisfies $\B_{S}^{S}(Q, M)=0$.
		If 
		\begin{enumerate}
			%\item\label{item:EP-0} the supernatural number $|\widetilde{G}|$ is coprime to $\Char Q$,
			\item \label{item:EP-1} $\varphi$ factors through $\Gal(k_S/Q)$,
			\item \label{item:EP-2} $\psi_{\frakp}$ in the right diagram exists for every $\frakp \in S$, and
			\item \label{item:EP-3} when $Q$ is a number field, $S_\ell(Q) \subset S$ for every $\ell \mid p[k:Q]$,
		\end{enumerate}
		then there exists a map $\psi$ in the left diagram that factors through $\Gal(k_S/Q)$.
	\end{lemma}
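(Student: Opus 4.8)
The plan is to reformulate the assertion as the vanishing of a single obstruction class and to read that vanishing off from the hypothesis $\B_S^S(Q,M)=0$. Write $\Gamma_0:=\Gal(k_S/Q)$, the Galois group of the maximal extension of $k$ unramified outside $S$; by \eqref{item:EP-1} the map $\varphi$ factors as $G_Q\twoheadrightarrow\Gamma_0\xrightarrow{\ \overline\varphi\ }G$, and $M$ is an $\F_p[\Gamma_0]$-module via $\overline\varphi$ and the conjugation of $\widetilde G$. Pulling the extension $1\to M\to\widetilde G\xrightarrow{\rho}G\to 1$ back along $\overline\varphi$ produces a class $\omega=\overline\varphi^{\,*}[\widetilde G]\in H^2(\Gamma_0,M)$, and a continuous homomorphism $\overline\psi\colon\Gamma_0\to\widetilde G$ with $\rho\circ\overline\psi=\overline\varphi$ exists if and only if $\omega=0$; composing such a $\overline\psi$ with $G_Q\twoheadrightarrow\Gamma_0$ yields a solution $\psi$ of the left-hand embedding problem that factors through $\Gal(k_S/Q)$ by construction. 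So everything reduces to proving $\omega=0$.

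Next I would show that $\omega$ is everywhere locally trivial. For a prime $\frakp$ of $Q$, the restriction $\omega_\frakp\in H^2(\calG_\frakp,M)$ of $\omega$ along $\calG_\frakp\to\Gamma_0$ is precisely the obstruction to the local embedding problem for $\varphi_\frakp$, so $\omega_\frakp=0$ for $\frakp\in S$ by \eqref{item:EP-2}. For $\frakp\notin S$, hypothesis \eqref{item:EP-3} makes the residue characteristic of $\frakp$ prime to $p[k:Q]$, so $\varphi_\frakp$ factors through the decomposition subgroup of $k_S/Q$ at $\frakp$, which is an extension of a procyclic group by the inertia subgroup of $k/Q$ at $\frakp$ --- a finite cyclic group of order prime to $p$, since $k_S/k$ is unramified at $\frakp$. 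Such a group has $p$-cohomological dimension $\le 1$, so $H^2(\,\cdot\,,M)=0$ and $\omega_\frakp=0$; the finitely many archimedean or infinite places are treated the same way. Hence $\omega$ lies in the subgroup of $H^2(\Gamma_0,M)$ consisting of classes that die on every $\calG_\frakp$.

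The heart of the proof is to see that this subgroup is zero, and this is where $\B$ enters. Since $\Gal(k_S/k)=G^{\emptyset}_{S(k)}(k)$ genuinely is the Galois group of the maximal extension of $k$ unramified outside $S(k)$, and $S(k)$ contains the places of $k$ above $p$ by \eqref{item:EP-3}, the Poitou--Tate nine-term exact sequence over $k$ applies and identifies the kernel of $H^2(G^{\emptyset}_{S(k)}(k),M)\to\bigoplus_{\frakP\in S(k)}H^2(\calG_\frakP,M)$ with the Pontryagin dual of a relaxed Tate--Shafarevich group $\Sha^1_{S(k)}(k,M')$, which on unwinding the definition \eqref{eq:def-B} is $\B^{S(k)}_{S(k)}(k,M)$; by Lemma~\ref{lem:B} its $\Gal(k/Q)$-invariants equal $\B^S_S(Q,M)=0$. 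Combining this with an inflation/Hochschild--Serre comparison relating $\Gamma_0$ to $G^{\emptyset}_{S'}(Q)$ for $S'=S\cup\Ram(k/Q)$, and with Lemma~\ref{lem:es-B} (which shows enlarging the ramification set only shrinks $\B$, so $\B^S_S(Q,M)=0$ forces $\B^{S'}_{S'}(Q,M)=0$), one deduces $\omega=0$ in $H^2(\Gamma_0,M)$, and the desired $\psi$ exists.

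I expect the main obstacle to be exactly this last step. The obstruction $\omega$ sits in $H^2$ of $\Gamma_0=\Gal(k_S/Q)$, which is a quotient of $G_Q$ but is not literally a Galois group ``with restricted ramification over $Q$'' unless $k/Q$ is unramified outside $S$; identifying precisely which restricted-ramification cohomology group controls $\omega$, and verifying that $\B^S_S(Q,M)=0$ is the exact condition annihilating it rather than merely a sufficient one up to bounded error terms, requires a careful chase through the Poitou--Tate machinery over $k$ and the comparison results of Section~\ref{sect:B}. By contrast the reduction to ``$\omega=0$'' is formal, and the local computation at primes outside $S$ is routine --- hypothesis \eqref{item:EP-3} is tailored precisely so that those tame decomposition groups carry no $H^2$, while the simplicity of $M$ together with hypothesis \eqref{item:EP-2} is what makes the local conditions at $S$ mesh cleanly with Poitou--Tate duality.
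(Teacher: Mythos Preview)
Your approach differs substantially from the paper's, and the gap you yourself flag in the last paragraph is real and your sketch does not close it.

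The paper does not attempt to kill an obstruction class in $H^2(\Gal(k_S/Q),M)$. Instead it works in two steps entirely over $G_Q$. First, it shows the local embedding problem is solvable at \emph{every} prime: at $\frakp\in S$ by hypothesis~\eqref{item:EP-2}, and at $\frakp\notin S$ by an explicit construction using Iwasawa's presentation of the tame quotient of $\calG_\frakp$ together with the Schur--Zassenhaus theorem. Since $\B^S_S(Q,M)=0$ forces $\Sha^2(Q,M)=0$ (via \cite[Proposition~8.5]{Liu-presentation}), a global lift $\phi:G_Q\to\widetilde G$ of $\varphi$ exists, with no control yet on its ramification. Second, the paper twists $\phi$ by a $1$-cocycle $\delta\in H^1(G_Q,M)$: at each $\frakp\notin S$ it extracts from $\phi$ a class in $H^1(\calT_\frakp,M)^{\calG_\frakp}$ measuring the unwanted ramification, and then invokes $\B^S_S(Q,M)=0$ a second time, now via \cite[Lemma~3.3]{Liu2022b}, to produce a global $\delta$ restricting to all these prescribed local classes. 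The twist ${}^\delta\phi$ then factors through $\Gal(k_S/Q)$. Thus the hypothesis $\B^S_S=0$ is used to manufacture an $H^1$-class with specified ramification behavior, not to annihilate a locally-trivial $H^2$-class.

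Your route instead needs a Hasse principle for $H^2(\Gal(k_S/Q),M)$, and your argument for it breaks down. Your appeal to Lemma~\ref{lem:B} to identify $\B^{S(k)}_{S(k)}(k,M)^{\Gal(k/Q)}$ with $\B^S_S(Q,M)$ is invalid: that lemma requires $p\nmid[L:Q]$, whereas nothing here forbids $p\mid[k:Q]$ --- condition~\eqref{item:EP-3} is designed precisely to accommodate that case. With Lemma~\ref{lem:B} unavailable, the Hochschild--Serre spectral sequence for $1\to G_{S(k)}(k)\to\Gal(k_S/Q)\to\Gal(k/Q)\to 1$ gives no clean handle on $H^2(\Gal(k_S/Q),M)$ when $\Gal(k/Q)$ has nontrivial $p$-cohomology, and inflation to $H^2(G_{S'}(Q),M)$ for $S'=S\cup\Ram(k/Q)$ need not be injective, so vanishing there does not pull back. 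The paper's two-step method sidesteps all of this by never leaving $G_Q$-cohomology.
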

	
	\begin{proof}
		By definition, $\B_{\{\text{all primes}\}}^{\{\text{all primes}\}}(Q,M)$ is a quotient of $\B_S^S(Q,M)$, so it is 0; and then the Shaferevich group $\Sha^2(Q,M)=0$ by \cite[Proposition~8.5]{Liu-presentation}. 
		By \cite[Lemma~3.7]{Liu2022b}, there exists a map $\psi: G_Q\to \widetilde{G}$ fitting into the left diagram if and only if the map $\psi_\frakp$ exists for every prime $\frakp$ of $Q$.
		
		We first show the existence of $\psi_{\frakp}$ for every $\frakp \not\in S$. If $\varphi_{\frakp}$ is unramified, then $\varphi_{\frakp}$ factors through $\calG_{\frakp}/\calT_{\frakp} \simeq \hat{\Z}$, and it can always be lifted to a map $\hat{\Z} \to \widetilde{G}$, which gives an unramified $\psi_{\frakp}$ fitting into the right diagram. Suppose $\varphi_{\frakp}$ is ramified for some $\frakp \not\in S$. By the condition~\eqref{item:EP-1}, any prime of $k$ above $\frakp$ is unramified in the field $\overline{Q}^{\ker \varphi}$. Then it follows by the condition~\eqref{item:EP-3} that $\frakp$ is tamely ramified in $k/Q$ and $\varphi_{\frakp}(\calT_{\frakp})$ has order pro-prime-to-$p$. By the result of Iwasawa \cite{Iwasawa}, $\varphi_{\frakp}(\calG_{\frakp})$ can be generated by two elements $t, s \in G$ such that 
		\begin{equation}\label{eq:Iwa}
			sts^{-1}=t^{\Nm(\frakp)}
		\end{equation}
		and the cyclic subgroup generated by $t$ is $\varphi_{\frakp}(\calT_{\frakp})$. Since $p\nmid |t|$ and $M$ is elementary abelian-$p$, there exists $\tilde{t} \in \rho^{-1}(t)$ such that $|\tilde{t}|=|t|$. Let $x\in \widetilde{G}$ be an element of $\rho^{-1}(s)$. By \eqref{eq:Iwa},
		\begin{equation}\label{eq:Iwa-lift}
			x \tilde{t} x^{-1} = {\tilde{t}}^{\,\Nm(\frakp)} m,
		\end{equation}
		for some $m \in M$. Since $|x\tilde{t} x^{-1}|=|\tilde{t}|=|t|=|sts^{-1}|=|t^{\Nm(\frakp)}|$, we see that $\tilde{t}^{\, \Nm(\frakp)}$ and $\tilde{t}^{\, \Nm(\frakp)} m$ have the same order that is prime to $|M|$, so by the Schur--Zassenhaus theorem, $\tilde{t}^{\, \Nm(\frakp)}$ and $\tilde{t}^{\, \Nm(\frakp)}$ are conjugate, i.e., there exists $g \in M$ such that $g \tilde{t}^{\, \Nm(\frakp)} m g^{-1}= \tilde{t}^{\, \Nm(\frakp)}$. Then \eqref{eq:Iwa-lift} implies
		\[
			(gx) \tilde{t} (gx)^{-1} = \tilde{t}^{\, \Nm(\frakp)},
		\]
		thus $\tilde{t}$ and $\tilde{s}:=gx$ give lifts of $t$ and $s$ that satisfies the relator in the presentation of the Galois group of maximal tamely ramified extension given in \cite{Iwasawa}. So the subgroup of $\widetilde{G}$ generated by $\tilde{t}$ and $\tilde{s}$ defines a lift $\psi_{\frakp}$ of $\varphi_{\frakp}$.
		
		From the argument above, we see the condition \eqref{item:EP-2} in the lemma implies the existence of $\phi:G_Q \to \widetilde{G}$ such that $\rho \circ \phi= \varphi$. Next, we will show that the conditions \eqref{item:EP-1} and \eqref{item:EP-3} imply that there exists a 1-cocycle $\delta: G_Q \to M$ such that the group homomorphism, which is the twist of $\phi$ by $\delta$,
		\begin{eqnarray*}
			^{\delta}\phi : G_Q &\longrightarrow& \widetilde{G} \\
			g &\longmapsto& \delta(g) \phi(g)
		\end{eqnarray*}
		factors through $\Gal(k_S/Q)$. For each prime $\frakp$ of $Q$, let $\phi_{\frakp}: \calG_{\frakp} \to \widetilde{G}$ denote the composition of $\calG_{\frakp} \hookrightarrow G_Q$ and $\phi$. Consider a prime $\frakp \not\in S$, and pick a prime $\frakP$ of $\overline{Q}^{\ker \varphi}$ lying above $\frakp$. Let $-\phi_{\frakp}$ be the map from $\calG_{\frakp} \to \widetilde{G}$ such that $\phi_{\frakp}(x)^{-1}=-\phi_{\frakp}(x)$ for every $x \in \calG_{\frakp}$. The restriction of $-\phi_{\frakp}$ to $\calG_{\frakP}$ gives a 1-cocycle $\delta_{\frakP}$ in $H^1(\calG_{\frakP}, M)^{\calG_{\frakp}}$, and its further restriction to $\calT_{\frakP}$ gives a 1-cocycle $\delta_{\frakP} |_{\calT_{\frakP}}$ in $H^1(\calT_{\frakP}, M)^{\calG_{\frakp}}$. Recall that we showed, because of the conditions~\eqref{item:EP-1} and \eqref{item:EP-3}, if $\varphi_\frakp$ is ramified, then it has to be tamely ramified. So $\calT_{\frakP}$ is a subgroup of $\calT_{\frakp}$ of index not divisible by $p$. So by \cite[Corollary~(2.4.2)]{NSW},
		\[
			H^i(\calG_{\frakp}/\calT_{\frakp}, M^{\calT_{\frakp}}) \overset{\sim}{\longrightarrow} H^i(\calG_{\frakp}/\calT_{\frakP}, M^{\calT_{\frakP}}), \quad \text{for }i>0.
		\]
		Then we have the following commutative diagram
		\[\begin{tikzcd}
			0 \arrow{r} & H^1(\calG_{\frakp}/\calT_{\frakp}, M^{\calT_{\frakp}}) \arrow{r} \arrow["\sim"]{d} & H^1(\calG_{\frakp}, M) \arrow{r}\arrow[equal]{d} & H^1(\calT_{\frakp}, M)^{\calG_{\frakp}} \arrow[dashed, "\sim"]{d} \arrow{r} & 0 \\
			0 \arrow{r} & H^1(\calG_{\frakp}/\calT_{\frakP}, M^{\calT_{\frakP}}) \arrow{r} & H^1(\calG_{\frakp}, M) \arrow{r} & H^1(\calT_{\frakP}, M)^{\calG_{\frakp}} \arrow{r} & 0,
		\end{tikzcd}\]
		where the rows are inflation-restriction exact sequences, and the last entries are zero because $H^2(\calG_{\frakp}/\calT_{\frakP}, M^{\calT_{\frakP}}) \simeq H^2(\calG_{\frakp}/\calT_{\frakp}, M^{\calT_{\frakp}})=0$ as $\calG_{\frakp}/\calT_{\frakp}\simeq \hat{\Z}$. From the diagram, we see that the right dashed arrow exists and is an isomorphism. 
		Via this isomorphism, we consider 
		\[
			\prod_{\frakp \not \in S} \delta_{\frakP}|_{\calT_{\frakP}} \in \bigoplus_{\frakp \not \in S} H^1(\calT_{\frakP}, M)^{\calG_{\frakp}} \overset{\sim}{\longrightarrow} \bigoplus_{\frakp \not \in S} H^1(\calT_{\frakp}, M)^{\calG_{\frakp}}.
		\]
		By the assumption $\B_S^S(Q,M)=0$ and \cite[Lemma~3.3]{Liu2022b}, there exists $\delta \in H^1(G_Q, M)$ such that the restriction of $\delta$ induced by $\calT_{\frakP} \hookrightarrow \calG_{\frakp} \hookrightarrow G_Q$ is $\delta_{\frakP}|_{\calT_{\frakP}}$for all $\frakp \not\in S$. Then $\calT_{\frakP} \subset \ker {^{\delta}\phi}$ by our construction of $\delta_{\frakP}$, so the map $^{\delta}\phi$ gives a lift of $\phi$ that does not further ramified at $\frakP$. This holds for all primes outside $S$, so $^{\delta}\phi$ fits into the global diagram in the lemma and factors through $\Gal(k_S/Q)$, and then the proof is completed.
	\end{proof}

\subsection{Maximal split subextension}
\hfill

	In this subsection, we study the basic properties of the maximal split subextensions for a given group extension, which will be used in the proof of the main theorems later.
	
	\begin{definition}\label{def:max-split}
		Given a profinite group extension 
		\begin{equation}\label{eq:ses}
			1 \longrightarrow M \longrightarrow \widetilde{G} \longrightarrow G \longrightarrow 1
		\end{equation}
		and a normal subgroup $N$ of $\widetilde{G}$ that is contained in $M$, we say \emph{$N$ defines a maximal split subextension of \eqref{eq:ses}} if the group extension 
		\[
			1 \longrightarrow M/N \longrightarrow \widetilde{G}/N \longrightarrow G \longrightarrow 1
		\]
		splits, and for any proper subgroup $N_0\subsetneq N$ that is normal in $\widetilde{G}$, the group extension 
		\[
			1 \longrightarrow M/N_0 \longrightarrow \widetilde{G}/N_0 \longrightarrow G \longrightarrow 1
		\]
		is nonsplit.
	\end{definition}
	
	\begin{lemma}\label{lem:max-split-abelian}
		Consider the extension \eqref{eq:ses} and let $\rho$ denote the surjection $\widetilde{G}\to G$. Assume $M$ is abelian. Then a normal subgroup $N$ of $\widetilde{G}$ defines a maximal split extension if and only if there exists a subgroup $H$ of $G$ such that $N=H\cap M$, $\rho(H)=G$, and the group extension $N \hookrightarrow H \twoheadrightarrow G$ defined by $\rho|_{H}$ is completely nonsplit (that is, if $\rho(E)=G$ for a subgroup $E\subset H$, then $E=H$). 
	\end{lemma}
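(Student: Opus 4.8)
The plan is to translate both sides of the equivalence into the language of complements. The backbone is an elementary dictionary: if $H\le\widetilde G$ satisfies $\rho(H)=G$, then $HM=\widetilde G$ (since $HM$ contains $M=\ker\rho$ and surjects onto $G$), so $H\cap M$ is normalized both by $H$ and by $M$ and is therefore normal in $\widetilde G$; as $M$ is abelian this forces $N_H:=H\cap M$ to be a $G$-submodule of $M$. Moreover $H/N_H$ meets $M/N_H$ trivially and maps onto $G$, i.e. it is a complement of $M/N_H$ in $\widetilde G/N_H$; conversely, for a $\widetilde G$-normal $N\subseteq M$ and any complement $C$ of $M/N$ in $\widetilde G/N$, the preimage $\pi_N^{-1}(C)$ (with $\pi_N\colon\widetilde G\to\widetilde G/N$) is a subgroup $H$ with $H\cap M=N$ and $\rho(H)=G$. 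I will also use the standard pushout remark: if $N_0\subseteq N_1$ are $\widetilde G$-normal and $\widetilde G/N_0\to G$ splits, then $\widetilde G/N_1\to G$ splits.

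For the forward implication, assume $N$ defines a maximal split subextension as in Definition~\ref{def:max-split}. Choose a section $s\colon G\to\widetilde G/N$ of $1\to M/N\to\widetilde G/N\to G\to1$ and set $H:=\pi_N^{-1}(s(G))$, so $H\cap M=N$ and $\rho(H)=G$. To see that $\rho|_H$ is completely nonsplit, suppose $E\le H$ with $\rho(E)=G$ and put $N_0:=E\cap M$, a $\widetilde G$-normal subgroup contained in $N$. If $N_0=N$, then $E\supseteq N$ and $E/N$ surjects onto $G$ inside $H/N\cong G$, forcing $E=H$. If $N_0\subsetneq N$, then $E/N_0$ is a complement of $M/N_0$ in $\widetilde G/N_0$ (it meets $M/N_0$ trivially because $E\cap M=N_0$), so $\widetilde G/N_0\to G$ splits, contradicting the minimality of $N$ in \eqref{eq:ses}. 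Hence $E=H$ always.

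For the converse, assume $H\le\widetilde G$ with $\rho(H)=G$, $N:=H\cap M$, and $\rho|_H$ completely nonsplit. The subextension at $N$ splits since $H/N$ is a complement. One must show that for every $\widetilde G$-normal $N_0\subsetneq N$ the subextension at $N_0$ is nonsplit. Suppose some such $N_0$ gives a split extension; using the pushout remark and replacing $N_0$ by a $\widetilde G$-normal subgroup maximal among those with $N_0\subseteq\,\cdot\,\subsetneq N$, we may assume $N/N_0$ is a simple $G$-module and $\widetilde G/N_0\to G$ splits. The plan is then to show this forces $1\to N/N_0\to H/N_0\to G\to1$ to split: given that, a complement $D$ of $N/N_0$ in $H/N_0$ pulls back under $\pi_{N_0}$ to a subgroup $E\le H$ with $E\cap M=N_0\subsetneq N$, hence $E\subsetneq H$, while $\rho(E)=G$ — contradicting complete nonsplitness. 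To obtain the splitting of $1\to N/N_0\to H/N_0\to G\to1$ one compares the complement $H/N$ with one arising from the assumed splitting mod $N_0$, adjusts by an element of $M/N$ (two complements of an abelian normal subgroup differ by a class in $H^1$), and uses simplicity of $N/N_0$ to kill the lifting obstruction in $H^2(G,N/N_0)$.

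The main obstacle is precisely this last point in the converse: for a general abelian $M$, a subgroup $H\le\widetilde G$ that surjects onto $G$ need not split over $H\cap M$ even when \eqref{eq:ses} itself splits, because the obstruction sits in $H^2(G,N/N_0)$ and can be nonzero. The argument therefore has to use the module structure actually at hand — in the application $M$ is simple, which collapses the poset of $\widetilde G$-normal submodules of $M$ to $\{0,M\}$; then $N$ is $0$ or $M$, and the converse reduces to the bookkeeping check that $N=0$ is maximal split exactly when \eqref{eq:ses} splits (a complement exists), while $N=M$ is maximal split exactly when \eqref{eq:ses} is nonsplit (no proper subgroup of $\widetilde G$ surjects onto $G$, i.e. $\rho=\rho|_{\widetilde G}$ is completely nonsplit). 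Everything else in the proof is formal manipulation with the complement dictionary.
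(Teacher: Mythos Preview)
Your forward implication is correct and matches the paper's: take $H$ to be the preimage of a section, and check complete nonsplitness by the minimality argument you give. You are also right that the converse is where the real difficulty lies.

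In fact the converse, as the lemma is stated, is \emph{false} for general abelian $M$, so your cohomological attempt cannot be completed. Take $G=\Z/p\Z$, $M=\Z/p^2\Z$ with trivial $G$-action, and $\widetilde G=M\times G$. The cyclic subgroup $H=\langle(1,1)\rangle\cong\Z/p^2\Z$ satisfies $\rho(H)=G$ and $N:=H\cap M=p\Z/p^2\Z$, and $H$ is completely nonsplit since its only proper subgroups are $0$ and $pH\subset M$. Yet $N$ is not a maximal split subextension: already $N_0=0$ gives the split extension $\widetilde G=M\times G\to G$. So the existence of a completely nonsplit $H$ with $H\cap M=N$ does not force $N$ to be minimal, and your instinct to retreat to extra hypotheses was sound --- though ``$M$ simple'' is not a hypothesis the lemma carries.

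The paper's own proof does not address this either. After establishing the dictionary $\{N:\widetilde G/N\to G\text{ splits}\}=\{H\cap M:\rho(H)=G\}$, it writes ``Therefore, $N=H\cap M$ defines a maximal split subextension if and only if $H$ does not contain any proper subgroup $E$ such that $\rho(E)=G$'' without further justification. The map $H\mapsto H\cap M$ is surjective and order-preserving but not order-reflecting, so minimality of $H$ need not give minimality of $H\cap M$; the counterexample above exhibits exactly this failure. What both you and the paper \emph{do} establish is the forward direction --- indeed the stronger fact that if $N$ is maximal split then \emph{every} $H$ with $H\cap M=N$ and $\rho(H)=G$ is completely nonsplit --- and this is what the subsequent Lemma~\ref{lem:rank-maxsplit} actually uses.
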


	\begin{proof}
		For a normal subgroup $N$ of $\widetilde{G}$, if the group extension $M/N\hookrightarrow \widetilde{G}/N \twoheadrightarrow G$ splits, then let $H$ be the full preimage of the subgroup $G$ of $\widetilde{G}/N$ (defined by a splitting) under the quotient map $\widetilde{G}\to \widetilde{G}/N$, and we have $N=H\cap M$ and $\rho(H)=G$. On the other hand, suppose $H$ is a subgroup of $G$ such that $\rho(H)=G$. Let $N=H\cap M$. Note that the conjugation action of $\widetilde{G}$ on $M$ factors through $G$ because $M$ is abelian. This $G$-action preserves $N$ because $\rho(H)=G$. Then $N$ is normal and $H/N$ defines a section of $\widetilde{G}/N \twoheadrightarrow G$, so $\widetilde{G}/N \twoheadrightarrow G$ splits. So we showed that $N$ defines a split subextension if and only if there exists $H\subset G$ such that $N=H\cap M$ and $\rho(H)=G$. Therefore, $N=H\cap M$ defines a maximal split subextension if and only if $H$ does not contain any proper subgroup $E$ such that $\rho(E)=G$.
	\end{proof}
	
	\begin{lemma}\label{lem:rank-maxsplit}
		Consider \eqref{eq:ses}, and assume $G=\Gamma$ is finite abelian and $M$ is a finitely generated abelian pro-$p$ group. Assume a normal subgroup $N \subset \widetilde{G}$ defines a maximal split subextension of \eqref{eq:ses}. Let $A$ be a simple $\F_p[\Gamma]$-module.
		\begin{enumerate}
			\item If $A\neq\F_p$, then $\rk_A N=0$ and $\rk_A M/N=\rk_A M$.
			\item If $A=\F_p$, then $\rk_{A} N \leq h^2(\Gamma, \F_p)$ and $\rk_{A} M/N \geq \rk_{A} M - h^2(\Gamma, \F_p)$.
		\end{enumerate}
	\end{lemma}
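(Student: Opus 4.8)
The plan is to transfer the question to the subgroup $H$ produced by Lemma~\ref{lem:max-split-abelian}: that lemma gives $H \leq \widetilde{G}$ with $N = H \cap M$, $\rho(H) = \Gamma$ (writing $\rho\colon\widetilde{G}\to\Gamma$), and such that the extension $1 \to N \to H \xrightarrow{\rho} \Gamma \to 1$ is completely nonsplit. Here $N$ is abelian, being a submodule of $M$, and because $M$ is abelian with $\rho(H)=\Gamma$ the conjugation action of $H$ on $N$ factors through $\Gamma$, so every $\Z_p[\Gamma]$-submodule of $N$ is normal in $H$. The first step is to upgrade complete nonsplitness to the statement that for every proper submodule $N'' \subsetneq N$ the extension $1 \to N/N'' \to H/N'' \to \Gamma \to 1$ is nonsplit: a section $\Gamma \to H/N''$ would pull back under $H \to H/N''$ to a subgroup $E \leq H$ with $\rho(E) = \Gamma$ and $E \cap N = N''$, and since $N'' \subsetneq N = H \cap N$ this would force $E \subsetneq H$, contradicting complete nonsplitness.

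Second, set $r := \rk_A N$, which is finite since $N$ is a submodule of the Noetherian $\Z_p[\Gamma]$-module $M$, and assume $r \geq 1$ (otherwise there is nothing to prove about $N$). Choose a surjection $N \twoheadrightarrow A^{\oplus r}$ with kernel $N'$ and let $\xi \in H^2(\Gamma, A^{\oplus r})$ be the class of the extension $1 \to A^{\oplus r} \to H/N' \to \Gamma \to 1$. For a proper submodule $W \subsetneq A^{\oplus r}$, the pushout of this extension along $A^{\oplus r} \twoheadrightarrow A^{\oplus r}/W$ is $H/N''$, where $N''$ is the preimage of $W$ in $N$; hence by the first step the image of $\xi$ in $H^2(\Gamma, A^{\oplus r}/W)$ is nonzero. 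Running $W$ over the maximal submodules, so that $A^{\oplus r}/W \cong A$, and using $H^2(\Gamma, A^{\oplus r}) \cong H^2(\Gamma,A)^{\oplus r}$ together with $\Hom_\Gamma(A^{\oplus r},A) \cong \End_\Gamma(A)^{\oplus r}$, this nonvanishing for every maximal $W$ translates into the statement that the $r$ components of $\xi$ are linearly independent over the field $k := \End_\Gamma(A)$. Therefore $r \leq \dim_k H^2(\Gamma, A)$.

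Third, I would evaluate $\dim_k H^2(\Gamma, A)$. If $A = \F_p$, then $k = \F_p$ and $r \leq \dim_{\F_p} H^2(\Gamma,\F_p) = h^2(\Gamma,\F_p)$. If $A \neq \F_p$, write $\Gamma = \Gamma_p \times \Gamma'$; then $\Gamma_p$ acts trivially on $A$ while $\Gamma'$ acts nontrivially, so the $\Gamma$-submodule $A^{\Gamma'} \subseteq A$ is proper, hence zero by simplicity of $A$. Since $\gcd(|\Gamma'|, p) = 1$ and $A$ is a $p$-group, $H^j(\Gamma', A) = 0$ for all $j \geq 0$, so the Hochschild--Serre spectral sequence for $1 \to \Gamma' \to \Gamma \to \Gamma_p \to 1$ gives $H^2(\Gamma, A) = 0$ and hence $\rk_A N = 0$. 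Finally, for the assertions about $M/N$, apply $\Hom_\Gamma(-,A)$ to $0 \to N \to M \to M/N \to 0$: left-exactness together with \eqref{eq:e-rk} gives $\rk_A M - \rk_A(M/N) \leq \rk_A N$, and since $M/N$ is a quotient of $M$ we also have $\rk_A(M/N) \leq \rk_A M$; combining these with the bounds on $\rk_A N$ just obtained yields $\rk_A(M/N) = \rk_A M$ when $A \neq \F_p$ and $\rk_A(M/N) \geq \rk_A M - h^2(\Gamma,\F_p)$ when $A = \F_p$.

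The step I expect to be the main obstacle is the second one: making rigorous that the nonvanishing of the pushed-forward obstruction classes, over all proper (equivalently, all maximal) submodules of $A^{\oplus r}$, is equivalent to $k$-linear independence of the components of $\xi$ in $H^2(\Gamma, A)$. This needs care with the $\End_\Gamma(A)$-module structures on $\Hom_\Gamma(A^{\oplus r},A)$ and on $H^2(\Gamma, A)$, with the identification of submodules of $A^{\oplus r}$ with $k$-subspaces, and with the functoriality of the extension class under pushout; by comparison, the cohomological input $H^2(\Gamma, A) = 0$ for a nontrivial simple module $A$ is standard.
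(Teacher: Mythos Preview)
Your proof is correct and follows the same strategy as the paper: both pass to the completely nonsplit extension $N \hookrightarrow H \twoheadrightarrow \Gamma$ via Lemma~\ref{lem:max-split-abelian}, both compute $H^2(\Gamma,A)=0$ for nontrivial $A$ via Hochschild--Serre for $\Gamma'\lhd\Gamma$, and both deduce the bound on $\rk_A(M/N)$ from the exact sequence $0 \to \Hom_\Gamma(M/N,A) \to \Hom_\Gamma(M,A) \to \Hom_\Gamma(N,A)$.

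The only presentational difference is in the case $A=\F_p$: the paper uses the five-term inflation--restriction sequence for $N \hookrightarrow H \twoheadrightarrow \Gamma$, observing that complete nonsplitness forces $h^1(\Gamma,\F_p)=h^1(H,\F_p)$, whence the transgression $\Hom_\Gamma(N,\F_p)\to H^2(\Gamma,\F_p)$ is injective. Your obstruction-class argument---showing that $\phi\mapsto\phi_*(\xi)$ is an injective $k$-linear map $\Hom_\Gamma(A^{\oplus r},A)\to H^2(\Gamma,A)$---is exactly this transgression, just packaged so as to treat both cases at once and with the $k$-structure made explicit. Your flagged ``main obstacle'' is not one: $k$-linearity holds because $(c\cdot\phi)_*=c_*\circ\phi_*$ by functoriality of pushout, and injectivity is precisely the nonvanishing of $\phi_*(\xi)$ for every nonzero $\phi$, which you have already established from your first step.
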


	\begin{proof}
		Recall that, by Lemma~\ref{lem:max-ideal}, $\Gamma_p$ acts trivially on $A$ and hence $A$ is a simple $\F_p[\Gamma']$-module. By the Hochschild--Serre spectral sequence (for example \cite[Corollary~(2.4.2)]{NSW}), since $H^i(\Gamma', A)=0$ for $i>0$, we have 
		\begin{equation}\label{eq:HS-isom}
			H^i(\Gamma_p, A^{\Gamma'}) \simeq H^i(\Gamma, A) \quad \text{for all }i.
		\end{equation}
		Let $H$ be as described in Lemma~\ref{lem:max-split-abelian}, and then $N \hookrightarrow H \to \Gamma$ is a completely nonsplit extension.
		
		Assume $\Gamma$ acts nontrivially on $A$. Then $A^{\Gamma'}=A^{\Gamma}=1$, and it follows by \eqref{eq:HS-isom} that $H^2(\Gamma, A)=0$. So $H^2(\Gamma,A)=0$ implies that $\rk_A N=0$, and 
		\begin{equation}\label{eq:HSses-split}
			0 \longrightarrow H^1(M/N, A)^{\Gamma} \longrightarrow H^1(M, A)^{\Gamma} \longrightarrow H^1(N, A)^{\widetilde{G}}
		\end{equation}
		implies $\Hom_{\Gamma}(M/N, A)\simeq \Hom_{\Gamma}(M,A)$, and hence $\rk_A M/N=\rk_A M$ follows by \eqref{eq:e-rk}.
		
		Assume $A=\F_p$. The exact sequence $N \hookrightarrow H \twoheadrightarrow \Gamma$ implies
		\[
			0 \longrightarrow H^1(\Gamma, \F_p) \longrightarrow H^1(H,\F_p) \longrightarrow H^1(N,\F_p)^{H} \longrightarrow H^2(\Gamma, \F_p).
		\]
		Since $N \hookrightarrow H \twoheadrightarrow \Gamma$ is completley nonsplit, $h^1(\Gamma,\F_p)=h^1(H,\F_p)$, so $\rk_{A} N \leq h^2(\Gamma, \F_p)$. Finally, the last inequality in the lemma follows by \eqref{eq:HSses-split} for $A=\F_p$.
	\end{proof}

\subsection{Presentations of maximal split subextensions of $\Gal(E_S^T(K)/Q)\to \Gal(K/Q)$.}\label{ss:presentaiton-split}
\hfill

	Throughout this subsection, we fix a simple $\F_p[\Gal(K/Q)]$-module $A$ and a finite set $\scrS$ of primes of $Q$ such that $S\subset \scrS$, and let $R$ be a quotient ring of $\Z_p[\Gamma]$ such that every composition factor of $R$ is isomorphic to $A$ and $\rk_A R=1$. Later in Section~\ref{sect:proof-main}, we will apply the results in this section to $R=P_A$ and $R=e\Z_p[\Gamma]$ for $e\in\Idem(A)$.
	
	Let $S$ and $T$ be the sets in Theorem~\ref{thm:lb-rank}. Recall $E_S^T(K)$ and $C_S^T(K)$ defined in Section~\ref{sec:MainResults}. Let 
	\[
		RC_S^T:=C_S^T(K) \otimes_{\Z_p[\Gamma]} R.
	\]
	Because $R$ is a quotient ring of $\Z_p[\Gamma]$, $RC_S^T$ is a $\Gamma$-equivariant quotient of $C_S^T(K)$. We define
	\[
		RE_S^T:=E_S^T(K)^{\ker(C_S^T(K)\to RC_S^T)},
	\]
	so $RE_S^T$ is the extension of $K$ with Galois group $RC_S^T$.

	By Lemma~\ref{lem:frakS}, there exists a set $\frakS$ of primes of $Q$ such that
	\begin{enumerate}
		\item $\scrS \subset \frakS$,
		\item $\B_{\frakS \backslash T}^{\frakS \cup T}(Q, A)=0$, and
		\item $\# \frakS \backslash (\cup_{\ell \mid (p|\Gamma|)} S_{\ell}(Q) \cup \scrS \cup T) = \dfrac{\dim_{\F_p} \B_{\scrS \backslash T}^{\scrS \cup T}(Q,A)}{\dim_{\F_p} \End_{\Gamma}(A)}$.
	\end{enumerate}
	We pick and then fix such a set $\frakS$. The motivation for defining $\frakS$ is: we want to enlarge the set $\scrS$ by including sufficiently many primes to make $\B_{\frakS}^{\frakS}(Q,A)$ zero, so that we can apply the embedding problem result Lemma~\ref{lem:EP}. 
	
	Define $RC_{\frakS}$ and $RE_{\frakS}$, by replacing $S$ with $\frakS$ and $T$ with $\O$ in the definition of $RC_S^T$ and $RE_S^T$. 
	Then consider the short exact sequence
	\begin{equation}\label{eq:RC-ses}
		1\longrightarrow RC_{\frakS} \longrightarrow \Gal(RE_{\frakS}/Q) \longrightarrow \Gal(K/Q) \longrightarrow 1,
	\end{equation} 
	and choose a normal subgroup $N$ of $\Gal(RE_{\frakS}/Q)$ that defines a maximal split subextension of \eqref{eq:RC-ses}. We denote by
	\[
		R\mathcal{C}_{\frakS}:=RC_{\frakS}/N \quad \text{and} \quad R\calE_{\frakS}:=(RE_{\frakS})^{N},
	\]
	and then by Definition~\ref{def:max-split} we have a split short exact sequence.
	\begin{equation}\label{eq:split-RC-ses}
		1 \longrightarrow R\mathcal{C}_{\frakS} \longrightarrow \Gal(R\mathcal{E}_{\frakS}/Q) \longrightarrow \Gal(K/Q) \longrightarrow 1.
	\end{equation}
	Note that $RC_S^T$ is a $\Gal(K/Q)$-equivariant quotient of $RC_{\frakS}$. By Lemma~\ref{lem:max-split-abelian}, one can check that the image of $N$ in $RC_S^T$ defines a maximal split subextension of 
	\begin{equation}\label{eq:RC-ses-little}
		1\longrightarrow RC_S^T \longrightarrow \Gal(RE_S^T/Q)\longrightarrow \Gal(K/Q) \longrightarrow 1.
	\end{equation}
	So we define
	\[
		R\calC_S^T:=\faktor{RC_{\frakS}}{N\ker (RC_{\frakS} \to RC_S^T)} \quad \text{and}\quad R\calE_S^T:=(RE_{\frakS})^{N\ker (RC_{\frakS} \to RC_S^T)},
	\]
	and then obtain a maximal split subextension of \eqref{eq:RC-ses-little}
	\begin{equation}\label{eq:split-RC-ses-little}
		1 \longrightarrow R\calC_S^T \longrightarrow \Gal(R\calE_S^T/Q) \longrightarrow \Gal(K/Q) \longrightarrow 1.
	\end{equation}
	The goal of this subsection is to give presentations of $R\calC_\frakS$  and $R\calC_S^T$ using the local relators (relators in terms of only local information such as inertia subgroups and Frobenius elements). 
	
	Let 
	\[
		r:=\rk_A R\calC_{\frakS}.
	\]
	Because \eqref{eq:split-RC-ses} splits, there exists a surjective group homomorphism
	\begin{equation}\label{eq:def-kappa}
		\kappa: R^{\oplus r} \rtimes \Gamma \xtwoheadrightarrow{} R\calC_{\frakS} \rtimes \Gal(K/Q) \simeq \Gal(R\calE_{\frakS}/Q),
	\end{equation}
	whose restriction to $\Gamma$ is the inverse of the chosen isomorphism $\iota: \Gal(K/Q)\overset{\sim}{\to} \Gamma$ for the $\Gamma$-extension $(K,\iota)$.
	
		For a prime $\frakp$ of $Q$, if $\frakp$ is tamely ramified or unramified in $R\calE_{\frakS}/Q$, then, by \cite{Iwasawa}, we let $t_{\frakp}, s_{\frakp} \in \Gal(R\calE_{\frakS}/Q)$ denote a set of generators of $\calG_{\frakp}(R\calE_{\frakS}/Q)$ such that $t_{\frakp}$ generates $\calT_{\frakp}(R\calE_{\frakS}/Q)$, $s_{\frakp}$ is a Frobenius element, and $t_{\frakp}$ and $s_{\frakp}$ are compatible in the sense that
	\begin{equation}\label{eq:tame-rel}
		s_{\frakp} t_{\frakp} s_{\frakp}^{-1} = t_{\frakp} ^{\Nm(\frakp)}.
	\end{equation}
	(So $t_{\frakp}$ is trivial when $\frakp$ is unramified.) We fix a choice of preimages
	\[
		x_{\frakp} \in \kappa^{-1}(t_{\frakp}) \quad \text{and} \quad y_{\frakp} \in \kappa^{-1}(s_{\frakp}).
	\]

	\begin{proposition}\label{prop:pres-scrS}
		There exists a constant $c_2$ depending on $\Gamma$, $p$, $Q$, $R$ and the $\Gamma$-module structure of $A$ such that $\ker \kappa$ is the smallest closed normal subgroup of $R^{\oplus r} \rtimes \Gamma$ containing elements of the following types:
		\begin{itemize}
			\item {\bf Tame Type:} 
			\[
				x_{\frakp}^{\Nm(\frakp)} y_{\frakp} x_{\frakp}^{-1} y_{\frakp}^{-1}
			\]
			for each prime $\frakp \in \frakS \backslash (\cup_{\ell | (p|\Gamma|)} S_{\ell}(Q))$, and
			
			\item {\bf Wild Type:} additionally at most $c_2$ elements.
		\end{itemize}
	\end{proposition}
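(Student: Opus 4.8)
The plan is to realize $\Gal(R\calE_{\frakS}/Q)$ as the solution of a sequence of small embedding problems, starting from $\Gal(K/Q)=\Gamma$ and adding one simple $\F_p[\Gamma]$-layer at a time, and to invoke Lemma~\ref{lem:EP} at each stage to control the ramification of the lift. First I would choose a filtration of the $\Z_p[\Gamma]$-module $R\calC_{\frakS}$ by submodules $R\calC_{\frakS}=M_0\supset M_1\supset\cdots\supset M_\ell=1$, each $M_i$ normal in $\Gal(R\calE_{\frakS}/Q)$, with each successive quotient $M_{i-1}/M_i$ a simple $\F_p[\Gamma]$-module. Because every composition factor of $R$ is isomorphic to $A$, and $R\calC_{\frakS}\simeq R^{\oplus r}$ up to a bounded-index subextension (here one uses Lemma~\ref{lem:DVR-module}\eqref{item:DVR-3} in the $e\Z_p[\Gamma]$ case, or directly the structure of $P_A$), each $M_{i-1}/M_i\simeq A$. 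Now run the following: given a lift of $\Gamma$ to $\Gal(R\calE_{\frakS}/M_{i-1}/?)$ — more precisely a surjection $\varphi_i\colon G_Q\to \Gal(R\calE_{\frakS}/Q)/M_{i-1}$ factoring through $\Gal(K_{\frakS}/Q)$ — we solve the embedding problem with kernel $M_{i-1}/M_i\simeq A$. The key input is that $\frakS$ was chosen so that $\B_{\frakS\backslash T}^{\frakS\cup T}(Q,A)=0$; combined with $S_\ell(Q)\subset\frakS$ for all $\ell\mid p|\Gamma|$ and the fact that $\varphi_i$ factors through $K_{\frakS}$, Lemma~\ref{lem:EP} (applied with $k$ the fixed field of $\ker\varphi_i$, $S=\frakS$, $T=\O$) tells us a global lift factoring through $\Gal(K_\frakS/Q)$ exists \emph{provided} the local lifts exist at the primes of $\frakS$. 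The local lifts at tamely ramified or unramified primes always exist, as shown inside the proof of Lemma~\ref{lem:EP}; the finitely many primes in $\bigcup_{\ell\mid p|\Gamma|}S_\ell(Q)$ (the wildly ramified ones) are the only places where an obstruction can survive, and there are only boundedly many of them.

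Next I would extract the presentation from this inductive construction. Since \eqref{eq:split-RC-ses} splits, we have the canonical surjection $\kappa\colon R^{\oplus r}\rtimes\Gamma\twoheadrightarrow \Gal(R\calE_{\frakS}/Q)$ of \eqref{eq:def-kappa}; I want to identify a generating set for $\ker\kappa$ as a closed normal subgroup. The domain $R^{\oplus r}\rtimes\Gamma$ is the ``universal'' object in the sense that $R\calE_{\frakS}/Q$ is by construction generated by its inertia and Frobenius elements at primes of $\frakS$ together with a lift of $\Gamma$; concretely, the decomposition groups $\calG_{\frakp}(R\calE_{\frakS}/Q)$ for $\frakp\in\frakS$, together with the image of $\Gamma$, topologically generate $\Gal(R\calE_{\frakS}/Q)$, because $R\calE_{\frakS}/K$ is unramified outside $\frakS(K)$ and abelian, so $R\calC_{\frakS}$ is generated by inertia at $\frakS$. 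Lifting these generators to $x_\frakp,y_\frakp\in R^{\oplus r}\rtimes\Gamma$ and $\Gamma$ itself, one sees $\langle x_\frakp,y_\frakp,\Gamma\rangle$ already surjects, so the chosen generators of $R^{\oplus r}\rtimes\Gamma$ can be rewritten in terms of them, which matches the standard bookkeeping (this is where the precise number $r=\rk_A R\calC_{\frakS}$ enters: $r$ is exactly the number of $A$-copies, hence the minimal number of module generators of $R\calC_{\frakS}$, which by the $\B$-vanishing and the exact sequence of Lemma~\ref{lem:es-B} is governed by $\#\frakS$ minus a bounded term). For each tamely/unramified prime $\frakp\in\frakS\setminus(\bigcup_{\ell\mid p|\Gamma|}S_\ell(Q))$ the Iwasawa relation \eqref{eq:tame-rel} holds on $t_\frakp,s_\frakp$, hence the element $x_\frakp^{\Nm(\frakp)}y_\frakp x_\frakp^{-1}y_\frakp^{-1}$ lies in $\ker\kappa$ — these are the Tame Type relators.

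The heart of the matter is then the claim that these tame relators, together with at most $c_2$ further ``Wild Type'' relators, \emph{generate} $\ker\kappa$ as a closed normal subgroup. I would prove this by a counting/minimality argument, in the spirit of comparing a presentation of $\Gal(R\calE_{\frakS}/Q)$ with the free pro-$p$ (or free profinite-in-the-relevant-sense) prosolvable cover. Let $F:=R^{\oplus r}\rtimes\Gamma$ and let $N_0\trianglelefteq F$ be the closed normal subgroup generated by the Tame Type relators; then $N_0\subseteq\ker\kappa$, and I must bound $\ker\kappa/N_0$. Passing to the maximal abelian-$p$-by-$\Gamma$ quotient and using the embedding-problem solvability established above (which says the only primes where the lift can be forced to ramify beyond $\frakS\setminus(\text{wild part})$ are the finitely many wild primes), one shows that modulo $N_0$ the group $\ker\kappa$ is topologically generated by contributions localized at the primes of $\bigcup_{\ell\mid p|\Gamma|}S_\ell(Q)$, i.e.\ the local relations $x_\frakp^{\Nm(\frakp)}y_\frakp x_\frakp^{-1}y_\frakp^{-1}$ (or their wild analogues) at those boundedly many primes, plus relations coming from the bounded-rank discrepancy between $R\calC_{\frakS}$ and $R^{\oplus r}$ (controlled by $\#S_\ell$, $[L:Q]$, and $h^2(\Gamma,A)$ as in Lemma~\ref{lem:rank-maxsplit} and Proposition~\ref{prop:e-rk}). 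Each of these contributes a bounded number of relators, and collecting them gives the constant $c_2$, which by inspection depends only on $\Gamma$, $p$, $Q$, $R$ and the $\Gamma$-module structure of $A$. \textbf{The main obstacle} I anticipate is precisely this last step: rigorously showing that no \emph{unexpected} global relations appear — that is, that the difference between the ``free'' group $R^{\oplus r}\rtimes\Gamma$ cut out by tame relations at all of $\frakS$ and the actual Galois group is accounted for entirely by a bounded set of wild relators. This requires carefully invoking $\B_{\frakS\backslash T}^{\frakS\cup T}(Q,A)=0$ to guarantee that every locally-solvable embedding problem in the tower is globally solvable with ramification confined to $\frakS$, so that the relation module is generated by its local pieces (à la the Poitou--Tate / $\Sha^2$ vanishing argument), and then bounding the genuinely local-at-wild-primes contributions uniformly. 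The rest — rewriting generators, verifying the tame relators lie in the kernel, and tracking the dependence of $c_2$ — is routine once this structural statement is in place.
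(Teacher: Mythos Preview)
Your proposal has the right ingredients---the tame relators, Lemma~\ref{lem:EP}, and the idea that only the wild primes contribute unbounded relations---but the architecture is off in a way that leaves a genuine gap at exactly the point you flag as ``the main obstacle.''

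First, the filtration in your opening paragraph is aimed at the wrong object. Filtering $R\calC_{\frakS}$ and solving embedding problems layer by layer from $\Gamma$ up to $\Gal(R\calE_{\frakS}/Q)$ only reconstructs the surjection $G_Q\twoheadrightarrow\Gal(R\calE_{\frakS}/Q)$, which you already have; it gives no direct handle on $\ker\kappa$. The paper instead works with the \emph{relation module}: let $M$ be the normal closure of the tame relators and let $M_1$ be the largest normal subgroup with $M\subset M_1\subset\ker\kappa$ and $\ker\kappa/M_1\simeq A^{\oplus d}$. The embedding problem that matters is the one lifting $G_Q\twoheadrightarrow\Gal(R\calE_{\frakS}/Q)$ \emph{upward} into the intermediate quotient $(R^{\oplus r}\rtimes\Gamma)/M_1$, not downward from $\Gamma$.

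Second, and more importantly, you are missing the mechanism that makes the wild local obstructions disappear. The paper's move is this: for each wild prime $\frakp\in\bigcup_{\ell\mid p|\Gamma|}S_\ell(Q)$, choose module generators of $\calG_{\frakP}(R\calE_{\frakS}/K)$, lift them to $R^{\oplus r}/M_1$, and let $N_{\frakp}$ be the submodule they generate; set $N_{p|\Gamma|}=(\ker\kappa/M_1)\cap\prod_\frakp N_\frakp$. After quotienting by $N_{p|\Gamma|}$, the local embedding problem at each wild prime becomes \emph{split}, hence trivially solvable; at the tame primes it is solved by the images of $x_\frakp,y_\frakp$. Now Lemma~\ref{lem:EP} produces a global lift $G_Q\to (R^{\oplus r}\rtimes\Gamma)/(M_1 N_{p|\Gamma|})$ factoring through $\Gal(K_{\frakS}/Q)$. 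But $R\calE_{\frakS}$ is by definition \emph{maximal} among such extensions, so this lift cannot be strictly larger than $\Gal(R\calE_{\frakS}/Q)$; hence $(\ker\kappa/M_1)/N_{p|\Gamma|}=1$, i.e.\ $\ker\kappa/M_1=N_{p|\Gamma|}$. Since each $N_\frakp$ is generated by a number of elements bounded by the local generator rank (controlled via \cite[Theorems~(7.5.3), (7.5.11)]{NSW}), this bounds $\rk_A(\ker\kappa/M_1)$, and Nakayama then bounds the number of generators of $\ker\kappa/M$.

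Your sketch never supplies either the splitting-at-wild-primes trick or the maximality step, and without them the sentence ``modulo $N_0$ the group $\ker\kappa$ is topologically generated by contributions localized at the wild primes'' remains an assertion rather than an argument. (Also, the aside ``$R\calC_{\frakS}\simeq R^{\oplus r}$ up to bounded index'' is not used and need not be true; all that is needed is $\rk_A R\calC_{\frakS}=r$, which is the definition of $r$.)
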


	\begin{proof}
		Let $\varphi_{\frakp}: \calG_{\frakp} \to \Gal(R\calE_{\frakS}/Q)$ denote the composition of the local inclusion $\calG_{\frakp} \hookrightarrow G_Q$ and $\varphi: G_Q \twoheadrightarrow \Gal(R\calE_{\frakS}/Q)$.
		When $\frakp \in \frakS \backslash ( \cup_{\ell |(p|\Gamma|)} S_{\ell}(Q))$, $R\calE_{\frakS}/Q$ must be tamely ramified or unramified at $\frakp$, so the map $\varphi_{\frakp}: \calG_{\frakp} \to \Gal(R\calE_{\frakS}/Q)$ factors through the Galois group of the maximal tamely ramified extension of $Q_{\frakp}$. Because $t_{\frakp}$ and $s_{\frakp}$ satisfy the relation \eqref{eq:tame-rel}, we obtain the relation of tame type as described in the lemma
		\[
			x_{\frakp}^{\Nm(\frakp)} y_{\frakp} x_{\frakp}^{-1} y_{\frakp}^{-1} \in \ker \kappa.
		\]
		Define $M$ be to the smallest closed normal subgroup of $R^{\oplus r} \rtimes \Gamma$ containing all the elements of tame type. If $M=\ker\kappa$, then we are done. Otherwise, $M\subsetneq \ker \kappa$, and we let $M_1$ be the smallest closed normal subgroup of $R^{\oplus r}\rtimes \Gamma$ such that $M \subset M_1\subset \ker \kappa$ and $\ker \kappa/M_1\simeq_{\Gamma} A^{\oplus d}$ for some integer $d$; equivalently, $M_1:=\cap_{\alpha} \ker \alpha$ where $\alpha$ varies in $\Hom_{\Gamma}(\ker \kappa/M ,A)$. 

	For each $\frakp \in \cup_{\ell \mid (p|\Gamma|)} S_{\ell}(Q)$, $\frakp$ can be wildly ramified in $R\calE_{\frakS}/Q$, and we will define a submodule $N_{\frakp}$ of $R^{\oplus r} /M_1$ as follows. First, $\kappa$ and $M_1$ define the short exact sequence below, in which we denote the surjection by $\varrho$.
	\begin{equation}\label{eq:ses-varrho}
		1 \longrightarrow \ker\kappa/M_1 \longrightarrow (R^{\oplus r} \rtimes \Gamma)/M_1 \overset{\varrho}{\longrightarrow} \Gal(R\calE_{\frakS}/Q) \longrightarrow 1
	\end{equation}
	The local Galois group $\calG_{\frakp}(R\calE_{\frakS}/Q)$ is $\im \varphi_{\frakp}$. Let $\frakP$ be a prime of $K$ lying above $\frakp$. By \cite[Theorem~(7.5.11)]{NSW} if $\frakp \in S_p(Q)$ and by \cite[Theorem~(7.5.3)]{NSW} if $\frakp\not\in S_p(Q)$, the pro-$p$ completion of $\calG_{\frakP}$ is finitely generated whose generator rank is bounded above by $Q$ and the size of $|\Gamma|$, so $d_{\frakp}:=\rk_A\calG_{\frakP}(R\calE_{\frakS}/K)$ is bounded above. Let $\gamma_{\frakp, 1}, \gamma_{\frakp,2},\ldots, \gamma_{\frakp,d_{\frakp}}$ be a minimal set of generators of the $R$-module $\calG_{\frakP}(R\calE_{\frakS}/K)$. For each $i=1, \ldots, d_{\frakp}$, pick a preimage $\widetilde{\gamma}_{\frakp, i} \in \varrho^{-1}(\gamma_{\frakp, i})$, then define $N_{\frakp}$ to be the submodule of $R^{\oplus r}/M_1$ generated by $\widetilde{\gamma}_{\frakp, 1}, \ldots, \widetilde{\gamma}_{\frakp, d_{\frakp}}$. 
	
	We claim that the submodule of an $R$-module $M$ generated by one (arbitrary) element $x \in M$ is a quotient module of $R$ (i.e., a one-generated $R$-module has $A$-rank at most 1). To see this, by the Nakayama's lemma, it suffices to show that $A^{\oplus n}$ cannot be generated by one element when $n\geq 2$, and this follows by \cite[Remark~5.2]{Liu-Wood} and Lemma~\ref{lem:End=A}.
	
	Therefore, for every $i$,  $\rk_{\F_p} \langle \widetilde{\gamma}_{\frakp, i} \rangle _{/p} \leq \dim_{\F_p} R_{/p}$. So we have 	
	\begin{equation}\label{eq:rk_Np}
		\rk_A(N_{\frakp} \cap (\ker \kappa/M_1))\leq d_{\frakp}\frac{\dim_{\F_p} R_{/p}}{\dim_{\F_p}A}.
	\end{equation}
	Moreover,
	\[
		1 \longrightarrow \frac{\ker \kappa/M_1}{N_{\frakp} \cap (\ker\kappa /M_1)} \longrightarrow \frac{\varrho^{-1}(\calG_{\frakp}(R\calE_{\frakS}/Q))}{N_{\frakp} \cap (\ker\kappa/M_1)}\longrightarrow \calG_{\frakp}(R\calE_{\frakS}/Q) \longrightarrow 1
	\]
	is a split group extension.
	
	Define $N_{p|\Gamma|}$ to be the intersection of $\ker \kappa/M_1$ and the product of $N_{\frakp}$ over all $\frakp \in \cup_{\ell \mid (p|\Gamma|)} S_{\ell}(Q)$. After taking quotient of \eqref{eq:ses-varrho} by $N_{p|\Gamma|}$, we obtain an embedding problem
	\begin{equation}\label{eq:embedprob-split}
	\begin{tikzcd}
		& & & G_Q \arrow["\varphi", two heads]{d} \arrow[dashed]{dl}& \\
		1 \arrow{r} & \dfrac{\ker \kappa /M_1}{N_{p|\Gamma|}} \arrow{r} &\dfrac{(R^{\oplus r} \rtimes \Gamma)/M_1}{N_{p|\Gamma|}} \arrow{r} &\Gal(R\calE_{\frakS}/Q) \arrow{r} & 1,
	\end{tikzcd}
	\end{equation}
	The induced local embedding problem at every $\frakp \in \cup_{\ell \mid (p|\Gamma|)} S_{\ell}(Q)$ is split (an embedding problem is split if and only if the horizontal group extension is split), so they are solvable. For each prime $\frakp \in \frakS \backslash \cup_{\ell \mid(p|\Gamma|)} S_{\ell}(Q)$, the images of $x_{\frakp}$ and $y_{\frakp}$ define a solution to the induced local embedding problem. By Lemma~\ref{lem:EP}, the global embedding problem \eqref{eq:embedprob-split} has a solution factoring through $\Gal(K_{\frakS}/Q)$. By definition of $R\calE_{\frakS}$, $(\ker \kappa/M_1)/N_{p|\Gamma|}$ must be trivial, so
	\[
		\rk_A \ker\kappa/M_1 = \rk_A N_{p|\Gamma|} \leq \sum_{\frakp \in \cup_{\ell \mid(p|\Gamma|)} S_{\ell}(Q)} N_{\frakp} \cap (\ker \kappa/M_1) \leq \frac{\dim_{\F_p} R_{/p}}{\dim_{\F_p} A}\sum_{\frakp \in \cup_{\ell \mid(p|\Gamma|)} S_{\ell}(Q)} d_{\frakp},
	\]
	Therefore, $\rk_A \ker\kappa/M_1$ is bounded above by a constant depending on $\Gamma$, $p$, $Q$, $R$ and $A$, and we denote this upper bound by $c_2$. Then $\ker \kappa/M_1$ is generated by $c_2(A)$ elements, and by Nakayama's lemma $\ker \kappa/M$ is generated by $c_2:= \max\{c_2(A)\mid A \in \calM_{\F_p[\Gamma]}\}$ elements, so the proof is completed.
\end{proof}

	\begin{corollary}\label{cor:pres}
		Let $\overline{t}_{\frakp}$ denote the image of $t_{\frakp}$ in $\Gal(K/Q)\simeq \Gamma$, and define $\varkappa$ to be the composite map
		\[
			R^{\oplus r} \rtimes \Gamma \xtwoheadrightarrow{\kappa} \Gal(R\calE_{\frakS}/Q) \xtwoheadrightarrow{} \Gal(R\calE_S^T/Q).
		\]
		There exists a constant $c_3$ depending on $|\Gamma|$, $p$, $Q$, $S$, $T$, $R$ and the $\Gamma$-module structure of $A$ such that $\ker \varkappa$ is the smallest closed normal subgroup of $R^{\oplus r} \rtimes \Gamma$ containing elements of the following types:
		\begin{enumerate}
			\item\label{item:pres-1}
				\[
					x_{\frakp}^{\Nm (\frakp)} y_{\frakp} x_{\frakp}^{-1} y_{\frakp}^{-1}
				\]
				for each prime $\frakp \in S \backslash (\cup_{\ell \mid (p |\Gamma|)} S_{\ell}(Q) \cup T)$,
			
			\item\label{item:pres-2}
				\[
					x_{\frakp}^{\Nm (\frakp)} y_{\frakp} x_{\frakp}^{-1} y_{\frakp}^{-1} \quad \text{and} \quad x_{\frakp}^{|\overline{t}_{\frakp}|}
				\]
				for each prime $\frakp \in \frakS \backslash (\cup_{\ell \mid (p |\Gamma|)} S_{\ell}(Q) \cup S \cup T)$, and 
			\item\label{item:pres-3} additionally at most $c_3$ elements.
		\end{enumerate}
	\end{corollary}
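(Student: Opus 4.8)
The plan is to derive Corollary~\ref{cor:pres} from Proposition~\ref{prop:pres-scrS} by analyzing the extra relations that appear when we pass from $R\calE_{\frakS}/Q$ to its subextension $R\calE_S^T/Q$. Recall that by construction $R\calC_S^T$ is the quotient of $RC_{\frakS}$ by $N\ker(RC_{\frakS}\to RC_S^T)$, so $\ker\varkappa$ contains $\ker\kappa$ together with all elements mapping into $N\ker(RC_{\frakS}\to RC_S^T)$. Thus the first job is to identify a set of normal generators for the kernel of $R\calC_{\frakS}\to R\calC_S^T$, i.e.\ the image of $\ker(RC_{\frakS}\to RC_S^T)$, which is generated (as a normal subgroup of $R\calC_{\frakS}\rtimes\Gamma$) by: (a) the inertia generators $t_{\frakp}$ at primes $\frakp\in\frakS\setminus S$ which become unramified when we restrict to the smaller ramification set $S$; (b) the decomposition (Frobenius) generators $t_{\frakp}=1,s_{\frakp}$ at primes $\frakp\in T$ which must split completely in $R\calE_S^T$ but need not in $R\calE_{\frakS}$; and (c) finitely many extra elements coming from the wild primes in $\cup_{\ell\mid p|\Gamma|}S_\ell(Q)$ whose local ramification behaviour differs between the two setups. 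For primes of type (a), killing the inertia $t_{\frakp}$ precisely amounts to imposing $x_{\frakp}^{|\overline{t}_{\frakp}|}$ (so that the image of $x_{\frakp}$ lies in $R^{\oplus r}$, i.e.\ the inertia is trivial in $R\calC_S^T$), while retaining the tame relation $x_{\frakp}^{\Nm(\frakp)}y_{\frakp}x_{\frakp}^{-1}y_{\frakp}^{-1}$; this yields the relators in item~\eqref{item:pres-2}. The relators in item~\eqref{item:pres-1} are just the tame relations from Proposition~\ref{prop:pres-scrS} at the primes of $S$ that survive.

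Concretely, I would proceed as follows. First I would record that, by Proposition~\ref{prop:pres-scrS}, $\ker\kappa$ is normally generated by the tame relators at $\frakp\in\frakS\setminus(\cup_{\ell\mid p|\Gamma|}S_\ell(Q))$ plus at most $c_2$ wild relators; so $\ker\varkappa$ is normally generated by those together with a normal generating set of the image of $\ker(RC_{\frakS}\to RC_S^T)$ in $R^{\oplus r}\rtimes\Gamma$ (via $\kappa$). Second, I would analyze $\ker(RC_{\frakS}\to RC_S^T)$ using the definition of $C_S^T(K)$ versus $C_{\frakS}(K)$ from Section~\ref{sec:MainResults}: the former is the maximal abelian $p$-quotient of $G_S^T(K)$, the latter of $G_{\frakS}^{\O}(K)$, so the kernel of the surjection is the normal closure inside $RC_{\frakS}$ of the inertia subgroups $\calT_{\frakP}(R\calE_{\frakS}/K)$ at primes $\frakP$ of $K$ over $\frakp\in\frakS\setminus S$ (which must become unramified) and the decomposition subgroups $\calG_{\frakP}(R\calE_{\frakS}/K)$ at primes over $\frakp\in T$ (which must split). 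Third, for a tamely ramified prime $\frakp\in\frakS\setminus(\cup_{\ell\mid p|\Gamma|}S_\ell(Q)\cup S\cup T)$, the inertia $\calT_{\frakp}(R\calE_{\frakS}/Q)$ is cyclic generated by $t_{\frakp}$; I would check that requiring this inertia to die in $R\calC_S^T$ is equivalent, modulo the tame relation already present, to imposing $x_{\frakp}^{|\overline{t}_{\frakp}|}\in\ker\varkappa$ — because $x_{\frakp}^{|\overline{t}_{\frakp}|}$ lands in the $R^{\oplus r}$ part and generates (normally, using the tame relation to control the conjugation action) the image of $\calT_{\frakP}(R\calE_{\frakS}/K)$. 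For primes $\frakp\in S\cap(\text{tame locus})$ that are not in $T$, only the tame relation is needed, giving \eqref{item:pres-1}. Fourth, for the finitely many bad primes — those in $\cup_{\ell\mid p|\Gamma|}S_\ell(Q)$, those in $T$, plus possibly the primes whose decomposition or inertia group has bounded but not controlled structure — I would invoke the same kind of bound as in Proposition~\ref{prop:pres-scrS}: by \cite[Theorems~(7.5.3) and (7.5.11)]{NSW} the generator rank of the relevant local Galois groups is bounded in terms of $Q$ and $|\Gamma|$, so the extra relators needed number at most a constant $c_3$ depending only on $|\Gamma|,p,Q,S,T,R$ and the $\Gamma$-module structure of $A$, absorbing also the $c_2$ wild relators from the previous proposition. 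This gives item~\eqref{item:pres-3}.

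The main obstacle I anticipate is the bookkeeping in step three: showing that \emph{precisely} the relator $x_{\frakp}^{|\overline{t}_{\frakp}|}$ (rather than some more complicated element) suffices, normally, to kill the image of inertia at $\frakp$ in the quotient $R\calC_S^T$, and that adding it does not accidentally kill more than intended. The point is that $\overline{t}_{\frakp}\in\Gamma$ is the image of $t_{\frakp}$, so $t_{\frakp}^{|\overline{t}_{\frakp}|}$ lies in $RC_{\frakS}$; its class in $RC_{\frakS}$ generates the $p$-part of the inertia image, and the prime-to-$p$ part of inertia is already recorded by $\overline{t}_{\frakp}$ itself inside $\Gamma$ (which we are not allowed to change, since $K$ is fixed). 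Because $\frakp$ is tame, the full inertia in $\Gal(R\calE_{\frakS}/Q)$ is procyclic, so its image in $R\calC_{\frakS}$ is generated by $t_{\frakp}^{|\overline{t}_{\frakp}|}$; hence the normal closure of $x_{\frakp}^{|\overline{t}_{\frakp}|}$ together with $\ker\kappa$ maps onto exactly the inertia contribution to $\ker(RC_{\frakS}\to RC_S^T)$ at $\frakp$. Summing over $\frakp\in\frakS\setminus(\cup S_\ell(Q)\cup S\cup T)$ and adjoining the bounded collection of relators at the remaining primes of $S$, $T$, and $\cup_{\ell\mid p|\Gamma|}S_\ell(Q)$ completes the presentation, and the constant $c_3$ is the sum of $c_2$ and the bounded contribution from those finitely many primes. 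The one subtlety to be careful about is that imposing $x_{\frakp}^{|\overline{t}_{\frakp}|}$ combined with the tame relation $x_{\frakp}^{\Nm(\frakp)}y_{\frakp}x_{\frakp}^{-1}y_{\frakp}^{-1}$ automatically forces $x_{\frakp}^{\gcd(\Nm(\frakp)-1,\,?)}$-type consequences; but since $x_{\frakp}$ already has order dividing $|\overline{t}_{\frakp}|\cdot(\text{something in }R)$ and $R$ is an $A$-primary quotient of $\Z_p[\Gamma]$, one checks this does not over-collapse, using that $\frakp\notin S_p(Q)$ so $p\nmid\Nm(\frakp)$ only when $\overline{t}_{\frakp}$ is itself a $p$-element — a case handled by the explicit form of $\frakm_e$ in Lemma~\ref{lem:max-ideal}.
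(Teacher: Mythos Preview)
Your proposal is correct and follows essentially the same route as the paper: start from Proposition~\ref{prop:pres-scrS}, identify $\ker(\Gal(R\calE_{\frakS}/Q)\to\Gal(R\calE_S^T/Q))$ as the normal closure of the inertia subgroups $\calT_{\frakP}(R\calE_{\frakS}/K)$ for $\frakP$ over $\frakp\in\frakS\setminus(S\cup T)$ together with the decomposition subgroups $\calG_{\frakP}(R\calE_{\frakS}/K)$ for $\frakP$ over $\frakp\in T$, observe that at a tame prime $\calT_{\frakP}(R\calE_{\frakS}/K)$ is (conjugate to) the procyclic group generated by $t_{\frakp}^{|\overline{t}_{\frakp}|}$, and absorb the contributions from wild primes and from $T$ into the bounded collection of extra relators via \cite[Theorems~(7.5.3) and (7.5.11)]{NSW}.

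Your ``main obstacle'' paragraph is unnecessary and slightly misdirected. There is no over-collapse issue to worry about: you are not trying to show that imposing $x_{\frakp}^{|\overline{t}_{\frakp}|}$ produces a quotient no smaller than $\Gal(R\calE_S^T/Q)$; rather, $\ker\varkappa$ is a fixed known subgroup and you are simply exhibiting a normal generating set for it. Once you know that $\calT_{\frakP}(R\calE_{\frakS}/K)$ is cyclic generated by $t_{\frakp}^{|\overline{t}_{\frakp}|}$ (immediate, since $\calT_{\frakp}(R\calE_{\frakS}/Q)=\langle t_{\frakp}\rangle$ is procyclic at a tame prime and the index in it of the inertia over $K$ is $|\overline{t}_{\frakp}|$), the relator $x_{\frakp}^{|\overline{t}_{\frakp}|}$ is exactly what is needed, and the discussion of $\gcd$'s and Lemma~\ref{lem:max-ideal} can be dropped.
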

	
	\begin{proof}
		By definition of $R\calE_S^T$, $\Gal(R\calE_S^T/Q)$ is the quotient of $\Gal(R\calE_{\frakS}/Q)$ modulo $\calT_{\frakP}(R\calE_{\frakS}/K)$ for each $\frakP \in \frakS\backslash (S\cup T)(K)$ and $\calG_{\frakP}(R\calE_{\frakS}/K)$ for each $\frakP \in T(K)$.
		For $\frakp \in \frakS\backslash( \cup_{\ell |(p|\Gamma|)} S_{\ell}(Q) \cup S \cup T)$ and a prime $\frakP$ of $K$ lying above $\frakp$, because an inertia subgroup $\calT_{\frakp}(R\calE_{\frakS}/Q)$ is generated by $t_{\frakp}$, $\calT_{\frakP}(R\calE_{\frakS}/K)$ is conjugate to the (pro)-cyclic subgroup of $\Gal(R\calE_{\frakS}/Q)$ generated by $t_{\frakp}^{|\overline{t}_{\frakp}|}$.
		So by Proposition~\ref{prop:pres-scrS}, we see that $\ker \varkappa$ is the smallest closed normal subgroup of $R^{\oplus r} \rtimes \Gamma$ containing elements in \eqref{item:pres-1}, \eqref{item:pres-2}, and 
		\begin{eqnarray}
			\calT_{\frakP}(R\calE_{\frakS}/K) &\text{for}& \frakP \in \cup_{\ell |(p|\Gamma|)} S_{\ell}(Q) \backslash (S \cup T) (K),\label{eq:pres-1}\\
			\calG_{\frakP}(R\calE_{\frakS}/K) &\text{for}&  \frakP \in T(K), \label{eq:pres-2}
		\end{eqnarray}
		\begin{equation}\label{eq:pres-3}
			 \text{the $c_2$ elements of wild type in Proposition~\ref{prop:pres-scrS}}.
		\end{equation}
		Note that, in \eqref{eq:pres-1}, $\calT_{\frakP}(R\calE_{\frakS}/K)$ is a (pro-)$p$-group (for $\ell=p$) or a (pro-)cyclic group (for $\ell \neq p$), so by \cite[Theorem~(7.5.11)]{NSW}, the minimal number of generators of $\calT_{\frakP}(R\calE_{\frakS}/K)$ can be bounded from above by a constant depending on $|\Gamma|$ and $Q$. Similarly, the minimal number of generators of $\calG_{\frakP}(R\calE_{\frakS}/K)$ in \eqref{eq:pres-2} can be bounded by a constant depending on $|\Gamma|$ and $Q$. Also, the number of primes in \eqref{eq:pres-1} and \eqref{eq:pres-2} is bounded by a constant depending of $|\Gamma|$, $S$, $T$ and $Q$ (recall both $S$ and $T$ are given and fixed). The number of elements in \eqref{eq:pres-3} is at most $c_2$ by Proposition~\ref{prop:pres-scrS}.
	\end{proof}

\section{Proof of Theorem~\ref{thm:lb-rank}}\label{sect:proof-main}

	In this section, we give the proof of Theorem~\ref{thm:lb-rank}. We apply the result in Section~\ref{ss:presentaiton-split} to the ring $R=e\Z_p[\Gamma]$ for $e\in\calE$ and let $A:=e\Z_p[\Gamma]/\frakm_e$. Let $\frakS$ be as defined in \S\ref{ss:presentaiton-split} for $\scrS=S \cup \scrR_I(K/Q)$,
	and let $eE_S^T, eC_S^T, e\calE_S^T, e\calC_S^T, e\calE_{\frakS}$ denote $RE_S^T, RC_S^T, R\calE_S^T, R\calC_S^T, R\calE_{\frakS}$ respectively.

	Note that $e\calE_S^T$ is a subfield of $eE_S^T$, so 
	\[
		\rk_{I} eC_S^T \geq \rk_{I} e\calC_S^T. 
	\]
	We will show that there exists a constant $c$ depending on $Q$, $S$, $T$, $\Gamma$, $p$ and $e$ such that 
	\begin{equation}\label{eq:pf-goal}
		\rk_{I}e\calC_S^T \geq \# \scrR_I(K/Q)- c,
	\end{equation}
	for any $\Gamma$-extension $K/Q$, and then Theorem~\ref{thm:lb-rank} immediately follows.
	
	By Proposition~\ref{prop:e-rk-arbitrary} and Lemma~\ref{lem:rank-maxsplit} applied to \eqref{eq:RC-ses-little} and \eqref{eq:split-RC-ses-little}, we have the following lower bound for $r:=\rk_A e\calC_{\frakS}$
	\begin{eqnarray}
		r &\geq& \rk_{A} e\calC_S^T \nonumber\\
		 &\geq& \frac{\dim_{\F_p} \B_{\scrS \backslash T}^{\scrS \cup T} (Q, A) + \sum\limits_{\frakp \in \scrS \backslash T} h^1(\calT_{\frakp}, A)^{\calG_{\frakp}}}{\dim_{\F_p} \End_{\Gamma}(A)}-c_4,\label{eq:No-trees}
	\end{eqnarray}
	where $c_4$ is a constant depending on $\Gamma$, $e$, $Q$, $S$ and $T$.
	
	For a prime $\frakp$ of $Q$ such that $\frakp \not \in S_p(Q)$, let $\calT_{\frakp}^{\tr}$ denote the maximal tame inertia subgroup (i.e., the pro-prime-to $\Nm(\frakp)$ completion of $\calT_{\frakp}$, which is a pro-cyclic group), and let $\calT_{\frakp}^{\tr}(p)$ denote the pro-$p$ completion of $\calT_{\frakp}^{\tr}$. Then 
	\begin{equation*}
		H^1(\calT_{\frakp}, A)^{\calG_{\frakp}} = H^1(\calT_{\frakp}^{\tr}(p), A)^{\calG_{\frakp}} 
		= \Hom_{\calG_{\frakp}}(\calT_{\frakp}^{\tr}(p), A).
	\end{equation*}
	When $\frakp \in \scrR_I(K/Q)$, the inertia subgroup $\calT_{\frakp}(K/Q)$ has order divisible by $p$ and $\calG_{\frakp}(K/Q)$ acts trivially on $A$. Because $\Gamma$ is abelian, $\calG_{\frakp}(K/Q)$ acts trivially on $\calT_{\frakp}^{\tr}(p)/p\calT_{\frakp}^{\tr}(p)\simeq \F_p$ and $A$.  Therefore, we have
	\begin{equation*}
		h^1(\calT_{\frakp}, A)^{\calG_{\frakp}} = \dim_{\F_p}\Hom_{\calG_{\frakp}}(\calT_{\frakp}^{\tr}(p), A) = \dim_{\F_p} A.
	\end{equation*}
	By \eqref{eq:No-trees} and Lemma~\ref{lem:End=A}, we have the following lower bound for $r$,
	\begin{equation}\label{eq:lb-r}
		r \geq \frac{\dim_{\F_p} \B_{\scrS \backslash T}^{\scrS \cup T}(Q, A)}{\dim_{\F_p} \End_{\Gamma}(A)} + \# \scrR_I(K/Q) \backslash T- c_4.
	\end{equation}

	We consider the surjection
	\[
		\varkappa: e\Z_p[\Gamma]^{\oplus r} \rtimes \Gamma \xtwoheadrightarrow{}\Gal(e\calE_S^T/Q)= \Gal(e\calE_S^T/K) \rtimes \Gamma
	\]
	defined in Corollary~\ref{cor:pres}. Taking the tensor products of the first components (i.e., $e\Z_p[\Gamma]^{\oplus r}$ and $\Gal(e\calE_S^T/K)$) with $e\Z_p[\Gamma]/ I$, we obtain the following surjective map
	\[
		\overline{\varkappa}: \left( \faktor{e\Z_p[\Gamma]}{I} \right)^{\oplus r} \rtimes \Gamma \xtwoheadrightarrow{}\left( \faktor{\Gal(e\calE_S^T/K)}{I \Gal(e\calE_S^T/K)} \right) \rtimes \Gamma.
	\]
	Then $\ker \overline{\varkappa}$ is the smallest normal subgroup of $(e\Z_p[\Gamma]/I)^{\oplus r} \rtimes \Gamma$ containing the images of the elements as described in Corollary~\ref{cor:pres}. For each $\frakp \in \frakS \backslash (\bigcup_{\ell | (p|\Gamma|)} S_{\ell}(Q) \cup S \cup T)$, we let $\frakx_{\frakp}$ and $\fraky_{\frakp}$ denote the images of $x_{\frakp}$ and $y_{\frakp}$ in $(e\Z_p[\Gamma]/I)^{\oplus r} \rtimes \Gamma$ respectively, and let $\overline{t}_{\frakp}$ and $\overline{s}_{\frakp}$ denote the images of $t_{\frakp}$ and $s_{\frakp}$ in $\Gal(K/Q)=\Gamma$ respectively. Then because $\kappa(x_{\frakp})=t_{\frakp}$ and $\kappa(y_{\frakp})=s_{\frakp}$, we can write 
	\[
		\frakx_{\frakp} = (a_{\frakp}, \overline{t}_{\frakp}), \quad \text{and} \quad \fraky_{\frakp} = ( b_{\frakp}, \overline{s}_{\frakp}),
	\]
	for some $a_{\frakp}, b_{\frakp} \in \Gal(e\calE_S^T/K)/I \Gal(e\calE_S^T/K)$, as represented using the notation of semidirect product. Then compute
	\begin{equation}\label{eq:comp-power}
		\frakx_{\frakp}^{|\overline{t}_{\frakp}|}=(a_{\frakp}, \overline{t}_{\frakp})^{|\overline{t}_{\frakp}|} = \left(a_{\frakp} \cdot \overline{t}_{\frakp}(a_{\frakp}) \cdot \overline{t}^2_{\frakp}(a_{\frakp}) \cdots  \overline{t}^{|\overline{t}_{\frakp}|-1}_{\frakp}(a_{\frakp}), 1 \right), \quad\text{and}
	\end{equation}
	\begin{eqnarray}
		\frakx_{\frakp}^{\Nm(\frakp)} \fraky_{\frakp} \frakx_{\frakp}^{-1} \fraky_{\frakp}^{-1} &=&  \frakx_{\frakp}^{\Nm(\frakp)-1} \frakx_{\frakp} \fraky_{\frakp} \frakx_{\frakp}^{-1} \fraky_{\frakp}^{-1} \nonumber \\
		&=& \frakx_{\frakp}^{\Nm(\frakp)-1}  \left(a_{\frakp}, \overline{t}_{\frakp}\right) \left(b_{\frakp}, \overline{s}_{\frakp}\right) \left(\overline{t}_{\frakp}^{-1}(a_{\frakp})^{-1}, \overline{t}_{\frakp}^{-1}\right) \left( \overline{s}_{\frakp}^{-1} (b_{\frakp})^{-1}, \overline{s}_{\frakp}^{-1} \right) \nonumber \\
		&=& \frakx_{\frakp}^{\Nm(\frakp)-1} \left(a_{\frakp}  \cdot \overline{s}_{\frakp} (a_{\frakp})^{-1} \cdot \overline{t}_{\frakp}(b_{\frakp}) \cdot b_{\frakp}^{-1} ,1 \right),
		\label{eq:comp-comm}
	\end{eqnarray}
	where the last uses the fact that $\Gamma$ is abelian.
	
	Suppose $\frakp\in\scrR_I (K/Q) \backslash ( \bigcup_{\ell | (p|\Gamma|)} S_{\ell}(Q) \cup S \cup T)$. By definition of $\scrR_I(K/Q)$, $\calG_{\frakp}(K/Q)$ acts trivially on $e\Z_p[\Gamma]/I$, so $\overline{s}_{\frakp}(a_{\frakp}) = a_{\frakp}$ and $\overline{t}_{\frakp}(b_{\frakp}) = b_{\frakp}$. Also, because the inertia subgroup $\calT_{\frakp}(K/Q) \subset \Gal(K/Q)$ has order divisible by $p$ and $\Gamma$ is abelian, by the presentation of Galois group of the maximal tamely ramified extension of $Q_{\frakp}$, we see that $\Nm(\frakp)-1$ is divisible by $|\overline{t}_{\frakp}|$. 
	Moreover, both $1-\overline{t}_{\frakp}$ and $\sum_{j=1}^{|\Gamma|} \overline{t}_{\frakp}^j$ annihilate $e\Z_p[\Gamma]/I$.
	Thus, from \eqref{eq:comp-power} and \eqref{eq:comp-comm}, we see that both $\frakx_{\frakp}^{| \overline{t}_{\frakp}|}$ and $\frakx_{\frakp}^{\Nm(\frakp)} \fraky_{\frakp} \frakx_{\frakp}^{-1} \fraky_{\frakp}^{-1}$ are trivial.
	
	We denote 
	\[
		S':= \frakS \backslash (\bigcup_{\ell| (p|\Gamma|)} S_{\ell}(Q) \cup \scrS \cup T),
	\]
	and then we have 
	\[
		\# S' \leq \frac{\dim_{\F_p} \B_{\scrS \backslash T}^{\scrS \cup T}(Q, A)}{\dim_{\F_p} \End_{\Gamma}(A)},
	\]
	by definition of $\frakS$. Note that both $\frakx_{\frakp}^{|\overline{t}_{\frakp}|}$ and $\frakx_{\frakp}^{\Nm(\frakp)} \fraky_{\frakp} \frakx_{\frakp}^{-1} \fraky_{\frakp}^{-1}$ are contained in the normal subgroup generated by $\frakx_{\frakp}$.
	
	Then, by the argument above and Corollary~\ref{cor:pres}, $\ker \overline{\varkappa}$ is contained in the smallest normal subgroup of $(e\Z_p[\Gamma]/I)^{\oplus r} \rtimes \Gamma$ containing
	\[
		\frakx_{\frakp} \quad \text{for each $\frakp \in S'$}
	\]
	and additionally $c_5$ many elements, where $c_5$ is a constant depending on $\Gamma$, $p$, $Q$, $S$ and $T$. These additional $c_5$ elements are those in Corollary~\ref{cor:pres} \eqref{item:pres-1} and \eqref{item:pres-3}. These additional elements together with the elements $\frakx_{\frakp}$ for $\frakp \in S'$ are all contained in the subgroup $(e\Z_p[\Gamma]/I)^{\oplus r}$ of $(e\Z_p[\Gamma]/I)^{\oplus r} \rtimes \Gamma$, because $\ker \overline{\varkappa}$ intersects trivially with the subgroup $\Gamma$. So the smallest normal subgroup containing these elements is exactly the $e\Z_p[\Gamma]$-submodule of $(e\Z_p[\Gamma]/I)^{\oplus r}$ generated by these elements. 
	
	Recall that the submodule of an $e\Z_p[\Gamma]$-module $M$ generated by one (arbitrary) element $x \in M$ is a quotient module of $e\Z_p[\Gamma]$. 	Finally, applying all the arguments, we have
	\begin{eqnarray*}
		 \rk_{I}  eC_S^T(K) &\geq & \rk_{I} \Gal(e\calE_S^T/K) \\
		&\geq & r - \#S' -c_5 \\
		&\geq & \frac{\dim_{\F_p} \B_{\scrS \backslash T}^{\scrS \cup T} (Q,A)}{\dim_{\F_p} \End_{\Gamma}(A)} + \# \scrR_I(K/Q) \backslash  T - c_4 - \# S' - c_5 \\
		&\geq & \# \scrR_I(K/Q) - \left(c_4+c_5  +\#T \right).
	\end{eqnarray*}
	Here the first step is because $\Gal(e\calE_S^T/K)$ is a quotient of $eC_S^T(K)$, the second step uses the presentation of $\overline{\varkappa}$ we discusses above, the third step uses \eqref{eq:lb-r}, and the last step uses the upper bound for $\#S'$ that follows from the definition of $\frakS$. Then the proof of Theorem~\ref{thm:lb-rank} is completed.

\section{Proof of Theorem~\ref{thm:kernel-limit}}\label{sect:Proof-kernel-limit}

\subsection{Preparation for the proof}
\hfill

	In this subsection, we prove Proposition~\ref{prop:tildeM}.

	\begin{lemma}\label{lem:comm_quot}
		Let $\pi_1:N_1 \to N_3$ and $\pi_2:N_2 \to N_3$ be two surjections of $P_A$-modules such that the $A$-ranks of $N_1, N_2, N_3$ are the same. Then there exist a unique maximal quotient $\overline{N_1}$ of $N_1$ and a unique maximal quotient $\overline{N_2}$ of $N_2$ such that the dashed isomorphic arrow in the following commutative diagram exists. 
		\[\begin{tikzcd}
			&&N_1 \arrow[two heads]{d}\arrow[two heads, bend left=45, "\pi_1"]{dd}\\
			&&\overline{N_1} \arrow[two heads]{d} \arrow[dashed, "\sim"']{dl} \\
			N_2 \arrow[two heads]{r} \arrow[two heads, bend right=45, "\pi_2"]{rr}&\overline{N_2} \arrow[two heads]{r} & N_3
		\end{tikzcd}\] 
	\end{lemma}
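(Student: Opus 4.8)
The plan is to construct $\overline{N_1}$ and $\overline{N_2}$ explicitly as quotients by suitable submodules of the fiber products, and then verify the universal (maximality) property. First I would form the fiber product $N_1 \times_{N_3} N_2$ via the maps $\pi_1,\pi_2$, and let $p_1, p_2$ be its two projections to $N_1$ and $N_2$. The natural candidates are $\overline{N_1} := N_1 / p_1(\ker p_2)$ and $\overline{N_2} := N_2 / p_2(\ker p_1)$: dividing $N_1$ by the image under $p_1$ of $\{(0,n_2) : \pi_2(n_2)=0\}$ exactly kills the ambiguity that prevents $\pi_1$ from factoring through an isomorphism onto a quotient of $N_2$. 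A diagram chase then shows $\pi_1$ and $\pi_2$ induce a well-defined map $\overline{N_1} \to \overline{N_2}$, that this map is surjective (since $\pi_1$ is), and that it is injective (if the class of $n_1$ maps to $0$, then $(n_1, 0) \in \ker p_2 + \text{(fiber product relations)}$, forcing $n_1 \in p_1(\ker p_2)$). So the dashed arrow is an isomorphism. Both $\overline{N_1}\to N_3$ and $\overline{N_2}\to N_3$ remain surjective by construction.

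The second half is maximality: I must show that if $N_1 \twoheadrightarrow N_1' \twoheadrightarrow N_3$ and $N_2 \twoheadrightarrow N_2' \twoheadrightarrow N_3$ are any quotients admitting an isomorphism $N_1' \xrightarrow{\sim} N_2'$ compatible with the maps to $N_3$ and with $\pi_1,\pi_2$, then $N_1'$ is a quotient of $\overline{N_1}$ (equivalently, $p_1(\ker p_2) \subseteq \ker(N_1 \to N_1')$), and similarly for $N_2'$. This is where the equal-$A$-rank hypothesis enters decisively. An element of $p_1(\ker p_2)$ has the form $n_1$ where $(n_1,n_2)$ lies in the fiber product with $n_2 \mapsto 0$ in $N_2'$; chasing through the compatible isomorphism $N_1'\simeq N_2'$, the image of $n_1$ in $N_1'$ must also be $0$, giving the inclusion of kernels. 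The point of the $A$-rank condition is to guarantee that all the surjections in sight are between modules with the same number of generators, so that the compatibility isomorphism $N_1' \simeq N_2'$ — which a priori only needs to exist abstractly over $N_3$ — is forced to be compatible with the chosen presentations; without it one could have a "maximal common quotient" that is not unique. Concretely I expect to invoke, via Lemma~\ref{lem:DVR-module} and the structure of $P_A$-modules, that a surjection of $P_A$-modules of equal $A$-rank has kernel contained in the radical, which rigidifies the situation enough to pin down $\overline{N_1}$ and $\overline{N_2}$ uniquely.

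The main obstacle I anticipate is precisely the uniqueness/maximality of $\overline{N_1}$ and $\overline{N_2}$ rather than their existence: existence is a direct fiber-product construction, but proving that these are the \emph{largest} such quotients — and that the pair $(\overline{N_1},\overline{N_2})$ is unique — requires carefully exploiting that $P_A$ is local with residue field-like quotient $A$ and that the $A$-ranks agree, so that Nakayama-type arguments control all the relevant kernels. I would organize the write-up as: (1) construct the fiber product and the candidate quotients; (2) verify the induced map is an isomorphism over $N_3$ and compatible with $\pi_1,\pi_2$; (3) prove maximality by the kernel-inclusion argument above, using the equal-rank hypothesis; (4) deduce uniqueness of the pair from maximality. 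Steps (1) and (2) are routine; step (3) is the crux.
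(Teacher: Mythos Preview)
Your explicit construction is wrong. Compute $p_1(\ker p_2)$: an element of $\ker p_2$ is a pair $(n_1,0)$ in the fiber product, which forces $\pi_1(n_1)=\pi_2(0)=0$, so $p_1(\ker p_2)=\ker\pi_1$ and your $\overline{N_1}=N_1/\ker\pi_1\cong N_3$. That is the \emph{minimal} common quotient, not the maximal one. (Test case: $N_1=N_2$, $\pi_1=\pi_2$; the maximal common quotient is $N_1$ itself, not $N_3$.) Your prose description does not match your formula either: the set $\{(0,n_2):\pi_2(n_2)=0\}$ is $\ker p_1$, and $p_1(\ker p_1)=0$, which would give $\overline{N_1}=N_1$. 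Neither reading produces the right object, so the isomorphism and maximality steps built on it cannot go through as written.

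The paper takes a different, non-constructive route. It reformulates the problem as: among all submodules $U\subseteq\ker\pi_2$ such that $\pi_1$ factors \emph{surjectively} through $N_2/U$, find the smallest. Existence of a smallest one follows once you show this family is closed under intersection. For $U_1,U_2$ in the family, $N_2/(U_1\cap U_2)$ is the fiber product of $N_2/U_1$ and $N_2/U_2$ over $N_2/(U_1U_2)$; the universal property gives a map $\phi:N_1\to N_2/(U_1\cap U_2)$, and the equal $A$-rank hypothesis plus Nakayama forces $\phi$ to be surjective. This is exactly where the rank condition is used --- not for uniqueness of an isomorphism as you suggest, but to upgrade the a priori non-surjective fiber-product map to a surjection.
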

	
	\begin{proof}
		It is enough to show that there exists a maximal quotient $\overline{N_2}$ of $N_2$ such that both $\pi_1$ and $\pi_2$ factor through $\overline{N_2}$. We will prove it by showing if $U_1$ and $U_2$ are two submodules of $\ker \pi_2$ such that $\pi_1$ factors through $N_2/U_i$ for both $i=1,2$, then $\pi_1$ also factors through $N_2/(U_1\cap U_2)$.
		
		Note that $N_2/(U_1\cap U_2)$ is the fiber product of $N_2/U_1 \twoheadrightarrow N_2/(U_1U_2)$ and $N_2/U_2 \twoheadrightarrow N_2/(U_1U_2)$. By the assumption that $\pi_1$ factors through $N_2/U_i$ for $i=1,2$, we see that $\pi_1$ also factors through $N_2/(U_1U_2) \twoheadrightarrow N_3$. Then by the universal property of fiber product, there exists a homomorphism $\phi: N_1 \to N_2/(U_1 \cap U_2)$ such that $\pi_1$ is the composition of $\phi$, $N_2/(U_1 \cap U_2) \twoheadrightarrow N_2/(U_1U_2)$ and $N_2/(U_1U_2) \twoheadrightarrow N_3$. Finally by Nakayama's lemma, since $\rk_A N_1=\rk_A N_3=\rk_A N_2/(U_1 \cap U_2)$ and $\pi_1$ is surjective, we obtain that $\phi$ is surjective, which implies that $\pi_1$ factors through $N_2/(U_1 \cap U_2)$.
	\end{proof}

	\begin{definition}
		Given extensions $\pi_1: N_1 \to N_3$ and $\pi_2: N_2 \to N_3$ as described in Lemma~\ref{lem:comm_quot}, we denote
		\[
			N_1 \boxtimes_{N_3}N_2:= N_1\times_{\overline{N_2}} N_2,
		\]
		where the fiber product on the right-hand side is defined by the surjections $N_1\twoheadrightarrow \overline{N_1} \overset{\sim}{\to} \overline{N_2}$ and $N_2 \twoheadrightarrow \overline{N_2}$ in the diagram in Lemma~\ref{lem:comm_quot}. We call the isomorphism class of the extension $\overline{N_2} \to N_3$ \emph{the maximal common quotient of $\pi_1$ and $\pi_2$}.
	\end{definition}
	
	\begin{lemma}\label{lem:nonsplit}
		In the setting of Lemma~\ref{lem:comm_quot}, the $A$-rank of the fiber product $N_1 \times_{N_3} N_2$ defined by $\pi_1$ and $\pi_2$ equals $\rk_A N_3$ if and only if $\overline{N_1} \simeq \overline{N_2} \simeq N_3$. In particular,
		\[
			\rk_A N_1\boxtimes_{N_3} N_2 = \rk_A N_3.
		\]
	\end{lemma}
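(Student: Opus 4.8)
The plan is to reduce the whole statement to a property of the local ring $P_A$. By the proof of Lemma~\ref{lem:e-A}, $P_A$ is a local ring; write $\frakr$ for its maximal ideal, so that $A$ is the unique simple $P_A$-module, $A\cong P_A/\frakr$, and for every finitely generated $P_A$-module $M$ one has that $\frakr M$ is the radical of $M$ and $M/\frakr M\cong A^{\oplus\rk_A M}$. Put $r:=\rk_A N_1=\rk_A N_2=\rk_A N_3$, let $P:=N_1\times_{N_3}N_2$ with projections $q_i\colon P\to N_i$ (surjective, since $\pi_1,\pi_2$ are), and note $\ker q_1=0\times\ker\pi_2$. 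Reducing $q_1$ modulo $\frakr$ gives a surjection $A^{\oplus\rk_A P}\twoheadrightarrow A^{\oplus r}$ with kernel $(\ker q_1+\frakr P)/\frakr P$; since such a surjection forces $\rk_A P\ge r$, with equality iff its kernel vanishes,
\[
\rk_A P=r\quad\Longleftrightarrow\quad 0\times\ker\pi_2\subseteq\frakr P .
\]
Because $\overline{N_1}\to N_3$ and $\overline{N_2}\to N_3$ are identified by the dashed isomorphism of Lemma~\ref{lem:comm_quot}, the main assertion will follow once I show that $0\times\ker\pi_2\subseteq\frakr P$ if and only if $\overline{N_2}\to N_3$ is an isomorphism; the ``in particular'' clause is deduced at the end.

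For the ``only if'' direction I would argue by contraposition. Recall $\overline{N_2}=N_2/U_0$, where $U_0$ is the smallest submodule of $\ker\pi_2$ through which $\pi_1$ factors (i.e.\ for which some $g\colon N_1\to N_2/U_0$ has $(N_2/U_0\to N_3)\circ g=\pi_1$), so $\overline{N_2}\xrightarrow{\sim}N_3$ means precisely that $\ker\pi_2$ is the only such submodule. Assuming $0\times\ker\pi_2\not\subseteq\frakr P$, set $U:=\{\,b\in\ker\pi_2:(0,b)\in\frakr P\,\}$; this is a \emph{proper} submodule of $\ker\pi_2$ and it contains $\frakr(\ker\pi_2)$, hence $\ker\pi_2/U$ is semisimple. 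In $P':=P/(0\times U)\subseteq N_1\times(N_2/U)$ the submodule $L':=(0\times\ker\pi_2)/(0\times U)\cong\ker\pi_2/U$ satisfies $L'\cap\frakr P'=0$ — this is exactly what the defining property of $U$ buys, after unwinding that if $(0,b)\equiv\sum_i m_i(x_i,y_i)\pmod{0\times U}$ with $m_i\in\frakr$ and $(x_i,y_i)\in P$ then $\sum_i m_ix_i=0$, so $(0,\sum_i m_iy_i)=\sum_i m_i(x_i,y_i)\in\frakr P$ and hence $b\in U$. A semisimple submodule meeting the radical trivially is a direct summand (split off its image in the cosocle $P'/\frakr P'$ and take the preimage of a complement), so $P'=E_1\oplus L'$ and $q_1'$ restricts to an isomorphism $E_1\xrightarrow{\sim}N_1$; composing its inverse with the projection $P'\to N_2/U$ produces a map $g\colon N_1\to N_2/U$ with $(N_2/U\to N_3)\circ g=\pi_1$, contradicting $\overline{N_2}\xrightarrow{\sim}N_3$.

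For the ``if'' direction, suppose $0\times\ker\pi_2\subseteq\frakr P$ and let $U\subseteq\ker\pi_2$ together with $g\colon N_1\to N_2/U$ be any factorization of $\pi_1$. Then $s\colon x\mapsto(x,g(x))$ is a section of $q_1'\colon P/(0\times U)\to N_1$, so $P/(0\times U)=s(N_1)\oplus L'$ with $L'=(0\times\ker\pi_2)/(0\times U)$; but the hypothesis forces $L'\subseteq\frakr\bigl(P/(0\times U)\bigr)$, and a direct summand contained in the radical is zero by Nakayama's lemma. Hence $U=\ker\pi_2$, so $\ker\pi_2$ is the only submodule through which $\pi_1$ factors and $\overline{N_2}\xrightarrow{\sim}N_3$. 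This proves the main equivalence. Finally, for the ``in particular'', I would apply this equivalence to the surjections $\phi_1\colon N_1\twoheadrightarrow\overline{N_1}\cong\overline{N_2}$ and $\phi_2\colon N_2\twoheadrightarrow\overline{N_2}$ onto $\overline{N_2}$ (all three modules have $A$-rank $r$, being squeezed between $N_3$ and $N_2$): their maximal common quotient is $\overline{N_2}$ itself, since any proper quotient of $N_2$ through which $\phi_1$ factored over $\overline{N_2}$ would, after composing with $\overline{N_2}\to N_3$, give a submodule of $\ker\pi_2$ strictly smaller than $U_0$ through which $\pi_1$ factors — impossible. Hence $\rk_A(N_1\boxtimes_{N_3}N_2)=\rk_A(N_1\times_{\overline{N_2}}N_2)=r=\rk_A N_3$.

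I expect the delicate point to be the two uses of ``a semisimple submodule meeting the radical trivially is a direct summand'' together with the identification of the correct submodule $U$ and the verification $L'\cap\frakr P'=0$: here one must use the precise description of $\frakr P$ inside $P\subseteq N_1\times N_2$, because in general $\frakr P$ is strictly smaller than $(\frakr N_1\times\frakr N_2)\cap P$. The remaining steps are routine Nakayama/cosocle bookkeeping, using throughout that $P_A$ is local with unique simple module $A$.
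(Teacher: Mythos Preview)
Your argument is correct, though you have swapped the labels ``if'' and ``only if'' relative to the biconditional as stated (your ``only if'' paragraph proves the contrapositive of the ``if'' direction, and vice versa); this is purely cosmetic. The route, however, is genuinely different from the paper's. You reformulate $\rk_A P=r$ as the radical condition $0\times\ker\pi_2\subseteq\frakr P$ and then argue via cosocle/Nakayama techniques: for one direction you split off a semisimple summand $L'$ of $P'$ meeting $\frakr P'$ trivially to manufacture a factorization $g\colon N_1\to N_2/U$; for the other you use a section $s$ to exhibit $L'$ as a direct summand lying in the radical, hence zero. The paper instead handles the first direction in one line by observing that $N_1\times_{\overline{N_2}}N_2$ is a proper submodule of $N_1\times_{N_3}N_2$ still surjecting onto $N_3$ (so Nakayama forces $\rk_A>d$), and for the harder direction it builds an explicit $d$-generated submodule $N\subsetneq N_1\times_{N_3}N_2$ from lifts of a generating set of $N_3$, then uses an index-counting computation to produce a common quotient $\overline{M_1}\simeq\overline{M_2}$ of size strictly larger than $|N_3|$. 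Your approach is cleaner and more structural, exploiting that $P_A$ is local with unique simple $A$; the paper's is more hands-on and elementwise. Both derive the ``in particular'' clause by checking that the maximal common quotient of $N_1\to\overline{N_2}$ and $N_2\to\overline{N_2}$ is $\overline{N_2}$ itself, which you verify correctly via the minimality of $U_0$.
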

	
	\begin{proof}
		Denote $d:=\rk_A N_3$, and $\overline{N_1}, \overline{N_2}$ be as described in Lemma~\ref{lem:comm_quot}. 
		Assume $\overline{N_2}\not\simeq N_3$. Then $N_1 \to \overline{N_1} \overset{\sim}{\to} \overline{N_2}$ and $N_2 \to \overline{N_2}$ define a fiber product $N_1 \times_{\overline{N_2}} N_2$, and one can check that $N_1 \times_{\overline{N_2}} N_2$ is a proper submodule of $N_1 \times_{N_3} N_2$ that is mapped surjectively onto $N_3$. By Nakayama's lemma, $\rk_A(N_1 \times_{N_3} N_2) >d$, which completes the proof of the ``only if'' direction.
		
		For the ``if'' direction, assume $\rk_A N_1 \times_{N_3} N_2 >d$. Pick a generator set $z_1, \ldots, z_d$ of $N_3$, and pick $x_i \in \pi_1^{-1}(z_i)$ and $y_i \in \pi_2^{-1}(z_i)$ for each $i=1, \ldots, d$. Then by the assumption $\rk_A N_1=\rk_A N_2 =\rk_A N_3$, $x_1, \ldots, x_d$ form a generator set of $N_1$ and $y_1, \ldots, y_d$ form a generator set of $N_2$. For each $i$, let $w_i$ denote the element $(x_i, y_i)$ of the fiber product $N_1 \times_{N_3} N_2$. Let $N$ denote the submodule generated by $w_1, \ldots, w_d$. By our construction, the composite map $N \hookrightarrow N_1\times_{N_3} N_2 \twoheadrightarrow N_3$ is surjective and $N$ is a proper submodule of $N_1\times_{N_3} N_2$. Consider the following diagram
		\[\begin{tikzcd}
			N \arrow[hook]{r} \arrow[bend left=30, two heads, "\varphi_1"]{rr} \arrow[bend right=30, two heads, "\varphi_2", swap]{rd}& N_1 \times_{N_3} N_2 \arrow[two heads, "\phi_1"]{r} \arrow[two heads, "\phi_2"]{d} \arrow[two heads, "\varpi"]{dr}& N_1 \arrow[two heads, "\pi_1"]{d} \\
			& N_2 \arrow[two heads, "\pi_2"]{r} & N_3.
		\end{tikzcd}\]
		Define $\overline{N}:=\frac{N}{\ker \varphi_1 \ker \varphi_2}$, $\overline{M_1}:= \frac{N_1}{(\ker \varphi_1 \ker \varphi_2)/\ker \varphi_1}$ and $\overline{M_2}:= \frac{N_2}{(\ker \varphi_1 \ker \varphi_2)/\ker \varphi_2}$. Then because 
		\[
			\frac{N}{\ker \varphi_1 \ker \varphi_2} \simeq \frac{N/\ker \varphi_j}{(\ker \varphi_1 \ker \varphi_2)/\ker \varphi_j} \simeq \frac{N_j}{(\ker \varphi_1 \ker \varphi_2)/\ker \varphi_j} \text{  for  }j=1,2,
		\]
		 we see that the isomorphisms $\overline{M_1}\simeq \overline{N} \simeq \overline{M_2}$, and one can check that these isomorphisms are compatible with their quotients to $N_3$. Finally, let $u$ denote the index of $N$ in $N_1 \times_{N_3} N_2$, which is greater than 1. Then for $j=1,2$, $[\ker \phi_j: \ker \varphi_j]=u$ because both $\varphi_j$ and $\phi_j$ are surjective and $\ker \varphi_j= N \cap \ker \phi_j$. So $\ker \varphi_1 \ker \varphi_2=(\ker \phi_1 \cap N) \times (\ker \phi_2 \cap N)$ is of index $u^2$ in $\ker \varpi=\ker \phi_1 \times \ker \phi_2$. Therefore, $|\overline{M_1}|/|N_3|=u>1$, so for the module $\overline{N_1}$ described in Lemma~\ref{lem:comm_quot}, we have $|\overline{N_1}|/|N_3| \geq |\overline{M_1}|/|N_3|>1$, which implies $\overline{N_1}\not\simeq N_3$. So the proof of the ``if'' direction is completed.
	\end{proof}
	
	\begin{corollary}\label{cor:boxtime-UniversalProperty}
		Retain the notation and assumptions from Lemma~\ref{lem:comm_quot}, and further assume $\overline{N_1}=\overline{N_2}=N_3$. If there are surjections $\rho_1: N \to N_1$ and $\rho_2:N \to N_2$ such that $\pi_1\circ \rho_1=\pi_2\circ\rho_2$, then there is a unique surjection $\rho: N \to N_1 \times_{N_3} N_2$ such that $\pi_1 \circ \rho_1$ is the composition of $\rho$ and the natural surjection $N_1 \times_{N_3} N_2 \to N_3$.
	\end{corollary}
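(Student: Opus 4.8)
The plan is to let $\rho$ be the canonical map into the fibre product and to reduce the corollary to the single nontrivial point that this $\rho$ is surjective. Since $\pi_1\circ\rho_1=\pi_2\circ\rho_2$, the rule $n\mapsto(\rho_1(n),\rho_2(n))$ defines a $P_A$-module homomorphism $\rho\colon N\to N_1\times_{N_3}N_2=:M$, and it is the unique homomorphism whose composites with the two structure projections $\phi_1,\phi_2$ of $M$ are $\rho_1$ and $\rho_2$; composing further with $\pi_1$ yields exactly the compatibility with the natural surjection $M\to N_3$ asserted in the statement. So everything reduces to showing $\rho$ is onto.

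For surjectivity I would argue with cosocles and then Nakayama's lemma. Recall from the proof of Lemma~\ref{lem:e-A} that $P_A$ is a local ring whose quotient by its maximal ideal is $A$; since $A$ is a field by Lemma~\ref{lem:End=A}, $A$ is the unique simple $P_A$-module, so for any finitely generated $P_A$-module $L$ the cosocle $L/JL$ (where $J$ is the Jacobson radical of $P_A$) is isomorphic to $A^{\oplus\rk_A L}$. Now the hypothesis $\overline{N_1}=\overline{N_2}=N_3$ puts us in the equality case of Lemma~\ref{lem:nonsplit}, which forces $\rk_A M=\rk_A N_3$; by the standing assumption from Lemma~\ref{lem:comm_quot} this common value equals $d:=\rk_A N_1$. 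The projection $\phi_1\colon M\to N_1$ is surjective (as $\pi_2$ is), hence induces a surjection of cosocles $A^{\oplus d}\cong M/JM\twoheadrightarrow N_1/JN_1\cong A^{\oplus d}$, which, being a surjection between finite modules of equal length, is an isomorphism.

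Finally I would push $\rho$ through this isomorphism. The composite $\phi_1\circ\rho=\rho_1$ is surjective, hence induces a surjection of cosocles $N/JN\twoheadrightarrow N_1/JN_1$; as it factors as $N/JN\to M/JM\xrightarrow{\ \sim\ }N_1/JN_1$, its first factor $N/JN\to M/JM$ is already surjective. Applying Nakayama's lemma to the finitely generated $P_A$-module $\coker\rho$, which then satisfies $\coker\rho=J\cdot\coker\rho$, gives $\coker\rho=0$, so $\rho$ is surjective and the corollary follows. The one place to be careful — a bookkeeping subtlety rather than a genuine obstacle — is that $\rk_A$ is not monotone under passage to submodules, so one cannot shortcut to surjectivity from the inequality $\rk_A\im\rho\ge d$; one genuinely needs the exact equality $\rk_A M=d$ supplied by Lemma~\ref{lem:nonsplit}, and hence the hypothesis $\overline{N_1}=\overline{N_2}=N_3$.
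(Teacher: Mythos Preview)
Your proof is correct and follows essentially the same approach as the paper: obtain $\rho$ from the universal property of the fibre product, invoke Lemma~\ref{lem:nonsplit} under the hypothesis $\overline{N_1}=\overline{N_2}=N_3$ to get $\rk_A(N_1\times_{N_3}N_2)=\rk_A N_3$, and then conclude surjectivity via Nakayama. The only cosmetic difference is that the paper applies Nakayama by noting that $\rho(N)$ already surjects onto $N_3$ under $N_1\times_{N_3}N_2\to N_3$, whereas you route the cosocle comparison through $N_1$ via $\phi_1$; both are the same one-line Nakayama argument.
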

	
	\begin{proof}
		The existence and uniqueness of $\rho$ follow by the universal property of the fiber product, so it is enough to show $\rho$ is surjective. By Lemma~\ref{lem:nonsplit}, $\rk_A N_1 \times_{N_3} N_2=\rk_A N_3$; then because $\rho(N)$ maps surjectively onto $N_3$ under the map $N_1 \times_{N_3} N_2 \to N_3$, we have $\rho(N)=N_1 \times_{N_3} N_2$ by Nakayama's lemma.
	\end{proof}

	For a finite $P_A$-module $M$ such that $\ker \rho_{M}=0$, we define below a surjection $\overline{M} \twoheadrightarrow M$ of $P_A$-modules.
	
	Assume that the Sylow $p$-subgroup $\Gamma_p$ of $\Gamma$ has order at least $p^2$. Let 
	\[
		\Gamma_{(2)} \subset \Gamma_{(1)} 
	\]
	be subgroups of $\Gamma_p$ such that $[\Gamma_p: \Gamma_{(1)}]=p$, $[\Gamma_p: \Gamma_{(2)}]=p^2$, and $\Gamma_p/\Gamma_{(2)}$ is cyclic only when $\Gamma_p$ is cyclic. Let $\gamma_1$ and $\gamma_2$ be elements of $\Gamma_p$ such that $\gamma_1 \not\in \Gamma_{(1)}$ and $\gamma_2 \in \Gamma_{(1)} \backslash \Gamma_{(2)}$. 
	
	Recall that Lemma~\ref{lem:gamma-ann} establishes a correspondence between $\Idem(A)$ and the set of cyclic quotients of $\Gamma_p$. Let $e_0$ be the one corresponding to the trivial quotient of $\Gamma_p$, and $e_1$ the one corresponding to $\Gamma_p/\Gamma_{(1)}$. Note that $\Gamma_p/\Gamma_{(2)}$ is an abelian $p$-group of order $p^2$. If $\Gamma_p/\Gamma_{(2)}\simeq \Z/p^2\Z$ then let $N:=\Gamma_{(2)}$, and if $\Gamma_p/\Gamma_{(2)}\simeq \Z/p\Z \times \Z/p\Z$ then let $N$ be the smallest subgroup of $\Gamma_p$ containing $\Gamma_{(2)}$ and $\gamma_1$. Therefore, $\Gamma_p/N$ be a cyclic quotient. Let $e_2$ be the idempotent in $\Idem(A)$ corresponding to $\Gamma_p/N$. List the idempotents in $\Idem(A)\backslash\{e_0,e_1, e_2\}$ as $e_3, e_4, \ldots, e_n$. For a finite $P_A$-module $M$ and $i\in \{0, \ldots, n\}$, we can write 
	\[
		e_iM =  \bigoplus_{j=1}^r e_i\Z_p[\Gamma]/ \frakm_{e_i}^{d_{i,j}},
	\]
	where $r:=\rk_A M$ and $d_{i,j}$ is a positive integer for every $i,j$. Then we define
	\[
		\widetilde{e_iM} =  \bigoplus_{j=1}^r e_i\Z_p[\Gamma]/ \frakm_{e_i}^{d_{i,j}+1}.
	\]
	In other words, $\widetilde{e_iM}$ is the $e_i \Z_p[\Gamma]$-module that is an extension of $e_iM$ such that $\rk_A \widetilde{e_iM}=\rk_A M$ and $\ker( \widetilde{e_iM} \to e_iM)\simeq A^{\oplus r}$.
	We define $M_0:=e_0M$ and $\widetilde{M_0}:=\widetilde{e_0M}$, and for $i=1,\ldots, n$, define
	\[
		M_i:=e_iM \times_{e_i M_{i-1}} M_{i-1} \quad \text{and}\quad \widetilde{M_i}:=\widetilde{e_iM} \boxtimes_{e_i M_{i-1}} \widetilde{M_{i-1}},
	\]
	where the second one is defined by $\widetilde{e_iM} \to e_iM \to e_i M_{i-1}$ and $\widetilde{M_{i-1}} \to M_{i-1} \to e_i M_{i-1}$.

	\begin{proposition}\label{prop:tildeM}
		Assume $\Gamma_p$ is not trivial or $\Z/p\Z$. For a finite $P_A$-module $M$, define $\widetilde{M_i}$ as above. Then the following holds.
		\begin{enumerate}
			\item \label{item:tildeM-1new}
				For every $1\leq i \leq n$, $M_i$ is a quotient of $M$ and a quotient of $\widetilde{M_i}$; and $\rk_A \widetilde{M_i} = \rk_A M_i=\rk_A M$.
			\item \label{item:tildeM-2new} For every $2\leq i \leq n$, $\ker (\widetilde{M_i} \to M_i) \simeq A^{\oplus r_i}$ for some integer $r_i \geq 2\rk_A M + \rk_{I_{e_1}} e_1M$.
			\item \label{item:tildeM-3new} $\ker \rho_{M} = \ker (M \to M_n)$
		\end{enumerate}
	\end{proposition}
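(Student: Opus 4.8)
The plan is to prove parts (1) and (2) together by induction on $i$, and then to deduce part (3) from an explicit description of $M_i$ as a partial image of $\rho_M$. Beyond the formal fiber‑product lemmas of this section (Lemmas~\ref{lem:comm_quot}, \ref{lem:nonsplit} and Corollary~\ref{cor:boxtime-UniversalProperty}), the only structural input I need is the following: for any $P_A$-module $N$ and any $e\in\Idem(A)$, the surjection $N\twoheadrightarrow eN$ is the \emph{maximal} quotient of $N$ that admits an $e\Z_p[\Gamma]$-module structure, and $\rk_A(eN)=\rk_A N$. Indeed, by Lemma~\ref{lem:e-A} the composite $P_A\to e\Z_p[\Gamma]\to A$ is reduction modulo $\frakm_{P_A}$, so $J:=\ker(P_A\to e\Z_p[\Gamma])\subseteq\frakm_{P_A}$ kills $A$; hence every $\Gamma$-homomorphism $N\to A$ factors through $N/JN=eN$, which with \eqref{eq:e-rk} gives $\rk_A(eN)=\rk_A N$, and a quotient $N/L$ is an $e\Z_p[\Gamma]$-module iff $JN\subseteq L$, so $eN$ is the largest such. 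The same computation gives $e\otimes A\simeq A$, which I will use repeatedly.

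\textbf{Part (1).} The base case $i=0$ is immediate from the construction of $\widetilde{e_0M}$. For the inductive step, assume $M_{i-1}$ is a quotient of $M$ and of $\widetilde{M_{i-1}}$ with $\rk_A M_{i-1}=\rk_A\widetilde{M_{i-1}}=r$. Any common quotient of $e_iM\to e_iM_{i-1}$ and of $\rho_{M_{i-1},e_i}$ is a quotient of $e_iM$, hence an $e_i\Z_p[\Gamma]$-module, hence (being a quotient of $M_{i-1}$) a quotient of $e_iM_{i-1}$; as it also surjects onto $e_iM_{i-1}$ and has $A$-rank $r$, it is isomorphic to $e_iM_{i-1}$. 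So the maximal common quotient of these two maps is $e_iM_{i-1}$ itself, and Lemma~\ref{lem:nonsplit} gives $\rk_A M_i=\rk_A e_iM_{i-1}=r$ while $\rk_A\widetilde{M_i}=\rk_A e_iM_{i-1}=r$. The compatible surjections $\widetilde{M_i}\twoheadrightarrow\widetilde{e_iM}\twoheadrightarrow e_iM$ and $\widetilde{M_i}\twoheadrightarrow\widetilde{M_{i-1}}\twoheadrightarrow M_{i-1}$ then produce, via Corollary~\ref{cor:boxtime-UniversalProperty}, a surjection $\widetilde{M_i}\twoheadrightarrow M_i$. Finally $\rho_{M,e_i}$ and $M\twoheadrightarrow M_{i-1}$ agree after projecting to $e_iM_{i-1}$ (functoriality of $e_i\otimes-$), hence induce $M\to M_i$; its image surjects onto $M_{i-1}$, so has $A$-rank $\geq r=\rk_A M_i$, and Nakayama's lemma makes it onto.

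\textbf{Part (2).} Via the inclusions $\widetilde{M_i}\hookrightarrow\widetilde{e_iM}\times\widetilde{M_{i-1}}$ and $M_i\hookrightarrow e_iM\times M_{i-1}$, the map $\widetilde{M_i}\to M_i$ is the restriction of $(\widetilde{e_iM}\to e_iM)\times(\widetilde{M_{i-1}}\to M_{i-1})$, so $\ker(\widetilde{M_i}\to M_i)=\widetilde{M_i}\cap\big(\ker(\widetilde{e_iM}\to e_iM)\times\ker(\widetilde{M_{i-1}}\to M_{i-1})\big)$; by construction of $\widetilde{e_iM}$ and by induction this ambient module is a finite direct sum of copies of $A$, so $\ker(\widetilde{M_i}\to M_i)\simeq A^{\oplus r_i}$ for some $r_i$. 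Counting orders in the ordinary fiber‑product square for $M_i$ and the $\boxtimes$-square for $\widetilde{M_i}$ yields $r_i=r+r_{i-1}-\ell_i$, where $\ell_i$ is the $A$-length of $\ker(\overline N_i\to e_iM_{i-1})$ with $\overline N_i$ the maximal common quotient; and $\ell_i\le r_{i-1}$, since that kernel is a quotient of $\ker(e_i\widetilde{M_{i-1}}\to e_iM_{i-1})$, which is a quotient of $e_i\otimes\ker(\widetilde{M_{i-1}}\to M_{i-1})\simeq A^{\oplus r_{i-1}}$. In particular $r_i\ge r$ for all $i$. The first sharpening is $\ell_1=0$, whence $r_1=2r$: the maximal common quotient $\overline N_1$ is at once a quotient of $\widetilde{e_1M}$ and of $\widetilde{M_0}=\widetilde{e_0M}$, hence a module over $e_1\Z_p[\Gamma]\otimes_{\Z_p[\Gamma]}e_0\Z_p[\Gamma]$; since $e_0$ forces $\Gamma_p$ to act trivially whereas on $e_1\Z_p[\Gamma]$ the group $\Gamma_p$ acts through its faithful order-$p$ quotient $\Gamma_p/\Gamma_{(1)}$, this tensor ring is $e_1\Z_p[\Gamma]/\frakm_{e_1}\simeq A$, a field, so $\overline N_1$ is an $A$-vector space of rank $r$; and $e_1M_0=e_1\otimes e_0M$ is likewise an $A$-vector space of rank $r$, so $\overline N_1\to e_1M_0$ is an isomorphism. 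The remaining point is the bound $\ell_2\le r-\rk_{I_{e_1}}e_1M$ (which gives $r_2=3r-\ell_2\ge 2r+\rk_{I_{e_1}}e_1M$), together with $\ell_i\le r$ for $i\ge3$ (which propagates $r_i\ge 2r+\rk_{I_{e_1}}e_1M$ through the recursion). For these one analyses $\ker(\widetilde{M_1}\to M_1)\simeq\ker(\widetilde{e_1M}\to e_1M)\oplus\ker(\widetilde{e_0M}\to e_0M)\simeq A^{\oplus 2r}$ inside $\widetilde{M_1}=\widetilde{e_1M}\times_{A^{\oplus r}}\widetilde{e_0M}$ and tracks its behaviour under $e_2\otimes-$ and passage to $\overline N_2$: the choice of $e_2$ (the idempotent attached to the cyclic quotient $\Gamma_p/N$) together with $\gamma_1\notin\Gamma_{(1)}$ and $\gamma_2\in\Gamma_{(1)}\setminus\Gamma_{(2)}$ is designed precisely so that $\gamma_2$ acts trivially on $e_1\Z_p[\Gamma]$ but as multiplication by a non-unit of $e_2\Z_p[\Gamma]$ whose valuation matches the definition of $I_{e_1}$, forcing the $e_1$-part of $\ker(\widetilde{M_1}\to M_1)$ to yield, after $e_2\otimes-$, at least $\rk_{I_{e_1}}e_1M$ coordinates on which $e_2\widetilde{M_1}\to e_2M_1$ is already injective, thereby cutting $\ell_2$ down by that amount. \textbf{I expect this length/valuation bookkeeping --- matching the combinatorics of the DVR-decompositions of $e_1M$, $e_2M$ and their $\widetilde{(\ )}$-enlargements against the valuation defining $I_{e_1}$ --- to be the main obstacle.}

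\textbf{Part (3).} Using part (1), an easy induction identifies $M_i$ with the image of $(\rho_{M,e_0},\dots,\rho_{M,e_i})\colon M\to\bigoplus_{j=0}^{i}e_jM$: the map lands in $M_i=e_iM\times_{e_iM_{i-1}}M_{i-1}\subseteq\bigoplus_{j\le i}e_jM$, and $M\to M_i$ is onto by part (1). Since $M$ is a $P_A$-module, $eM=0$ for $e\in\calE\setminus\Idem(A)$, so $\bigoplus_{e\in\calE}eM=\bigoplus_{j=0}^{n}e_jM$ and $\rho_M=(\rho_{M,e_0},\dots,\rho_{M,e_n})$; therefore $\ker\rho_M=\ker\!\big(M\to\bigoplus_{j=0}^{n}e_jM\big)=\ker(M\to M_n)$, which is the assertion.
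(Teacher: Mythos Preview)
Your overall architecture matches the paper's: the same induction for (1), the same size recursion $r_i=r+r_{i-1}-\ell_i$ with $\ell_i=\log_{|A|}|\ker(\overline N_i\to e_iM_{i-1})|$ for (2), and your argument for (3) (identifying $M_i$ with the image of $(\rho_{M,e_0},\dots,\rho_{M,e_i})$) is a clean variant of the paper's. Your computation $\ell_1=0$, $r_1=2r$ is also the paper's. The gaps are both in Part~(2).

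First, you assert $\ell_i\le r$ for $i\ge 3$ but only justify $\ell_i\le r_{i-1}$, which is not enough to propagate the bound. The missing observation is on the \emph{other} side of the $\boxtimes$: $\overline N_i$ is also a quotient of $\widetilde{e_iM}$, hence an $e_i\Z_p[\Gamma]$-module of $A$-rank $r$; since you already showed $\ker(\overline N_i\to e_iM_{i-1})$ is a direct sum of copies of $A$, it sits inside $\overline N_i[\frakm_{e_i}]\simeq A^{\oplus r}$, giving $\ell_i\le r$.

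Second, your sketch for $\ell_2$ points at the right actors but misses the mechanism. The controlling fact is that $\gamma_2\in\Gamma_{(1)}$ acts trivially on both $e_0$- and $e_1$-modules, hence on $\widetilde{M_1}$ and on $U_2:=\overline N_2$. The paper then splits cases. If $\Gamma_p/\Gamma_{(2)}\simeq(\Z/p\Z)^2$, the image of $\gamma_2$ generates $\Gamma_p/N$, so $\frakm_{e_2}=(1-\gamma_2)$ and $U_2$ is an $A$-vector space of rank $r$; thus $U_2\simeq e_2M_1$ and $\ell_2=0$. If $\Gamma_p$ is cyclic (so $\gamma_2=\gamma_1^p$, $N=\Gamma_{(2)}$), the key extra step is a \emph{factorization through $e_1$}: any $e_2\Z_p[\Gamma]$-module on which $\gamma_2$ acts trivially is already an $e_1$-module, so $M_1\to e_2M_1$ factors through $e_1M_1=e_1M$ and $\widetilde{M_1}\to U_2$ factors through $e_1\widetilde{M_1}=\widetilde{e_1M}$. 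Since in the cyclic case $I_{e_1}$ is exactly the image of $(1-\gamma_2,\sum_j\gamma_2^j)$ in $e_1\Z_p[\Gamma]$ and $\gamma_2$ acts trivially on $U_2$, one gets $I_{e_1}\cdot\widetilde{e_1M}\subseteq\ker(\widetilde{e_1M}\to U_2)$; chasing the two-row diagram $A^{\oplus r}\hookrightarrow\widetilde{e_1M}\twoheadrightarrow e_1M$ over $A^{\oplus r}\to e_2\widetilde{e_1M}\twoheadrightarrow e_2M_1$ with $U_2$ sandwiched in between yields $\ell_2\le r-\rk_{I_{e_1}}e_1M$. Your ``valuation matches $I_{e_1}$'' intuition is in this direction, but the argument goes through the $e_1$-factorization rather than a direct valuation comparison inside $e_2\Z_p[\Gamma]$.
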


	\begin{proof}%[Proof of Proposition~\ref{prop:tildeM}]
		By the construction of $\widetilde{M_0}$ and $M_0$, we have 
		\[
			\ker(\widetilde{M_0} \to M_0) \simeq A^{\oplus r} \quad \text{and} \quad  \rk_A \widetilde{M_0} = \rk_A M_0.
		\] 
		Regarding $\widetilde{M_1}$ and $M_1$, by definition we have the following commutative diagram, where each arrow is surjective and the smaller square is cartesian.  
		\[\begin{tikzcd}
			\widetilde{M_1} \arrow{rr} \arrow{dd} \arrow[dashed]{dr}& & \widetilde{M_0} \arrow{d} \\
			& M_1 \arrow{r} \arrow{d} & M_0 \arrow{d} \\
			\widetilde{e_1M} \arrow{r} &e_1 M \arrow{r} &e_1M_0.
		\end{tikzcd}\]
		Since $\Gamma_{(1)}$ acts trivially on all the modules in the above diagram, we consider these modules as $P\otimes_{\Z_p} \Z_p[\Gamma_p/\Gamma_{(1)}]$-modules, where $P$ is the projective $\Z_p[\Gamma']$-module with $P/pP\simeq A$. Then one can check $e_0\Z_p[\Gamma]\simeq P$ and $e_1\Z_p[\Gamma] \simeq P \otimes_{\Z_p} (\Z_p[\Gamma_p/\Gamma_{(1)}]/\Z_p)$, where the ring $\Z_p[\Gamma_p/\Gamma_{(1)}]/\Z_p$ is the quotient of $\Z_p[\Gamma_p/\Gamma_{(1)}]$ by $\Z_p[\Gamma_p/\Gamma_{(1)}]^{\Gamma_p/\Gamma_{(1)}}\simeq \Z_p$. The $\Gamma_p/\Gamma_{(1)}$-coinvariant of $\Z_p[\Gamma_p/\Gamma_{(1)}]/\Z_p$ is isomorphic $\F_p$, so the $\Gamma_p/\Gamma_{(1)}$-coinvariant of $e_1\Z_p[\Gamma]$ is isomorphic $A$. Therefore, the $\Gamma_p$-coinvariant of $\widetilde{e_1M}$ is isomorphic to $A^{\oplus r}$, which is exactly $e_1M_0$. Because $\Gamma_p$ acts trivially on $\widetilde{M_0}$, the maximal common quotient of $\widetilde{M_0} \to e_1M_0$ and $\widetilde{e_1M} \to e_1M_0$ is $e_1M_0$.
		So, by Corollary~\ref{cor:boxtime-UniversalProperty}, there is a surjective map from $\widetilde{M_1} \to M_1$ and $\ker(\widetilde{M_1} \to M_1) \simeq \ker(\widetilde{M_0} \to M_0) \times \ker(\widetilde{e_1M} \to e_1M) \simeq A^{\oplus 2r}$. Lemma~\ref{lem:nonsplit} implies $\rk_A \widetilde{M_1} = \rk_A M_1 = \rk_A e_1 M_0=\rk_A M$. Similarly, applying Corollary~\ref{cor:boxtime-UniversalProperty} to the surjections $M \to M_0$ and $M \to e_1M$, we see that $M_1$ is a quotient of $M$, so we have \eqref{item:tildeM-1new} for $i=1$.
		
		For $1\leq i \leq n-1$, let $U_{i+1} \to e_{i+1} M_i$ be the maximal common quotient of $\widetilde{e_{i+1} M} \to e_{i+1} M_i$ and $\widetilde{M_i} \to e_{i+1} M_i$. 
		Consider the following commutative diagram in which all arrows are surjective and both the square containing $\widetilde{M_{i+1}}$ and the square containing $M_{i+1}$ are cartesian.
		\begin{equation}\label{eq:diag-tilde}\begin{tikzcd}
			\widetilde{M_{i+1}} \arrow{rr}\arrow{dd} & & \widetilde{M_i} \arrow{dd}\arrow{dr} & \\
			& M_{i+1} \arrow[crossing over]{rr}  & &M_i \arrow{dd} \\
			\widetilde{e_{i+1}M} \arrow{rr} \arrow{rd}&  & U_{i+1} \arrow{dr} &  \\
			 & e_{i+1} M\arrow{rr} \arrow[from=2-2, crossing over]  && e_{i+1}M_i.
		\end{tikzcd}\end{equation}
		If \eqref{item:tildeM-1new} holds for $i$, then $\rk_A e_{i+1}M_i = \rk_A M_i= \rk_A M$, which together with $\rk_A \widetilde{e_{i+1}M}=\rk_A M$ implies $\rk_A U_{i+1}=\rk_A M$. Note that any quotient of $e_{i+1}M$ is an $e_{i+1}\Z_p[\Gamma]$-module and $e_{i+1}M_i$ is the maximal quotient of $M_i$ that is an $e_{i+1}\Z_p[\Gamma]$-module, so $e_{i+1}M_i$ is the maximal common quotient of $M_i \to e_{i+1}M_i$ and $e_{i+1}M \to e_{i+1} M_i$. So by Lemma~\ref{lem:nonsplit} and the definition $\widetilde{M_{i+1}}= \widetilde{e_{i+1}M} \boxtimes_{e_{i+1}M_i} \widetilde{M_i}$ and $M_{i+1} = e_{i+1}M \times_{e_{i+1}M_i} M_i= e_{i+1}M \boxtimes_{e_{i+1}M_i} M_i$, we see that $\rk_A\widetilde{M_{i+1}}=\rk_A M_{i+1} = \rk_A M$. Furthermore, by Corollary~\ref{cor:boxtime-UniversalProperty}, there is a surjection $\widetilde{M_{i+1}} \to M_{i+1}$ that fits into the commutative diagram above; similarly, since $M_i$ and $e_{i+1}M$ are both quotients of $M$, there is a a surjection $M \to M_{i+1}$. So \eqref{item:tildeM-1new} can be proved by induction.

		Considering the diagram \eqref{eq:diag-tilde}, we have $\ker(\widetilde{M_{i+1}} \to M_{i+1})=\ker(\widetilde{M_{i+1}} \to e_{i+1}M) \cap \ker(\widetilde{M_{i+1}} \to M_i)$, so under the surjection $\widetilde{M_{i+1}}\to \widetilde{e_{i+1}M}$ (and resp. $\widetilde{M_{i+1}} \to \widetilde{M_{i}}$), $\ker(\widetilde{M_{i+1}} \to M_{i+1})$ is mapped to a submodule of $\ker(\widetilde{e_{i+1}M} \to e_{i+1}M)$ (resp. $\ker(\widetilde{M_{i}} \to M_i)$). As $\ker(\widetilde{M_{i+1}} \to \widetilde{e_{i+1}M}) \cap \ker (\widetilde{M_{i+1}} \to \widetilde{M_i})=0$, we see that 
		\begin{equation}\label{eq:ker-tilde}
			\ker(\widetilde{M_{i+1}} \to M_{i+1}) \hookrightarrow \ker(\widetilde{e_{i+1}M} \to e_{i+1}M) \times \ker(\widetilde{M_i} \to M_i).
		\end{equation}
		On the other hand, we compare $|\widetilde{M_{i+1}}|$ and $|M_{i+1}|$ as follows. Note that
		\begin{eqnarray*}
			&&\ker(\widetilde{M_{i+1}} \to e_{i+1}M_i)\\
			 &=& \{ (x,y) \in \ker(\widetilde{e_{i+1}M} \to e_{i+1}M_i) \times \ker(\widetilde{M_i} \to e_{i+1}M_i) \,:\, \text{images of $x, y$ in $U_{i+1}$ are equal}  \},
		\end{eqnarray*}
		so 
		\[
			\frac{|\widetilde{M_{i+1}}|}{|e_{i+1}M_i|}=\frac{|\ker(\widetilde{e_{i+1}M} \to e_{i+1}M_i)| |\ker(\widetilde{M_i} \to e_{i+1}M_i)|}{|\ker(U_{i+1} \to e_{i+1} M_i)|}.
		\]
		Then, since $|M_{i+1}|/|e_{i+1}M_i|=|\ker(e_{i+1}M \to e_{i+1}M_i)| |\ker(M_i \to e_{i+1} M_i)|$, we obtain
		\begin{equation}\label{eq:ker-tilde-2}
			|\ker(\widetilde{M_{i+1}} \to M_{i+1})|= \frac{|\ker(\widetilde{e_{i+1}M} \to e_{i+1}M)| |\ker(\widetilde{M_i} \to M_i)|}{|\ker(U_{i+1} \to e_{i+1}M_i)|}.
		\end{equation}
		By \eqref{eq:ker-tilde} and \eqref{eq:ker-tilde-2}, if $\ker(\widetilde{M_i} \to M_i) \simeq A^{\oplus r_i}$ for some integer $i$, then $\ker(\widetilde{M_{i+1}} \to M_{i+1}) \simeq A^{\oplus r_{i+1}}$ for 
		\begin{equation}\label{eq:r_i}
			r_{i+1}=r+r_i-\log_{|A|} |\ker(U_{i+1} \to e_{i+1}M_i)|.
		\end{equation}

		We are going to prove the inequality for $r_i$ in \eqref{item:tildeM-2new} by induction. Consider the diagram \eqref{eq:diag-tilde} for $i=1$. As $U_2$ is a quotient of $\widetilde{M_1}$, $\gamma_2$ acts trivially on $U_2$. If $\Gamma_p/\Gamma_{(2)}\simeq \Z/p\Z \times \Z/p\Z$, then the $\langle \gamma_2 \rangle$-coinvariant of $e_2\Z_p[\Gamma]$ is isomorphic to $\F_p$, so $U_2 \simeq e_2M_1\simeq \F_p^{\oplus r}$, and hence \eqref{eq:r_i} implies $\ker(\widetilde{M_2}\to M_2) \simeq A^{\oplus 3r}$. Otherwise, $\Gamma_p$ is cyclic generated by $\gamma_1$ and $\Gamma_p/\Gamma_{(2)} \simeq \Z/p^2\Z$, without loss of generality we assume $\gamma_2=\gamma_1^p$. In this case, one can explicitly write down the structure of $e\Z_p[\Z/p^2\Z]$ for all (the three) primitive idempotents $e$ of $\Q_p[\Z/p^2\Z]$; and one can see that if $V$ is an $e_2\Z_p[\Gamma]$-module such that $\gamma-2$ acts trivially on $V$, then $V=e_1V$ (when $V$ is viewed as a $\Z_p[\Gamma]$-module). Then, since $e_2M$ is a $e_2\Z_p[\Gamma]$-module and $\gamma_2$ acts trivially on $M_1$, $e_2 M_1$ equals $e_1(e_2M_1)$, so $M_1 \to e_2 M_1$ factors through $e_1 M_1$; also because $M_1 \to e_2M_1$ is defined by taking tensor product with $e_2\Z_p[\Gamma]$, $e_1M_1 \to e_2 M_1$ is also defined by $\otimes_{\Z_p[\Gamma]} e_2\Z_p[\Gamma]$.
		Similarly, $\widetilde{M_1} \to U_2$ factors through $e_1 \widetilde{M_1}$, and hence $U_2$ is a quotient of $e_2(e_1\widetilde{M_1})$.
		By the right exactness of tensor product $\ker(e_2(e_1\widetilde{M_1}) \to e_2M_1)$ is a quotient of $e_2 \ker(e_1\widetilde{M_1} \to e_1M) \simeq A^{\oplus r}$. Since $e_1\widetilde{M_1} = \widetilde{e_1M}$ and $e_1M_1=e_1M$, we have the following commutative diagram
		\[\begin{tikzcd}
			A^{\oplus r} \arrow[hook]{r} \arrow[equal]{d}&\widetilde{e_1M} \arrow[two heads]{rr} \arrow[two heads]{d} & & e_1M \arrow[two heads]{d} \\
			A^{\oplus r} \arrow{r} &e_2 (\widetilde{e_1M}) \arrow[two heads]{rr} \arrow[two heads]{rd}&  & e_2 M_1\\
			& & U_2 \arrow[two heads]{ru} &
		\end{tikzcd}\]
		where the two rows are exact.
		One can check in this case (when $\Gamma_p$ is cyclic) by definition that the ideal $I_{e_1}$ of $e_1\Z_p[\Gamma]$ is the image of $(1-\gamma_2, \sum_{j=1}^{|\gamma_2|} \gamma_2^j)$, so $U_2$ is an $e_1\Z_p[\Gamma]/I_{e_1}$-module, which implies that $I_{e_1}\cdot \widetilde{e_1M} \subseteq \ker( \widetilde{e_1M} \to U_2)$. Then by chasing the diagram above, we see that $\ker (U_2 \to e_2 M_1)$ is a quotient of $A^{\oplus r-\rk_{I_{e_1}} e_1M}$, so \eqref{eq:r_i} implies $\ker(\widetilde{M_2} \to M_2)\simeq A^{\oplus 2r+ \rk_{I_{e_1}} e_1M}$. Thus, \eqref{item:tildeM-2new} holds for $i=2$.
		
		Suppose \eqref{item:tildeM-2new} holds for $i\geq 2$. To prove \eqref{item:tildeM-2new} for $i+1$, by applying \eqref{eq:r_i}, it suffices to show $\ker(U_{i+1} \to e_{i+1}M_i)$ is a quotient of $A^{\oplus r}$. Because $\ker (\widetilde{M_i} \to M_i)$ is a direct product of copies of $A$, the kernel of the map $e_{i+1} \widetilde{M_i} \to e_{i+1} M_i$ induced by $\otimes_{\Z_p[\Gamma]} e_{i+1}\Z_p[\Gamma]$ is also a direct product of copies of $A$. As $U_{i+1}$ is a quotient of $e_{i+1} \widetilde{M_i}$, $\ker(U_{i+1} \to e_{i+1}M_i)$ is also a direct product of copies of $A$. Then, since $\rk_A \widetilde{e_{i+1}M} = \rk_A e_{i+1} M_i = \rk_A M$, from the diagram \eqref{eq:diag-tilde} we see that $\rk_A \ker(U_{i+1} \to e_{i+1}M_i) \leq r$. So the proof of \eqref{item:tildeM-2new} is completed.
		
		Finally, we prove \eqref{item:tildeM-3new}. By \eqref{item:tildeM-1new}, $M_n$ is a quotient of $M$, which induces quotient maps $e_i M \to e_i M_n$ for all $i$. Since $M_n$ is constructed in a way such that $e_iM$ is a quotient of $M_n$, so those quotient maps $e_iM \to e_iM_n$ are isomorphisms. If $x\in \ker(M \to M_n)$, then $x \in \ker \rho_{M,e_i}$ for all $i$. On the other hand, if $x \in \ker \rho_{M, e_i}$ for all $i$, then $x$ is in $\ker(M \to M_i)$ for all $i$. So $\ker \rho_M=\ker(M\to M_n)$.
	\end{proof}

\subsection{Proof of Theorem~\ref{thm:kernel-limit}}
\label{ss:proof-kernel-limit}
\hfill

	We apply the result in Section~\ref{ss:presentaiton-split} with 
	\[
		R=P_A\quad \text{and} \quad \scrS=\scrS_A:= S \cup \scrR_A(K/Q),
	\]
	and let $P_AE_S^T$, $P_AC_S^T$, $P_A\calE_S^T$, $P_A\calC_S^T$, $P_A\calE_{\frakS}$ and $P_A \calC_{\frakS}$ denote the notation $RE_S^T$, $RC_S^T$, $R\calE_S^T$, $R\calC_S^T$, $R\calE_{\frakS}$ and $R \calC_{\frakS}$ defined in \eqref{eq:RC-ses}-\eqref{eq:split-RC-ses-little}.
	For the module $P_A\calC_S^T$ and an idempotent $e \in \Idem(A)$,  taking tensor product with $\otimes_{\Z_p[\Gamma]} e\Z_p[\Gamma]$ defines a surjection $P_A\calC_S^T \to eP_A\calC_S^T$, so we define
	\begin{equation}\label{eq:theta}
		\theta_S^T(K): P_A \calC_S^T(K) \longrightarrow \bigoplus_{e\in \Idem(A)} eP_A\calC_S^T(K).
	\end{equation}
	Here we write $K$ explicitly since it worth pointing out that the map $\theta_S^T$ depends on $K$.
	
	Denote 
	\[
		r_{\frakS}:=\rk_A P_A \calC_{\frakS} \quad \text{and}\quad r_S^T:=\rk_A P_A \calC_S^T.
	\]
	
	\begin{lemma}\label{lem:frakS-ST}
		There exists a constant $C_1$ depending on $Q$, $\Gamma$ and $A$ such that 
		\[
			r_{\frakS} - r_S^T \leq C_1.
		\]
	\end{lemma}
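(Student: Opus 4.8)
The plan is to route both $r_{\frakS}=\rk_A P_A\calC_{\frakS}$ and $r_S^T=\rk_A P_A\calC_S^T$ through the $A$-ranks of the restricted-ramification modules $C_{\frakS}^{\O}(K)$ and $C_S^T(K)$, and then to observe that the $\B$-terms produced by the resulting estimates cancel against the contribution of the ``extra'' primes in $\frakS$. First note that, since the decomposition \eqref{eq:PA} of $\Z_p[\Gamma]$ into indecomposable ideals is a decomposition into blocks, for any finitely generated $\Z_p[\Gamma]$-module $M$ we have $M\otimes_{\Z_p[\Gamma]}P_A=\epsilon_A M$ for the block idempotent $\epsilon_A$ with $\epsilon_A\Z_p[\Gamma]\cong P_A$, and $\Hom_{\Z_p[\Gamma]}(\epsilon_A M,A)=\Hom_{\Z_p[\Gamma]}(M,A)$; hence by \eqref{eq:e-rk}, $\rk_A P_AC_{\frakS}=\rk_A C_{\frakS}^{\O}(K)$ and $\rk_A P_AC_S^T=\rk_A C_S^T(K)$, and both modules are finitely generated over $\Z_p$ by class field theory (recall $Q$ is a number field). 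Since $P_A\calC_{\frakS}$ is a quotient of $P_AC_{\frakS}$, we get $r_{\frakS}\le\rk_A C_{\frakS}^{\O}(K)$ for free, while Lemma~\ref{lem:rank-maxsplit} applied to \eqref{eq:RC-ses-little} gives $r_S^T\ge\rk_A C_S^T(K)-h^2(\Gamma,\F_p)$. (Together with the dual inequalities these also give $r_{\frakS}-r_S^T\ge0$, but only the upper bound is needed.)

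Next I would estimate the two class-group ranks. For $C_S^T(K)$, Proposition~\ref{prop:e-rk-arbitrary} with $\scrS=\scrS_A:=S\cup\scrR_A(K/Q)$ yields
\[
  \rk_A C_S^T(K)\ \ge\ \frac{\dim_{\F_p}\B_{\scrS_A\backslash T}^{\scrS_A\cup T}(Q,A)+\sum_{\frakp\in\scrS_A\backslash T}h^1(\calT_{\frakp},A)^{\calG_{\frakp}}}{\dim_{\F_p}\End_{\Gamma}(A)}-c_1 .
\]
For $C_{\frakS}^{\O}(K)$ I would run the argument of Proposition~\ref{prop:e-rk} verbatim with $S$ replaced by $\frakS$ and $T$ replaced by $\O$; the role of ``$\scrS_A$'' there is then played by $\frakS\cup\scrR_A(K/Q)=\frakS$, since $\scrR_A(K/Q)\subseteq\scrS\subseteq\frakS$ by the defining properties of $\frakS$, so the set of primes at which the relevant $A$-isotypic extension could ramify outside the chosen set is empty. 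The crucial point to check is that the error term in that argument is uniform in $\frakS$: it depends only on $Q,\Gamma,p$ and $A$ (the bounds there are on $\#S_p(K)$, on $[L:Q]$ and $\dim_{\F_p}(\calO_{L}^{\times}/p)$ for $L=Q(A,\mu_p)$, and on local generator ranks, all uniform, and $\#T=\#\O=0$). This gives
\[
  \rk_A C_{\frakS}^{\O}(K)\ \le\ \frac{\dim_{\F_p}\B_{\frakS}^{\frakS}(Q,A)+\sum_{\frakp\in\frakS}h^1(\calT_{\frakp},A)^{\calG_{\frakp}}}{\dim_{\F_p}\End_{\Gamma}(A)}+c_1' ,
\]
and $\B_{\frakS}^{\frakS}(Q,A)=0$: by the defining property of $\frakS$ we have $\B_{\frakS\backslash T}^{\frakS\cup T}(Q,A)=0$, and dropping the splitting conditions at $T$ only enlarges the domain of the map \eqref{eq:def-B}, so $\B_{\frakS}^{\frakS}(Q,A)$ is a quotient of $\B_{\frakS\backslash T}^{\frakS\cup T}(Q,A)$.

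Combining the four displayed inequalities,
\[
  r_{\frakS}-r_S^T\ \le\ \frac{\sum_{\frakp\in\frakS}h^1(\calT_{\frakp},A)^{\calG_{\frakp}}-\dim_{\F_p}\B_{\scrS_A\backslash T}^{\scrS_A\cup T}(Q,A)-\sum_{\frakp\in\scrS_A\backslash T}h^1(\calT_{\frakp},A)^{\calG_{\frakp}}}{\dim_{\F_p}\End_{\Gamma}(A)}+c_1+c_1'+h^2(\Gamma,\F_p).
\]
Since $\scrS_A\backslash T\subseteq\scrS_A\subseteq\frakS$, the difference of the two prime sums is $\sum_{\frakp\in\frakS\backslash(\scrS_A\backslash T)}h^1(\calT_{\frakp},A)^{\calG_{\frakp}}$, and
\[
  \frakS\backslash(\scrS_A\backslash T)\ \subseteq\ \Bigl(\frakS\backslash\bigl(\textstyle\bigcup_{\ell\mid p|\Gamma|}S_{\ell}(Q)\cup\scrS_A\cup T\bigr)\Bigr)\cup\Bigl(\textstyle\bigcup_{\ell\mid p|\Gamma|}S_{\ell}(Q)\Bigr)\cup T .
\]
For $\frakp\notin S_p(Q)$ one has $h^1(\calT_{\frakp},A)^{\calG_{\frakp}}=\dim_{\F_p}\Hom_{\calG_{\frakp}}(\calT_{\frakp}^{\tr}(p),A)\le\dim_{\F_p}A$, so, using the defining cardinality property of $\frakS$ and Lemma~\ref{lem:End=A} ($\dim_{\F_p}A=\dim_{\F_p}\End_{\Gamma}(A)$), the sum over $\frakS\backslash(\bigcup_{\ell}S_{\ell}(Q)\cup\scrS_A\cup T)$ is at most $\dim_{\F_p}\B_{\scrS_A\backslash T}^{\scrS_A\cup T}(Q,A)$; the sums over $\bigcup_{\ell\mid p|\Gamma|}S_{\ell}(Q)$ and over $T$ involve a bounded number of primes with each local term bounded as in the proof of Corollary~\ref{cor:pres} (via \cite[Theorem~(7.5.11)]{NSW}), hence are bounded by a constant $C'$ depending only on $Q,\Gamma,p,T,A$. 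The $\B$-terms cancel, and we obtain $r_{\frakS}-r_S^T\le C_1$ with $C_1=C'/\dim_{\F_p}\End_{\Gamma}(A)+c_1+c_1'+h^2(\Gamma,\F_p)$ depending only on $Q$, $\Gamma$, $A$ (and the fixed auxiliary sets $S,T$), as claimed. The one genuinely delicate point, and the step I expect to require the most care, is the re-derivation of Proposition~\ref{prop:e-rk} for $C_{\frakS}^{\O}(K)$ together with the verification that its error constant does not grow with $\frakS$; everything else is routine set-bookkeeping with $\frakS,\scrS_A,T$ and $\bigcup_{\ell}S_{\ell}(Q)$.
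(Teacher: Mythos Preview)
Your proof is correct and follows essentially the same strategy as the paper's: both bound $r_{\frakS}$ and $r_S^T$ via the $A$-ranks of $C_{\frakS}(K)$ and $C_S^T(K)$ (using Lemma~\ref{lem:rank-maxsplit} to handle the passage to the maximal split quotient), and both exploit that the extra primes in $\frakS\setminus\scrS_A$ contribute at most $\dim_{\F_p}\B_{\scrS_A\backslash T}^{\scrS_A\cup T}(Q,A)$ to the rank count, which is exactly what they were chosen to kill.

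The organizational difference is that you route through the $\B$-formula (Propositions~\ref{prop:e-rk} and \ref{prop:e-rk-arbitrary}) separately for the two ranks and then subtract, which forces you to re-derive Proposition~\ref{prop:e-rk} for $C_{\frakS}^{\O}(K)$ and to verify that its error constant is uniform in $\frakS$. The paper instead stays one level lower: it applies Lemma~\ref{lem:base_change} twice (once for $S,T$ and once for $\frakS,T$, noting that $c_0$ there is already stated to be independent of $S$), and bounds the difference $h^1(G_{\frakS}^T(L),A)^{\Gal(L/Q)}-h^1(G_{\scrS_A}^T(L),A)^{\Gal(L/Q)}$ directly using the exact sequence of Lemma~\ref{lem:es-B}; only at the end does it pass from $C_{\frakS}^T$ to $C_{\frakS}$ by modding out Frobenii at $T$-primes. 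This is slightly cleaner since it sidesteps the re-derivation you flag as ``the one genuinely delicate point,'' but the underlying ingredients are identical. (Your verification that $\B_{\frakS}^{\frakS}(Q,A)$ is a quotient of $\B_{\frakS\backslash T}^{\frakS\cup T}(Q,A)$ is correct and is also used elsewhere in the paper; and in both arguments the final constant in fact picks up a dependence on $\#T$ and $S$, which is harmless since these are fixed throughout Section~\ref{sect:Proof-kernel-limit}.)
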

	
	\begin{proof}
		Let $L:=Q(A,\mu_p)$. For every $\frakp$ of $Q$, since $p \nmid [L:Q]$, for every $\frakP \in \frakp(L)$, $\calT_{\frakP}$ is a normal subgroup of $\calT_\frakp$ of index prime to $p$, so
		\[
			\left(\bigoplus_{\frakP \in \frakp(L)} H^1(\calT_{\frakP}, A)^{\calG_{\frakP}} \right)^{\Gal(L/Q)} \simeq H^1(\calT_{\frakP}, A)^{\calG_{\frakp}}\simeq H^1(\calT_{\frakp}, A)^{\calG_{\frakp}} = \Hom_{\calG_{\frakp}}(\calT_{\frakp}, A).
		\]
		When $\frakp \not\in S_p(Q)$, $\dim_{\F_p}\Hom_{\calG_{\frakp}}(\calT_{\frakp},A) = \dim_{\F_p}\Hom_{\calG_{\frakp}}(\calT_{\frakp}^{tr}, A)\leq \dim_{\F_p} A$.
		So by definition of $\frakS$ and Lemma~\ref{lem:frakS},
		\begin{eqnarray*}
			&& \dim_{\F_p} \left(\bigoplus_{\frakP \in \frakS \backslash (\scrS_A \cup T)(L)} H^1(\calT_{\frakP}, A)^{\calG_{\frakP}} \right)^{\Gal(L/Q)} \\
			&=& \sum_{\frakp \in \frakS \backslash(\scrS_A \cup T)} \dim_{\F_p} \Hom_{\calG_{\frakp}}(\calT_{\frakp}, A) \\
			&\leq&\sum_{\frakp \in \cup_{\ell \mid p |\Gamma|} S_{\ell}(Q) \backslash (\scrS_A \cup T)} \Hom_{\calG_{\frakp}}(\calT_{\frakp},A)+ \frac{\dim_{\F_p} \B_{\scrS_A\backslash T}^{\scrS_A \cup T} (Q, A)}{\dim_{\F_p} \End_{\Gamma}(A) } \dim_{\F_p} A \\
			&\leq& C_0 + \dim_{\F_p} \B_{\scrS_A\backslash T}^{\scrS_A \cup T} (Q, A),
		\end{eqnarray*}
		where $C_0$ depends only on $A$, $Q$ and $p|\Gamma|$ and the last step uses Lemma~\ref{lem:End=A}.
		Then applying Lemma~\ref{lem:es-B} with $S_1=\scrS_A$, $S_2=\frakS$, $T=T$ and $k=L$, by Lemma~\ref{lem:B}, we have the equality above holds and
		\[
			0\leq h^1(G_{\frakS}^T(L), A)^{\Gal(L/Q)}-h^1(G_{\scrS_A}^T(L), A)^{\Gal(L/Q)} \leq C_0 .
		\]
		Then by Lemma~\ref{lem:base_change}, 
		\begin{equation}\label{eq:rk-S-frakS}
			|\rk_A C_S^T(K)-\rk_A C_{\frakS}^T (K)| \leq c_0 +\frac{C_0}{\dim_{\F_p} A}.
		\end{equation}
		Since $C_{\frakS}^T(K)$ is the quotient of $C_{\frakS}(K)$ by the Frobenius element at the primes in $T(K)$, and for each $\frakp \in T(Q)$, there are at most $|\Gamma|$ many primes of $K$ lying above $\frakp$, we have
		\begin{equation}\label{eq:frakS-frakST}
			0 \leq \rk_A C_{\frakS}(K) - \rk_A C_{\frakS}^T(K) \leq |\Gamma| \# T(Q).
		\end{equation}
		Finally, let $N$ denote the subgroup of $\Gal(P_A E_{\frakS}/Q)$ that defines the maximal split extension we are using (i.e., use the notation $N$ defined in Section~\ref{ss:presentaiton-split}), and recall 
		\[
			P_A \calC_{\frakS}:= P_A C_{\frakS} /N \quad \text{and} 
		\]
		\[
			P_A \calC_S^T:=\faktor{P_A C_{\frakS}}{N \ker(P_AC_{\frakS} \to P_AC_S^T)}\simeq \faktor{P_A C_S^T}{N/(\ker(P_A C_{\frakS} \to P_AC_S^T) \cap N)}.
		\]
		So 
		\[
			\rk_A C_{\frakS} -\rk_A N \leq \rk_A \calC _{\frakS} \leq \rk_A C_{\frakS}, \quad \text{and}
		\]
		\[
			\rk_A C_S^T - \rk_A N \leq \rk_A \calC_S^T \leq \rk_A C_S^T.
		\]
		Then the lemma follows by \eqref{eq:rk-S-frakS}, \eqref{eq:frakS-frakST}, and Lemma~\ref{lem:rank-maxsplit}.
	\end{proof}
	
	\begin{lemma}\label{lem:ker-theta}
		If the Sylow $p$-subgroup of $\Gamma$ is not trivial or $\Z/p\Z$, then
		\[
			\lim_{X \to \infty} \frac{\sum\limits_{K\in \calA_{\Gamma}(X,Q)}\, \rk_{A} \ker\theta_S^T(K)}{ \# \calA_{\Gamma}(X,Q)}= \infty.
		\]
	\end{lemma}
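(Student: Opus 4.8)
The plan is to deduce Lemma~\ref{lem:ker-theta} from Proposition~\ref{prop:tildeM} together with the presentation machinery of Section~\ref{ss:presentaiton-split} and the lower-bound technology already used to prove Theorem~\ref{thm:lb-rank}. The point of Proposition~\ref{prop:tildeM} is that, for a finite $P_A$-module $M$ with $\ker\rho_M=0$, one builds a chain of modules $M_0,\dots,M_n$ with $M_n$ a quotient of $M$ and lifts $\widetilde{M_i}$ with $\ker(\widetilde{M_i}\to M_i)\simeq A^{\oplus r_i}$ where $r_i\ge 2\rk_A M+\rk_{I_{e_1}}e_1M$ for $i\ge 2$. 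I would apply this with $M=P_A\calC_S^T(K)$: by construction $\ker\rho_M=\ker(M\to M_n)$, and since $M_n$ is the fiber product built from the various $eM$'s, $\ker(M\to M_n)=\ker\theta_S^T(K)$ up to a bounded correction coming from the difference between $P_A\calC_S^T(K)$ and $P_A C_S^T(K)$ (controlled by Lemma~\ref{lem:rank-maxsplit}). So the task reduces to producing \emph{many} $\Gamma$-extensions $K$ for which $P_A\calC_S^T(K)$ admits a lift $\widetilde{M_n}$ that is actually realized by a Galois extension of $Q$, forcing $\rk_A\ker\theta_S^T(K)$ to be large.

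The mechanism for realizing such lifts is exactly the embedding-problem argument of Section~\ref{sect:presentation}. First I would fix the simple module $A$ and, for a $\Gamma$-extension $K/Q$, form the presentation $\varkappa: P_A^{\oplus r}\rtimes\Gamma\twoheadrightarrow\Gal(P_A\calE_S^T/Q)$ from Corollary~\ref{cor:pres}, whose relations are (all but boundedly many) of tame type at primes in $\scrR_A(K/Q)$. The key observation, paralleling the proof of Theorem~\ref{thm:lb-rank}, is that a tame relation $x_\frakp^{\Nm(\frakp)}y_\frakp x_\frakp^{-1}y_\frakp^{-1}$ at a prime $\frakp\in\scrR_A(K/Q)$ (so $p\mid |\calT_\frakp(K/Q)|$, $\calG_\frakp(K/Q)$ acting trivially on $A$) becomes trivial modulo $I_{e}\cdot$ for the appropriate small ideal, which means these relations impose \emph{no} constraint on the part of the module killed by the maximal ideals relevant to building $\widetilde{M_n}$. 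Concretely: the number of ramified primes grows (this is the content of the averaging hypothesis, which I would supply via Theorem~\ref{tCond} exactly as in the proof of Theorem~\ref{thm:conductor}, since $\calA_\Gamma(X,Q)$ satisfies ramification restriction at finitely many primes), while the number of ``wild'' relations and the rank of $N$ stay bounded. Hence $r=\rk_A P_A\calC_{\frakS}(K)\to\infty$ on average, and one can lift the presentation along $P_A^{\oplus r}\twoheadrightarrow (P_A/\frakm_A^{\text{something}})^{\oplus r}$ to produce a $\Gamma$-extension whose associated module surjects onto $\widetilde{M_n}$ rather than just $M_n$, for $M=P_A\calC_S^T$ of whatever (bounded-rank) shape; but what we actually need is simpler — we need $\rk_A M\to\infty$, because then $r_i\ge 2\rk_A M\to\infty$ gives $\rk_A\ker(\widetilde{M_n}\to M_n)\to\infty$, and since $\widetilde{M_n}$ is realized inside $P_A C_S^T(K')$ for a cofinal family of $K'$, we get $\rk_A\ker\theta_S^T(K')\to\infty$ on average.

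More carefully, here is the order of steps I would carry out. (i) Reduce $\ker\theta_S^T(K)$ to $\ker\rho_M$ for $M=P_A\calC_S^T(K)$ up to a bounded error, using Notation~\ref{not:idem}, Proposition~\ref{prop:tildeM}\eqref{item:tildeM-3new}, and Lemma~\ref{lem:rank-maxsplit}. (ii) Show $\rk_A\ker\rho_M\ge r_n(M)-C$ where $r_n$ is the quantity from Proposition~\ref{prop:tildeM}\eqref{item:tildeM-2new}, provided the lift $\widetilde{M_n}$ is realized by an extension in the family — that is, provided there is some $\Gamma$-extension $K$ with $P_A C_S^T(K)\twoheadrightarrow\widetilde{M_n}$; this is where the embedding-problem result Lemma~\ref{lem:EP} enters, together with the set $\frakS$ from Lemma~\ref{lem:frakS} making $\B$ vanish so that the local-global principle applies. (iii) Invoke Proposition~\ref{prop:e-rk-arbitrary} to get the lower bound $\rk_A C_S^T(K)\ge \#\scrR_A(K/Q)-c$, exactly as in Section~\ref{sect:proof-main}, so $\rk_A M\to\infty$ on average over $\calA_\Gamma(X,Q)$ by Theorem~\ref{tCond} (as in the proof of Theorem~\ref{thm:conductor}). (iv) Combine: on average $\rk_A M\to\infty$, hence $r_n\ge 2\rk_A M+\rk_{I_{e_1}}e_1M\to\infty$, hence $\rk_A\ker\theta_S^T(K)\to\infty$ on average, which is the claim.

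The main obstacle — and the step I would expect to take real work — is step (ii): one needs that the ``larger'' module $\widetilde{M_n}$, obtained abstractly by the fiber-product/box-product construction, is \emph{actually the Galois group of an unramified (outside $S$, split at $T$) abelian $p$-extension} for a positive-density (indeed cofinal) set of $K$ in the family. This is not automatic from the presentation of $P_A\calC_S^T(K)$ alone, because the relations cut down the module; one has to verify that the tame relations at the ramified primes in $\scrR_A(K/Q)$, when reduced to the relevant finite quotient where $\widetilde{M_n}$ lives, become trivial — i.e., that the extra copies of $A$ in $\ker(\widetilde{M_n}\to M_n)$ are not killed by any relation. Here the structural input of Lemma~\ref{lem:gamma-ann} (which elements $1-\gamma$, $\sum\gamma^j$ annihilate which $e\Z_p[\Gamma]$) and the explicit description of $\widetilde{e_iM}$ as a one-step extension is essential: the filtration jump added in passing from $e_iM$ to $\widetilde{e_iM}$ lives in degrees where the tame relators already vanish, by the same computation (\eqref{eq:comp-power}, \eqref{eq:comp-comm}) used in the proof of Theorem~\ref{thm:lb-rank}. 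Making this precise — i.e., packaging Proposition~\ref{prop:tildeM} with the presentation so that ``$M=P_A\calC_S^T(K)$ with $\ker\rho_M=0$'' can be upgraded to ``$\widetilde{M_n}$ is realized, so $\rk_A\ker\rho_{P_A C_S^T(K')}\gg r_n$ for a suitable $K'$'' — is the heart of the argument; once it is in place the averaging is routine via Theorem~\ref{tCond} exactly as in Theorem~\ref{thm:conductor}.
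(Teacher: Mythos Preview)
Your proposal has the right ingredients but assembles them incorrectly, and the central step~(ii) is a genuine gap.

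First, you apply Proposition~\ref{prop:tildeM} to $M=P_A\calC_S^T(K)$. The paper instead applies it to $M=\im\theta_S^T(K)$. This matters: for $\im\theta_S^T$ one has $\ker\rho_M=0$ tautologically (it sits inside $\bigoplus_e eP_A\calC_S^T$), so Proposition~\ref{prop:tildeM}\eqref{item:tildeM-3new} gives $M_n=M$, and the proposition produces an abstract module $\widetilde{M_n}$ of the \emph{same} $A$-rank $r_S^T$ together with a surjection $\widetilde{M_n}\twoheadrightarrow M$ whose kernel is $A^{\oplus r_n}$ with $r_n\ge 2r_S^T+\rk_{I_{e_1}}e_1M$.

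Second, your ``realization'' step~(ii) is not needed and would not work as written. The quantity $r_n$ measures $\rk_A\ker(\widetilde{M_n}\to M_n)$, not $\rk_A\ker(M\to M_n)$; realizing $\widetilde{M_n}$ as $P_AC_S^T(K')$ for some other $K'$ says nothing about $\ker\theta_S^T(K)$. What the paper actually does is purely algebraic: since $P_A$ is projective, the surjection $\alpha\colon P_A^{\oplus r_{\frakS}}\twoheadrightarrow \im\theta_S^T$ (the restriction of $\varkappa$ followed by $/\ker\theta_S^T$) lifts to a surjection $P_A^{\oplus r_{\frakS}}\twoheadrightarrow\widetilde{M_n}$; factoring through a free module of rank $r_S^T$ gives $\ker\alpha\cong P_A^{\oplus(r_{\frakS}-r_S^T)}\oplus(\text{something surjecting onto }A^{\oplus r_n})$, hence $\rk_A\ker\alpha\ge r_n+(r_{\frakS}-r_S^T)$. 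No embedding problem is solved here.

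Third, and this is the missing engine, the comparison is between $\rk_A\ker\alpha$ and $\rk_A\ker\varkappa$. From $\ker\alpha/\ker\varkappa\simeq\ker\theta_S^T$ one gets $\rk_A\ker\theta_S^T\ge\rk_A\ker\alpha-\rk_A\ker\varkappa$. The paper then bounds the total number of relations $m$ in Corollary~\ref{cor:pres} by $m\le 2r_{\frakS}+D$ (each prime in $\frakS$ contributes at most two relations, and there are $\le r_{\frakS}+O(1)$ such primes by the $\B$-vanishing count). This is not the ``tame relations become trivial'' mechanism from Theorem~\ref{thm:lb-rank}; here one counts \emph{all} relations and compares to the generator rank. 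Combining with Lemma~\ref{lem:frakS-ST} ($r_{\frakS}-r_S^T\le C_1$), the net bound is
\[
\rk_A\ker\theta_S^T(K)\;\ge\;\rk_{I_{e_1}}e_1P_A\calC_S^T(K)-D-C_1,
\]
and the right side goes to infinity on average by Theorem~\ref{thm:conductor} applied to the idempotent $e_1$. The growth comes from $\rk_{I_{e_1}}e_1P_A\calC_S^T$, not from $\rk_A M$ as in your step~(iv).
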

	
	\begin{proof}
		Use the notation $r_{\frakS}$ and $r_S^T$ defined in Lemma~\ref{lem:frakS-ST}.
		Then there exists a surjective group homomorphism
	\[
		\varkappa: P_A^{\oplus r_{\frakS}} \rtimes \Gamma \xtwoheadrightarrow{}  \Gal(P_A\calE_{\frakS}/Q) \xtwoheadrightarrow{} \Gal(P_A\calE_S^T/Q).
	\]
	By definition of $\frakS$, we have $\B_{\frakS \backslash T}^{\frakS \cup T}(Q,A)=0$, so it follows by \cite[Lemma~3.4]{Liu2022b} that $\B_{\frakS}^{\frakS}(Q,A)=0$.  Then by Corollary~\ref{cor:pres}, $\ker\varkappa$ is generated (as a $P_A$-module) by at most 
	\begin{equation}\label{eq:rel-count}
		m:=\# \left(S\backslash(\cap_{\ell \mid (p|\Gamma|)} S_{\ell}(Q) \cap T)\right) +2 \# \left(\frakS \backslash (\cap_{\ell \mid (p|\Gamma|)} S_{\ell}(Q) \cap S \cap T \right) + c_3
	\end{equation}
	many elements. So 
	\begin{equation}\label{eq:kappa-m}
		\rk_A \ker \varkappa \leq m.
	\end{equation}
	
	By Proposition~\ref{prop:e-rk}, Lemma~\ref{lem:rank-maxsplit} and $\B_{\frakS}^{\frakS}(Q,A)=0$,
	\begin{equation}\label{eq:r-lb}
		r_{\frakS} \geq \frac{\dim_{\F_p} \B_{\frakS}^{\frakS}(Q,A)+\sum_{\frakp \in \frakS} h^1(\calT_{\frakp}, A)^{\calG_{\frakp}}}{\dim_{\F_p}\End_{\Gamma}(A)}-c_1-h^2(\Gamma, \F_p)=\frac{\sum_{\frakp \in \frakS} h^1(\calT_{\frakp}, A)^{\calG_{\frakp}}}{\dim_{\F_p}\End_{\Gamma}(A)}-c_1-h^2(\Gamma, \F_p).
	\end{equation}
	We have
	\begin{eqnarray*}
		\# \left(\frakS \backslash (\cup_{\ell\mid p|\Gamma|} S_{\ell}(Q) \cup \scrS_A \cup T) \right) &=& \frac{\dim_{\F_p} \B_{\scrS_A\backslash T}^{\scrS_A\cup T} (Q,A)}{\dim_{\F_p} \End_{\Gamma}(A^{\vee})} \\
		&\leq& \frac{\dim_{\F_p}\B_{\frakS \backslash T}^{\frakS \cup T}(Q, A) + \sum_{\frakp \in \frakS \backslash (\scrS_A \cup T)(K)} h^1(\calT_{\frakp}, A)^{\calG_{\frakp}}}{\dim_{\F_p} \End_{\Gamma}(A)} \\
		&=&\frac{\sum_{\frakp \in \frakS \backslash (\scrS_A \cup T)(K)} h^1(\calT_{\frakp}, A)^{\calG_{\frakp}}}{\dim_{\F_p} \End_{\Gamma}(A)},
	\end{eqnarray*}
	where the first inequality and the last equality follow from the definition of $\frakS$ and the inequality uses Lemma~\ref{lem:es-B}. For every $\frakp \in \scrS_A$, since $\calG_{\frakp}$ acts trivially on $A$ and $\frakp\not\in S_p(K)$, it follows that
	$H^1(\calT_{\frakp}, A)^{\calG_{\frakp}}=\Hom(\calT_{\frakp}, A)^{\calG_{\frakp}}\simeq A,
	$
	so 
	\[
		\frac{h^1(\calT_{\frakp}, A)^{\calG_{\frakp}}}{\dim_{\F_p}\End_{\Gamma}(A)}=1.
	\]
	Therefore,
	\begin{equation}\label{eq:set2-ub}
		\#\left(\frakS \backslash (\cup_{\ell \mid p|\Gamma|} S_{\ell}(Q) \cup S \cup T)\right) \leq \frac{\sum_{\frakp \in \frakS \backslash T (K)} h^1(\calT_{\frakp}, A)^{\calG_{\frakp}}}{\dim_{\F_p} \End_{\Gamma}(A)}.
	\end{equation}
	So by \eqref{eq:rel-count}, \eqref{eq:r-lb} and \eqref{eq:set2-ub}, there exists a constant $D$ depending on $S, T, Q, A, \Gamma$ (not depending on $K$) such that
	\begin{equation}\label{eq:relation-r-m}
		m \leq 2r_{\frakS}+D.
	\end{equation}	
	
	Next, we consider $\im \theta_S^T$. Since the image of $\ker \theta_S^T$ under $P_A \calC_S^T(K) \to e P_A \calC_S^T(K)$ is trivial for any $e \in \Idem(A)$, taking quotient of $P_A \calC_S^T(K)$ by $\ker \theta_S^T$ does not change the $A$-rank, i.e., $\rk_A \im\theta_S^T = r_S^T$. Define a surjection $\alpha$ as
	\[
		\alpha: P_A^{\oplus r_{\frakS}} \rtimes \Gamma \xtwoheadrightarrow{\varkappa} \Gal(P_A \calE_S^T/Q) \xtwoheadrightarrow{/\ker \theta_S^T} \im \theta_S^T.
	\]
	Since the map $\im \theta_S^T \to \oplus_{e \in \Idem(A)} e\im\theta_S^T=\oplus_{e \in \Idem(A)} eP_A\calC_S^T(K)$ is injective, applying Proposition~\ref{prop:tildeM} to $M=\im \theta_S^T$, we have
	\begin{eqnarray}
		\rk_A \ker \alpha &\geq& 2\rk_A \im\theta_S^T +\rk_{I_{e_1}} e_1 \im \theta_S^T+r_{\frakS} - r_S^T \nonumber\\
		&=& r_S^T+r_{\frakS} +\rk_{I_{e_1}} e_1 P_A \calC_S^T(K). \label{eq:ker_alpha}
	\end{eqnarray}
	Then,
	\begin{eqnarray*}
		\rk_A \ker\theta_S^T &\geq & \rk_A \ker \alpha - \rk_A \ker \varkappa \\
		&\geq&  r_S^T+r_{\frakS} +\rk_{I_{e_1}} e_1 P_A \calC_S^T(K) -m \\
		&\geq& r_S^T-r_{\frakS} +\rk_{I_{e_1}} e_1 P_A \calC_S^T(K) -D \\
		&\geq& \rk_{I_{e_1}} e_1 P_A \calC_S^T(K) -D-C_1,
	\end{eqnarray*}
	where the first inequality follows by the definition of $\alpha$, the second uses \eqref{eq:kappa-m} and \eqref{eq:ker_alpha}, the third uses \eqref{eq:relation-r-m}, and the last follows from Lemma~\ref{lem:frakS-ST}. Since $D$ and $C_1$ are constants that are not depending on $K$, the proof is completed by Theorem~\ref{thm:conductor}.
	\end{proof}
	
	Since $\Z_p[\Gamma]=\oplus_{A\in \calM_{\F_p[\Gamma]}} P_A$ and for each $A$ every decomposition factor of $P_A$ is isomorphic to $A$, $\rk_A\ker \rho_S^T$ equals the $A$-rank of the kernel of 
	\[
		P_A\rho_S^T(K): P_AC_S^T(K) \longrightarrow \bigoplus_{e\in \Idem(A)} eC_S^T(K).
	\]
	By definition, $P_A\calC_S^T$ is a quotient of $P_AC_S^T$, and we denote the kernel of this quotient map by $\calN$. Then we have the following commutative diagram, in which the last two vertical maps are defined by taking direct sum of the tensor product maps and the rows are exact.
	\begin{equation}\label{eq:diag-N}
	\begin{tikzcd}
		1 \arrow{r} &\calN \arrow{r} \arrow["\rho"]{d} &P_AC_S^T \arrow{r} \arrow["P_A\rho_S^T"]{d} & P_A\calC_S^T \arrow["\theta_S^T"]{d} \arrow{r} & 1\\
		1 \arrow{r}&\bigoplus\limits_{e \in \Idem(A)} \ker(eC_S^T \to eP_A \calC_S^T) \arrow{r} & \bigoplus\limits_{e \in \Idem(A)} eC_S^T \arrow{r} & \bigoplus\limits_{e \in \Idem(A)} eP_A\calC_S^T \arrow{r} & 1
	\end{tikzcd}
	\end{equation}
	where $\ker(eC_S^T \to eP_A \calC_S^T)$ is a quotient of $e\calN$ by the right exactness of tensor product.
	Recall the definition in Section~\ref{ss:presentaiton-split}, $\Gal(P_A\calE_{\frakS}/Q) \twoheadrightarrow \Gal(K/Q)$ is a maximal split subextension of $\Gal(P_AE_{\frakS}/Q) \twoheadrightarrow \Gal(K/Q)$, so by Lemma~\ref{lem:rank-maxsplit}, $\rk_A \ker(P_AC_{\frakS}\to P_A\calC_{\frakS})$ is at most $h^2(\Gamma, \F_p)$ if $A\simeq\F_p$, and is 0 otherwise. Since $\calN$ is the image of $\ker(P_AC_{\frakS}\to P_A\calC_{\frakS})$ under the quotient map $P_AC_{\frakS} \to P_A C_S^T$, we have 
	\[
		\rk_A \calN \text{ is } \begin{cases}
			\leq h^2(\Gamma, \F_p) & \text{if } A\simeq\F_p \\
			=0 & \text{otherwise.}
		\end{cases}
	\]
	When $A\not\simeq \F_p$, $\calN$ is zero, so $P_AC_S^T=P_A\calC_S^T$ and $eC_S^T=e\calC_S^T$, and hence the claim in Theorem~\ref{thm:kernel-limit} follows by Lemma~\ref{lem:ker-theta} and the fact $\rk_A \ker \rho_S^T(K)=\rk_A\ker P_A\rho_S^T(K)=\rk_A\ker \theta_S^T(K)$. 
	
	For the rest of the proof, we assume $A\simeq \F_p$. Applying the snake lemma to \eqref{eq:diag-N}, we have the following exact sequence of $P_A$-modules.
	\begin{equation}\label{eq:es-sl}
		1\longrightarrow \ker \rho \longrightarrow \ker P_A \rho_S^T \longrightarrow \ker \theta_S^T \longrightarrow \coker \rho 
	\end{equation}
	Note that $\rk_{\F_p} \ker \rho \leq \rk_{\F_p} \calN \leq h^2(\Gamma, \F_p)$ and $\rk_{\F_p} \coker \rho \leq \sum_{e \in \Idem(A)} \rk_{\F_p} e\calN \leq h^2(\Gamma, \F_p)\#\Idem(\F_p) $, so the exact sequence \eqref{eq:es-sl} implies 
	\[
		|\rk_{\F_p} \ker P_A \rho_S^T/p\ker P_A \rho_S^T - \rk_{\F_p} \ker \theta_S^T / p\ker\theta_S^T| \leq h^2(\Gamma, \F_p)(\#\Idem(\F_p)+1)
	\]
	where the right-hand side depends only on $\Gamma$. Finally, note that $\frac{\rk_{\F_p}M/pM}{\rk_{\F_p} P_A/pP_A}\leq\rk_A M\leq \rk_{\F_p} M/pM$ for any $P_A$-module $M$, so the proof in this case is completed by applying Lemma~\ref{lem:ker-theta}.

\section{Preparation for function field moment counting}\label{sect:ff-moment-prep}
\subsection{$I$-closure of modules} 
\hfill
	
	Recall that $e\Z_p[\Gamma]$ is a discrete valuation ring, and its maximal ideal is denoted by $\frakm_e$.
	
	\begin{definition}[{$I$-closure of an $e\Z_p[\Gamma]$-module}]\label{def:Iclosure}
		Let $I$ be a nonzero proper ideal of $e\Z_p[\Gamma]$, and $d_I$ the integer such that 
		\[
			I=\frakm_e^{d_I}.
		\] 
		Given a finite $e\Z_p[\Gamma]$-module $M$ expressed as
		\[
			M\simeq \bigoplus_{i=1}^{r} e\Z_p[\Gamma]/\frakm_e^{n_i},
		\]	
		define the \emph{$I$-closure of $M$} to be 
		\[
			\bigoplus_{i=1}^r e\Z_p[\Gamma]/\frakm_e^{n_i+d_I}.
		\]
	\end{definition}
	
	\begin{lemma}\label{lem:Iclosure-prop}
		Let $H$ be a finite $e\Z_p[\Gamma]$-module such that $H$ is the $I$-closure of $IH$.
		\begin{enumerate}
			\item\label{item:Iclosure-1} If $M$ is a finite $e\Z_p[\Gamma]$-module such that $IM\simeq IH$, then there exists an $e\Z_p[\Gamma]/I$-module such that $M \simeq H \oplus B$.
			\item\label{item:Iclosure-2} Let $M$ be a finite $e\Z_p[\Gamma]$-module. If $\phi: IM \to IH$ is a surjection, then $\phi$ can be extended to a surjection from $M$ to $H$, i.e., there exists a surjection $\varphi: M \to H$ such that $\varphi|_{IM}=\phi$. 
		\end{enumerate}
	\end{lemma}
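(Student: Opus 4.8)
The plan is to work entirely inside the complete discrete valuation ring $\calO := e\Z_p[\Gamma]$ (Proposition~\ref{prop:dvr}), fix a uniformizer $\pi$, and write $d := d_I$, so $I = \frakm_e^d$ with $d \geq 1$. By the structure theorem for finitely generated $\calO$-modules (Lemma~\ref{lem:DVR-module}\eqref{item:DVR-1}) one writes a finite $\calO$-module $M$ as $\bigoplus_i \calO/\frakm_e^{n_i}$, whence $IM = \frakm_e^d M \cong \bigoplus_{n_i > d}\calO/\frakm_e^{n_i-d}$ and its $I$-closure is $\bigoplus_{n_i>d}\calO/\frakm_e^{n_i}$. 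In particular the hypothesis "$H$ is the $I$-closure of $IH$" is equivalent to $H \cong \bigoplus_{j=1}^s \calO/\frakm_e^{m_j+d}$ with every $m_j \geq 1$; I record for later the consequence that each cyclic summand of $H$ has length $> d$, so $H[I] \subseteq \frakm_e H$.

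For \eqref{item:Iclosure-1}, write $M \cong \bigoplus_i \calO/\frakm_e^{n_i}$ and compute $IM$ as above. Since $IM \cong IH \cong \bigoplus_j \calO/\frakm_e^{m_j}$, uniqueness of the decomposition of finite modules over the DVR $\calO$ forces the multiset $\{\,n_i - d : n_i > d\,\}$ to equal $\{m_j\}$. Grouping the cyclic summands of $M$ accordingly, those with $n_i > d$ assemble into $\bigoplus_j \calO/\frakm_e^{m_j+d} \cong H$, while the remaining summands, having $n_i \leq d$, are killed by $\frakm_e^d = I$; their direct sum $B$ is thus an $\calO/I$-module and $M \cong H \oplus B$.

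For \eqref{item:Iclosure-2}, choose generators $x_i$ of $M$, one per cyclic summand with $\ord(x_i) = \frakm_e^{n_i}$, and likewise $h_j$ of $H$ with $\ord(h_j) = \frakm_e^{m_j+d}$; then $\{\pi^d x_i\}$ generates $IM$ and $\{\pi^d h_j\}$ generates $IH$. Write $\phi(\pi^d x_i) = \sum_j c_{ij}\pi^d h_j$ with $c_{ij} \in \calO$ (taking $c_{ij}=0$ when $\pi^d x_i = 0$). The key observation is that the relation $\pi^{n_i} x_i = 0$ in $M$, pushed through the well-defined homomorphism $\phi$, yields $\pi^{n_i} c_{ij} \in \frakm_e^{m_j+d}$ for all $i,j$ — which is precisely the condition needed for $x_i \mapsto \sum_j c_{ij} h_j$ to extend to a well-defined $\calO$-homomorphism $\varphi : M \to H$. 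Since $\varphi$ is a homomorphism and the $\pi^d x_i$ generate $IM$, one checks $\varphi|_{IM} = \phi$ directly. For surjectivity, multiplication by $\pi^d$ gives a surjection $\mu_H : H \to IH$ with kernel $H[I] \subseteq \frakm_e H$; since $\mu_H(\varphi(x_i)) = \phi(\pi^d x_i)$ and these generate $IH$ — this is where surjectivity of $\phi$ enters — we get $\varphi(M) + \frakm_e H = H$, hence $\varphi(M) = H$ by Nakayama.

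The one genuinely delicate point, and the step I would watch most carefully, is the surjectivity of $\varphi$. One cannot first produce an arbitrary lift of $\phi$ along $IM \hookrightarrow M$ and then repair surjectivity: any two such lifts differ by a homomorphism $M \to H[I] \subseteq \frakm_e H$, which is invisible modulo $\frakm_e$, so no such correction can change whether the map is onto. The lift must be built surjective from the start, which is exactly why transporting the coefficients $c_{ij}$ unchanged is the correct construction and why the surjectivity hypothesis on $\phi$ is essential rather than cosmetic. If one prefers, the bookkeeping in \eqref{item:Iclosure-2} can be streamlined by first splitting off, via the argument of \eqref{item:Iclosure-1}, a direct summand of $M$ killed by $I$ and reducing to the case where $M$ is its own $I$-closure of $IM$, but this is optional; everything else is a routine consequence of the structure theory over $\calO$.
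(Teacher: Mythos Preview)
Your proof is correct. For part~\eqref{item:Iclosure-1} it is essentially identical to the paper's: both decompose $M$ via the structure theorem, match invariant factors using uniqueness, and split off the summands killed by $I$.

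For part~\eqref{item:Iclosure-2} the approaches genuinely diverge. The paper argues conceptually: it sets $\overline{M}:=M/\ker\phi$, observes that $I\overline{M}\simeq IH$, invokes part~\eqref{item:Iclosure-1} to write $\overline{M}\simeq H\oplus B$ with $B$ annihilated by $I$, and defines $\varphi$ as the composite $M\twoheadrightarrow\overline{M}\twoheadrightarrow H$. This is short and avoids coordinates entirely, though it leaves the verification of $\varphi|_{IM}=\phi$ implicit (one must check that the splitting $\overline{M}\simeq H\oplus B$ can be chosen compatibly with the isomorphism $I\overline{M}\simeq IH$ induced by $\phi$, which is routine but not stated). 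Your approach instead builds $\varphi$ explicitly by transporting the matrix of coefficients $c_{ij}$ from the level of $IM\to IH$ up to $M\to H$, verifies well-definedness by pushing the relations $\pi^{n_i}x_i=0$ through $\phi$, and deduces surjectivity from Nakayama via $H[I]\subseteq\frakm_e H$. This is more hands-on but has the advantage that the restriction condition $\varphi|_{IM}=\phi$ is transparent by construction. Your closing remark---that one could streamline by first applying \eqref{item:Iclosure-1} to split off an $I$-torsion summand of $M$---is in fact the germ of the paper's argument.
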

	
	\begin{proof}
		Write $M$ as 
		\[
			M\simeq \bigoplus_{i=1}^r e\Z_p[\Gamma]/\frakm_e^{m_i}.
		\]
		If $IM \simeq IH$, then $H$ is isomorphic to the direct sum of the summands in $M$ such that $m_i > d_I$. Define $B$ to be the direct sum of the summands with $m_i \leq d_I$. Then $M\simeq H \oplus B$, so \eqref{item:Iclosure-1} is proved. 
		
		Suppose $\phi: IM \to IH$ is a surjection. Since $\ker \phi$ is a submodule of $M$, we define $\overline{M}:=M/\ker \phi$ and then $\phi$ factor through $I\overline{M}$, where $I\overline{M}\simeq IH$. By \eqref{item:Iclosure-1}, there exists $B$ such that $\overline{M} \simeq H \oplus B$, so taking quotient by $B$ gives a surjection $\overline{M} \twoheadrightarrow H$. Then the composition $M \twoheadrightarrow \overline{M} \twoheadrightarrow H$ gives the desired $\varphi$.
	\end{proof}

	\begin{proposition}\label{prop:weight-formula}
		Let $H$ be a finite $e\Z_p[\Gamma]$-module such that $H$ is the $I$-closure of $IH$. For any finite $e\Z_p[\Gamma]$-module, denote
		\[
			w(M,H):=\begin{cases}
				\# \Hom_{e\Z_p[\Gamma]} (M, H[I]) & \text{ if } \Sur_{e\Z_p[\Gamma]}(M, H_{/I}) \neq \O \\
				0 & \text{ otherwise}.
			\end{cases}
		\]
		Then 
		\begin{equation}\label{eq:weight-Sur}
			\# \Sur_{e\Z_p[\Gamma]} (M, H) = w(M, H) \# \Sur_{e\Z_p[\Gamma]} (IM, IH).
		\end{equation}
	\end{proposition}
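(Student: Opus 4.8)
The plan is to count $\Sur_{e\Z_p[\Gamma]}(M,H)$ by fibering it, via the restriction map $\mathrm{res}\colon\varphi\mapsto\varphi|_{IM}$, over $\Sur_{e\Z_p[\Gamma]}(IM,IH)$. First I would check that $\mathrm{res}$ is well defined: for a surjection $\varphi\colon M\twoheadrightarrow H$ one has $\varphi(IM)=I\varphi(M)=IH$, so $\varphi|_{IM}\in\Sur_{e\Z_p[\Gamma]}(IM,IH)$. This same computation handles the degenerate case: if $\Sur_{e\Z_p[\Gamma]}(IM,IH)=\O$ then $\Sur_{e\Z_p[\Gamma]}(M,H)=\O$ as well, and both sides of \eqref{eq:weight-Sur} are $0$. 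So from now on assume $\Sur_{e\Z_p[\Gamma]}(IM,IH)\neq\O$. By Lemma~\ref{lem:Iclosure-prop}\eqref{item:Iclosure-2}, every $\phi\in\Sur_{e\Z_p[\Gamma]}(IM,IH)$ extends to a surjection $M\twoheadrightarrow H$, so $\mathrm{res}$ is surjective and $\Sur_{e\Z_p[\Gamma]}(M,H)\neq\O$; composing a surjection $M\twoheadrightarrow H$ with $H\twoheadrightarrow H_{/I}$ shows $\Sur_{e\Z_p[\Gamma]}(M,H_{/I})\neq\O$, hence $w(M,H)=\#\Hom_{e\Z_p[\Gamma]}(M,H[I])$ by definition.

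Next I would compute the fibers of $\mathrm{res}$. Fix $\phi\in\Sur_{e\Z_p[\Gamma]}(IM,IH)$ and choose a lift $\varphi_0\in\Sur_{e\Z_p[\Gamma]}(M,H)$. For any homomorphism $\varphi\colon M\to H$, the condition $\varphi|_{IM}=\phi$ is equivalent to $\varphi-\varphi_0$ killing $IM$, i.e.\ to $\varphi-\varphi_0$ factoring through $M_{/I}=M/IM$; since $M_{/I}$ is annihilated by $I$, this forces $(\varphi-\varphi_0)(M)\subseteq H[I]$. Thus, inside $\Hom_{e\Z_p[\Gamma]}(M,H)$, the fiber $\mathrm{res}^{-1}(\phi)$ is exactly the coset $\varphi_0+\Hom_{e\Z_p[\Gamma]}(M/IM,H)$, and $\Hom_{e\Z_p[\Gamma]}(M/IM,H)=\Hom_{e\Z_p[\Gamma]}(M/IM,H[I])=\Hom_{e\Z_p[\Gamma]}(M,H[I])$ has cardinality $w(M,H)$.

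The hard part — really the only point with content — is seeing that this coset consists \emph{entirely of surjections}, so that the count of $\mathrm{res}^{-1}(\phi)$ as a subset of $\Hom$ equals its count as a subset of $\Sur$. Write $I=\frakm_e^{d_I}$ with $d_I\geq 1$. Since $H$ is the $I$-closure of $IH$, Definition~\ref{def:Iclosure} together with Lemma~\ref{lem:DVR-module}\eqref{item:DVR-1} shows that every cyclic summand $e\Z_p[\Gamma]/\frakm_e^{\,n}$ of $H$ has $n>d_I$, and therefore $H[I]\subseteq\frakm_e H$. Consequently, for $\varphi=\varphi_0+\psi$ with $\psi\in\Hom_{e\Z_p[\Gamma]}(M/IM,H)$ we have $\psi(M)\subseteq H[I]\subseteq\frakm_e H$, so $\varphi$ and $\varphi_0$ have the same image in $H/\frakm_e H$; as $\varphi_0$ is onto and $e\Z_p[\Gamma]$ is local, Nakayama's lemma gives $\varphi(M)=H$. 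Hence $\mathrm{res}^{-1}(\phi)=\{\varphi\in\Sur_{e\Z_p[\Gamma]}(M,H):\varphi|_{IM}=\phi\}$ has exactly $w(M,H)$ elements for each $\phi$, and summing over $\phi\in\Sur_{e\Z_p[\Gamma]}(IM,IH)$ yields \eqref{eq:weight-Sur}.
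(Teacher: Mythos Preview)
Your proof is correct and follows essentially the same approach as the paper: define the restriction map $\Sur(M,H)\to\Sur(IM,IH)$, show it is surjective via Lemma~\ref{lem:Iclosure-prop}\eqref{item:Iclosure-2}, and show each fiber is a coset of $\Hom(M,H[I])$ consisting entirely of surjections by using $H[I]\subseteq\frakm_e H$ and Nakayama. The only cosmetic differences are that you verify well-definedness of the restriction by the one-line computation $\varphi(IM)=I\varphi(M)=IH$ (the paper uses a snake-lemma diagram), and you split into cases on whether $\Sur(IM,IH)$ is empty rather than on whether $\Sur(M,H_{/I})$ is empty; both choices work.
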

	
	\begin{proof}
		If $\Sur_{e\Z_p[\Gamma]}(M, H_{/I})=\O$, then $\Sur_{e\Z_p[\Gamma]}(M, H)$ must also be empty, so \eqref{eq:weight-Sur} holds in this case. For the rest of the proof, assume $\Sur_{e\Z_p[\Gamma]}(M, H_{/I})\neq\O$.
	
		Let $\varphi \in \Sur_{e\Z_p[\Gamma]}(M, H)$. By the right exactness of tensor product, the kernel of $M_{/I} \to H_{/I}$ is a quotient of $(\ker \varphi)_{/I}$, so we have the following commutative diagram 
		\[\begin{tikzcd}
			0 \arrow{r} & \ker \varphi \arrow{r} \arrow[two heads]{d} & M \arrow["\varphi"]{r} \arrow[two heads,"\otimes {e\Z_p[\Gamma]/I}"]{d} & H \arrow{r} \arrow[two heads,"\otimes {e\Z_p[\Gamma]/I}"]{d} & 0 \\
			0 \arrow{r} &\ker(M_{/I} \to H_{/I}) \arrow{r}& M_{/I} \arrow{r} & H_{/I} \arrow{r} & 0 
		\end{tikzcd}\]
		Then it follows by the snake lemma that $\varphi|_{IM}$ is a surjection from $IM$ to $IH$. So we obtain a map 
		\begin{eqnarray*}
			{}\Sur_{e\Z_p[\Gamma]}(M, H) &\overset{\beta}{\longrightarrow}& \Sur_{e\Z_p[\Gamma]}(IM, IH) \\
			{}\varphi &\longmapsto& \varphi|_{IM}.
		\end{eqnarray*}
		
		The map $\beta$ is surjective by Lemma~\ref{lem:Iclosure-prop}\eqref{item:Iclosure-2}, so it suffices to show $\#\ker \beta = w(M,H)$. Suppose $\varphi_1, \varphi_2 \in \Sur_{e\Z_p[\Gamma]}(M,H)$ such that $\beta(\varphi_1)=\beta(\varphi_2)$. Then the map from $M$ to $H$ that sends $x$ to $\varphi_1(x) \varphi_2(x)^{-1}$ is a module morphism that is a zero map when restricted to $IM$, so it belongs to $\Hom_{e\Z_p[\Gamma]}(M, H[I])$.
		 On the other hand, given $\varphi \in \Sur_{e\Z_p[\Gamma]}(M, H)$ and $\delta \in \Hom_{e\Z_p[\Gamma]}(M, H[I])$, we have a module homomorphism
		 \begin{eqnarray*}
		 	\varphi+\delta: M &\longrightarrow& H \\
			x & \longmapsto& \varphi(x)+\delta(x).
		 \end{eqnarray*}
		Taking the composition of $\varphi+\delta$ with the radical quotient map $H \twoheadrightarrow H_{/\frakm_e}$, we obtain a surjection $\xi: M \to H_{/\frakm_e}$. By the assumption that $H$ is the $I$-closure of $IH$, using Definition~\ref{def:Iclosure}, one can check that $H[I] \subset \frakm_e H$. Then since the image of $\delta$ is contained in $H[I]\subseteq \frakm_e H$ and $\varphi$ is surjective, we conclude that $\xi$ is surjective. Finally, by the Nakayama lemma, the surjectivity of $\xi$ implies the surjectivity of $\varphi+\delta$. So we see that $\# \ker \beta = \# \Hom_{e\Z_p[\Gamma]}(M, H[I])$, which completes the proof.
	\end{proof}

\subsection{Preparation for function field counting}
\hfill

	Throughout this subsection, let $H$ denote a finite $\Z_p[\Gamma]$-module and let $\gamma$ denote an element of the abelian group $\Gamma$. Given $H$ and $\gamma$, define the following sets of elements of $H$.
	\begin{eqnarray*}
		\frakA_{\gamma}^0(H) &:=& \{ h \in H \mid (1-\gamma)h=(1+\gamma+ \gamma^2 + \cdots + \gamma^{|\gamma|-1})h=0 \} \\
		\frakA_{\gamma}^-(H) &:=& \{ h \in H \mid (1+ \gamma + \gamma^2 + \cdots + \gamma^{|\gamma|-1}) h =0 \} \\
		\frakA_{\gamma}^+(H) &:=& \{ h \in H \mid (1-\gamma)h =0 \} \\
		\frakB_{\gamma}^-(H) &:=& \{(1-\gamma)h \mid h \in H\} \\
		\frakB_{\gamma}^+(H) &:=& \{(1+\gamma+ \gamma^2+ \cdots + \gamma^{|\gamma|-1})h \mid h \in H\}
	\end{eqnarray*}
	In other words, if we let $I$ denote the ideal of $\Z_p[\Gamma]$ generated by $1- \gamma$ and let $J$ denote the ideal generated by $1+\gamma+\gamma^2+ \cdots + \gamma^{|\gamma|-1}$, then the sets defined above are submodules of $H$:
	\[
		\frakA_{\gamma}^0(H)=H[I+J], \quad A_{\gamma}^+(H)=H[I], \quad A_{\gamma}^-(H)=H[J]
	\]
	\[
		\frakB_{\gamma}^-(H)=IH, \quad \text{and} \quad \frakB_{\gamma}^+(H)=JH.
	\]
	We summarize some basic properties of these submodules in the following lemma.
	
	\begin{lemma}\label{lem:basic-AB}
		\begin{enumerate}
			\item\label{item:basic-AB-1} $\frakA_{\gamma}^0(H) = \frakA_{\gamma}^-(H) \cap \frakA_{\gamma}^+(H)$, $\frakB_{\gamma}^-(H) \subset \frakA_{\gamma}^-(H)$, and $\frakB_{\gamma}^+(H) \subset \frakA_{\gamma}^+(H)$.
			\item\label{item:basic-AB-2} If $H_1$ is a sub-$\Z_p[\Gamma]$-module of $H$, then $\frakA_{\gamma}^0(H_1)=H_1 \cap \frakA_{\gamma}^0(H)$, $\frakA_{\gamma}^-(H_1)= H_1 \cap \frakA_{\gamma}^-(H)$ and $\frakA_{\gamma}^+(H_1)=H_1 \cap \frakA_{\gamma}^+(H)$.
			\item\label{item:basic-AB-3} If $\pi: H \to H_1$ is a quotient map of $\Z_p[\Gamma]$-modules, then $\frakB_{\gamma}^-(H_1)=\pi(\frakB_{\gamma}^-(H))$ and $\frakB_{\gamma}^+(H_1)= \pi(\frakB_{\gamma}^+(H))$.
		\end{enumerate}
	\end{lemma}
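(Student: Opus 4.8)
\textbf{Proof plan for Lemma~\ref{lem:basic-AB}.}

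The plan is to verify each of the three statements directly from the module-theoretic descriptions $\frakA_{\gamma}^0(H)=H[I+J]$, $\frakA_{\gamma}^+(H)=H[I]$, $\frakA_{\gamma}^-(H)=H[J]$, $\frakB_{\gamma}^-(H)=IH$, $\frakB_{\gamma}^+(H)=JH$, where $I=(1-\gamma)$ and $J=(1+\gamma+\cdots+\gamma^{|\gamma|-1})$ as ideals of $\Z_p[\Gamma]$. For part \eqref{item:basic-AB-1}, the identity $\frakA_{\gamma}^0(H)=\frakA_{\gamma}^-(H)\cap\frakA_{\gamma}^+(H)$ is just the observation that $H[I+J]=H[I]\cap H[J]$: an element is killed by both generating sets of $I+J$ iff it is killed by $I$ and by $J$. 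For the containments $\frakB_{\gamma}^-(H)\subset\frakA_{\gamma}^-(H)$ and $\frakB_{\gamma}^+(H)\subset\frakA_{\gamma}^+(H)$, I would use the elementary polynomial identity $(1+\gamma+\cdots+\gamma^{|\gamma|-1})(1-\gamma)=1-\gamma^{|\gamma|}=0$ in $\Z_p[\Gamma]$ (since $\gamma^{|\gamma|}=1$): this gives $J\cdot I=I\cdot J=0$ as ideals, so $IH\subseteq H[J]$ and $JH\subseteq H[I]$, which is exactly what is claimed.

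For part \eqref{item:basic-AB-2}, this is the general and trivial fact that for a submodule $H_1\le H$ and any ideal $\mathfrak{a}$ of the ring, $H_1[\mathfrak{a}]=H_1\cap H[\mathfrak{a}]$; I would simply note $x\in H_1$ is annihilated by $\mathfrak{a}$ as an element of $H_1$ iff it is annihilated by $\mathfrak{a}$ as an element of $H$. Applying this with $\mathfrak{a}=I+J$, $I$, $J$ yields the three stated equalities. For part \eqref{item:basic-AB-3}, if $\pi:H\twoheadrightarrow H_1$ is surjective and $\Z_p[\Gamma]$-linear, then for any ideal $\mathfrak{a}$ one has $\mathfrak{a}H_1=\mathfrak{a}\pi(H)=\pi(\mathfrak{a}H)$, because $\pi$ is surjective and linear so it commutes with multiplication by ring elements and with forming the submodule generated by the images. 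Taking $\mathfrak{a}=I$ gives $\frakB_{\gamma}^-(H_1)=IH_1=\pi(IH)=\pi(\frakB_{\gamma}^-(H))$, and $\mathfrak{a}=J$ gives the other identity.

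None of the three parts presents a genuine obstacle; this is a bookkeeping lemma collecting formal properties. The only point requiring a moment's care is the containment in \eqref{item:basic-AB-1}, where one must remember that $\gamma$ has finite order $|\gamma|$ in $\Gamma$ so that $\gamma^{|\gamma|}=1$ and hence $(1-\gamma)\sum_{j=0}^{|\gamma|-1}\gamma^j=0$; everything else follows from the elementary algebra of ideals acting on modules. I would present parts \eqref{item:basic-AB-2} and \eqref{item:basic-AB-3} in a single line each, and spend the bulk of the (very short) writeup on the annihilation identity underlying \eqref{item:basic-AB-1}.
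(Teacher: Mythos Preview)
Your proposal is correct and takes essentially the same approach as the paper: the paper dismisses parts \eqref{item:basic-AB-2}, \eqref{item:basic-AB-3}, and the equality in \eqref{item:basic-AB-1} as immediate from the definitions, and proves the two containments in \eqref{item:basic-AB-1} via the identity $(1+\gamma+\cdots+\gamma^{|\gamma|-1})(1-\gamma)=1-\gamma^{|\gamma|}=0$, exactly as you do.
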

	
	\begin{proof}
		Statements \eqref{item:basic-AB-2}, \eqref{item:basic-AB-3} and the equality $\frakA_{\gamma}^0(H)=\frakA_{\gamma}^-(H) \cap \frakA_{\gamma}^+(H)$ in \eqref{item:basic-AB-1} follow immediately by definition. The rest of \eqref{item:basic-AB-1} follows by $(1+ \gamma+ \cdots +\gamma^{|\gamma|-1})(1-\gamma)=1- \gamma^{|\gamma|}=0$.
	\end{proof}

	\begin{definition}\label{def:c_gamma}
		Let $\pi:G \to \Gamma$ be a surjection of finite groups. For any $\gamma \in \Gamma$, let $c_{\gamma}(G,\pi)$ denote the set of elements of $G$ that map to $\gamma$ under $\pi$ and have the same order as $\gamma$, and let $d_{\gamma}(G,\pi)$ denote the number of conjugacy classes of elements in $c_{\gamma}(G,\pi)$.
	\end{definition}
	
	\begin{lemma}\label{lem:AB-c}
		Let $H$ be a finite $\Z_p[\Gamma]$-module and $G= H\rtimes \Gamma$, and let $\pi$ denote the natural surjection $G \to \Gamma$. Then, for any $\gamma \in \Gamma$ and $g \in c_{\gamma}(G, \pi)$, there is a bijection 
		\begin{eqnarray*}
			\frakA_{\gamma}^-(H) &\longrightarrow& c_{\gamma}(G, \pi) \\
			h &\longmapsto& (h,\gamma). 
		\end{eqnarray*}
		Moreover, two elements $(h_1, \gamma)$ and $(h_2, \gamma)$ in $c_{\gamma}(G,\pi)$ are conjugate in $G$ if and only if the images of $h_1$ and $h_2$ in $\frakA_{\gamma}^-(H)/\frakB_{\gamma}^-(H)$ are in the same $\Gamma$-orbit. 
	\end{lemma}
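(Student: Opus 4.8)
The plan is to compute directly in the semidirect product $G = H \rtimes \Gamma$, writing $H$ additively. For the first assertion, every element of $G$ lying over $\gamma$ is of the form $(h,\gamma)$ with $h \in H$ uniquely determined, so I only need to identify those of order exactly $|\gamma|$. An easy induction on $k$ gives the power formula
\[
	(h,\gamma)^k = \bigl( (1 + \gamma + \gamma^2 + \cdots + \gamma^{k-1})h,\ \gamma^k \bigr).
\]
Since $\pi(h,\gamma) = \gamma$ has order $|\gamma|$, the order of $(h,\gamma)$ is a multiple of $|\gamma|$; it equals $|\gamma|$ precisely when $(h,\gamma)^{|\gamma|} = (0,1)$, i.e. when $(1 + \gamma + \cdots + \gamma^{|\gamma|-1})h = 0$, i.e. when $h \in \frakA_{\gamma}^-(H)$. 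This yields the stated bijection $h \mapsto (h,\gamma)$. (The hypothesis $g \in c_{\gamma}(G,\pi)$ is only there to record that this set is nonempty, which is automatic since $0 \in \frakA_{\gamma}^-(H)$; the map itself uses no choice of $g$.)

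For the conjugacy statement, I would conjugate $(h,\gamma)$ by an arbitrary element $(k,\delta) \in G$. Using the inverse formula $(k,\delta)^{-1} = (-\delta^{-1}(k), \delta^{-1})$ and the multiplication rule, together with the fact that $\Gamma$ is abelian (so $\delta\gamma\delta^{-1} = \gamma$ and $\delta\gamma\bigl(-\delta^{-1}(k)\bigr) = -\gamma(k)$), a short computation gives
\[
	(k,\delta)\,(h,\gamma)\,(k,\delta)^{-1} = \bigl( (1-\gamma)k + \delta(h),\ \gamma \bigr).
\]
Hence $(h_1,\gamma)$ and $(h_2,\gamma)$ are conjugate in $G$ if and only if there exist $k \in H$ and $\delta \in \Gamma$ with $h_2 - \delta(h_1) \in (1-\gamma)H = \frakB_{\gamma}^-(H)$. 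Because $\Gamma$ is abelian, $(1-\gamma)$ is a $\Z_p[\Gamma]$-module endomorphism, so $\frakB_{\gamma}^-(H)$ is a $\Z_p[\Gamma]$-submodule of $H$ (and of $\frakA_{\gamma}^-(H)$, by Lemma~\ref{lem:basic-AB}\eqref{item:basic-AB-1}); thus the $\Gamma$-action descends to $\frakA_{\gamma}^-(H)/\frakB_{\gamma}^-(H)$, and the displayed condition says exactly that the images of $h_1$ and $h_2$ there lie in one $\Gamma$-orbit.

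There is no real obstacle here: the argument is a direct computation. The only points demanding care are the bookkeeping of signs and order of composition in the semidirect-product arithmetic, and the (immediate) remark that $\delta(h_1) \in \frakA_{\gamma}^-(H)$ whenever $h_1 \in \frakA_{\gamma}^-(H)$, which is needed for the orbit statement to make sense and which holds because $\frakA_{\gamma}^-(H)$ is a submodule.
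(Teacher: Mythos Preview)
Your proof is correct and follows essentially the same approach as the paper: both compute $(h,\gamma)^{|\gamma|}$ via the semidirect-product multiplication rule to obtain the bijection, and both compute an explicit conjugation formula to reduce the conjugacy condition to a $\Gamma$-orbit modulo $\frakB_{\gamma}^-(H)$. The only superficial differences are that the paper writes $H$ multiplicatively and conjugates as $(a,b)^{-1}(h_1,\gamma)(a,b)$ rather than $(k,\delta)(h,\gamma)(k,\delta)^{-1}$, which yields the cosmetically different but equivalent expression $b^{-1}(h_1)\cdot(\gamma-1)b^{-1}(a)$ in place of your $(1-\gamma)k + \delta(h)$.
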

	
	\begin{proof}
		For $h \in H$, the element $(h, \gamma) \in G$ is contained in $c_{\gamma}(G, \pi)$ if and only if $(h, \gamma)^{|\gamma|}=1$. By the multiplication rule of semidirect products, we have $(h, \gamma)^{|\gamma|}=(h\gamma(h) \gamma^2(h) \cdots \gamma^{|\gamma|-1}(h), \gamma^{|\gamma|})$, which is trivial if and only if $h \in \frakA_{\gamma}^-(H)$, so we obtain the bijection in the lemma.
		
		For any $(a,b) \in G$, the conjugation of $(h_1, \gamma)$ by $(a,b)$ is $(a,b)^{-1} (h_1, \gamma) (a,b) = (b^{-1}(a)^{-1}, b^{-1})(h_1, \gamma) (a,b) = (b^{-1}(h_1)\cdot b^{-1}(a)^{-1}\cdot\gamma(b^{-1}(a))), \gamma)$. For a fixed $b$, any element of $\frakB_{\gamma}^{-}(H)$ can be written as $b^{-1}(a)^{-1} \cdot \gamma(b^{-1}(a))$ for some appropriate $a$. So given $h_1$ and $h_2$, $(h_1, \gamma)$ is conjugate to $(h_2, \gamma)$ if and only if there exists $b\in \Gamma$ such that $b^{-1}(h_1) \in h_2 \frakB_{\gamma}^-(H)$.
	\end{proof}
	
	\begin{lemma}\label{lem:isom-module}
		Let $e$ be a primitive idempotent of $\Q_p[\Gamma]$ and $\gamma$ an element of $\Gamma$. For any finite $e\Z_p[\Gamma]$-module $H$, the modules $\frakA_{\gamma}^{-}(H)/\frakB_{\gamma}^-(H)$, $\frakA_{\gamma}^0(H)$ and $H/\frakB_{\gamma}^-(H) \frakB_{\gamma}^+(H)$ are isomorphic.
	\end{lemma}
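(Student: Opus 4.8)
The plan is to reduce everything to the structure theory of modules over the discrete valuation ring $e\Z_p[\Gamma]$, using Lemma~\ref{lem:gamma-ann} to pin down which of the two generators $1-\gamma$, $\sum_{j=1}^{|\gamma|}\gamma^j$ acts as a unit on $e\Z_p[\Gamma]$. By Lemma~\ref{lem:gamma-ann}, exactly one of these two elements annihilates $e\Z_p[\Gamma]$; the other one then generates (the image in $e\Z_p[\Gamma]$ of) the unit ideal or at least acts invertibly after tensoring with $\Q_p$, and more precisely its image generates a power $\frakm_e^{a}$ of the maximal ideal for some $a\geq 0$ (possibly $a=0$, i.e. a unit). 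I would split into the two symmetric cases.

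First suppose $1-\gamma$ acts invertibly on $e\Z_p[\Gamma]$ (equivalently $\gamma$ acts without nontrivial fixed behaviour in the relevant sense; concretely $\sum_{j=1}^{|\gamma|}\gamma^j$ annihilates $e\Z_p[\Gamma]$). Then for every finite $e\Z_p[\Gamma]$-module $H$ the map $x\mapsto(1-\gamma)x$ is an isomorphism of $H$, so $\frakB_\gamma^-(H)=(1-\gamma)H=H$ and $\frakA_\gamma^-(H)=H[\,(\sum_j\gamma^j)\,]=H$. Hence $\frakA_\gamma^-(H)/\frakB_\gamma^-(H)=0$. Also $\frakA_\gamma^0(H)=\frakA_\gamma^-(H)\cap\frakA_\gamma^+(H)$ by Lemma~\ref{lem:basic-AB}\eqref{item:basic-AB-1}, and $\frakA_\gamma^+(H)=H[(1-\gamma)]=0$ because $1-\gamma$ is injective; so $\frakA_\gamma^0(H)=0$. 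Finally $\frakB_\gamma^-(H)\frakB_\gamma^+(H)\supseteq\frakB_\gamma^-(H)=H$, so $H/\frakB_\gamma^-(H)\frakB_\gamma^+(H)=0$ as well, and all three modules are the zero module.

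In the other case $\sum_{j=1}^{|\gamma|}\gamma^j$ acts invertibly on $e\Z_p[\Gamma]$, so $\frakB_\gamma^+(H)=(\sum_j\gamma^j)H=H$ and $\frakA_\gamma^+(H)=H[(1-\gamma)]$. Then $\frakB_\gamma^-(H)\frakB_\gamma^+(H)=H$, giving $H/\frakB_\gamma^-(H)\frakB_\gamma^+(H)=0$; and $\frakA_\gamma^-(H)=H[(\sum_j\gamma^j)]=0$, so $\frakA_\gamma^-(H)/\frakB_\gamma^-(H)=0$; and $\frakA_\gamma^0(H)=\frakA_\gamma^-(H)\cap\frakA_\gamma^+(H)\subseteq\frakA_\gamma^-(H)=0$. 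Again all three are zero, so they are trivially isomorphic. The only genuine point to check carefully — and the one I expect to be the main (minor) obstacle — is the clean dichotomy: that for a module over the \emph{DVR} $e\Z_p[\Gamma]$, an element of $e\Z_p[\Gamma]$ either is a non-zero-divisor (hence acts injectively on any submodule of a free module, and \emph{bijectively} on any finite module once we know it is injective — here one uses that a finite module has finite length so an injective endomorphism is surjective) or is zero. Combining this with Lemma~\ref{lem:gamma-ann}, which tells us precisely that one of $1-\gamma$ and $\sum_j\gamma^j$ maps to $0$ in $e\Z_p[\Gamma]$ while the other does not, we get that the nonzero one acts as a non-zero-divisor, hence injectively, hence bijectively on the finite module $H$ (and on any submodule), which is exactly what each of the three computations above needs. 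So the proof is: invoke Lemma~\ref{lem:gamma-ann} for the dichotomy, then in each case observe that one of the two operators is bijective on $H$, and read off that all three modules collapse to $0$.
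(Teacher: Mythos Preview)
Your dichotomy is wrong, and the error is not minor. Lemma~\ref{lem:gamma-ann} says that exactly one of $1-\gamma$ and $\sum_{j=1}^{|\gamma|}\gamma^j$ maps to \emph{zero} in $e\Z_p[\Gamma]$; it does \emph{not} say the other one is a unit. In fact the other one often lands in the maximal ideal $\frakm_e$: for instance if $\Gamma=\Z/p\Z$ with generator $\gamma$ and $e$ is the nontrivial idempotent, then $\sum_j\gamma^j$ annihilates $e\Z_p[\Gamma]\simeq\Z_p[\zeta_p]$ while $1-\gamma$ maps to $1-\zeta_p$, a uniformizer. Taking $H=e\Z_p[\Gamma]/\frakm_e\simeq\F_p$ gives $\frakB_\gamma^-(H)=(1-\gamma)H=0$ and $\frakA_\gamma^-(H)=H$, so $\frakA_\gamma^-(H)/\frakB_\gamma^-(H)\simeq\F_p$, not $0$. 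All three modules in this example are $\F_p$, so your conclusion ``all three collapse to $0$'' is false.

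The parenthetical justification you offer does not repair this: a nonzero element of a DVR is indeed a non-zero-divisor and acts injectively on \emph{torsion-free} modules, but finite $e\Z_p[\Gamma]$-modules are torsion, so injectivity fails and the finite-length argument never starts. The paper's proof uses the correct dichotomy (one of $\frakB_\gamma^\pm(H)$ vanishes, not that the other operator is bijective) together with Lemma~\ref{lem:DVR-module}\eqref{item:DVR-2}, which gives $H[I_\gamma]\simeq H_{/I_\gamma}$ for the ideal $I_\gamma$ generated by the images of $1-\gamma$ and $\sum_j\gamma^j$; this is exactly the isomorphism $\frakA_\gamma^0(H)\simeq H/\frakB_\gamma^-(H)\frakB_\gamma^+(H)$, and the remaining identification $\frakA_\gamma^-(H)/\frakB_\gamma^-(H)\simeq\frakA_\gamma^0(H)$ follows by a short case split on which of $\frakB_\gamma^\pm(H)$ is zero.
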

	
	\begin{proof}
		Let $I_{\gamma}$ denote the ideal of $e\Z_p[\Gamma]$ generated by the images of $1-\gamma$ and $\sum_{\i=1}^{|\gamma|} \gamma^i$ under the quotient map $\Z_p[\Gamma] \to e\Z_p[\Gamma]$. Then $H/\frakB_{\gamma}^-(H)\frakB_{\gamma}^+(H)=H_{/I_{\gamma}}$ and $\frakA_{\gamma}^0(H)=H[I_{\gamma}]$, so by Lemma~\ref{lem:DVR-module}\eqref{item:DVR-2},
		\[
			H/\frakB_{\gamma}^-(H) \frakB_{\gamma}^+(H) \simeq \frakA_{\gamma}^0(H).
		\]
		
		By Lemma~\ref{lem:gamma-ann}, one of $\frakB_{\gamma}^+(H)$ and $\frakB_{\gamma}^-(H)$ is zero. If $\frakB_{\gamma}^+(H)$ is zero, then $\frakA_{\gamma}^-(H) =H$, so $\frakA_{\gamma}^-(H)/\frakB_{\gamma}^-(H)=H/\frakB_{\gamma}^-(H)$ and then $\frakA_{\gamma}^-(H)/\frakB_{\gamma}^-(H)\simeq H/\frakB_{\gamma}^-(H) \frakB_{\gamma}^+(mH)$. If $\frakB_{\gamma}^-(H)=0$, then $\frakA_{\gamma}^-(H)=\frakA_{\gamma}^0(H)$ and hence $\frakA_{\gamma}^-(H)/\frakB_{\gamma}^-(H)\simeq \frakA_{\gamma}^0(H)$. 
	\end{proof}
	
	The corollary below follows immediately by Lemma~\ref{lem:AB-c} and Lemma~\ref{lem:isom-module}.
	
	\begin{corollary}\label{cor:d=isom-module}
		Let $e$ be a primitive idempotent of $\Q_p[\Gamma]$, $H$ a finite $e\Z_p[\Gamma]$-module and $G:=H \rtimes \Gamma$. Then $d_{\gamma}(G,\pi)$ is equal to
		\begin{enumerate}
			\item the number of the $\Gamma$-orbits of $\frakA_{\gamma}^-(H)/\frakB_{\gamma}^-(H)$;
			\item the number of the $\Gamma$-orbits of $H/\frakB_{\gamma}^+(H) \frakB_{\gamma}^-(H)$;
			\item the number of the $\Gamma$-orbits of $\frakA_{\gamma}^0(H)$.
		\end{enumerate}
	\end{corollary}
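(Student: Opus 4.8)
\emph{Proof proposal for Corollary~\ref{cor:d=isom-module}.} The plan is to simply assemble the pieces already proven. Fix $\gamma \in \Gamma$ and write $\pi\colon G = H\rtimes\Gamma \to \Gamma$ for the natural projection. By Lemma~\ref{lem:AB-c}, the map $h \mapsto (h,\gamma)$ is a bijection $\frakA_{\gamma}^-(H) \to c_{\gamma}(G,\pi)$, and two elements $(h_1,\gamma), (h_2,\gamma)$ of $c_{\gamma}(G,\pi)$ are $G$-conjugate precisely when the images of $h_1$ and $h_2$ in $\frakA_{\gamma}^-(H)/\frakB_{\gamma}^-(H)$ lie in the same $\Gamma$-orbit. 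Hence $d_{\gamma}(G,\pi)$, which by Definition~\ref{def:c_gamma} is the number of $G$-conjugacy classes of elements of $c_{\gamma}(G,\pi)$, equals the number of $\Gamma$-orbits on $\frakA_{\gamma}^-(H)/\frakB_{\gamma}^-(H)$. This gives (1).

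For (2) and (3), I would invoke Lemma~\ref{lem:isom-module}: since $H$ is an $e\Z_p[\Gamma]$-module, that lemma provides isomorphisms of $e\Z_p[\Gamma]$-modules
\[
	\frakA_{\gamma}^-(H)/\frakB_{\gamma}^-(H) \;\simeq\; \frakA_{\gamma}^0(H) \;\simeq\; H/\frakB_{\gamma}^-(H)\frakB_{\gamma}^+(H).
\]
One should check that these isomorphisms are $\Gamma$-equivariant — but this is automatic, because they are isomorphisms of $e\Z_p[\Gamma]$-modules and the $\Gamma$-action on each of these modules is exactly the restriction of the $e\Z_p[\Gamma]$-action along $\Z_p[\Gamma]\to e\Z_p[\Gamma]$ (indeed $\Gamma$ sits inside the unit group of $e\Z_p[\Gamma]$). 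Therefore the three modules have the same number of $\Gamma$-orbits, and combining this with (1) yields (2) and (3).

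There is essentially no obstacle here: the corollary is a bookkeeping consequence of Lemmas~\ref{lem:AB-c} and \ref{lem:isom-module}, as the text already anticipates ("The corollary below follows immediately…"). The only point requiring a word of care is the $\Gamma$-equivariance of the isomorphisms from Lemma~\ref{lem:isom-module}, and, as noted, that holds because all maps in sight are morphisms of $e\Z_p[\Gamma]$-modules and the relevant $\Gamma$-actions factor through $e\Z_p[\Gamma]^{\times}$; no separate verification is needed beyond this remark.
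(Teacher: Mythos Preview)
Your proposal is correct and matches the paper's approach exactly: the paper states that the corollary ``follows immediately by Lemma~\ref{lem:AB-c} and Lemma~\ref{lem:isom-module},'' and you have simply spelled out how those two lemmas combine, including the minor observation that the module isomorphisms are automatically $\Gamma$-equivariant.
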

	
	When $\pi:G \to \Gamma$ is nonsplit, we have the following proposition about $d_{\gamma}(G,\pi)$.
	
	\begin{proposition}\label{prop:d-semi}
		Let $\pi: G \to \Gamma$ be a surjection of finite groups such that $H:=\ker \pi$ is an abelian $p$-group. The conjugation of $G$ on $H$ gives $H$ a $\Z_p[\Gamma]$-module structure. Then for each $\gamma \in \Gamma$, 
		\begin{equation}\label{eq:d-lb}
			d_{\gamma}(G, \pi) \leq \# \{\text{$\Gamma$-orbits of $\frakA_{\gamma}^-(H)/\frakB_{\gamma}^-(H)$}\}.
		\end{equation}
		Moreover, if the $\Z_p[\Gamma]$-action on $H$ factors through $e\Z_p[\Gamma]$ for a primitive idempotent $e$ of $\Q_p[\Gamma]$ and the equality in \eqref{eq:d-lb} holds for every $\gamma$, then $G$ is isomorphic to $H \rtimes \Gamma$.
		
	\end{proposition}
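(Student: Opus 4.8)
The plan is to first reinterpret $d_\gamma(G,\pi)$ as the number of orbits of an \emph{affine} action of $\Gamma$, extending Lemma~\ref{lem:AB-c} and Corollary~\ref{cor:d=isom-module} to the possibly non-split case, and then invoke Burnside's lemma. If $c_\gamma(G,\pi)=\O$ the bound is vacuous, so fix $g_0\in c_\gamma(G,\pi)$. Conjugation by $g_0$ acts on $H$ as $\gamma$, and a short computation gives $(g_0h)^{|\gamma|}=g_0^{|\gamma|}\cdot(1+\gamma+\cdots+\gamma^{|\gamma|-1})h$; since $g_0^{|\gamma|}=1$ this shows $h\mapsto g_0h$ is a bijection $\frakA_\gamma^-(H)\to c_\gamma(G,\pi)$. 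For $g\in G$ with image $b\in\Gamma$ one has $g^{-1}(g_0h)g=g_0\bigl(b^{-1}(h)+m_\gamma(g)\bigr)$ where $m_\gamma(g):=g_0^{-1}g^{-1}g_0g$; because $g^{-1}g_0g$ again has order $|\gamma|$ this lies in $\frakA_\gamma^-(H)$, and replacing $g$ by $gk$ with $k\in H$ changes it by $(1-\gamma^{-1})k\in\frakB_\gamma^-(H)$, so $n_\gamma(b):=m_\gamma(g)\bmod\frakB_\gamma^-(H)$ is well defined, with $n_\gamma(bb')=b'^{-1}(n_\gamma(b))+n_\gamma(b')$. Transporting the conjugation action of $G$ across the bijection, the $G$-conjugacy classes inside $c_\gamma(G,\pi)$ are exactly the orbits of the affine action $b\cdot\bar h=b^{-1}(\bar h)+n_\gamma(b)$ of $\Gamma$ on $\frakA_\gamma^-(H)/\frakB_\gamma^-(H)$; thus $d_\gamma(G,\pi)$ equals the number of these orbits (when $G=H\rtimes\Gamma$ one may take $g_0=(0,\gamma)$, forcing $n_\gamma\equiv0$, which recovers Corollary~\ref{cor:d=isom-module}).

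Burnside's lemma then gives the inequality: for each $b\in\Gamma$ the set of affine fixed points of $b$ is empty or a coset of $\ker(1-b^{-1})$ in $\frakA_\gamma^-(H)/\frakB_\gamma^-(H)$, hence no larger than the fixed set of $b$ in the linear ($\Z_p[\Gamma]$-module) action; averaging over $\Gamma$ yields $d_\gamma(G,\pi)\le\#\{\Gamma\text{-orbits of }\frakA_\gamma^-(H)/\frakB_\gamma^-(H)\}$, with equality for a given $\gamma$ iff $n_\gamma(b)$ lies in the image of $1-b^{-1}$ on $\frakA_\gamma^-(H)/\frakB_\gamma^-(H)$ for every $b\in\Gamma$.

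For the ``moreover'' part I would induct on $|\Gamma|$, using two observations drawn from the same picture. First, if equality holds for all $\gamma$ then $c_\gamma(G,\pi)\neq\O$ for every $\gamma$ (otherwise $d_\gamma=0$ while $0\in\frakA_\gamma^-(H)/\frakB_\gamma^-(H)$ already contributes an orbit). Second, for any subgroup $\Gamma_0\le\Gamma$ the hypotheses descend to the extension $\pi^{-1}(\Gamma_0)\to\Gamma_0$: the sets $c_\gamma$ and the values $n_\gamma(b)$ for $b\in\Gamma_0$ may be computed inside $\pi^{-1}(\Gamma_0)$ with the same $g_0$, so the equality criterion from the previous paragraph restricts from $\Gamma$ to $\Gamma_0$, while the dichotomy of Lemma~\ref{lem:gamma-ann} (exactly one of $1-\gamma$, $\sum_{j=1}^{|\gamma|}\gamma^j$ annihilates $H$), a consequence of the $e\Z_p[\Gamma]$-structure, is unaffected by restriction. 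Since $\mathrm{res}\colon H^2(\Gamma,H)\to H^2(\Gamma_p,H)$ is injective ($H^2(\Gamma,H)$ is a $p$-group and $[\Gamma:\Gamma_p]$ is prime to $p$), one may further assume $\Gamma$ is a $p$-group. When $\Gamma$ is cyclic with generator $\gamma$, an order-$|\gamma|$ lift of $\gamma$ generates a complement, so $G\cong H\rtimes\Gamma$.

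When $\Gamma$ is non-cyclic, write $\Gamma=\Gamma_0\times\langle\sigma\rangle$ with both factors proper. By the descent and the inductive hypothesis $\pi^{-1}(\Gamma_0)\cong H\rtimes\Gamma_0$; fix a section $s_0\colon\Gamma_0\to\pi^{-1}(\Gamma_0)\subseteq G$. It then suffices to produce $g_0\in c_\sigma(G,\pi)$ commuting with $s_0(\Gamma_0)$, since then $a\sigma^j\mapsto s_0(a)g_0^j$ is a section of $\pi$. Writing $g_0=g_0'h$ with a fixed $g_0'\in c_\sigma(G,\pi)$ and $h\in\frakA_\sigma^-(H)$, commutation with $s_0(a)$ is the equation $(1-a^{-1})h=m_\sigma(s_0(a))$ in $H$, i.e.\ the vanishing in $H^1(\Gamma_0,\frakA_\sigma^-(H))$ of the class of the cocycle $a\mapsto m_\sigma(s_0(a))$. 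The equality hypothesis at all $\gamma$ makes the reduction of this class modulo $\frakB_\sigma^-(H)$ locally trivial (its $b$-values are in the image of $1-b^{-1}$, by the criterion above), and the dichotomy of Lemma~\ref{lem:gamma-ann}, via $\frakB_\sigma^-(H)=(1-\sigma)H$, is what one needs in order to promote this to an honest vanishing in $H^1(\Gamma_0,\frakA_\sigma^-(H))$ — possibly after adjusting $s_0$ within its $H^1(\Gamma_0,H)$-torsor of complements. I expect this last reconciliation of the mod-$\frakB_\sigma^-$ data with an actual splitting to be the main obstacle; everything else is formal bookkeeping with the affine-action description.
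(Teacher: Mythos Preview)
Your argument for the inequality \eqref{eq:d-lb} is correct and is essentially the paper's: your affine-action/Burnside formulation repackages the paper's centralizer computation (their identity $[\overline{g}z,s]=[\overline{g},s][z,s]$ and the comparison ``$\#\ker\alpha_{s,\gamma}$ if $[\overline{g},s]\in\im\alpha_{s,\gamma}$, else $0$'' is exactly your ``the affine fixed set of $b$ is empty or a coset of the linear fixed set''), and your equality criterion $n_\gamma(b)\in\im(1-b^{-1})$ for all $b\in\Gamma$ coincides with the paper's $[\overline{g},s]\in\im\alpha_{s,\gamma}$ for all $s$.

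The gap is exactly where you flag it. Your inductive step leaves you needing a class in $H^1(\Gamma_0,\frakA_\sigma^-(H))$ to vanish, knowing only that its restriction to each cyclic subgroup of $\Gamma_0$ is a coboundary (this is all that equality at $\sigma$ gives you). For a general $\Gamma_0$-module that implication fails, and the dichotomy of Lemma~\ref{lem:gamma-ann} by itself does not force it. The missing input is Lemma~\ref{lem:max-ideal}: the $\Gamma$-action on any $e\Z_p[\Gamma]$-module factors through a \emph{cyclic} quotient $C$ of $\Gamma$. The paper uses this not to patch your induction but to bypass it entirely. One picks generators $\gamma_1,\ldots,\gamma_d$ of $\Gamma$ with $\gamma_1$ mapping to a generator of $C$ and $\gamma_j\in\ker(\Gamma\to C)$ for all $j\geq 2$; then each such $\gamma_j$ acts trivially on $H$, so $\frakB_{\gamma_j}^-(H)=0$. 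Choosing lifts $g_i\in c_{\gamma_i}(G,\pi)$, the equality criterion at $\gamma_j$ (for $j\geq 2$) is now a statement in $H$ itself: with $s=g_i$ it reads $[g_j,g_i]\in\im(z\mapsto[z,g_i])$ on $\frakA_{\gamma_j}^-(H)$. For $i\geq 2$ this image is $0$ (trivial action), so $[g_i,g_j]=1$ outright; for $i=1$ it gives $[g_j,g_1]=(1-\gamma_1^{-1})z_j$ for some $z_j\in\frakA_{\gamma_j}^-(H)$, and replacing $g_j$ by $g_jz_j$ (which preserves the order $|\gamma_j|$ and the commutation with every $g_k$, $k\geq 2$) forces $[g_j,g_1]=1$. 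The commuting lifts then split $\pi$. In short, the cyclicity of $C$ makes the problem essentially one-generator and collapses your $H^1$ obstruction; once you feed Lemma~\ref{lem:max-ideal} into your framework (e.g.\ by always choosing $\sigma\in\ker(\Gamma\to C)$), your induction would also go through, but the paper's direct argument is shorter.
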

	
	\begin{proof}
		The inequality in \eqref{eq:d-lb} holds when $d_{\gamma}(G, \pi)=0$ because the right-hand side is always positive. Assume $d_{\gamma}(G, \pi)>0$, and let $g$ be an element of $c_{\gamma}(G, \pi)$. Then an element $x \in G$ is in $c_{\gamma}(G, \pi)$ if and only if $x=gh$ for some $h \in H$ such that $(gh)^{|\gamma|}=1$. Because $(gh)^{|\gamma|}=(ghg^{-1})(g^2hg^{-2}) \cdots (g^{|\gamma|} h g^{-|\gamma|})=\sum_{i=1}^{|\gamma|} \gamma^i(h)$, we see that $x=gh$ belongs to $c_{\gamma}(G, \pi)$ if and only if $h \in \frakA_{\gamma}^-(H)$.
		
		For any $a \in \frakB_{\gamma}^-(H)$, there exists $y \in H$ such that $a=(1-\gamma)y$, so $a$ as an element of $G$ is equal to the commutator $[g, y]$. Then for any element $h \in H$, $gh$ is conjugate to $gah$ since $y^{-1} ghy=g[g,y]h$. So for each $g \in c_{\gamma}(G, \pi)$, elements of the coset $g \frakB_{\gamma}^-(H)$ belong to the same conjugacy class of $G$. 
		
		Consider a fixed $g\in c_{\gamma}(G, \pi)$. 
		Let $\overline{G}:=G/\frakB^-_{\gamma}(H)$, and let $\overline{g}$ denote the image of $g$ in $\overline{G}$.
		We have
		\begin{eqnarray}
			d_{\gamma}(G, \pi) &=& \sum_{z \in \frakA^-_{\gamma}(H)/\frakB^-_{\gamma}(H)} ( \text{the size of conjugacy class of $\overline{g}z$ in $\overline{G}$} )^{-1} \nonumber \\
			&=& \sum_{z \in \frakA^-_{\gamma}(H)/\frakB^-_{\gamma}(H)} \frac{|Z_{\overline{G}}(\overline{g}z)|}{|\overline{G}|} \nonumber \\
			&=& \frac{1}{|\overline{G}|} \sum_{z \in \frakA^-_{\gamma}(H)/\frakB^-_{\gamma}(H)} \#\{s \in \overline{G} \mid [\overline{g}z, s]=1\} \nonumber \\
			&=& \frac{1}{|\overline{G}|} \sum_{s \in \overline{G}}\# \{z \in \frakA^-_{\gamma}(H)/\frakB^-_{\gamma}(H) \mid [\overline{g}z,s]=1\} \label{eq:dgamma-1},
		\end{eqnarray}
		where $Z_{\overline{G}}(\overline{g}z)$ denotes the centralizer of $\overline{g}z$ in $\overline{G}$.
		Since $H$ and $\Gamma$ are abelian, we have $[\overline{g},s]\in H/\frakB^-_{\gamma}(H)$ for any $s \in \overline{G}$, and $[\overline{g}z, s]=[\overline{g},s]^z[z,s]=[\overline{g},s][z,s]$ for any $s\in \overline{G}$, $z \in H/\frakB^-_{\gamma}(H)$. Moreover, for any $s$, the following map is a homomorphism of abelian groups.
		\begin{eqnarray}
			\alpha_{s,\gamma} : H/\frakB^-_{\gamma}(H) &\longrightarrow& H/\frakB^-_{\gamma}(H) \label{eq:alpha}\\
			z &\longmapsto& [z,s] \nonumber
		\end{eqnarray}
		So $\#\{ z \in \frakA^-_{\gamma}(H)/\frakB^-_{\gamma}(H) \mid [\overline{g}z,s]=1\}$ is $\#\ker \alpha_{s,\gamma}$ if $[\overline{g},s]\in \im \alpha_{s,\gamma}$, and is 0 otherwise. Then \eqref{eq:dgamma-1} is 
		\begin{eqnarray}
			&\leq & \frac{1}{|\overline{G}|} \sum_{s \in \overline{G}} \#\{z \in \frakA^-_{\gamma}(H)/\frakB^-_{\gamma}(H) \mid [z,s]=1\} \label{eq:ineq}\\
			&=& \frac{1}{|\Gamma|} \sum_{\sigma \in \Gamma} \#\{ z \in \frakA^-_{\gamma}(H)/\frakB^-_{\gamma}(H) \mid z=\sigma(z)\} \nonumber\\
			&=& \text{the number of $\Gamma$-orbits of $\frakA^-_{\gamma}(H)/\frakB^-_{\gamma}(H)$}, \nonumber
		\end{eqnarray}
		so we proved \eqref{eq:d-lb}.
			
		For the rest of the proof, we assume that the $\Z_p[\Gamma]$-action on $H$ factors through $e\Z_p[\Gamma]$ for a primitive idempotent $e$ of $\Q_p[\Gamma]$ and $d_{\gamma}(G,\pi)$ equals the number of $\Gamma$-orbits of $\frakA^-_{\gamma}(H)/\frakB^-_{\gamma}(H)$ for every $\gamma\in \Gamma$. Then $c_{\gamma}(G,\pi)\neq \O$ for any $\gamma$, because $\frakA_{\gamma}^-(H)/\frakB^-_{\gamma}(H)$ is nonempty. 
		By Lemma~\ref{lem:max-ideal}, the $\Gamma$-action on $H$ factors through a cyclic quotient $C$ of $\Gamma$. 
		So we can pick a set of generators $\gamma_1, \ldots, \gamma_d$ of $\Gamma$, such that $\gamma_1$ maps to a generator of $C$ and $\gamma_i\in \ker (G\to C)$ for any $i\geq 2$. We pick one $g_i \in c_{\gamma_i}(G, \pi)$ for each $i$, and we will show $[g_i, g_j]=1$ for any $i \neq j$. Then it follows immediately that the subgroup of $G$ generated by $g_1, \ldots, g_{d}$ is an abelian group isomorphic to $\Gamma$, so it gives a splitting for $\pi$ and then $G$ is isomorphic to $H \rtimes \Gamma$.
		
		From the argument above, for each $\gamma \in \Gamma$, the equality in \eqref{eq:d-lb} holds if and only if $[\overline{g},s]\in \im \alpha_{s, \gamma}$ for every $s \in  \overline{G}$ (here $\overline{g}$ is the image in $\overline{G}$ of an arbitrary element in $c_{\gamma}(G, \pi)$). Consider $[g_i, g_j]$ with $i<j$. 
		Denote the images of $g_i$ and $g_j$ by $\overline{g}_i$ and $\overline{g}_j$ in $\overline{G}$ respectively. Then the assumption that the equality \eqref{eq:d-lb} holds implies that $[\overline{g}_j, \overline{g}_i] \in \im \alpha_{\overline{g}_i, \gamma_i}$.
		Since $j>1$, $g_j$ acts trivially on $e\Z_p[\Gamma]$, and hence acts trivially on $H$. So $\im \alpha_{\overline{g}_i, \gamma_j}=1$ and $\frakB_{\gamma_j}^-(H)=1$, and then $[\overline{g}_j, \overline{g}_i]\in \im \alpha_{\overline{g}_i, \gamma_j}$ implies $[g_i, g_j]=[g_j, g_i]^{-1}=1$. The proof is completed.
	\end{proof}

\section{Proof of the function field moment theorem}\label{sect:ff-moment-proof}

\subsection{Hurwitz spaces}\label{ss:Hurwitz}
\hfill

	Given a finite group $G$ and a subset $c$ of $G$ closed under conjugation by elements of $G$ and closed under taking invertible powering, there is a Hurwitz scheme $\Hur_{G,c}^n$ defined over $\Z[|G|^{-1}]$, such that an object of $\Hur_{G,c}^n$ in the fiber $\Hur_{G,c}^n(S)$ over a scheme $S \to \Spec \Z[|G|^{-1}]$ is a triple $(f, \iota; P)$, where
	\begin{itemize}
		\item $f:X \to \PP^1_S$ is a tame Galois cover with $n$ branch points, such that $\infty \in \PP^1(S)$ is unramified and all the inertia groups are generated by elements in $c$,
		\item $\iota: \Aut f \to G$ is an isomorphism, and
		\item $P \in X(S)$ is a point lying over $\infty$.
	\end{itemize}
	See \cite[\S11]{LWZB} for more details about this Hurwitz scheme. When we fix a separable closure $\overline{\F_q(t)}$ of $\F_q(t)$ and a prime $\overline{\infty}$ of $\overline{\F_q(t)}$ lying over $\infty$ for $q\nmid |G|$, given $\F_q(t) \subset L \subset \overline{\F_q(t)}$, there is a unique prime of $L$ lying below $\overline{\infty}$.
	Then one see that there is a one-to-one correspondence between the points of $\Hur_{G,c}^n(\F_q)$ and the tuples $(L/\F_q(t), \iota)$, where 
	\begin{itemize}
		\item $L/\F_q(t)$ is a Galois subextension of $\overline{\F_q(t)}/\F_q(t)$ such that all the inertia subgroups are generated by elements in $c$ and $L/\F_q(t)$ is split completely at the prime $\infty$ of $\F_q(t)$, and 
		\item $\iota$ is an identification $\Gal(L/\F_q(t))\simeq G$.
	\end{itemize}
	We will first show in Lemma~\ref{lem:Cl-Hur} that the $\F_q$-points of $\Hur_{G,c}^n$, with appropriate choice of $G$ and $c$, are the objects of our interest.

	For a $\Z_p[\Gamma]$-module $H$, we say that $(G, \iota, \pi)$ is \emph{an extension of $\Gamma$ with kernel $H$} if $\pi: G \to \Gamma$ is a surjection and $\iota: \ker \pi \to H$ is a $\Gamma$-equivariant isomorphism, where the $\Gamma$-action on $\ker \pi$ is defined by the conjugation-by-$G$ action on $\ker \pi$ (note that since $H$ is abelian, this conjugation action factors through $\Gamma$). Two extensions $(G_1, \iota_1, \pi_1)$ and $(G_2, \iota_2, \pi_2)$ are isomorphic if there exists an isomorphism $\phi: G_1 \to G_2$ such that $\pi_1=\pi_2 \circ \phi$ and $\iota_1 \circ \iota_2^{-1}$ is the identity map on $H$. We define $\Ext_{\Gamma}(H)$ to be the set of isomorphism classes of extensions of $\Gamma$ with kernel $H$. 
	
	For $(G, \iota, \pi) \in \Ext_{\Gamma}(H)$, we let $\Aut(G, \iota, \pi)$ denote the set of isomorphisms of the extension $(G, \iota, \pi)$ to itself, and we define $c_{\pi}$ to be the set of elements of $G$ that have the same order as their image under $\pi$. 
	Let $\calA^+_{\Gamma}(q^n, \F_q(t))$ denote the set of isomorphism classes of $\Gamma$-extensions of $\F_q(t)$ such that $\rDisc K =q^n$ and $K/\F_q(t)$ is split completely at $\infty$.

\begin{lemma}\label{lem:Cl-Hur}
	Assume $H$ is a $\Z_p[\Gamma]$-module and $\Char(\F_q)$ is relatively prime to $p|\Gamma|$. Then 
	\begin{equation}\label{eq:Cl-Hur}
		\sum_{K \in \calA^+_{\Gamma}(q^n, \F_q(t))} \#\Sur_{\Gamma}(\Cl(K), H) = \sum_{(G, \iota, \pi) \in \Ext_{\Gamma}(H)} \frac{\#\Hur_{G, c_{\pi}}^n(\F_q)}{\# \Aut(G, \iota, \pi)}.
	\end{equation}
\end{lemma}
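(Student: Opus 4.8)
The identity is a ``mass formula'' matching $\Gamma$-extensions carrying a surjection onto $H$ with points of Hurwitz spaces for extensions of $\Gamma$ by $H$. The natural strategy is to first reorganize the left-hand side so that each summand records not just a $\Gamma$-extension $K$ together with a surjection $\phi\colon\Cl(K)\twoheadrightarrow H$, but the data needed to build the field cut out by $\phi$. Concretely, given $(K,\iota_K)\in\calA^+_\Gamma(q^n,\F_q(t))$ and $\phi\in\Sur_\Gamma(\Cl(K),H)$, let $L$ be the subfield of the Hilbert class field (unramified-everywhere, split at $\infty$ abelian $p$-extension) of $K$ fixed by $\ker\phi$; then $\Gal(L/K)\cong H$ and $L/\F_q(t)$ is Galois because $\phi$ is $\Gamma$-equivariant. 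Setting $G:=\Gal(L/\F_q(t))$, $\pi\colon G\twoheadrightarrow\Gal(K/\F_q(t))\xrightarrow{\iota_K}\Gamma$, and $\iota\colon\ker\pi=\Gal(L/K)\xrightarrow{\phi\text{-induced}}H$, we get a well-defined element $(G,\iota,\pi)\in\Ext_\Gamma(H)$. So the plan is: (i) describe this assignment precisely, noting that $L/\F_q(t)$ is automatically tamely ramified (residue characteristic prime to $|G|$) and split completely at $\infty$ (since $K$ is split at $\infty$ and $L/K$ is unramified there, hence split by the split-at-$\infty$ hypothesis defining the Hilbert class field), and that the inertia groups of $L/\F_q(t)$ lie in $c_\pi$ --- an inertia generator at a prime $\frakp$ maps to a generator of the inertia in $\Gal(K/\F_q(t))$ of the same order because the extension is unramified over $K$, so a generator realizes the full local degree; this is exactly the condition ``same order as its image under $\pi$''.

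\textbf{Counterting via Hurwitz points.} Next I would fix an isomorphism class $(G,\iota,\pi)\in\Ext_\Gamma(H)$ and count how many pairs $(K,\phi)$ on the left map to it, comparing with $\#\Hur^n_{G,c_\pi}(\F_q)$. Using the dictionary recalled just before the lemma, a point of $\Hur^n_{G,c_\pi}(\F_q)$ is a Galois extension $L/\F_q(t)$ inside $\overline{\F_q(t)}$ with all inertia generated by elements of $c_\pi$, split at $\infty$, together with an identification $\Gal(L/\F_q(t))\cong G$. Composing that identification with $\pi$ gives a surjection onto $\Gamma$, hence a $\Gamma$-extension $K=L^{\ker\pi}$; because every inertia element of $L/\F_q(t)$ lies in $c_\pi$ (same order as its $\pi$-image), the extension $L/K$ is unramified at all finite primes, and split at $\infty$, so $L$ is contained in the Hilbert class field of $K$ and the inclusion $\Gal(L/K)\hookrightarrow\Cl(K)$ gives (after $\iota$) a surjection $\phi\colon\Cl(K)\twoheadrightarrow H$. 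One checks this is inverse to the previous construction up to the choice of identification $\Gal(L/\F_q(t))\cong G$: the set of such identifications compatible with the fixed $(\iota,\pi)$ data is a torsor under $\Aut(G,\iota,\pi)$, and two identifications give the same pair $(K,\phi)$ iff they differ by such an automorphism. This is precisely why the right-hand side divides $\#\Hur^n_{G,c_\pi}(\F_q)$ by $\#\Aut(G,\iota,\pi)$. I also need to confirm the discriminant bookkeeping: $\rDisc K=q^n$ should correspond to $L/\F_q(t)$ having exactly $n$ branch points; since $L/K$ is unramified, the ramified primes of $L/\F_q(t)$ are exactly those of $K/\F_q(t)$, and ``$\rDisc K=q^n$'' means the radical of the discriminant has norm $q^n$, i.e.\ there are $n$ ramified (degree-one, after base field normalization) primes --- matching the $n$ branch points on $\PP^1$. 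A small point to verify is that branch points of $\Hur^n$ are counted as points of $\PP^1$, and ramified primes of $\F_q(t)$ of any residue degree must be handled; I expect the standard setup (as in \cite{LWZB}) already arranges branch divisors rather than rational branch points, so ``$n$ branch points'' should be read as a degree-$n$ branch divisor, matching $\rDisc$.

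\textbf{Main obstacle.} The genuine content, and the step I expect to need the most care, is the bijection-up-to-automorphism between $\F_q$-points of the Hurwitz space and tuples $(L/\F_q(t),\text{identification with }G)$, together with the claim that the automorphism group acting on identifications is exactly $\Aut(G,\iota,\pi)$ and not $\Aut(G)$: we are fixing the kernel identification $\iota$ as part of the data, so only automorphisms of $G$ fixing $H$ pointwise and commuting with $\pi$ are allowed, which is the definition of $\Aut(G,\iota,\pi)$ given in the text. I would make this precise by noting that the fiber of ``$\F_q$-points of $\Hur^n_{G,c_\pi}$'' $\to$ ``pairs $(K,\phi)$'' over a fixed $(K,\phi)$ is the set of group isomorphisms $\Gal(L/\F_q(t))\xrightarrow{\sim}G$ lying over $\iota_K$ on the quotient $\Gamma$ and over the prescribed map on the kernel; such isomorphisms form an $\Aut(G,\iota,\pi)$-torsor (nonempty because $(G,\iota,\pi)$ was defined to be the isomorphism class arising from $(K,\phi)$). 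Summing over isomorphism classes $(G,\iota,\pi)\in\Ext_\Gamma(H)$ and over $n$ fixed yields \eqref{eq:Cl-Hur}. One should also remark that the sums are finite: for fixed $n$ there are finitely many $K$ with $\rDisc K=q^n$, finitely many surjections, and correspondingly finitely many $(G,\iota,\pi)$ that can occur (bounded order, since $|G|=|\Gamma|\,|H|$), so no convergence issues arise.
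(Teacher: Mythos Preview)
Your proposal is correct and follows essentially the same approach as the paper: set up a correspondence between pairs $(K,\phi)$ on the left and Hurwitz points on the right, then observe that the fiber over a given $(K,\phi)$ is an $\Aut(G,\iota,\pi)$-torsor. The paper organizes this as a single surjection from $\bigsqcup_{(G,\iota,\pi)}\Hur^n_{G,c_\pi}(\F_q)$ onto the set of tuples $(K,\phi,L,\rho)$ and then analyzes its fibers, whereas you describe both directions explicitly, but the content is the same. One small slip: where you write ``the inclusion $\Gal(L/K)\hookrightarrow\Cl(K)$,'' you mean the quotient map $\Cl(K)\twoheadrightarrow\Gal(L/K)$ coming from class field theory.
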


\begin{proof}	
	Regarding the right-hand side of \eqref{eq:Cl-Hur}, for any $(G, \iota, \pi) \in \Ext_{\Gamma}(H)$, a point of $\Hur_{G, c_{\pi}}^n(\F_q)$ is a split-completely-at-$\infty$ Galois extension $L/\F_q(t)$ together with a prime of $L$ lying above $\infty$ and an isomorphism $\varphi: \Gal(L/\F_q(t)) \overset{\sim}{\to} G$ such that every inertia subgroup of $L/\F_q(t)$ is generated by an element in $c_{\pi}$. Let $K$ denote the subfield of $L$ fixed by $\varphi^{-1}(\ker \pi)$, and then $\varphi$ induces an isomorphism $\phi:\Gal(K/\F_q(t)) \overset{\sim}{\to} \Gamma$, so $(K, \phi)$ is an element of $\calA_{\Gamma}^+(q^n, \F_q(t))$. Since $c_{\pi} \cap \ker \pi =1$, $L$ is an unramified extension of $K$. Also, the conjugation action by $\Gal(L/\F_q(t))$ on $\Gal(L/K)$ defines a $\Gal(K/\F_q(t))$-action on $\Gal(L/K)$, so $\phi$ and $\varphi$ gives a $\Gamma$-equivariant isomorphism $\rho: \Gal(L/K) \overset{\sim}{\to} H$. Then we obtain a map of sets.
		\begin{equation}\label{eq:Hur-map}
			\bigsqcup_{(G, \iota, \pi) \in \Ext_{\Gamma}(H)} \Hur_{G,c_\pi}^n(\F_q) \longrightarrow \left\{ (K, \phi, L, \rho) \; \Bigg| \;  \begin{aligned} 
		& (K, \phi) \in \calA_{\Gamma}^+(q^n, \F_q(t)) \\
		& L/K \text{ is unramfied and $L/\F_q(t)$ is Galois} \\ & \rho: \Gal(L/K) \overset{\sim}{\to} H \text{ is $\Gamma$-equivariant}
		\end{aligned}
			\right\}.
		\end{equation}
		This map is surjective because: given a tuple $(K, \phi, L, \rho)$ from the right-hand side, $G:=\Gal(L/\F_q(t))$, $\pi: \Gal(L/\F_q(t)) \to \Gal(K/\F_q(t)) \overset{\phi}{\to} \Gamma$ and $\iota:  \Gal(L/K)=\ker \pi  \overset{\rho}{\to} H$ give an element $(G, \iota, \pi)$ of $\Ext_{\Gamma}(H)$; and $(L, \rho \rtimes \phi)$ is an $\F_q$-point of $\Hur_{G,c_{\pi}}^n$.
		
		Suppose that two elements $(L_1, \varphi_1)$ and $(L_2, \varphi_1)$ on the left-hand side of \eqref{eq:Hur-map} give the same image $(K, \phi, L, \rho)$. Let $(G_i, \iota_i, \pi_i) \in \Ext_{\Gamma}(H)$ denote the extension defined by $(L_i, \varphi_i)$ for each $i=1,2$, (i.e., $(L_i, \varphi_i) \in \Hur_{G_i, c_{\pi_i}}^n(\F_q)$). Then $L_1=L_2=L$ and the following diagram is commutative for each $i=1,2$.
		\[\begin{tikzcd}
			1 \arrow{r} &\Gal(L/K) \arrow{r} \arrow["\rho"', "\sim"]{d} & \Gal(L/\F_q(t)) \arrow{r} \arrow["\varphi_i"']{d} & \Gal(K/\F_q(t)) \arrow{r} \arrow["\phi"', "\sim"]{d} & 1 \\
			1 \arrow{r} &H \arrow["\iota_i^{-1}"]{r} & G_i \arrow["\pi_i"]{r} & \Gamma \arrow{r} & 1
		\end{tikzcd}\]	
		It follows that $\varphi_2\circ \varphi_1^{-1}:G_1 \to G_2$ defines an isomorphism from $(G_1, \iota_1, \pi_1)$ to $(G_2, \iota_2, \pi_2)$. On the other hand, if $(L, \varphi)\in \Hur_{G, c_\pi}^n(\F_q)$ for $(G, \iota, \pi)\in \Ext_{\Gamma}(H)$ and $\alpha \in \Aut(G,\iota , \pi)$, then $(L, \alpha\circ\varphi)$ is also contained in $\Hur_{G, c_\pi}^n(\F_q)$ and has the same image as the image of $(L , \varphi)$ under \eqref{eq:Hur-map}. So, there is a bijection between the right-hand side of \eqref{eq:Hur-map} and 
		\[
			\bigsqcup_{(G,\iota, \pi) \in \Ext_{\Gamma}(H)} \faktor{\Hur_{G,c_\pi}^n(\F_q)}{\Aut(G, \iota, \pi)}.
		\]
		
		For $K \in \calA_{\Gamma}^+(q^n, \F_q(t))$, there is a bijective correspondence between $\Sur_{\Gamma}(\Cl(K),H)$ and the set of pairs $(L, \rho)$, where $L$ is an unramified extension of $K$ such that $L/\F_q(t)$ is Galois and split completely at $\infty$, and $\rho$ is a $\Gamma$-equivariant isomorphism $\Gal(L/K) \overset{\sim}{\to} H$. So there is a bijection between the right-hand side of \eqref{eq:Hur-map} and the set $\sqcup_{(K, \phi) \in \calA_{\Gamma}^+(q^n, \F_q(t))} \Sur_{\Gamma}(\Cl(K), H)$. Then the formula \eqref{eq:Cl-Hur} follows.
\end{proof}

	To prove the function field moment Theorem~\ref{thm:main-B}\eqref{item:main-B-2}, we need to estimate the number of points of $\Hur_{G,c}^n(\F_q)$, using the methods builded upon \cite{LWZB}. Briefly, applying the Grothendieck--Lefschetz trace formula, the first main term of $\#\Hur_{G,c}^n(\F_q)$ is given by $\pi_{G,c}(q,n) q^n$, where $\pi_{G,c}(q,n)$ is the number of Frobenius-fixed components of $(\Hur_{G,c}^n)_{\overline{\F}_q}$. To compute $\pi_{G,c}(q,n)$, one can analyze the braid group monodromy action on the Hurwitz space (see \cite{Wood-lifting} and \cite[\S12]{LWZB}); in particular by \cite[Proposition~12.7]{LWZB}
	\begin{equation}\label{eq:pi-b}
		\pi_{G,c}(q, n)=b(G,c,q,n)+O_G(n^{d_{G,c}(q)-2}),
	\end{equation}
	where $d_{G,c}(q)$ is the number of orbits of $q$th powering on the conjugacy classes in $c$ (under conjugation in $G$) and $b(G,c,q,n)$ is the number of some lattice points defined as below.

	A Schur covering $\phi: S \to G$ of $G$ is a stem extension such that the universal coefficient theorem map $H^2(G, \ker \phi) \to \Hom(H_2(G,\Z),\ker \phi)$ maps the class in $H^2(G, \ker\phi)$ representing $\phi$ to an isomorphism $H_2(G,\Z) \overset{\sim}{\to} \ker \phi$. Given $G$, $c$ and a Schur covering $\phi$ of $G$, the reduced Schur covering for $G, c$ and $\phi$, denoted by $\phi_c: S_c \to G$, is the quotient of $\phi: S \to G$ (i.e., $S_c$ and $\phi_c$ are obtained by taking quotient of $S$) by the normal subgroup generated by the set of commutators 
	\[
		\{[\hat{x},\hat{y} \mid \hat{x}, \hat{y}\in S, \phi(\hat{x})\in c, \text{ and } [\phi(\hat{x}), \phi(\hat{y})]=1\}.
	\]
	The kernel of $\phi_c$ is naturally a quotient of $H_2(G,\Z)$, which we denote by $H_2(G,c)$.
	Let $c/G$ denote the set of conjugacy classes of elements in $c$ and let $\Z^{c/G}$ denote the free abelian group generated by elements of the set $c/G$. Then the map $\Z^{c/G} \to G^{\ab}$ sending the generator for the class of $g\in c$ to the image of $g$ under $G \to G^{\ab}$ is a group homomorphism. 
	For each conjugacy class $\gamma \in c/G$, we pick an element $x_{\gamma}$ in $\gamma$ and a lift $\widehat{x_{\gamma}}$ of $x_{\gamma}$ in $S_c$. Then, if $q$ is prime to $|G|$, we define a group homomorphism $W_{q^{-1}}: \Z^{c/G} \to \ker \phi_c$ by sending the generator corresponding to $\gamma$ to $\widehat{x_{\gamma}}^{-1/q} \widehat{x_{\gamma}^{1/q}} \in \ker \phi_c$.
	 Write $\Z_{\equiv q, n, \geq M}^{c/G}$ for the sublattice of $\Z^{c/G}$ consisting of elements satisfying: 1) each coordinates is positive; 2) all the coordinates sum up to $n$; and 3) if $\gamma_1,  \gamma_2 \in c/G$ such that elements in $\gamma_2$ are the $q$th power of elements in $\gamma_1$, then the coordinates corresponding to $\gamma_1$ and $\gamma_2$ are equal. For an element $a\in \ker \phi_c$, we write $\nr_{q-1}(a)$ for the number of $x\in \ker\phi_c$ such that $x^{q-1}=a$. Then define
	\begin{equation}\label{eq:defofb}
		b(G,c,q,n):=\sum_{\underline{m} \in \ker \left(\Z_{\equiv q, n, \geq 0}^{c/G} \to G^{\ab} \right)} \nr_{q-1}(W_{q^{-1}}(\underline{m})).
	\end{equation}
	Here, $H_2(G,c)$ and $b(G,c,q,n)$ do not depend on the choice of the Schur covering $\phi$ we start with.

\subsection{Proof of Theorem~\ref{thm:main-B}\eqref{item:main-B-2}}
\hfill

		\begin{lemma}\label{lem:b}
		Let $\Gamma$ be a finite abelian group and $e$ a primitive idempotent of $\Q_p[\Gamma]$. Let $H_i$ be a finite $e\Z_p[\Gamma]$-module for each $i=1,2$, such that there is a surjective homomorphism $\rho^\circ:H_1 \to H_2$. Let $G_i$ be $H_i\rtimes \Gamma$, $\pi_i$ the natural surjection $G_i \to \Gamma$ with kernel $H_i$, and $\rho$ the surjection $G_1 \to G_2$ defined by $\rho|_{H_1}=\rho^{\circ}$ and $\rho|_{\Gamma}: \Gamma \overset{\sim}{\to} \Gamma$. Let $c_i:=\cup_{\gamma\in \Gamma} c_{\gamma}(G_i, \pi_i)$. Assume that the elements in $c_i$ generate $G_i$. Then the following statements hold for any prime power $q$ such that $p\nmid (q-1)q$.
		\begin{enumerate}
			\item \label{item:b-1} $d_{G_1,c_1}(q) \geq d_{G_2,c_2}(q)$, and the equality holds if and only if $\ker \rho$ is contained in $\frakB^-_{\gamma}(H_1)\frakB^+_{\gamma}(H_1)$ for each $\gamma \in \Gamma$.
			\item \label{item:b-2} If $d_{G_1, c_1}(q)=d_{G_2, c_2}(q)$, then $b(G_1, c_1, q, n) = b(G_2, c_2, q, n)$.
		\end{enumerate}
	\end{lemma}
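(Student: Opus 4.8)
The plan is to route both parts through Corollary~\ref{cor:d=isom-module}, which identifies, for each $\gamma\in\Gamma$, the number of $G_i$-conjugacy classes inside $c_\gamma(G_i,\pi_i)$ with the number of $\Gamma$-orbits on $H_i/I_\gamma H_i$, where $I_\gamma:=\rho_{\Z_p[\Gamma],e}\bigl((1-\gamma,\sum_{j=1}^{|\gamma|}\gamma^j)\bigr)$, so that $\frakB_\gamma^-(H_i)\frakB_\gamma^+(H_i)=I_\gamma H_i$. Since $d$ and $b$ are only relevant when $q$ is prime to $|G_i|$ and $H_i$ is a $p$-group, I work with the (contextually forced) extra hypothesis $\gcd(q,|\Gamma|)=1$, so that $x\mapsto x^q$ is a bijection of the set $c_i=\bigcup_\gamma c_\gamma(G_i,\pi_i)$ commuting with $G_i$-conjugation. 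The first step for Part~1 is to reduce $q$th-power orbit counting to one representative per $\langle q\rangle$-orbit $O$ on $\Gamma$ (representative $\gamma_O$): the $q$th-power map cyclically permutes the blocks $\bigsqcup_{\gamma\in O}c_\gamma(G_i,\pi_i)/G_i$ through $|O|$ bijections whose composite is the $q^{|O|}$th-power map on $c_{\gamma_O}(G_i,\pi_i)/G_i$, and this composite is the identity because $q^{|O|}\equiv1\pmod{|\gamma_O|}$ together with the dichotomy of Lemma~\ref{lem:gamma-ann}: one of $1-\gamma_O$ and $\sum_j\gamma_O^j$ annihilates $e\Z_p[\Gamma]$ hence $H_i$; when $\sum_j\gamma_O^j$ kills $H_i$ one has $\frakA_{\gamma_O}^-(H_i)=H_i$ and the return operator $1+\gamma_O+\dots+\gamma_O^{q^{|O|}-1}$ equals $t\,(\sum_j\gamma_O^j)+1\equiv1$ on $H_i$; when $1-\gamma_O$ kills $H_i$, $\gamma_O$ acts trivially, $\frakA_{\gamma_O}^-(H_i)=H_i[|\gamma_O|]$, and the return operator acts as multiplication by $q^{|O|}\equiv1$ on that subgroup. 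Hence $d_{G_i,c_i}(q)=\sum_O\#\bigl(c_{\gamma_O}(G_i,\pi_i)/G_i\bigr)$. Since $\rho^\circ$ induces $\Gamma$-equivariant surjections $H_1/I_\gamma H_1\twoheadrightarrow H_2/I_\gamma H_2$, and a $\Gamma$-equivariant surjection of finite sets never increases the number of orbits, strictly decreasing it when it is a homomorphism of abelian groups with nonzero kernel (the fibre over $0$ then contains the orbit $\{0\}$ and at least one more orbit), summing over the $\gamma_O$ gives $d_{G_1,c_1}(q)\ge d_{G_2,c_2}(q)$.

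For the equality clause, equality forces $\ker\rho\subseteq I_{\gamma_O}H_1$ for every representative $\gamma_O$. Because $I_\gamma$ depends only on the cyclic subgroup $\langle\gamma\rangle$, equals $e\Z_p[\Gamma]$ when $\gamma=1$, and every nontrivial cyclic subgroup arises as some $\langle\gamma_O\rangle$, this is equivalent to $\ker\rho\subseteq\bigcap_{1\ne\gamma\in\Gamma}I_\gamma H_1$; since $e\Z_p[\Gamma]$ is a discrete valuation ring (Proposition~\ref{prop:dvr}) its ideals are totally ordered, so the intersection equals $\bigl(\bigcap_{1\ne\gamma}I_\gamma\bigr)H_1=I_eH_1$ by Definition~\ref{def:thresfold-ideal}. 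Thus equality in Part~1 holds exactly when $\ker\rho\subseteq\frakB_\gamma^-(H_1)\frakB_\gamma^+(H_1)$ for every $\gamma\in\Gamma$, which is the asserted criterion.

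For Part~2, assume $d_{G_1,c_1}(q)=d_{G_2,c_2}(q)$, so $\ker\rho\subseteq I_eH_1\subseteq I_\gamma H_1$ for all $\gamma$; then $\rho^\circ$ induces $\Gamma$-isomorphisms $H_1/I_\gamma H_1\xrightarrow{\sim}H_2/I_\gamma H_2$ for every $\gamma$, and (one checks $I_e$ lies in the image of the augmentation ideal, since it sits in $\frakm_e$ which is generated by $1-\sigma_0$ for a generator $\sigma_0$ of the cyclic quotient through which $\Gamma$ acts) also $(H_1)_\Gamma\xrightarrow{\sim}(H_2)_\Gamma$, hence $\rho$ induces an isomorphism $G_1^{\ab}\xrightarrow{\sim}G_2^{\ab}$. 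Unwinding \eqref{eq:defofb}: since $H_2(G_i,c_i)$ is finite abelian, $\nr_{q-1}(a)=\#\bigl(H_2(G_i,c_i)[q-1]\bigr)$ when $a\in(q-1)H_2(G_i,c_i)$ and $0$ otherwise, so $b(G_i,c_i,q,n)$ equals $\#\bigl(H_2(G_i,c_i)[q-1]\bigr)$ times the number of $\underline m\in\ker\bigl(\Z^{c_i/G_i}_{\equiv q,n,\ge0}\to G_i^{\ab}\bigr)$ with $W_{q^{-1}}(\underline m)\in(q-1)H_2(G_i,c_i)$. The isomorphisms above, with Corollary~\ref{cor:d=isom-module}, identify the combinatorial input on the two sides — the $q$-power-orbit set $(c_i/G_i)/\langle q\rangle$ and the relevant map to $G_i^{\ab}$ — so the lattice-point counts match once the reduced Schur multipliers are matched. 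The remaining step, where I expect essentially all of the difficulty to lie, is the claim that $\rho$ induces an isomorphism $H_2(G_1,c_1)\xrightarrow{\sim}H_2(G_2,c_2)$ carrying $W_{q^{-1}}$ to $W_{q^{-1}}$: the ordinary five-term exact sequence shows that $H_2(G_2,\Z)$ differs from the image of $H_2(G_1,\Z)$ by $(\ker\rho)_{G_1}$, but the hypothesis $\ker\rho\subseteq\frakB_\gamma^-(H_1)\frakB_\gamma^+(H_1)$ for all $\gamma$ is exactly what makes every element of $\ker\rho$ become, in the reduced Schur cover $S_{c_1}$, a product of commutators $[\widehat x,\widehat y]$ of lifts of commuting elements of $c_1$ and of relations forced by $\widehat x^{|\gamma|}=1$, so $\ker\rho$ contributes nothing to $H_2(G_1,c_1)$ and, dually, the extra part of $H_2(G_2,\Z)$ has already died in $H_2(G_2,c_2)$. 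Granting this, all ingredients of the two $b$-formulas agree and $b(G_1,c_1,q,n)=b(G_2,c_2,q,n)$. Concretely I would attack the Schur-multiplier step by decomposing $H_2(H_i\rtimes\Gamma,\Z)$ into pieces built from $H_2(\Gamma,\Z)$, $(H_i)_\Gamma$, and $(\wedge^2H_i)_\Gamma$, then determining which pieces survive the $c_i$-reduction and verifying that the survivors depend on $H_i$ only through the quotients $H_i/I_\gamma H_i$ and $(H_i)_\Gamma$ that we have already matched.
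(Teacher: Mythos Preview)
Your Part~\eqref{item:b-1} is correct and follows the paper's route through Corollary~\ref{cor:d=isom-module}, though the $q$-power orbit reduction is simpler than you make it: since every $g\in c_\gamma(G_i,\pi_i)$ has $|g|=|\gamma|$, the implication $\gamma^{q^n}=\gamma\Rightarrow g^{q^n}=g$ already shows that distinct elements of any fixed $c_\gamma$ lie in distinct $q$-power orbits, so $d_{G_i,c_i}(q)=\sum_{\gamma}d_\gamma(G_i,\pi_i)$ (sum over representatives of $\langle q\rangle$-orbits on $\Gamma$) with no need to invoke Lemma~\ref{lem:gamma-ann} or analyze a return operator.

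Part~\eqref{item:b-2} has a genuine gap. You stake the argument on an isomorphism $H_2(G_1,c_1)\xrightarrow{\sim}H_2(G_2,c_2)$ intertwining the two $W_{q^{-1}}$, but you do not prove it, and there is no reason to expect the $p$-primary parts of these reduced Schur multipliers to agree when $\ker\rho^\circ$ is merely contained in $I_eH_1$. Your proposed attack via a decomposition of $H_2(H_i\rtimes\Gamma,\Z)$ would at best show that certain prime-to-$p$ pieces depend only on the quotients $H_i/I_\gamma H_i$; it does not control the $p$-part.

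The paper's argument is structurally different and sidesteps any direct comparison of the two Schur multipliers. Using \cite[Lemma~12.10]{LWZB}, one builds for each $i$ a map $\kappa_i$ from $H_2(G_i,c_i)$ to the reduced Schur multiplier of the prime-to-$p$ quotient $\Gamma'=\Gamma/\Gamma_p$, and shows that $\ker\kappa_i$ is a $p$-group. The hypothesis $p\nmid q-1$ then makes multiplication by $q-1$ bijective on any $p$-group, whence $\nr_{q-1}(x)=\nr_{q-1}(\kappa_i(x))$: the function $\nr_{q-1}$ is blind to the $p$-part, and both sides of the $b$-formula are now computed in a \emph{common} target depending only on $\Gamma'$. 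One then checks that the bijection $\phi:c_1/G_1\to c_2/G_2$ intertwines the composites $\kappa_i\circ W^i_{q^{-1}}$ into that common target, and (via a separate use of $p\nmid q-1$) that the lattice kernels to $G_i^{\ab}$ already coincide with the kernels of the further map to $\Gamma$. The equality $b(G_1,c_1,q,n)=b(G_2,c_2,q,n)$ then falls out of~\eqref{eq:defofb}. The moral: do not try to match the Schur multipliers; use $p\nmid q-1$ to discard their $p$-parts and work over $\Gamma'$.
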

	
	\begin{proof}
		Suppose $g_1, g_2\in c_{\gamma}(G_i, \pi_i)$ such that $g_1^{q^n}=g_2$ for some integer $n$. Since $\gamma=\pi_i(g_1)=\pi_i(g_2)$, $g_1^{q^n}=g_2$ implies $\gamma^{q^n}=\gamma$. Then because $|g_1|=|\gamma|$, we have $|g_1|\mid q^n-1$, so $g_1=g_1^{q^n}=g_2$. Thus, we showed that elements in $c_{\gamma}(G_i, \pi_i)$ lie in pairwisely distinct $q$-th powering orbits in $c_i$, and hence
		\begin{equation}\label{eq:formula-d}
			d_{G_i,c_i}(q)= \sum_{\gamma} d_{\gamma}(G_i, \pi_i),
		\end{equation}
		where the sum runs over a set of representatives of the $q$-th powering orbits of $\Gamma$.
	
		By Corollary~\ref{cor:d=isom-module}, $d_{\gamma}(G_i, \pi_i)$ is the number of the $\Gamma$-orbits of $H_i/\frakB^-_{\gamma}(H_i)\frakB^+_{\gamma}(H_i)$; and by Lemma~\ref{lem:basic-AB}\eqref{item:basic-AB-3}, $\rho$ induces a $\Gamma$-equivariant surjection from $H_1/\frakB^-_{\gamma}(H_1)\frakB^+_{\gamma}(H_1)$ to $H_2/\frakB^-_{\gamma}(H_2)\frakB^+_{\gamma}(H_2)$. So we have $d_{\gamma}(G_1, \pi_1)\geq d_{\gamma}(G_2, \pi_2)$, and the equality holds if and only if 
		\begin{equation*}
			H_1/\frakB^-_{\gamma}(H_1)\frakB^+_{\gamma}(H_1) \simeq H_2/\frakB^-_{\gamma}(H_2) \frakB^+_{\gamma}(H_2),
		\end{equation*}
		i.e., if and only if $\ker \rho \subset \frakB^-_{\gamma}(H_1)\frakB^+_{\gamma}(H_1)$. Then the statement \eqref{item:b-1} follows by the formula \eqref{eq:formula-d}. 
		
		For the rest of the proof, we assume $d_{G_1,c_1}(q)=d_{G_2,c_2}(q)$. 
		
		\emph{Claim 1: $\rho$ induces an isomorphism $G_1^{\ab} \overset{\sim}{\longrightarrow}G_2^{\ab}$.} 
		
		The assumption that $c_1$ generates $G_1$ implies that $G_1^{\ab}$ is generated by the images of elements of $c_1$. Since $\Gamma$ is abelian, it is a quotient of $G_1^{\ab}$. 
		For any $\gamma \in \Gamma$,  elements of $c_{\gamma}(G_1, \pi)$ has order $|\gamma|$,
		so the exponent of $G_1^{\ab}$ equals the exponent of $\Gamma$. Let $\gamma$ be an element of $\Gamma$ such that $|\gamma|$ equals the exponent of $\Gamma$, and let $M$ denote $\ker (G_1^{\ab} \to \Gamma)$, which can be viewed as a $\Z_p[\Gamma]$-module with the trivial $\Gamma$-action. Then $\gamma$ acts trivially on $M$ and $M$ has exponent dividing $|\gamma|$, so one can check $\frakB^-_{\gamma}(M)=\frakB^+_{\gamma}(M)=0$. Then by the assumption that $d_{G_1,c_1}(q)=d_{G_2,c_2}(q)$ and applying Lemma~\ref{lem:basic-AB}\eqref{item:basic-AB-3} to the quotient map $H_1 \to M$, we have 
		\[
			\ker \rho \subset \frakB^-_{\gamma}(H_1)\frakB^+_{\gamma}(H_1) \subset [G_1, G_1],
		\] 
		so we proved Claim~1.

		\emph{Claim 2: $\ker(\Z^{c_i/G_i}_{\equiv q, n, \geq 0} \to G_i^{\ab})=\ker(\Z^{c_i/G_i}_{\equiv q, n, \geq 0} \to \Gamma)$ for each $i=1,2$, where the map $\Z^{c_i/G_i}_{\equiv q, n, \geq 0} \to \Gamma$ is the composition of $\Z^{c_i/G_i}_{\equiv q, n, \geq 0} \to G_i^{\ab}$ and $G_i^{\ab} \to \Gamma$. }

		Because $G_i$ is the semidirect product $H_i \rtimes \Gamma$, its abelianization $G_i^{\ab}$ is the direct product $(H_i)_{\Gamma} \times \Gamma$, where $(H_i)_{\Gamma}$ is the $\Gamma$-coinvariant of $H_i$.
		Let $g\in c_i$ and $m$ be the smallest positive integer such that $g^{q^m}$ is conjugate to $g$. Then $g, g^q, g^{q^2}, \ldots, g^{q^{m-1}}$ lie in distinct conjugacy classes and they are all the conjugacy classes in the $q$-th powering orbits of $g$, so their corresponding coordinates in an element of $\Z^{c_i/G_i}_{\equiv q, n, \geq 0}$ are equal to each other. We let $\underline{e_g}$ denote the element in $\Z^{c_i/G_i}$ such that the coordinates corresponding to $g, g^q, \ldots, g^{q^m-1}$ are 1 and the other coordinates are 0. So each element of $\Z^{c_i/G_i}_{\equiv q, n, \geq 0}$ can be written as $\sum_{g} a_g \underline{e_g}$, where $a_g \in \Z$ and the sum runs over a set of representatives of the $q$-th powering orbits of $c_i/G_i$. 
		
		Let $(x, \gamma)$ denote the image of $g$ in $G_i^{\ab}$, where $x\in (H_i)_{\Gamma}$ and $\gamma=\pi_i(g)\in \Gamma$. By definition of $c_i$, we have $|g|=|\gamma|$, so $|x|$ divides $|\gamma|$.
		Because $\Gamma$ is abelian, $\pi(g^{q^m}) \sim \pi(g)$ implies $\gamma^{q^m}=\gamma$, so $|\gamma|\mid q^m-1$. So the image of $\underline{e_g}$ in $G_i^{\ab}$ is 
		\begin{equation}\label{eq:g-power}
			(x, \gamma)^{1+q+\cdots q^{m-1}}=(x^{1+q+\cdots q^{m-1}}, \gamma^{1+q+\cdots q^{m-1}}).
		\end{equation}
		Since $(H_i)_{\Gamma}$ is an abelian $p$-group and $p\nmid q-1$, we have $m>1$, so it follows by $|x|\mid q^m-1$ that $1+q+\cdots +q^{m-1}=\frac{q^m-1}{q-1}$ is a multiple of $|x|$. Thus, the first coordinate in \eqref{eq:g-power} is zero, and this is true for any $g \in c_i$. So Claim 2 follows.
		
		Recall that we proved that $d_{G_1,c_1}(q)=d_{G_2,c_2}(q)$ implies $d_{\gamma}(G_1, \pi_1)=d_{\gamma}(G_2, \pi_2)$ for each $\gamma\in \Gamma$. So for each $\gamma$, the quotient map $G_1 \to G_2$ defines a bijection between the conjugacy classes of elements in $c_{\gamma}(G_1, \pi_1)$ and the conjugacy classes of elements in $c_{\gamma}(G_2, \pi_2)$, and then we have a bijection $\phi: \Z^{c_1/G_1}_{\equiv q, n, \geq 0} \to \Z^{c_2/G_2}_{\equiv q, n, \geq 0}$. One can check that $\phi$ is compatible with the maps $\Z^{c_i/G_i}_{\equiv q, n, \geq 0} \to \Gamma$, i.e., the following diagram commutes.
		\[\begin{tikzcd}
			\Z^{c_1/G_1}_{\equiv q, n, \geq 0} \arrow{rr} \arrow["\phi", swap]{d} && \Gamma \\
			\Z^{c_2/G_2}_{\equiv q, n, \geq 0} \arrow{rru} &&
		\end{tikzcd}\]
		Then by Claim 2, $\phi$ defines a bijection between 
		\begin{equation}\label{eq:phi}
		\begin{tikzcd}
			\phi: \ker(\Z^{c_1/G_1}_{\equiv q, n, \geq 0} \to G_1^{\ab}) \arrow["1-1"]{r} &\ker(\Z^{c_2/G_2}_{\equiv q, n, \geq 0} \to G_2^{\ab}).
		\end{tikzcd}
		\end{equation}
		
		Let $\Gamma'$ denote the quotient of $\Gamma$ modulo the Sylow $p$-subgroup of $\Gamma$, and $c_{\gamma'}$ denote the set of elements of $\Gamma'$.  By \cite[Lemma~12.10]{LWZB} \footnote{In the statement of \cite[Lemma~12.10]{LWZB}, $H$ is required to be an admissible $\Gamma$-group, but this condition can be removed because it is not used in the proof. Here, we apply this lemma to $G_i=H_i \rtimes \Gamma=(H_i \times \Gamma_p)\rtimes \Gamma'$, where $H_i\times \Gamma_p$ has order coprime to $|\Gamma'|$ but is not necessarily an admissible $\Gamma'$-group (for example, when $\Gamma_p$ is nontrivial, $H_i \times \Gamma_p$ is not admissible; see the definition of \emph{admissible $\Gamma$-groups} in \cite[\S2]{LWZB}).}, there are Schur coverings $S_i \to G_i$ for $i=1,2$ and $S_{\Gamma'}\to \Gamma'$ satisfying the following diagram for each $i$
		\begin{equation}\label{eq:com-schurcovering}
		\begin{tikzcd}
			S_i \arrow{r} \arrow["f_i"']{d} & G_i \arrow{d} \\
			S_{\Gamma'} \arrow{r} & \Gamma', 
		\end{tikzcd}
		\end{equation}
		and moreover, the order of the kernel of $f|_{\ker(S_i \to G_i)}$, a map from $\ker(S_i \to G_i)$ to $\ker(S_{\Gamma'} \to \Gamma')$, is a power of $p$. 
		Let $Q_i$ (resp. $Q_{\Gamma'}$) denote the subgroup of $S_i$ (resp. $S_{\Gamma'}$) generated by all commutators $[\hat{x}, \hat{y}]$, where $\hat{x}, \hat{y}$ are elements of $S_i$ (resp. $S_{\Gamma'}$) and their images in $G_i$  (resp. $\Gamma'$), denoted by $x$ and $y$ respectively, commutes and $x \in c_i$ (resp. $x\in c_{\Gamma'}$). (Note that the commutator $[\hat{x}, \hat{y}]$ does not depend on the choice of lifts $\hat{x}$ and $\hat{y}$ since the Schur coverings are central extensions.) Then $Q_i \subseteq \ker(S_i \to G_i)$ and $Q_{\Gamma'} \subseteq \ker(S_{\Gamma'} \to \Gamma')$, and one can check that the image of $Q_i$ under the map $f_i:S_i \to S_{\Gamma}'$ is contained in $Q_{\Gamma'}$. On the other hand, suppose $x \in c_{\Gamma'}$ and $y \in \Gamma'$; since $G_i=(H_i \times \Gamma_p) \rtimes \Gamma'$, the natural splitting $\Gamma' \hookrightarrow G_i$ maps $x,y$ to $\tilde{x}, \tilde{y} \in G_i$, so $\tilde{x} \in c_i$ and $\tilde{x}$ commutes with $\tilde{y}$. Then picking lifts $\hat{\tilde{x}},\hat{\tilde{y}} \in S_{i}$ of $\tilde{x}, \tilde{y}$ respectively, the image of $[\hat{\tilde{x}}, \hat{\tilde{y}}]$ in $S_{\Gamma'}$ is the element of $Q_i$ defined by $x,y$. So we see 
		\begin{equation}\label{eq:image-Q}
			f_i(Q_i)=Q_{\Gamma'}.
		\end{equation}

		Let $\overline{S_i}:=S_i/Q_i$ and $\overline{S_{\Gamma'}}:=S_{\Gamma'}/Q_{\Gamma'}$ be the reduced Schur covering of $G_i$ and $\Gamma'$. Then the diagram \eqref{eq:com-schurcovering} defines a map 
		\[
			\kappa_i: \ker(\overline{S_i} \to G_i) \to \ker(\overline{S_{\Gamma'}} \to \Gamma')
		\]
		for each $i=1,2$. By \eqref{eq:image-Q}, we have the following commutative diagram in which rows are exact.
		\[\begin{tikzcd}
			1 \arrow{r} & Q_i \arrow{r} \arrow["f|_{Q_i}", two heads]{d} &\ker(S_i \to G_i) \arrow{r} \arrow["f|_{\ker(S_i \to G_i)}"]{d} & \ker(\overline{S_i}\to G_i) \arrow{r} \arrow["\kappa_i"]{d} & 1 \\
			1 \arrow{r} & Q_{\Gamma'} \arrow{r} &\ker(S_{\Gamma'}\to \Gamma') \arrow{r} &\ker(\overline{S_{\Gamma'}}  \to \Gamma')\arrow{r} &1 .
		\end{tikzcd}\]
		By the snake lemma, $\ker \kappa_i$ is a quotient group of $\ker f |_{\ker(S_i \to G_i)}$, so the order of $\ker \kappa_i$ is a power of $p$. Since $p\nmid q-1$,we have 
		\begin{equation}\label{eq:nr}
			\nr_{q-1}(x)=\nr_{q-1}(\kappa_i(x)) \text{  for any $x\in \ker(\overline{S_i} \to G_i)$}. 
		\end{equation}
		We define $\overline{W}_{q^{-1}}^i: \Z^{c_i/G_i} \to \ker(\overline{S_{\Gamma'}} \to \Gamma')$ to be the composition of $W_{q^{-1}}^i$ and $\kappa_i$. Then, one can check that $\phi$ fits into the following commutative diagram
		\begin{equation}\label{eq:com-Z}
		\begin{tikzcd}
			\Z_{\equiv q, n, \geq 0}^{c_1/G_1} \arrow["\overline{W}_{q^{-1}}^1"]{rr} \arrow["\phi", swap]{d} & &\ker(\overline{S_{\Gamma'}} \to \Gamma') \\
			\Z_{\equiv q, n, \geq 0}^{c_2/G_2} \arrow["\overline{W}_{q^{-1}}^2", swap]{rru} &&
		\end{tikzcd}
		\end{equation}
		and therefore
		 \begin{eqnarray*}
		 	b(G_1, c_1, q, n) &=& \sum_{\underline{m} \in \ker(\Z_{\equiv q, n, \geq 0}^{c_1/G_1} \to G_1^{\ab})} \nr_{q-1} (W^1_{q^{-1}}(\underline{m}))\\
			&=& \sum_{\underline{m} \in \ker(\Z_{\equiv q, n, \geq 0}^{c_1/G_1} \to G_1^{\ab})} \nr_{q-1} (\overline{W}^1_{q^{-1}}(\underline{m})) \\
			&=& \sum_{\underline{m} \in \ker(\Z_{\equiv q, n, \geq 0}^{c_1/G_1} \to G_1^{\ab})} \nr_{q-1} (\overline{W}^2_{q^{-1}}(\phi(\underline{m}))) \\
			&=& \sum_{\underline{m} \in \ker(\Z_{\equiv q, n, \geq 0}^{c_2/G_2} \to G_2^{\ab})} \nr_{q-1} (\overline{W}^2_{q^{-1}}(\underline{m})) \\
			&=& b(G_2, c_2, q, n).
		 \end{eqnarray*}
		 Here the first equality follows by the definition of $b(G_1, c_1, q, n)$, the second equality uses \eqref{eq:nr}, the third uses the commutative diagram \eqref{eq:com-Z}, the fourth uses \eqref{eq:phi}, and the last equality follows by definition and \eqref{eq:nr}.
	\end{proof}

	\begin{proposition}\label{prop:final-prop}
		Let $\Gamma$ be a finite abelian group and $e$ a nontrivial primitive idempotent of $\Q_p[\Gamma]$. Let $H_1$ and $H_2$ be finite $e\Z_p[\Gamma]$-modules with a surjective homomorphism $\rho^{\circ}: H_1 \to H_2$. Let $I_e$ be the ideal of $e\Z_p[\Gamma]$ defined in Definition~\ref{def:thresfold-ideal}. Then
		\begin{equation}\label{eq:final-prop}
			\lim_{N \to \infty}  \lim_{\substack{q \to \infty \\ p\nmid q(q-1)\\ \gcd(q,|\Gamma|)=1}} \frac{\sum\limits_{0\leq n \leq N} \sum\limits_{K\in \calA^+_{\Gamma}(q^n, \F_q(t))} \#\Sur_{\Gamma} (\Cl(K), H_1) }{\sum\limits_{0\leq n \leq N} \sum\limits_{K\in \calA^+_{\Gamma}(q^n, \F_q(t))} \#\Sur_{\Gamma} (\Cl(K), H_2)}=\begin{cases}
				\dfrac{|H_2|}{|H_1|} & \text{ if }\ker \rho^{\circ} \subseteq I_eH_1 \\
				\infty &\text{otherwise.}
			\end{cases}
		\end{equation}
	\end{proposition}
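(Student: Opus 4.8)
The plan is to pass from class groups to $\F_q$-points of Hurwitz spaces via Lemma~\ref{lem:Cl-Hur}, isolate the dominant contribution (a single split extension), and then feed the braid-group comparison of Lemma~\ref{lem:b} together with a cohomological bookkeeping of automorphism groups into the Grothendieck--Lefschetz count. First I would apply Lemma~\ref{lem:Cl-Hur} to write, for fixed $n$ and $j=1,2$,
\[
	\sum_{K\in\calA^+_{\Gamma}(q^n,\F_q(t))}\#\Sur_{\Gamma}(\Cl(K),H_j)=\sum_{(G,\iota,\pi)\in\Ext_{\Gamma}(H_j)}\frac{\#\Hur_{G,c_{\pi}}^n(\F_q)}{\#\Aut(G,\iota,\pi)},
\]
a finite sum in which only the $(G,\iota,\pi)$ with $c_{\pi}$ generating $G$ contribute (a connected $G$-cover has surjective monodromy). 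For these, the trace formula gives $\#\Hur_{G,c_{\pi}}^n(\F_q)=\pi_{G,c_{\pi}}(q,n)q^n+O_{G,n}(q^{n-1/2})$ and, by \eqref{eq:pi-b}, $\pi_{G,c_{\pi}}(q,n)=b(G,c_{\pi},q,n)+O_G(n^{d_{G,c_{\pi}}(q)-2})$ with $b(G,c_{\pi},q,n)$ a quasipolynomial in $n$ of degree $d_{G,c_{\pi}}(q)-1$. Writing $(G_j^0,\iota_j^0,\pi_j^0)\in\Ext_{\Gamma}(H_j)$ for the (unique) split extension $H_j\rtimes\Gamma$ and $c_j:=c_{\pi_j^0}$, I would first check that $c_j$ generates $G_j^0$: for $e$ nontrivial, picking $\gamma$ mapping to a generator of the cyclic quotient $C$ through which the $\Gamma$-action on $e\Z_p[\Gamma]$ factors, Lemma~\ref{lem:gamma-ann} shows $\sum_{i=1}^{|\gamma|}\gamma^i$ kills $e\Z_p[\Gamma]$, hence $\frakA_{\gamma}^-(H_j)=H_j$, so by Lemma~\ref{lem:AB-c} every $(h,\gamma)$ lies in $c_j$, while the $(0,\gamma')$ already give $\Gamma$. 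Using the elementary identity $d_{G,c_{\pi}}(q)=\sum_{[\gamma]}d_{\gamma}(G,\pi)$ (sum over $q$-power orbits of $\Gamma$) together with Corollary~\ref{cor:d=isom-module} and Proposition~\ref{prop:d-semi}, one gets $d_{G,c_{\pi}}(q)\le d_{G_j^0,c_j}(q)$ for every $(G,\iota,\pi)\in\Ext_{\Gamma}(H_j)$, with equality forcing $G\cong H_j\rtimes\Gamma$, i.e.\ attained only by the split extension. Hence, as $n\to\infty$, the split term strictly dominates each $H_j$-sum in the degree filtration.

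Next I would apply Lemma~\ref{lem:b} to the surjection $\rho\colon G_1^0\to G_2^0$ induced by $\rho^{\circ}$. Since $e\Z_p[\Gamma]$ is a discrete valuation ring, its ideals are totally ordered, so $\bigcap_{\gamma\neq 1}\rho_{\Z_p[\Gamma],e}\big((1-\gamma,\sum_{i=1}^{|\gamma|}\gamma^i)\big)=I_e$ is realised by a single $\gamma_0$ and $\bigcap_{\gamma\neq 1}\frakB_{\gamma}^-(H_1)\frakB_{\gamma}^+(H_1)=I_eH_1$; thus the hypothesis ``$\ker\rho\subseteq\frakB_{\gamma}^-(H_1)\frakB_{\gamma}^+(H_1)$ for all $\gamma$'' in Lemma~\ref{lem:b} is exactly $\ker\rho^{\circ}\subseteq I_eH_1$. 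Therefore: if $\ker\rho^{\circ}\subseteq I_eH_1$ then $d_{G_1^0,c_1}(q)=d_{G_2^0,c_2}(q)=:d$ and $b(G_1^0,c_1,q,n)=b(G_2^0,c_2,q,n)$ for all $n$; and if $\ker\rho^{\circ}\not\subseteq I_eH_1$ then $d_{G_1^0,c_1}(q)>d_{G_2^0,c_2}(q)$ for every admissible $q$.

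To control the automorphism factors I would prove the following cohomological lemma: for $e$ nontrivial with $\ker\rho^{\circ}\subseteq I_eH_1$,
\[
	\frac{\#\Aut(G_1^0,\iota_1^0,\pi_1^0)}{\#\Aut(G_2^0,\iota_2^0,\pi_2^0)}=\frac{|Z^1(\Gamma,H_1)|}{|Z^1(\Gamma,H_2)|}=\frac{|H_1|}{|H_2|}.
\]
Here $\#\Aut$ of an extension of $\Gamma$ by an abelian module $H$ equals $|Z^1(\Gamma,H)|=|H|\cdot|H^1(\Gamma,H)|/|H^0(\Gamma,H)|$, so it suffices to show that the ``defect'' $\delta(H):=|H^1(\Gamma,H)|/|H^0(\Gamma,H)|$ agrees for $H_1$ and $H_2$. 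Since $H^\ast(\Gamma,-)$ commutes with direct sums, $\delta$ is multiplicative over $\oplus$, so $\delta(H)=\prod_i\delta(e\Z_p[\Gamma]/\frakm_e^{n_i})$. I would then show $\delta(e\Z_p[\Gamma]/\frakm_e^{n})=\delta_e$ is independent of $n\ge 1$: if the prime-to-$p$ part of $C$ is nontrivial, faithfulness makes $(e\Z_p[\Gamma]/\frakm_e^{n})^{\Gamma'}=0$, whence all $H^i(\Gamma,-)$ vanish and $\delta_e=1$; otherwise $C$ is a cyclic $p$-group, $\Gamma'$ acts trivially, and one computes $\delta_e=|A|^{d(\Gamma_p)-1}$ by the Lyndon--Hochschild--Serre spectral sequence for $\ker(\Gamma_p\twoheadrightarrow C)\trianglelefteq\Gamma_p$, using that $\sum_{g\in\Gamma}g$ annihilates every $e\Z_p[\Gamma]$-module (a consequence of $e\neq e_0$). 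Consequently $\delta(H)=\delta_e^{\rk_A H}$; and when $\ker\rho^{\circ}\subseteq I_eH_1$ one has $I_e\subseteq\frakm_e$ whenever $\delta_e\neq 1$ (Lemma~\ref{lem:Ie-proper}), so $\ker\rho^{\circ}\subseteq\frakm_e H_1$, hence $\rk_A H_1=\rk_A H_2$ and $\delta(H_1)=\delta(H_2)$.

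Finally I would assemble the double limit. For fixed $N$, the quantities $d_{G,c}(q)$, $b(G,c,q,n)$, $\pi_{G,c}(q,n)$ are eventually functions of $q$ through a fixed modulus, so $\limsup$ and $\liminf$ as $q\to\infty$ are each computed over finitely many residue classes; dividing numerator and denominator by $q^N$, only the $n=N$ terms survive, and by the previous paragraphs each equals (up to $o(N^{d-1})$) $b(G_j^0,c_j,q,N)q^N/|Z^1(\Gamma,H_j)|$. In the case $\ker\rho^{\circ}\subseteq I_eH_1$, the equal $b$'s (Lemma~\ref{lem:b}) and the automorphism identity give, for each residue class, a limit $|H_2|/|H_1|$ as $N\to\infty$, uniformly over the finitely many classes, whence $\lim_N\lim_q=|H_2|/|H_1|$. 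In the case $\ker\rho^{\circ}\not\subseteq I_eH_1$, the strict inequality $d_{G_1^0,c_1}(q)>d_{G_2^0,c_2}(q)$ together with positivity of the leading term of $b(G_1^0,c_1,q,N)$ (for the split extension, following \cite{LWZB}) makes the ratio grow like a positive power of $N$ in every residue class, so $\lim_N\lim_q=\infty$. The hardest parts, which I would treat most carefully, are making the interchange of the $N$- and $q$-limits rigorous with the uniformity of all the error terms, the positivity/degree statement for $b$ on split extensions, and the spectral-sequence computation of $\delta_e$.
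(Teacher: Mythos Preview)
Your outline is correct and follows the paper's proof closely: translate to Hurwitz spaces via Lemma~\ref{lem:Cl-Hur}, use Proposition~\ref{prop:d-semi} to single out the split extension as dominant, and invoke Lemma~\ref{lem:b} to compare the two split extensions. The one substantive difference is how you compute the ratio $\#\Aut(G_1^0,\iota_1^0,\pi_1^0)/\#\Aut(G_2^0,\iota_2^0,\pi_2^0)$. The paper picks generators $\gamma_1,\dots,\gamma_d$ of $\Gamma$ with $\gamma_1$ mapping to a generator of $C$ and the others acting trivially, counts splittings directly to get $\#\Aut(G_i^0,\iota_i^0,\pi_i^0)=|\frakA^-_{\gamma_1}(H_i)|\prod_{j\ge 2}|\frakA^+_{\gamma_1}(H_i)[|\gamma_j|]|$, and then observes that $\frakA^-_{\gamma_1}(H_i)=H_i$ while $\frakA^+_{\gamma_1}(H_i)=\frakA^0_{\gamma_1}(H_i)\simeq H_i/\frakB^-_{\gamma_1}(H_i)$ by Lemma~\ref{lem:isom-module}; since $\ker\rho^\circ\subseteq I_eH_1\subseteq\frakB^-_{\gamma_1}(H_1)$, this last quotient is the same for $i=1,2$. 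Your route via $|Z^1(\Gamma,H)|=|H|\cdot\delta(H)$ and a spectral-sequence computation of $\delta_e$ reaches the same conclusion and is correct (indeed, unwinding $|Z^1|$ with the same generating set recovers the paper's formula verbatim), but the paper's argument is shorter and avoids the cohomological case analysis. One small imprecision: ``dividing by $q^N$, only the $n=N$ terms survive'' should read $n=N_i$, the largest $n\le N$ with $\pi_{G_i^0,c_i}(q,n)>0$; the paper notes that $b(G_1^0,c_1,q,n)=b(G_2^0,c_2,q,n)$ forces $N_1=N_2$, which is what makes the comparison go through.
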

	
	\begin{proof}
		For each $i=1,2$, let $G_i$ denote $H_i \rtimes \Gamma$, $\pi_i$ the natural surjection $G_i \to \Gamma$, $\iota_i$ the natural embedding $H_i \hookrightarrow G_i$, $c_i$ denote the elements of $G_i$ that have the same order as their image under $\pi_i$.
		Since the idempotent $e$ is assumed to be nontrivial, there exists $\gamma \in \Gamma$ such that $\gamma$ acts nontrivially on $e \Z_p[\Gamma]$, so $\frakB^-_{\gamma}(e\Z_p[\Gamma]) \neq 0$. Then by Lemma~\ref{lem:gamma-ann}, $\frakB^+_{\gamma}(e\Z_p[\Gamma])=0$, so $\frakB^+_{\gamma}(H_i)=0$ and $\frakA^-_{\gamma}(H_i)=H_i$ for each $i =1 ,2$. Note that for each $h \in \frakA_{\gamma}^-(H_i)$, the element $(h, \gamma) \in c_i$, so it follows by $(h,1)=(h, \gamma) (1, \gamma^{-1})$ that the subgroup of $G_i$ generated by elements in $c_i$ contains $H_i$. Also, choosing a splitting $\Gamma \hookrightarrow H_i \rtimes \Gamma$, all the elements of $\Gamma$ is contained in $c_i$. So $c_i$ generates the group $G_i$, and hence $\Hur_{G_i, c_i}^n$ is not empty.
		
		We let $\pi_{G,c}(q, n)$ denote the number of $\Frob_{(\Hur_{G,c}^n)_{\F_q}}$-fixed components of $(\Hur_{G,c}^n)_{\F_q}$. By \cite[Corollary~12.9]{LWZB}, for each $i=1,2$ and $(G, \iota, \pi) \in \Ext_{\Gamma}(H_i)$, $\pi_{G,c_{\pi}}(q,n)$ is either 0 or $O_G(n^{d_{G,c_{\pi}}(q)-1})$.
		Note that 
		\[
			d_{G, c_{\pi}}(q) = \sum_{\gamma} d_{\gamma}(G, \pi)
		\]
		where the sum runs over a set of representatives of the $q$-th powering orbits of $\Gamma$. So for each $i=1,2$, by Lemma~\ref{prop:d-semi}, if there exists some $(G, \iota, \pi)\in \Ext_{\Gamma}(H_i)$ such that $\pi_{G, c_{\pi}}(q,n)>0$, then $\pi_{G_i, c_i}(q,n)>0$ and 
		\[
			\pi_{G_i, c_i}(q,n) = O_G(n^{d_{G_i, c_i}(q)-1}), \quad \text{and}\quad \pi_{G, c_{\pi}}(q, n) = O_G(n^{d_{G_i, c_i}(q)-2}) \text{ if $(G, \iota, \pi)\neq (G_i, \iota_i, \pi_i)$}.
		\]

		Let $N_i$ denote the largest integer such that $N_i\leq N$ and $\pi_{G_i, c_i}(q, N_i)>0$. Then by the arguments (about how to apply the trace formula and the Weil bounds) in the proof of \cite[Theorem~1.4]{LWZB} and by Lemma~\ref{lem:Cl-Hur}, we have
		\begin{eqnarray}
			&& \sum_{0 \leq n \leq N}\sum_{K \in \calA^+_{\Gamma}(q^n, \F_q(t))} \Sur_{\Gamma}(\Cl(K), H_i) \nonumber \\
			&=& \sum_{0 \leq n \leq N}\sum_{(G, \iota, \pi)\in \Ext_{\Gamma}(H_i)} \frac{\#\Hur_{G, c_{\pi}}^n(\F_q)}{\#\Aut(G, \iota, \pi)}  \nonumber\\
			&=& \frac{\pi_{G_i, c_i}(q, N_i) q^{N_i}}{\#\Aut(G_i, \iota_i, \pi_i)} + \sum_{\substack{(G, \iota, \pi)\in \Ext_{\Gamma}(H_i) \\ (G, \iota, \pi)\neq (G_i, \iota_i, \pi_i)}} \sum_{0 \leq n \leq N}\frac{\#\Hur_{G, c_{\pi}}^n(\F_q)}{\#\Aut(G, \iota, \pi)}  \nonumber\\
			&=& \frac{\pi_{G_i, c_i}(q, N_i) q^{N_i}}{\#\Aut(G_i, \iota_i, \pi_i)} + \sum_{\substack{(G, \iota, \pi)\in \Ext_{\Gamma}(H_i) \\ (G, \iota, \pi)\neq (G_i, \iota_i, \pi_i)}}  E_i(N, q, G, c_{\pi}) q^{N_i-\frac{1}{2}}
			\label{eq:main-1},
		\end{eqnarray}
		where $E_i(N, q, G, c_{\pi}) = O_{N, G}(1)$, and the sum is taken over a finite set since $\Ext_{\Gamma}(H_i)$ is finite.
		By \cite[Corollary~12.9]{LWZB}, there exists some positive integer $r_i$ such that 
		\[
			\pi_{G_i,c_i}(q,N_i)=r_i N_i^{d_{G_i, c_i}(q)-1} +O_{G_i}(N_i^{d_{G_i, c_i}(q)-2}).
		\]
		So by Lemma~\ref{lem:b}\eqref{item:b-1}, if $\ker \rho^{\circ} \not\subset I_eH_1=\frakB_{\gamma}^-(H_1)\frakB_{\gamma}^+(H_1)$, then $d_{G_1, c_1}(q)>d_{G_2, c_2}(q)$, and hence, in this case, the proposition follows by \cite[Corollary~12.9]{LWZB}.

		For the rest of the proof, assume $\ker \rho^{\circ} \subseteq I_eH_1$, then $d_{G_1, c_1}(q)=d_{G_2, c_2}(q)$, then $b(G_1, c_1, q, n)=b(G_2, c_2, q, n)$ for any $n$ by Lemma~\ref{lem:b}\eqref{item:b-2}. So it follows by \cite[Proposition~12.7]{LWZB} and the formula~\eqref{eq:main-1} that $N_1=N_2$ and the left-hand side of \eqref{eq:final-prop} equals
		\begin{equation}\label{eq:main-aut}
			\frac{\#\Aut(G_2, \iota_2, \pi_2)}{\#\Aut(G_1, \iota_1, \pi_1)}.
		\end{equation}	
		It suffices to show that $\#\Aut(G_1, \iota_1, \pi_1)/\#\Aut(G_2, \iota_2, \pi_2)=|H_1|/|H_2|$. By Lemma~\ref{lem:max-ideal}, the $\Gamma$-action on $e\Z_p[\Gamma]$ factors through a cyclic quotient, so we can choose a set of generators $\{\gamma_1, \cdots, \gamma_d\}$ of the abelian group $\Gamma$ such that $\Gamma=\prod_{i=1}^d \langle \gamma_i \rangle$ and $\gamma_j$ acts trivially on $e\Z_p[\Gamma]$ for each $j\geq 2$.
		
		We claim that 
		\begin{equation}\label{eq:Aut}
			\#\Aut(G_i, \iota_i, \pi_i)=\# \frakA^-_{\gamma_1}(H_i)\prod_{j=2}^d \#\frakA^+_{\gamma_1}(H_i)[|\gamma_j|],
		\end{equation}
		where $\frakA^+_{\gamma_1}(H_i)[|\gamma_j|]$ denotes the $|\gamma_j|$-torsion elements of $\frakA^+_{\gamma_1}(H_i)$. We will prove the formula~\eqref{eq:Aut} for $i=1$, and then the case when $i=2$ similarly follows.
		Since $G_1=H_1 \rtimes \Gamma$, one can check that $\#\Aut(G_1, \iota_1, \pi_1)$ equals the number of homomorphic splitting $\Gamma \hookrightarrow G_1$ of $\pi_1$. Since $\Gamma$ is abelian, $\{\gamma_j \mapsto g_j\}_{j=1}^d$ defines a homomorphic splitting if and only if $g_j \in \pi_1^{-1}(\gamma_j)$ such that $|g_j|=|\gamma_j|$ and $g_j g_k=g_k g_i$ for any $1\leq j, k \leq d$. By the multiplication rule of semidirect products, $g_1\in \pi_1^{-1}(\gamma_1)$ satisfies $|g_1|=|\gamma_1|$ if and only if $g_1$ is written as $(h_1, \gamma_1)\in H_1 \rtimes \Gamma$ such that $h_1\in \frakA^-_{\gamma_1}(H_1)$. For any $j\geq 2$, since $\gamma_j$ acts trivially on $H_1$, $g_j\in \pi_1^{-1}(\gamma_j)$ satisfies $|g_j|=|\gamma_j|$ if and only if $g_j=(h_j , \gamma_j)$ for $h_j \in H_1[|\gamma_j|]$. Moreover, we compute for any $j, k\geq 2$ and any $a, b \in H_1$
		\begin{eqnarray*}
			(a, \gamma_1) (b, \gamma_j) &=& (a\gamma_1(b), \gamma_1 \gamma_j) \\
			(b, \gamma_j) (a, \gamma_1) &=& (ab, \gamma_1\gamma_j) \\
			(a, \gamma_j) (b, \gamma_k) &=& (ab, \gamma_j\gamma_k),
		\end{eqnarray*}
		from which we conclude that $\{\gamma_j \mapsto g_j\}_{j=1}^d$ defines a homomorphic splitting if and only if $g_j=(h_j, \gamma_j)$ such that 
		\[
			h_1 \in \frakA^-_{\gamma_1}(H_1)\quad \text{ and }\quad h_j \in \frakA^+_{\gamma_1}(H_1)[|\gamma_j|], \, \forall j>1.
		\]
		Thus, the formula~\eqref{eq:Aut} immediately follows.
		
		Since the idempotent $e$ is assumed to be nontrivial, the $\Gamma$ acts nontrivially on $e\Z_p[\Gamma]$, so $\gamma_1$ acts nontrivially on $e\Z_p[\Gamma]$ and hence $\frakB_{\gamma_1}^-(e\Z_p[\Gamma])\neq 0$. Then, by Lemma~\ref{lem:gamma-ann}, $\frakB^+_{\gamma_1}(e\Z_p[\Gamma])=0$. So, every element in $H_1$ and $H_2$ are annihilated by $\sum_{m=1}^{|\gamma_1|} \gamma_1^m$, so $\frakA^-_{\gamma_1}(H_i)=H_i$ and $\frakA^{+}_{\gamma_1}(H_i)=\frakA^0_{\gamma_1}(H_i)$ for each $i=1,2$. The assumption that $\ker\rho^{\circ} \subset I_eH_1$ implies that $\ker \rho^{\circ} \subset \frakB_{\gamma_1}^-(H_1)$. Then by Lemma~\ref{lem:basic-AB}\eqref{item:basic-AB-3}
			and Lemma~\ref{lem:isom-module}, we have 
			\[
				\frakA^0_{\gamma_1}(H_1)\simeq H_1/\frakB_{\gamma_1}^-(H_1) \simeq H_2/\frakB_{\gamma_1}^-(H_2) \simeq \frakA^0_{\gamma_1}(H_2).
			\]
			So it follows by \eqref{eq:Aut} that the formula in \eqref{eq:main-aut} is equal to $|H_2|/|H_1|$.
	\end{proof}

	Now we have all the ingredients to prove Theorem~\ref{thm:main-B}\eqref{item:main-B-2}.
	
	\begin{proof}[Proof of Theorem~\ref{thm:main-B}\eqref{item:main-B-2}]
		Let $H_1$ denote the $I_e$-closure of $M$ and $H_2:=(e\Z_p[\Gamma]/I_e)^r$. Then $I_eH_1=M$, $H_1[I_e]=(H_1)_{/I_e}\simeq H_2$ and there is a natural surjection $\rho^{\circ}: H_1 \to H_2$ whose kernel is $M=I_eH_1$. 
		By $e\Z_p[\Gamma]$ is a discrete valuation ring, there exists $\gamma\in \Gamma$ such that $I_e=\rho_{\Z_p[\Gamma],e}((1-\gamma, \sum_{i=1}^{|\gamma|} \gamma^i ))$. 
		Then for this $\gamma$,  $\frakA^0_{\gamma}(H_2)=H_2$ because $I_eH_2=0$. For any proper submodule $H^\circ$ of $H_2$, Lemma~\ref{lem:basic-AB}\eqref{item:basic-AB-2} shows that $\frakA^0_{\gamma}(H^{\circ})=H^{\circ}$, so the number of $\Gamma$-orbits of $\frakA^0_{\gamma}(H^{\circ})$ is strictly less that the number of $\Gamma$-orbits of $\frakA^0_{\gamma}(H_2)$. 
		Writing $\pi^{\circ}: H^{\circ} \rtimes \Gamma \to \Gamma$ and $\pi_2:H_2 \rtimes \Gamma \to \Gamma$, by Corollary~\ref{cor:d=isom-module} and \eqref{eq:formula-d}, we have $d_{H_2 \rtimes \Gamma, \pi_2}(q)>d_{H^{\circ} \rtimes \Gamma, \pi^{\circ}}(q)$ for any $q$.
		Then we apply the same argument as in the proof of Proposition~\ref{prop:final-prop} and obtain
		\[
			\lim_{N \to \infty} \lim_{\substack{q \to \infty \\ p\nmid q(q-1)\\ \gcd(q,|\Gamma|)=1}} \frac{\sum\limits_{0\leq n \leq N} \sum\limits_{K\in \calA^+_{\Gamma}(q^n, \F_q(t))} \#\Sur_{\Gamma} (\Cl(K), H^{\circ}) }{\sum\limits_{0\leq n \leq N} \sum\limits_{K\in \calA^+_{\Gamma}(q^n, \F_q(t))} \#\Sur_{\Gamma} (\Cl(K), H_2)}=0.
		\]
		Then by an inclusion-exclusion argument, we have 
		\[
			\lim_{N \to \infty} \lim_{\substack{q \to \infty \\ p\nmid q(q-1)\\ \gcd(q,|\Gamma|)=1}} \frac{\sum\limits_{0\leq n \leq N} \sum\limits_{K\in \calA^+_{\Gamma}(q^n, \F_q(t))} \#\Hom_{\Gamma} (\Cl(K), H_2) }{\sum\limits_{0\leq n \leq N} \sum\limits_{K\in \calA^+_{\Gamma}(q^n, \F_q(t))} \#\Sur_{\Gamma} (\Cl(K), H_2)}=1.
		\]
		Thus, the theorem follows by Proposition~\ref{prop:final-prop} and Proposition~\ref{prop:weight-formula}.
	\end{proof}

	Theorem~\ref{thm:main-B}\eqref{item:main-B-2} is only for nontrivial primitive idempotents. For the trivial primitive idempotent $e_0=(\sum_{\gamma \in \Gamma} \gamma)/|\Gamma|$, we prove the following proposition.

	\begin{proposition}\label{prop:trivial-idem}
		 Let $\Gamma$ be a finite abelian group, and $K$ is a $\Gamma$-extension of $\Q$ or a $\Gamma$-extension of $\F_q(t)$ that is completely split at $\infty$. Assume $e$ is the trivial primitive idempotent, that is $e=(\sum_{\gamma \in \Gamma} \gamma)/|\Gamma|$. Let $p^n$ denote the exponent of $\Gamma_p$. Then 
		 \[
		 	I_e(e\Cl(K)) = p^n \Cl(K)_{\Gamma} \quad \text{and} \quad |I_e(e\Cl(K))| \leq |\wedge^2 \Gamma_p|.
		\]
	 \end{proposition}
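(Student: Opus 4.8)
First I would compute $e_0\Z_p[\Gamma]$ and $I_{e_0}$ explicitly. By Definition~\ref{def:eM}, $e_0x=\bigl(\sum_{\gamma}a_\gamma\bigr)e_0$ for $x=\sum a_\gamma\gamma$, so $e_0\Z_p[\Gamma]=\Z_p e_0\cong\Z_p$ with trivial $\Gamma$-action, and under this identification the map $\rho_{\Z_p[\Gamma],e_0}$ of Notation~\ref{not:idem} is just the augmentation $\Z_p[\Gamma]\to\Z_p$ (consistent with Lemma~\ref{lem:prop_eM}\eqref{item:prop_eM-2}). Hence $e_0\Cl(K)=\Z_p\otimes_{\Z_p[\Gamma]}\Cl(K)(p)=\Cl(K)(p)_\Gamma$, which I will write as $\bar C$. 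Since $\rho_{\Z_p[\Gamma],e_0}$ sends $1-\gamma\mapsto 0$ and $\sum_{j=1}^{|\gamma|}\gamma^j\mapsto|\gamma|$, Definition~\ref{def:thresfold-ideal} gives $I_{e_0}=\bigcap_{1\ne\gamma\in\Gamma}|\gamma|\Z_p=p^{M}\Z_p$ with $M=\max_{\gamma\ne 1}v_p(|\gamma|)=v_p(\exp\Gamma)=v_p(\exp\Gamma_p)=n$. Therefore $I_{e_0}(e_0\Cl(K))=p^n\bar C=p^n\Cl(K)(p)_\Gamma$, which is the first assertion (reading $\Cl(K)_\Gamma$ $p$-adically, so that its $p$-part is $\bar C$). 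The remaining task is the bound $|p^n\bar C|\le|\wedge^2\Gamma_p|$.

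\textbf{Step 2: a class-field-theoretic model for $\bar C$.}
Let $E/K$ be the maximal abelian $p$-extension unramified everywhere and split above $\infty$, so $\Gal(E/K)=\Cl(K)(p)$. The submodule $I_\Gamma\Cl(K)(p)$ (where $I_\Gamma$ is the augmentation ideal) is $\Gamma$-stable, hence normal in $\Gal(E/Q)$; put $M:=E^{I_\Gamma\Cl(K)(p)}$. Then $M/Q$ is Galois, $M/K$ is unramified everywhere and split above $\infty$, and $G:=\Gal(M/Q)$ sits in a \emph{central} extension $1\to\bar C\to G\xrightarrow{\pi}\Gamma\to 1$ (centrality is exactly the vanishing of the $\Gamma$-action on the coinvariants). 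I will then show $G$ is generated by its inertia subgroups: since $M/K$ is unramified, $M/Q$ ramifies only at the finite places $v$ ramified in $K/Q$, and at each such $v$ the inertia subgroup $\tilde T_v\subset G$ maps isomorphically onto the inertia subgroup $T_v\subset\Gamma$. Letting $N\trianglelefteq G$ be generated by all inertia subgroups, centrality of $\bar C$ gives $[G,G]=[N\bar C,N\bar C]=[N,N]\subseteq N$, so $\Gal(M^N/Q)=G/N$ is abelian, unramified everywhere, and split above $\infty$; as $\Cl(Q)=0$ (for $Q=\Q$ by Minkowski; for $Q=\F_q(t)$ because the only everywhere-unramified abelian extensions are the constant ones, which are \emph{inert}, not split, at $\infty$), we get $M^N=Q$, i.e. $G=N$.

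\textbf{Step 3: the Schur-multiplier estimate.}
Now $G$ is generated by elements $x$ (generators of the $\tilde T_v\cong T_v\subseteq\Gamma$) with $\operatorname{ord}(x)=\operatorname{ord}(\pi(x))\mid\exp\Gamma$, so the Sylow $p$-subgroup $G^{\mathrm{ab}}(p)$ of $G^{\mathrm{ab}}$ is generated by $p$-power-order elements of order dividing $p^n$; hence $p^nG^{\mathrm{ab}}(p)=0$. Since $G^{\mathrm{ab}}(p)$ is an extension of $\Gamma_p$ by $\bar C/(\bar C\cap[G,G])$, this forces $p^n\bar C\subseteq\bar C\cap[G,G]\subseteq[G,G]$. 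Finally, $\bar C$ being central and $\Gamma$ abelian, the commutator map $G\times G\to\bar C$ descends to an alternating bilinear map, i.e. a homomorphism $\wedge^2\Gamma\to\bar C$ with image $[G,G]$; as $\bar C$ is a $p$-group and $\Gamma=\Gamma_p\times\Gamma'$ with $\gcd(|\Gamma'|,p)=1$, this factors through $(\wedge^2\Gamma)\otimes\Z_p\cong\wedge^2\Gamma_p$. Thus $|p^n\bar C|\le|[G,G]|\le|\wedge^2\Gamma_p|$, which together with Step 1 completes the proof.

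\textbf{Expected main obstacle.}
The genuine content is Step 2, and the delicate point there is the class-field-theoretic bookkeeping in the function field case: one must use the ``split at $\infty$'' hypothesis to rule out nontrivial constant-field extensions when showing $M^N=Q$, and one must be careful that the maximal extension of $K$ with a central $p$-kernel over $Q$ is correctly captured by $\Cl(K)(p)_\Gamma$ and that $M/Q$ is Galois with inertia transferring isomorphically from $K/Q$. By contrast Steps 1 and 3 are short once the model in Step 2 is in place.
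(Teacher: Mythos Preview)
Your argument is correct and matches the paper's proof essentially line for line: both identify $e_0\Cl(K)=\Cl(K)(p)_\Gamma$ and $I_{e_0}=p^n\Z_p$, pass to the central extension $1\to\bar C\to G\to\Gamma\to 1$, use that $G$ is generated by inertia lifts of order dividing $\exp\Gamma$ to force $p^n\bar C\subseteq[G,G]$, and bound $|[G,G]|$ by $|\wedge^2\Gamma_p|$ via the commutator pairing. The one place your write-up is slightly elliptic is the equality $G=N\bar C$ in Step~2 (needed before you can conclude $G/N$ is abelian and invoke $\Cl(Q)=0$); the paper sidesteps this by using directly that $Q$ has \emph{no} nontrivial unramified split-at-$\infty$ extensions whatsoever (Minkowski for $\Q$), but your route works too once you note that the $T_v$ already generate $\Gamma$ by applying the same reasoning to $K/Q$.
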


	 \begin{remark}
	 	If $\Gamma_p$ is cyclic, this proposition implies that $I_e(e\Cl(K))$ is trivial, which also follows from the fact that the norm map $\sum_{\gamma \in \Gamma} \gamma$ annihilates the class group $\Cl(K)$.
	 \end{remark}
	 
	 \begin{proof}
	 	Let $Q$ denote $\Q$ or $\F_q(t)$. The first claim immediately follows by the definition of $e\Cl(K)$ and $I_e$. By class field theory, $\Cl(K)$ is isomorphic to the Galois group of the maximal unramified extension (in the number field case) or the maximal unramified and completely-split-at-$\infty$ extension (in the function field case) of $K$. Since $e\Cl(K)$ is a quotient of $\Cl(K)$, $e\Cl(K)$ corresponds to an extension of $K$, and we denote it by $L/K$. 		Consider the abelianization $\Gal(L/Q)^{\ab}$ of $\Gal(L/Q)$, and let $L^{\ab}/Q$ denote the subextension of $L/Q$ that corresponds to $\Gal(L/Q)^{\ab}$.
		Because $\Gamma$ is abelian and is a quotient of $\Gal(L/Q)$, $K$ is contained in $L^{\ab}$. Then as $L/K$ is unramified, $\calT_{\frakp}(L/Q)\simeq \calT_{\frakp}(L^{\ab}/Q)\simeq \calT_{\frakp}(K/Q)$ for every prime $\frakp$ of $Q$. Note that $\Q$ (resp. $\F_q(t)$) does not have any nontrivial unramified (resp. unramified and completely-split-at-$\infty$) extension. So $\Gal(L/Q)$ equals the normal subgroup generated by all $\calT_{\frakp}(L/Q)$ for prime $\frakp$ of $Q$, and $\Gal(L^{\ab}/Q)$ is generated by $\calT_{\frakp}(L^{\ab}/Q)$ for all $\frakp$. Then as $\calT_{\frakp}(L^{\ab}/Q)\simeq \calT_{\frakp}(K/Q) \subset \Gamma$, we see that the exponent of $\Gal(L^{\ab}/Q)$ equals $p^n$.
	 	Since $\Gamma$ acts trivially on $e\Z_p[\Gamma]$, 
		\[
			I_e=\bigcap_{\gamma\in \Gamma} \rho_{\Z_p[\Gamma],e}(( \sum_{i=1}^{|\gamma|} \gamma^i))=p^n \cdot e\Z_p[\Gamma],
		\]
		so $I_e (e\Cl(K)) \subseteq \Gal(L/L^{\ab})$. Then it is enough to show $|\Gal(L/L^{\ab})| \leq |\wedge^2 \Gamma_p|$.

		For every $x\in \Gal(L^{\ab}/Q)$, pick a lift $\hat{x} \in \Gal(L/Q)$. Then
		since $\Gamma$ acts trivially on $\Gal(L/L^{\ab}) \subseteq \Gal(L/K)$, the following is a central extension
		\[
			1 \longrightarrow \Gal(L/L^{\ab}) \longrightarrow \Gal(L/Q) \longrightarrow \Gal(L^{\ab}/Q) \longrightarrow 1.
		\]
		so $\Gal(L/L^{\ab})$ is the subgroup of $\Gal(L/Q)$ generated by $\{[\hat{x}, \hat{y}] \,|\, x,y \in \Gal(L^{\ab}/Q)\}$. If $x\in \ker(\Gal(L^{\ab}/Q) \to \Gal(K/Q))$, then $\hat{x} \in \Gal(L/K)=e\Cl(K)$ is in the center of $\Gal(L/Q)$, so for any $y,z \in \Gal(L^{\ab}/Q)$,
		\[
			[\widehat{xz},\hat{y}]=[\hat{x}\hat{z}, \hat{y}]=[\hat{x},\hat{y}]^{\hat{z}} [\hat{z}, \hat{y}] = [\hat{z}, \hat{y}].
		\]
		So picking a lift $\tilde{\gamma} \in \Gal(L/Q)$ for each $\gamma \in \Gal(K/Q)\simeq \Gamma$, the commutator subgroup $\Gal(L/L^{\ab})$ is generated by $\{[\widetilde{\gamma_1}, \widetilde{\gamma_2}] \,|\, \gamma_1, \gamma_2 \in \Gamma\}$. 
		Finally, since $[\widetilde{\gamma_1}, \widetilde{\gamma_2}]=[\widetilde{\gamma_2}, \widetilde{\gamma_1}]^{-1}$, $[\widetilde{\gamma_1}, \widetilde{\gamma_1}]=1$, and $[\widetilde{\gamma_1}\widetilde{\gamma_2}, \widetilde{\gamma_3}]=[\widetilde{\gamma_1},\widetilde{\gamma_3}]^{\widetilde{\gamma_2}} [\widetilde{\gamma_2},\widetilde{\gamma_3}]=[\widetilde{\gamma_1},\widetilde{\gamma_3}] [\widetilde{\gamma_2},\widetilde{\gamma_3}]$ for any $\gamma_1, \gamma_2, \gamma_3 \in \Gamma$, so there is a quotient map
		\begin{eqnarray*}
			\wedge^2 \Gamma &\longrightarrow& \Gal(L/L^{\ab}) \\
			\gamma_1 \wedge \gamma_2 &\longmapsto& [\widetilde{\gamma_1}, \widetilde{\gamma_2}].
		\end{eqnarray*}
		Because $\Gal(L/L^{\ab})$ is a $p$-group, the above quotient map factors through $\wedge^2 \Gamma_p$, then the proof is completed.
	 \end{proof}
	 
\section{Proof of Theorem~\ref{thm:Z/2Z}}\label{sect:Z/2Z}

	When $\Gamma:=\Z/2\Z$, there is a unique nontrivial primitive idempotent of $\Q_2[\Z/2\Z]$, which is $e:=\frac{1-\sigma}{2}$, where $\sigma$ is the nontrivial element of $\Z/2\Z$. Throughout this section, let $q$ be a power of an odd prime.

\subsection{Properties of $\im W_{q^{-1}}$}
\hfill

	Let $H$ be a finite $e\Z_2[\Z/2\Z]$-module, let $G$ denote $H \rtimes \Z/2\Z$, and let $c$ denote the set of all elements of $G$ that has order 2 and is not contained in $H$. Since $\sigma$ acts on $H$ as taking inverse, we have
	\[
		G^{\ab} = H/2H \times \Z/2\Z;
	\]
	we define $c^{\ab}$ to be all the elements of $G^{\ab}$ whose image under the quotient map $G^{\ab} \to \Z/2\Z$ is nontrivial.
	
	\begin{lemma}\label{lem:c/G-ab}
		The quotient map $G \to G^{\ab}$ induces a bijection between $c/G$ and $c^{\ab}/G^{\ab}$; moreover, it induces a bijection 
		\[
			\ker(\Z^{c/G}_{\equiv q, n, \geq 0} \to G^{\ab} )\overset{\sim}{\longrightarrow} \ker(\Z^{c^{\ab}/G^{\ab}}_{\equiv q, n, \geq 0} \to G^{\ab} )
		\]
	\end{lemma}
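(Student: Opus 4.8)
\textbf{Proof plan for Lemma~\ref{lem:c/G-ab}.}

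The plan is to analyze the conjugacy classes in $c$ directly, using the fact that $\Gamma = \Z/2\Z$ acts on $H$ by inversion, and then to promote the resulting bijection on conjugacy classes to a bijection on the relevant lattice kernels. First I would fix the identification $G = H \rtimes \langle \sigma \rangle$ and write every element of $c$ in the form $(h,\sigma)$ with $h \in H$; since $\sigma$ has order $2$ and $(h,\sigma)^2 = (h \cdot \sigma(h), 1) = (h h^{-1}, 1) = (0,1)$, \emph{every} such $(h,\sigma)$ automatically lies in $c$, so $c$ is in bijection with $H$ via $h \mapsto (h,\sigma)$. Next I would compute the conjugation action: for $(a, \tau) \in G$ one has $(a,\tau)^{-1}(h,\sigma)(a,\tau) = (\tau^{-1}(h) \cdot \tau^{-1}(a)^{-1} \cdot \sigma(\tau^{-1}(a)), \sigma)$, and running over $a \in H$ (with $\tau$ either trivial or $\sigma$) the $H$-component ranges over the coset $\pm h + (1-\sigma)H = \pm h + 2H$ (using $(1-\sigma)h = h - \sigma(h) = 2h$). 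Hence two elements $(h_1,\sigma)$ and $(h_2,\sigma)$ are $G$-conjugate exactly when $h_1 \equiv \pm h_2 \pmod{2H}$, so $c/G$ is identified with the set of $\{\pm 1\}$-orbits on $H/2H$. But $H/2H$ is an $\F_2$-vector space, so $-1$ acts trivially; therefore $c/G \cong H/2H$, and under the quotient $G \to G^{\ab} = H/2H \times \Z/2\Z$ this is precisely $c^{\ab}/G^{\ab}$ (the same computation applied to the abelian group $G^{\ab}$, where conjugation is trivial, shows $c^{\ab}/G^{\ab} = c^{\ab} = H/2H$). This gives the first assertion, and the identification is visibly compatible with the maps $c/G \to G^{\ab}$ and $c^{\ab}/G^{\ab} \to G^{\ab}$ sending a class to the image of a representative.

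For the second assertion, a bijection $c/G \xrightarrow{\sim} c^{\ab}/G^{\ab}$ respecting the $q$-th-powering structure and the maps to $G^{\ab}$ induces an isomorphism of the free abelian groups $\Z^{c/G} \xrightarrow{\sim} \Z^{c^{\ab}/G^{\ab}}$ carrying $\Z^{c/G}_{\equiv q, n, \geq 0}$ onto $\Z^{c^{\ab}/G^{\ab}}_{\equiv q, n, \geq 0}$: positivity and the sum-to-$n$ condition are preserved tautologically, and the $q$-th-powering equivalence on conjugacy classes is preserved because the bijection intertwines the powering operations (powering on $(h,\sigma) \in c$ corresponds to powering on its image, and since $|(h,\sigma)| = 2$ and $q$ is odd, $q$-th powering fixes every class of $c$ — so in fact the "$\equiv q$" condition is vacuous here, as it is for $c^{\ab}$). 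Restricting this isomorphism to the kernels of the two maps to $G^{\ab}$ — which agree under the bijection since a representative of a class in $c/G$ and its image in $c^{\ab}/G^{\ab}$ have the same image in $G^{\ab}$ — yields the claimed bijection on $\ker(\Z^{c/G}_{\equiv q,n,\geq 0}\to G^{\ab}) \xrightarrow{\sim} \ker(\Z^{c^{\ab}/G^{\ab}}_{\equiv q,n,\geq 0}\to G^{\ab})$.

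The only mild subtlety, and the step I would be most careful about, is bookkeeping the conjugation-orbit computation: one must check that conjugating by elements of the form $(a,\sigma)$ (not just $(a,1)$) does not enlarge the orbit beyond $\pm h + 2H$, which it does not because $\sigma(\pm h + 2H) = \mp h + 2H = \pm h + 2H$ in $H/2H$. Everything else is formal transport of structure along the bijection; there is no real analytic or cohomological obstacle, since the hypothesis $\Gamma = \Z/2\Z$ collapses the inversion action modulo $2$ and makes the $q$-th-powering conditions trivial for odd $q$.
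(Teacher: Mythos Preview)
Your proposal is correct and follows essentially the same approach as the paper's proof: both compute the $G$-conjugacy classes in $c$ directly via the semidirect product structure, identify $c/G$ with $H/2H$, and then note that the ``$\equiv q$'' condition is vacuous because every element of $c$ has order $2$ and $q$ is odd. The only difference is that the paper conjugates only by elements of $H$ (getting the orbit $h+2H$) and observes this already suffices for injectivity of $c/G \to c^{\ab}$, whereas you additionally check conjugation by $(a,\sigma)$ and observe it gives $-h+2H = h+2H$; your extra check is harmless but redundant, since well-definedness of $c/G \to G^{\ab}$ is automatic and injectivity only needs that elements with the same image in $G^{\ab}$ are conjugate.
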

	
	\begin{proof}
		We write elements of $G$ as $(a,g)$ for $a \in H$ and $g \in \Z/2\Z$. Then the set $c$ is $\{(a,\sigma) \mid a \in H\}$. For any element $b \in H$, the conjugation of $(a, \sigma)$ by $b$ is 
	$(b^{-1}, 1)(a, \sigma) (b,1) = (a b^{-2}, \sigma)$. So for any $h \in H/2H$, all elements of $G$ whose image in $G^{\ab}$ is $(h,\sigma)$ are all conjugate to each other, which implies the first bijection in the lemma.
	
		Since every element in $c$ and $c^{\ab}$ has order 2, the requirement ``$\equiv q$'' in $\Z_{\equiv q, n , \geq 0}^{c/G}$ and $\Z_{\equiv q, n, \geq 0}^{c^{\ab}/G^{\ab}}$ can be removed without changing the sets. Then the second bijection follows from the first bijection.
	\end{proof}
	Recall the definition of $b(G,c,q,n)$ in \eqref{eq:defofb}, and we need to compare $b(G, c, q, n)$ and $b(G^{\ab}, c^{\ab}, q, n)$ in the proof of Theorem~\ref{thm:Z/2Z}. First, we describe the Schur multiplier $H_2(G, c)$ and a reduced Schur covering map of $G$ and $c$.
	
	\begin{lemma}\label{lem:SchurCover-C2}
		Retain the notation of $G$ and $c$ from above. Write the group $H$ as $\prod_{i=1}^r \Z/2^{d_i}\Z$ with $d_1 \geq d_2 \geq \ldots d_r>0$, and let $x_1,\ldots, x_r$ be a standard basis of $H$ such that $|x_i|=2^{d_i}$. Then the reduced Schur multiplier of $G$ and $c$ is 
		\[
			H_2(G,c) \simeq \prod_{1\leq i < j \leq r} \Z/2^{d_j-1}\Z \simeq \wedge^2 2H.
		\]
		Let $\widetilde{H}$ denote the nilpotency class-2 2-group generated by $\widetilde{x}_1, \ldots, \widetilde{x}_r$ such that there is a surjection 
		\begin{eqnarray*}
			\rho:  \widetilde{H} &\longrightarrow& H \\
			\widetilde{x}_i & \longmapsto& x_i, \quad \forall i,
		\end{eqnarray*}
		with $|\widetilde{x}_i|=|x_i|$ and $\ker \rho \simeq H_2(G,c)$ is generated by $[\widetilde{x}_i, \widetilde{x}_j]$ for all $1 \leq i < j \leq r$. There is a unique $\sigma$-action on $\widetilde{H}$ such that $\sigma(\widetilde{x}_i)=\widetilde{x}_i ^{-1}$. Using this $\sigma$-action, we obtain a semidirect product $\widetilde{H} \rtimes \Z/2\Z$; then 
		\[
			1 \longrightarrow H_2(G,c) \longrightarrow \widetilde{H} \rtimes \Z/2\Z \longrightarrow H \rtimes \Z/2\Z \longrightarrow 1
		\]
		is a reduced Schur covering of $G$ and $c$.
	\end{lemma}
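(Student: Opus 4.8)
The plan is to produce an explicit Schur covering $\phi\colon S\to G$ of $G=H\rtimes\Z/2\Z$, to compute its $c$-reduction $\phi_c\colon S_c\to G$ by hand, and to identify $S_c$ with $\widetilde H\rtimes\Z/2\Z$. Since $\sigma$ inverts $H$, the group $G$ is a generalized dihedral $2$-group, and a Schur covering can be written down concretely from a presentation of $G$ (equivalently via Hopf's formula $H_2(G,\Z)=(R\cap[F,F])/[F,R]$): take generators $\widetilde x_1,\dots,\widetilde x_r$ lifting $x_1,\dots,x_r$ with $|\widetilde x_i|=|x_i|$, a lift $\widetilde\sigma$ of $\sigma$, declare the commutators $[\widetilde x_i,\widetilde x_j]$ ($i<j$) central together with the classes $\widetilde\sigma^2$ and $w_i:=\widetilde\sigma\widetilde x_i\widetilde\sigma^{-1}\widetilde x_i$ coming from lifting $\sigma^2=1$ and $\sigma x_i\sigma^{-1}=x_i^{-1}$, and impose only the relations forced by the stem condition. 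In this description $[\widetilde x_i,\widetilde x_j]$ has order $2^{d_j}$ for $i<j$ — the usual computation of $M(H)=\wedge^2 H$ for a finite abelian group — and these classes together with $\widetilde\sigma^2$ and the $w_i$ (the latter accounting for the part of $H_2(G,\Z)$ coming from $H_1(\Z/2\Z,H)\cong H/2H$) span $\ker\phi$.

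Next I would carry out the reduction. By definition $S_c=S/R$ with $R$ generated by the commutators $[\hat x,\hat y]$ where $\phi(\hat x)\in c$ and $[\phi(\hat x),\phi(\hat y)]=1$; since $S\to G$ is a stem extension, $R$ is central, so $H_2(G,c)=\ker\phi/R$. A short calculation in $G$ shows $(v,\sigma)$ commutes with $(a,1)$ exactly when $a\in H[2]$, and with $(v',\sigma)$ exactly when $v'^{\,2}=v^{2}$, i.e.\ $v'=vz$ with $z\in H[2]$. Feeding the pairs $\big((x_i,\sigma),\,(x_j^{2^{d_j-1}},1)\big)$ into $R$ and expanding in $S$ using $\widetilde\sigma\widetilde x_k\widetilde\sigma^{-1}\equiv\widetilde x_k^{-1}$ modulo the center, $\widetilde x_k^{2^{d_k}}=1$, and the class-$2$ identity $[\widetilde x_j^{m},\widetilde x_i]=[\widetilde x_i,\widetilde x_j]^{-m}$, one obtains precisely the relations $[\widetilde x_i,\widetilde x_j]^{2^{d_j-1}}=1$; feeding in the pairs $\big((1,\sigma),(a,1)\big)$ and $\big((x_k,\sigma),(x_k a,\sigma)\big)$ with $a\in H[2]$ forces $\widetilde\sigma^{2}$ and each $w_i$ to become trivial in $S_c$. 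Thus $S_c$ is generated by the images of $\widetilde x_1,\dots,\widetilde x_r,\widetilde\sigma$, with $\widetilde\sigma$ of order $2$ inverting each $\widetilde x_i$ and $[\widetilde x_i,\widetilde x_j]$ of order dividing $2^{d_j-1}$ — exactly the relations defining $\widetilde H\rtimes\Z/2\Z$ — so there is a surjection $S_c\twoheadrightarrow\widetilde H\rtimes\Z/2\Z$ over $G$.

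To finish I would prove the reverse, i.e.\ that $\widetilde H\rtimes\Z/2\Z\to G$ is itself a reduced Schur covering, which then forces $S_c\cong\widetilde H\rtimes\Z/2\Z$. First check it is a stem extension: its kernel is $[\widetilde H,\widetilde H]\subseteq Z(\widetilde H)\cap[\widetilde H,\widetilde H]$, it is $\Z/2\Z$-stable, and $\sigma$ acts trivially on it since $[\widetilde x_i^{-1},\widetilde x_j^{-1}]=[\widetilde x_i,\widetilde x_j]$ in a class-$2$ group, so the kernel is central in $\widetilde H\rtimes\Z/2\Z$. Second, verify $[\hat x,\hat y]=1$ in $\widetilde H\rtimes\Z/2\Z$ for every $\hat x$ over $c$ with $\phi(\hat x),\phi(\hat y)$ commuting — a class-$2$ commutator computation of the same flavor as above, where one crucially uses that $[\widetilde x_i,\widetilde x_j]$ already has order at most $2^{d_j-1}$ (this is exactly why the reduced, not the full, Schur cover appears). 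By the universal property of the reduced Schur covering this gives a surjection $\widetilde H\rtimes\Z/2\Z\twoheadrightarrow S_c$ over $G$, completing the isomorphism $S_c\cong\widetilde H\rtimes\Z/2\Z$ and hence $H_2(G,c)\cong[\widetilde H,\widetilde H]$. It remains to identify $[\widetilde H,\widetilde H]$: by the structure theorem for the relatively free class-$2$ nilpotent group on generators of orders $2^{d_1}\ge\dots\ge2^{d_r}$ subject to the built-in exponent constraint, $[\widetilde H,\widetilde H]$ is freely generated as an abelian group by the $[\widetilde x_i,\widetilde x_j]$ ($i<j$) of order $2^{d_j-1}$, so $[\widetilde H,\widetilde H]\cong\prod_{i<j}\Z/2^{d_j-1}\Z$; and since $2H\cong\prod_i\Z/2^{d_i-1}\Z$, bilinearity of $\wedge^2$ with $\Z/2^{a}\Z\otimes\Z/2^{b}\Z\cong\Z/2^{\min(a,b)}\Z$ gives $\wedge^2(2H)\cong\prod_{i<j}\Z/2^{\min(d_i-1,d_j-1)}\Z=\prod_{i<j}\Z/2^{d_j-1}\Z$.

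The main obstacle is the precise $c$-reduction: showing that the listed relations cut $\ker\phi$ down to \emph{exactly} $[\widetilde H,\widetilde H]$, and in particular that nothing further is killed. The ``nothing further'' direction is the delicate one — it is equivalent to $|H_2(G,c)|=|\wedge^2 2H|$ — and I expect the cleanest route is the universal-property argument sketched above (identifying $\widetilde H\rtimes\Z/2\Z$ as the terminal stem extension of $G$ in which every $c$-commuting pair commutes), using the explicit $S$ only to certify that such a terminal object exists with kernel of the stated size; the alternative, an honest computation of $H_2(G,c)$ via the Lyndon–Hochschild–Serre spectral sequence for $1\to H\to G\to\Z/2\Z\to1$ together with the reduction, is possible but requires tracking which differentials survive.
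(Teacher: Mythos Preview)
Your approach is essentially the same as the paper's---build an explicit Schur cover, then compute the $c$-reduction by hand---but you flesh out the reduction step far more than the paper does (the paper simply writes ``one can compute this type of commutators and verify the statements in the lemma''). The paper's chosen Schur cover is cleaner than yours: it takes $\widehat{H}\rtimes\Z/2\Z$ where $\widehat{H}$ is the class-$2$ group on $\widehat{x}_1,\dots,\widehat{x}_r$ with $|\widehat{x}_i|=2^{d_i+1}$ and $|[\widehat{x}_i,\widehat{x}_j]|=2^{d_j}$, with $\sigma$ acting by $\widehat{x}_i\mapsto\widehat{x}_i^{-1}$. Because this is already a genuine semidirect product, the classes you call $\widetilde{\sigma}^2$ and $w_i$ are automatically trivial, and the entire kernel $H_2(G,\Z)$ sits inside $\widehat{H}$ as $\langle\widehat{x}_i^{2^{d_i}}\rangle\times\langle[\widehat{x}_i,\widehat{x}_j]\rangle$; its size is confirmed by Evens' formula $|H_2(G,\Z)|=|H_2(H,\Z)_{\Z/2\Z}|\cdot|H[2]|=2^r\prod_{i<j}2^{d_j}$ rather than via Hopf. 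Your Hopf-style cover with $|\widetilde{x}_i|=|x_i|$ and separate central classes $\widetilde{\sigma}^2,w_i$ is isoclinic to this but carries extra bookkeeping; in particular your size count in step~1 is one generator too many unless you impose the relation that forces one of $\widetilde{\sigma}^2,w_1,\dots,w_r$ to be redundant (which the stem condition does, but you do not identify which).

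One genuine slip: the directions of both surjections you write are reversed. In step~2, showing that the defining relations of $\widetilde{H}\rtimes\Z/2\Z$ hold in $S_c$ gives a surjection $\widetilde{H}\rtimes\Z/2\Z\twoheadrightarrow S_c$ (upper bound on $|H_2(G,c)|$), not the other way. In step~3, the universal property of the reduced Schur cover says that any $c$-reduced stem extension is a \emph{quotient} of $S_c$, so verifying that $\widetilde{H}\rtimes\Z/2\Z$ is such an extension yields $S_c\twoheadrightarrow\widetilde{H}\rtimes\Z/2\Z$ (lower bound). Together these give the isomorphism, and your final paragraph correctly frames this as a size sandwich---so the logic is sound once the arrows are flipped. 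Your universal-property argument for the lower bound is a genuine improvement over the paper's ``one can compute'': it isolates exactly why nothing further is killed, and the verification that $c$-commuting pairs already commute in $\widetilde{H}\rtimes\Z/2\Z$ (using $|[\widetilde{x}_i,\widetilde{x}_j]|\mid 2^{d_j-1}$) is the right mechanism.
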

	
	\begin{proof}
		By \cite[\S2 (1)]{Evens}, the Schur multiplier of $G$ has size $\#H_2(G, \Z)=\#H_2(H,\Z)_{\Z/2\Z} \cdot \# H[2]$. Since $\Z/2\Z$ acts trivially on $H_2(H,\Z)$, we have $\#H_2(G,\Z)=2^r \prod_{1 \leq i< j \leq r} 2^{d_j}$. Now, let's describe a Schur covering group of $G$. Let $\widehat{H}$ be the nilpotentcy class-2 2-group generated by $\widehat{x}_1, \ldots, \widehat{x}_r$ such that (1) $|\widehat{x}_i|=x_i^{d_i+1}$ for all $i$, (2) its abelianization is $\widehat{H}^{\ab} \simeq \prod_{i=1}^r \Z/2^{d_i+1}\Z$, and (3) $|[\widehat{x}_i, \widehat{x}_j]|=2^{d_j}$ for all $1\leq i < j \leq r$. There is a unique $\sigma$-action on $\widehat{H}$ such that $\sigma(\widehat{x}_i)=\widehat{x}_i^{-1}$ (note that $\sigma(\widehat{x}_i)=\widehat{x}_i^{-1}$ induces the trivial $\sigma$-action on $[\widehat{H},\widehat{H}]$). Then one can check that $\widehat{H} \rightarrow H, \widehat{x}_i \mapsto x_i$ defines a $\Z/2\Z$-equivariant surjection, and it induces a stem extension $\varrho: \widehat{H} \rtimes \Z/2\Z \to H \rtimes \Z/2\Z$. Since the size of $\ker(\widehat{H}\to H)$ equals the size of $H_2(G,\Z)$, we see that $\varrho$ is a Schur covering for $G$. 
		
		By definition of the reduced Schur covering map, to obtain a reduced Schur covering for $G, c$ from $\varrho$, we need take the quotient of $\widehat{H} \rtimes \Z/2\Z$ by the elements $[\hat{x}, \hat{y}]$ for all $\hat{x}, \hat{y} \in \widehat{H} \rtimes \Z/2\Z$ such that $\varrho(\hat{x}) \in c$ and $[\varrho(\hat{x}), \varrho(\hat{y})]=1$. One can compute this type of commutators and verify the statements in the lemma.
	\end{proof}
	
	In the rest of this section, we will use the notation in the above lemma. By a slight abuse of notation, we denote 
	\begin{equation}\label{def:W}
		W_{q^{-1}}: \ker(\Z^{c/G}_{\equiv q, n, \geq 0} \to G^{\ab}) \longrightarrow  \ker \rho,
	\end{equation}
	i.e., it is the restriction of the homomorphism $W_{q^{-1}}$ defined in \S\ref{ss:Hurwitz} to the subset $\ker(\Z^{c/G}_{\equiv q, n, \geq 0} \to G^{\ab})$ of $\Z^{c/G}$. 
	
	\begin{lemma}\label{lem:W}
		Let $q$ be a power of an odd prime, and $W_{q^{-1}}$ be the map \eqref{def:W}. Then, the map $W_{q^{-1}}$ depends only on $\val_2(q-1)$, i.e., then $\Z_{\equiv q_1, n, \geq 0}^{c/G}=\Z_{\equiv q_2, n, \geq 0}^{c/G}$ and $W_{q_1^{-1}}=W_{q_2^{-1}}$, for $\val_2(q_1-1)=\val_2(q_2-1)$.
		When $n$ is even, the following statements about $\im W_{q^{-1}}$ hold.
		\begin{enumerate}
			\item \label{item:W-1} If $n$ is sufficiently large (for example, when $n\geq 2^r$)$, then \im W_{q^{-1}} = 2^{\val_2(q-1)-1} \ker \rho$.
			\item \label{item:W-2} Let $\varpi_n$ denote the composition of the following surjections
			\[
				\ker(\Z^{c/G}_{\equiv q, n, \geq 0} \to G^{\ab}) \xtwoheadrightarrow{W_{q^{-1}}} \im W_{q^{-1}} \xtwoheadrightarrow{} \im W_{q^{-1}} / 2 \im W_{q^{-1}}.
			\]
			For any $\lambda \in \im W_{q^{-1}}/2\im W_{q^{-1}}$, we have 
			\[
				\lim_{\substack{n \to \infty \\ \text{$n$ is even}}}\frac{\# \varpi_n^{-1}(\lambda)}{\#\ker \varpi}=1.
			\]
		\end{enumerate}
		When $n$ is odd, we have $\ker(\Z_{\equiv q, n, \geq 0}^{c/G} \to G^{\ab})=0$.
	\end{lemma}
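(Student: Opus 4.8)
The plan is to reduce the statement to an explicit computation inside the reduced Schur covering of Lemma~\ref{lem:SchurCover-C2}, followed by a standard lattice‑point count. I would begin with the $q$‑independent reductions. Every element of $c$ has order $2$, so for odd $q$ its $q$‑th power is itself; hence the condition ``$\equiv q$'' is vacuous, and $\Z^{c/G}_{\equiv q,n,\ge 0}$, the map $\Z^{c/G}\to G^{\ab}$, and the set $\ker(\Z^{c/G}_{\equiv q,n,\ge 0}\to G^{\ab})$ are all independent of $q$. Composing $\Z^{c/G}\to G^{\ab}$ with $G^{\ab}\to\Z/2\Z$ sends each basis vector $\underline e_\gamma$ to the nontrivial element (the image of $x_\gamma\in c$), so an element of $\Z^{c/G}_{\equiv q,n,\ge 0}$ maps to $n\bmod 2$; thus $\ker(\Z^{c/G}_{\equiv q,n,\ge 0}\to G^{\ab})=0$ for odd $n$, and it remains to analyse $W_{q^{-1}}$.

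Next I would compute $W_{q^{-1}}$ on generators. Index $c/G$ by the classes $h=\sum_i e_i\bar x_i\in H/2H$ with $e_i\in\{0,1\}$ (Lemma~\ref{lem:c/G-ab}); for the class $\gamma=\gamma(h)$ take the representative $x_\gamma=(\prod_i x_i^{e_i},\sigma)$ and the lift $\widehat x_\gamma=(\prod_i\widetilde x_i^{e_i},\sigma)\in\widetilde H\rtimes\Z/2\Z$. Since $\sigma$ inverts the $\widetilde x_i$ and acts trivially on $[\widetilde H,\widetilde H]$, a direct commutator computation in the class‑$2$ group $\widetilde H$ gives $\widehat x_\gamma^{\,2}=\prod_{i<j}[\widetilde x_i,\widetilde x_j]^{e_ie_j}\in\ker\rho$, an element of $2$‑power order; in particular $|\widehat x_\gamma|$ is a power of $2$. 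Because $|\widetilde H\rtimes\Z/2\Z|$ is a power of $2$ and $q$ is odd, $q$ is a unit modulo $|\widehat x_\gamma|$, and the definition of $W_{q^{-1}}$ together with $x_\gamma^{1/q}=x_\gamma$ yields $W_{q^{-1}}(\underline e_\gamma)=\widehat x_\gamma^{\,1-q^{-1}}$ (exponent read modulo $|\widehat x_\gamma|$). Writing $1-q^{-1}=2^{\,v}u$ with $v=\val_2(q-1)$ and $u$ a $2$‑adic unit, this equals $(\widehat x_\gamma^{\,2})^{2^{v-1}u}\in 2^{v-1}\ker\rho$; since $W_{q^{-1}}$ is a homomorphism, $\im W_{q^{-1}}\subseteq 2^{v-1}\ker\rho$ for every even $n$, and the induced map $\Z^{c/G}\to\ker\rho/2^{v}\ker\rho$ sends $\underline e_\gamma$ to $2^{v-1}\widehat x_\gamma^{\,2}\bmod 2^{v}\ker\rho$, which depends on $q$ only through $v$ (the unit $u$ is absorbed modulo $2^{v}\ker\rho$). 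This is the asserted dependence statement, and it is exactly what later applications use, since $\nr_{q-1}$ only detects membership in $2^{v}\ker\rho$.

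For part~(1) I would run a Conway--Parker–type construction. Use the classes $\gamma_{ij}$ ($e_i=e_j=1$, else $0$), for which $\widehat x_{\gamma_{ij}}^{\,2}=[\widetilde x_i,\widetilde x_j]$, so the $W$‑contributions of the $\underline e_{\gamma_{ij}}$ generate $2^{v-1}\ker\rho$, together with the classes $\gamma_i$ (a single $e_i=1$) and $\gamma_0$ (all $e_i=0$), for which $\widehat x_\gamma^{\,2}=1$ and hence $W_{q^{-1}}(\underline e_\gamma)=1$. Given a target $z\in 2^{v-1}\ker\rho$, one first chooses nonnegative multiplicities of the $\underline e_{\gamma_{ij}}$ solving the required congruences (possible since $u$ is a unit, padding by multiples of the relevant $2$‑power orders to stay nonnegative), and then pads with $\underline e_{\gamma_i}$ and $\underline e_{\gamma_0}$ to kill the $H/2H$‑component and reach total coordinate‑sum $n$; this is possible once $n$ is even and exceeds a bound depending only on $H$ (the paper's $n\ge 2^{r}$ being one). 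Hence $\im W_{q^{-1}}=2^{v-1}\ker\rho$.

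Finally, part~(2). By part~(1), $\im W_{q^{-1}}/2\im W_{q^{-1}}=2^{v-1}\ker\rho/2^{v}\ker\rho=:\Lambda$ is a fixed finite abelian group, and by the computation above $\varpi_n$ is the linear map $\underline m\mapsto 2^{v-1}\sum_\gamma m_\gamma\widehat x_\gamma^{\,2}\bmod 2^{v}\ker\rho$ on the lattice points of $\{\underline m\ge 0,\ \sum m_\gamma=n\}\cap\ker(\Z^{c/G}\to H/2H)$. Writing $N_n(\lambda)$ for the number of such points with $\varpi_n(\underline m)=\lambda$, I would exhibit for each $\lambda'\in\Lambda$ a fixed integer vector — a small combination of an $\underline e_{\gamma_{ij}}$ with compensating $\underline e_{\gamma_i},\underline e_{\gamma_0}$ terms — which preserves the coordinate‑sum and the $H/2H$‑kernel condition while shifting $\varpi_n$ by $\lambda'$; translating by it injects most of $\varpi_n^{-1}(\lambda)$ into $\varpi_n^{-1}(\lambda+\lambda')$ up to a boundary discrepancy $O_H(n^{D-1})$, where $D$ is the dimension of the affine slice. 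Since $\sum_{\lambda}N_n(\lambda)=\#\big(\ker(\Z^{c/G}_{\equiv q,n,\ge 0}\to H/2H)\big)\asymp_H n^{D}$, it follows that $N_n(\lambda)/N_n(0)\to 1$ for every $\lambda$ as $n\to\infty$ through even values, i.e.\ $\#\varpi_n^{-1}(\lambda)/\#\ker\varpi\to 1$ with $\ker\varpi=\varpi_n^{-1}(0)$. The main obstacle is making this last equidistribution‑in‑a‑dilating‑polytope estimate and its $O_H(n^{D-1})$ error rigorous; it is of the same flavour as the counts in \cite[\S12]{LWZB}, and everything else is bookkeeping in the class‑$2$ group $\widetilde H$ (the only minor care needed being the exact reading of the exponent in $\widehat x_\gamma^{\,-1/q}$ and of the nonnegativity convention in the definition of $b$, which affect only the padding steps).
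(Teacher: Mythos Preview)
Your proposal is correct and, through the odd-$n$ statement and part~(1), runs essentially parallel to the paper: both compute $\widehat{x_\gamma}^{\,2}=\prod_{i<j}[\widetilde x_i,\widetilde x_j]^{e_ie_j}$ in the class-$2$ group $\widetilde H$, both observe $W_{q^{-1}}(\underline e_\gamma)=(\widehat x_\gamma^{\,2})^{2^{v-1}u}$ with $u$ a $2$-adic unit, and both hit the generators $[\widetilde x_i,\widetilde x_j]$ via the four classes $\gamma_0,\gamma_i,\gamma_j,\gamma_{ij}$ and then pad with $\underline e_{\gamma_0}$. Your treatment is marginally cleaner in that you argue directly for arbitrary $v$ rather than reducing to $v=1$ first.

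The genuine divergence is in part~(2). You run a translation argument: the vector $\underline e_{\gamma_{ij}}-\underline e_{\gamma_i}-\underline e_{\gamma_j}+\underline e_{\gamma_0}$ lies in $\ker(\Z^{c/G}\to G^{\ab})$, has coordinate sum $0$, and shifts $\varpi_n$ by the class of $[\widetilde x_i,\widetilde x_j]$; composing such translations reaches any $\lambda'$, and the nonnegativity constraint costs only a codimension-$1$ boundary error $O_H(n^{D-1})$ against a main term $\asymp n^{D}$. The paper instead factors $\varpi_n=\beta\circ\alpha_n$, where $\alpha_n$ records the parity vector of $\underline m$ as a subset $V\subset\F_2^{r}\times\{1\}$ with $\sum_{v\in V}v=0$, and $\beta(V)=\sum_{\gamma\in V}\widehat x_\gamma^{\,2}\bmod 2\ker\rho$. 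The key observation is that the target set $T$ of such subsets is an $\F_2$-vector space under symmetric difference and $\beta$ is a \emph{group homomorphism}, so $\#\beta^{-1}(\lambda)=\#\ker\beta$ exactly; the only asymptotic input is then the single lattice-point estimate $\#\alpha_n^{-1}(V)=R\,(n-\#V)^{2^r-1}+O(n^{2^r-2})$ from \cite[Lemma~12.8]{LWZB}. The paper's route isolates the combinatorics (equal fibers of $\beta$) from the analysis (a single polytope count), whereas your route is more elementary but pushes the equidistribution directly through the polytope; both are valid, and your identification of the explicit translation vectors is exactly what makes the elementary argument go through.
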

	
	\begin{proof}
		Recall the definition of $W_{q^{-1}}$: for each conjugacy class $\gamma \in c/G$, let $x_{\gamma}$ be an element in $\gamma$ and $\widehat{x_{\gamma}}$ be a lift of $x_{\gamma}$ in $\widetilde{H}\rtimes \Z/2\Z$; since all elements in $c$ have order 2, $x_{\gamma}^{1/q}=x_{\gamma}$, so $W_{q^{-1}}$ sends the generator of $\Z^{c/G}$ corresponding to $\gamma$ to $\widehat{x_{\gamma}}^{-1/q} \widehat{x_{\gamma}^{1/q}}=\widehat{x_{\gamma}}^{\frac{q-1}{q}} \in \ker \rho$. Since $\#\ker \rho$ is a power of 2, $q$ is odd, and $\Z_{\equiv q, n, \geq 0}^{c/G}$ are equal for all $q$, we have that $W_{q^{-1}}$ depends only on $\val_2(q-1)$. Moreover, we see that if the statements in the lemma hold for some $q$ with $\val_2(q-1)=v$, then they hold for any $q$ with $\val_2(q-1)\geq v$. So it suffices to prove for the case that $\val_2(q-1)=1$. We assume $\val_2(q-1)=1$ for the rest of the proof.
		
		We use the basis $x_1, \ldots, x_r$ of $H$ and the basis $\widetilde{x}_1, \ldots, \widetilde{x}_r$ of $\widetilde{H}$ defined in Lemma~\ref{lem:SchurCover-C2}. Then by Lemma~\ref{lem:c/G-ab}, for any $\gamma \in c$, we can pick the unique representative of $\gamma$ in the following form:
		\[
			x_{\gamma}= (x_1^{a_1^{\gamma}}x_2^{a_2^{\gamma}}\cdots x_r^{a_r^{\gamma}},\sigma)\in H \rtimes \Z/2\Z, \quad a_i^{\gamma} \in \{0,1\};
		\]
		and we pick the lift as
		$\widehat{x_{\gamma}}= (\widetilde{x}_1^{a_1^{\gamma}}\widetilde{x}_2^{a_2^{\gamma}}\cdots \widetilde{x}_r^{a_r^{\gamma}},\sigma)\in \widetilde{H} \rtimes \Z/2\Z$.
		Then 
		\[
			\widehat{x_{\gamma}}^2=\widetilde{x}_1^{a_1^{\gamma}}\cdots \widetilde{x}_r^{a_r^{\gamma}} \sigma(\widetilde{x}_1^{a_1^{\gamma}}\cdots \widetilde{x}_r^{a_r^{\gamma}}) =\widetilde{x}_1^{a_1^{\gamma}}\cdots \widetilde{x}_r^{a_r^{\gamma}}\widetilde{x}_1^{-a_1^{\gamma}}\cdots \widetilde{x}_r^{-a_r^{\gamma}} \equiv \prod_{1 \leq i<j\leq r} [\widetilde{x}_i, \widetilde{x}_j]^{a_i^{\gamma} a_j^{\gamma}} \mod\, 2\ker \rho,
		\]
		and $W_{q^{-1}}(\underline{m}) \in 2 \ker \rho$ for any $\underline{m} \in 2\Z^{c/G} \subset \Z^{c/G}$.
		So to study $\im W_{q^{-1}}$ modulo $2\ker \rho$, we just need to consider the elements $\underline{m} \in \Z^{c/G}$ such that 1) the coordinate corresponding to each conjugacy class in $c/G$ is 0 or 1, and 2) the sum of all coordinates has the same parity as $n$ and is not greater than $n$.
		
		Note that every element in $c$ has nontrivial image under the projection map $H \rtimes \Z/2\Z \to \Z/2\Z=\langle{\sigma \rangle}$. When $n$ is odd, for any $\underline{m} \in \Z^{c/G}_{\equiv q, n, \geq 0}$, the image of $\underline{m}$ under the composite map $\Z^{c/G}_{\equiv q, n, \geq 0} \to G^{\ab}\to \langle{\sigma}\rangle$ is nontrivial. So $\ker(\Z^{c/G}_{\equiv q, n, \geq 0} \to G^{\ab})=0$.
		
		Next, we prove $\im W_{q^{-1}}=\ker \rho$ for even $n$. When $r=1$, we have $\ker \rho=0$ by Lemma~\ref{lem:SchurCover-C2}, so the lemma obviously holds. Then we assume $r>1$, and we will prove $[\widetilde{x}_i, \widetilde{x_j}] \in \im W_{q^{-1}} \bmod \, 2\ker\rho$ for all $1 \leq i < j \leq r$, and then the statement in \eqref{item:W-1} naturally follows. When $n\geq 4$ is even, considering the vector $\underline{m_{even}}$ such that the coordinates corresponding to $(1, \sigma)$, $(x_i,\sigma)$, $(x_j, \sigma)$, $(x_ix_j, \sigma)$ are 1 and all other coordinates are 0, we see that $W_{q^{-1}}(\underline{m_{even}})\equiv [\widetilde{x}_i, \widetilde{x}_j] \bmod\,  2 \ker \rho$. So the proof of \eqref{item:W-1} is completed. 
		
		When $n$ is even and sufficiently large, there is a surjection 
		\[
			\alpha_n: \ker(\Z_{\equiv q, n , \geq 0}^{c/G} \to G^{\ab})  \longrightarrow T:=\left\{V \subset \F_2^{\oplus r} \times \{1\} \subset \F_2^{\oplus r+1} \, \bigg{|}\,  \sum_{\vec{v} \in V} \vec{v} =0\right\}
		\]
		defined by sending the coordinate corresponding to $\gamma$ to $(a_1^{\gamma}, a_2^{\gamma}, \ldots, a_r^{\gamma},1)$. For every $V \in T$, the size of $\alpha_n^{-1}(V)$ equals the number of $\underline{m}\in \Z^{c/G}$ such that each coordinate is non-negative even and the sum of all coordinates is $n-\#V$. By \cite[Lemma~12.8]{LWZB}, 
		\begin{equation}\label{eq:size-alpha-1}
			\#\alpha_n^{-1}(V) = R(n-\#V)^{2^r-1} + O_r((n-\#V)^{2^r-2})
		\end{equation}
		for some constant $R$ depending on $r$. 
		There is a surjection 
		\begin{eqnarray*}
			\beta: T & \longrightarrow & \ker \rho /2 \ker \rho \\
			V &\longmapsto & \sum_{(a_1^{\gamma}, \ldots, a_r^{\gamma}, 1) \in V} \prod_{1\leq i < j \leq r} [\widetilde{x}_i, \widetilde{x}_j]^{a_i^{\gamma} a_j^{\gamma}} \bmod\, 2\ker \rho.
		\end{eqnarray*}
		Then one can check that 
		\begin{equation}\label{eq:varpi-alpha}
			\varpi_n= \beta \circ \alpha_n.
		\end{equation}
		
		{\bf Claim:} $\#\beta^{-1}(\lambda) = \# \ker \beta$ for every $\lambda \in \ker \rho / 2\ker \rho$.
		
		For any $V_1, V_2 \in T$, we define $V_1+V_2$ to be the union of $V_1\backslash V_2$ and $V_2\backslash V_1$, and one can check that $V_1+V_2 \in T$. So $T$, with this addition ``$+$'' and identity element $\O$, is an elementary abelian $2$-group. We compute
		\begin{eqnarray*}
			\beta(V_1)+\beta(V_2) &=& \left(\sum_{(a_1^{\gamma}, \ldots, a_r^{\gamma}, 1) \in V_1} \prod_{1\leq i < j \leq r} [\widetilde{x}_i, \widetilde{x}_j]^{a_i^{\gamma} a_j^{\gamma}} +\sum_{(a_1^{\gamma}, \ldots, a_r^{\gamma}, 1) \in V_2} \prod_{1\leq i < j \leq r} [\widetilde{x}_i, \widetilde{x}_j]^{a_i^{\gamma} a_j^{\gamma}}\right)\bmod\, 2\ker \rho \\
			&=& \beta(V_1+V_2),
		\end{eqnarray*}
		which implies that $\beta$ is a group homomorphism. So we proved the claim.
		
		Finally, the statement in \eqref{item:W-2} follows because 
		\[
			\lim_{\substack{n \to \infty \\ n \text{ is even}}} \frac{\# \varpi_n^{-1}(\lambda)}{\#\ker \varpi}= \lim_{\substack{n \to \infty \\ n \text{ is even}}} \frac{ \sum\limits_{V \in \beta^{-1}(\lambda)} \# \alpha_n^{-1}(V)}{\sum\limits_{V \in \ker(\beta)} \# \alpha_n^{-1}(V)}=1,
		\]
		where the first equality uses \eqref{eq:varpi-alpha} and the second one uses \eqref{eq:size-alpha-1} and the claim.
	\end{proof}

\subsection{Proof of Theorem~\ref{thm:Z/2Z}}
\hfill

	Let $H$ be an $e\Z_2[\Z/2/\Z]$-module and $q$ is a power of an odd prime. 
	Let $G_1:=H \rtimes \Z/2\Z$ and $\pi_1: G_1 \to \Z/2\Z$ be the quotient map modulo $H$, and let $c_1$ be the set of elements of $G_1$ that have the same order as their image under $\pi_1$. 
	Let $\iota_1: \ker \pi \to H$ be the identity map. Then $\Aut(G_1, \iota_1, \pi_1)$ is one-one corresponding to the splitting of $\pi_1$. So $\#\Aut(G_1, \iota_1, \pi_1)=|H|$, as $\sigma \mapsto (h, \sigma)$ defines a splitting for every $h \in H$. 
	For any positive integer $n$, by Lemma~\ref{lem:Cl-Hur}, we have
	\[
		\sum_{K\in \calA_{\Z/2\Z}^+(q^n, \F_q(t))} \# \Sur(\Cl(K), H) = \frac{\#\Hur^n_{G_1,c_1}(\F_q)}{|H|}.
	\]
	
	Similarly, define $G_2:=G_1^{\ab}$ and $\pi_2:G_2 \to \Z/2\Z$, and then define $\iota_2$, $c_2$ accordingly; we have 
	\[
		\sum_{K\in \calA_{\Z/2\Z}^+(q^n, \F_q(t))} \# \Sur(\Cl(K), H/2H) = \frac{\#\Hur^n_{G_2,c_2}(\F_q)}{|H/2H|}.
	\]
	Applying the Hurwitz-point counting method in \eqref{eq:main-1} (the method established in \cite{LWZB}), we have
	\[
		 \lim_{N\to \infty} \lim_{\substack{q \to\infty \\ \val_2(q-1)=v}}\frac{\sum\limits_{0 \leq n \leq N}\#\Hur^n_{G_1,c_1}(\F_q)}{\sum\limits_{0 \leq n \leq N}\#\Hur^n_{G_2,c_2}(\F_q)} = \lim_{N\to \infty} \lim_{\substack{q \to\infty \\ \val_2(q-1)=v}} \frac{b(G_1,c_1,q, 2\lfloor \frac{N}{2} \rfloor) }{b(G_2,c_2,q, 2\lfloor \frac{N}{2} \rfloor)},
	\]
	here $2\lfloor \frac{N}{2} \rfloor$ is the largest even number $\leq N$, which is the largest integer $n\leq N$ such that $b(G_i, c_i, q, n)>0$ by Lemma~\ref{lem:W}. Also, by Lemma~\ref{lem:W} and the definition \eqref{eq:defofb}, letting $\rho_i$ be the map $\rho$ there for $G:=G_i$ and $c:=c_i$, we have 
	\begin{eqnarray*}
		b(G_i,c_i, q, 2\lfloor \frac{N}{2} \rfloor) &=& \#\ker\rho_i[2^{\val_2(q-1)}] \cdot \frac{\# 2^{\val_2(q-1)} \ker \rho_i}{\# 2^{\val_2(q-1)-1} \ker \rho_i} \cdot \#\ker(\Z^{c_i/G_i}_{\equiv q, 2\lfloor N/2 \rfloor, \geq 0} \to G_i^{\ab}) \\
		&=&\#\ker\rho_i[2^{v}] \cdot \frac{\#\ker\rho_i[2^{v-1}]}{\#\ker\rho_i[2^{v}]} \cdot \#\ker(\Z^{c_i/G_i}_{\equiv q, 2\lfloor N/2 \rfloor, \geq 0} \to G_i^{\ab}) \\
		&=& \#\ker\rho_i[2^{v-1}] \cdot \#\ker(\Z^{c_i/G_i}_{\equiv q, 2\lfloor N/2 \rfloor, \geq 0} \to G_i^{\ab}).
	\end{eqnarray*}
	By Lemma~\ref{lem:SchurCover-C2}, $\#\ker \rho_1 [2^{v-1}] = \# (\wedge^2 2H)[2^{v-1}]$ and $\#\ker \rho_2 [2^{v-1}]=1$.
	Then Theorem~\ref{thm:Z/2Z} follows by Proposition~\ref{prop:weight-formula} and Lemma~\ref{lem:c/G-ab}.

\section{Conjectures for Moment and Probability}\label{sect:conj}

	Let $e$ be a primitive central idempotent of $\Q_p[\Gamma]$ and $\calP_{e\Z_p[\Gamma]}$ denote the set of isomorphism classes of finite $e\Z_p[\Gamma]$-modules. Define a topology on $\calP_{e\Z_p[\Gamma]}$ in which the basic opens are the sets 
	\[
		U_{M,I}:=\{X \in \calP_{e\Z_p[\Gamma]} \mid X \otimes_{e\Z_p[\Gamma]} e\Z_p[\Gamma]/I \simeq M\}
	\]
	for each $M \in \calP_{e\Z_p[\Gamma]}$ and $I$ an nonzero ideal of $e\Z_p[\Gamma]$. Applying the result of Sawin and Wood \cite{Sawin-Wood-category}, we show in Proposition~\ref{prop:prob} that there exists a unique probability measure on $\calP_{e\Z_p[\Gamma]}$ such that the $M$-moment, i.e., the average size of $\Sur_{e\Z_p[\Gamma]}(-,M)$, is $1/|M|$ for every finite $e\Z_p[\Gamma]$-module $M$. Finally, in Conjecture~\ref{conj:GG}, we give the conjecture for the distribution of $I_e \cdot e\Cl(K)$ as $K$ varies over totally real $\Gamma$-extensions of $Q=\Q$ or $\F_q(t)$ ordered by $\rDisc$.
	
	\begin{proposition}\label{prop:prob}
		There is a unique probability measure $\mu_{e\Z_p[\Gamma]}$ on $\calP_{e\Z_p[\Gamma]}$ such that 
		\[
			\int_{X\in \calP_{e\Z_p[\Gamma]}} \#\Sur_{e\Z_p[\Gamma]}(X, M) d\mu_{e\Z_p[\Gamma]} = \frac{1}{|M|}.
		\]
		Denote $A:=e\Z_p[\Gamma]/\frakm_e$ (recall that $\frakm_e$ is the maximal ideal of $e\Z_p[\Gamma]$). The formula of $\mu_{e\Z_p[\Gamma]}$ is given by
		\[
			\mu_{e\Z_p[\Gamma]} (M) =   \frac{1}{|\Aut_{e\Z_p[\Gamma]} (M)| |M|} \prod_{i=2}^{\infty} (1-|A|^{-i}).
		\]
	\end{proposition}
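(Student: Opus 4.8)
The plan is to invoke the existence-and-uniqueness machinery of Sawin--Wood \cite{Sawin-Wood-category} for the category of finite $e\Z_p[\Gamma]$-modules, and then verify that the moments $1/|M|$ are well-behaved enough to fall under its hypotheses. First I would record the structural facts about $R := e\Z_p[\Gamma]$ that make this category tractable: by Proposition~\ref{prop:dvr}, $R$ is a complete discrete valuation ring with residue field $A := R/\frakm_e$, so every finite $R$-module is a finite direct sum of modules $R/\frakm_e^k$ (Lemma~\ref{lem:DVR-module}\eqref{item:DVR-1}), the category is a ``diamond category'' in the sense of \cite{Sawin-Wood-category}, and the basic opens $U_{M,I}$ generate the appropriate topology on $\calP_R$. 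The key analytic input is a growth bound on the moments: one needs $\#\Sur_R(X,M) \le \#\Hom_R(X,M) \le |M|^{\rk_{\frakm_e} X}$, while the candidate moment $1/|M|$ decays, so the moments are ``small'' in the precise sense required by the Sawin--Wood existence theorem. This gives existence of a measure $\mu_R$ with the stated moments, and uniqueness follows because the moments grow slowly enough that they determine the measure (this is the ``moments determine the distribution'' clause referenced in the introduction after \eqref{eq:ff-weight-moment}).

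For the explicit formula, I would proceed by Möbius inversion over the poset of finite $R$-modules, exactly as in the classical Cohen--Lenstra computation but now over the DVR $R$ rather than $\Z_p$. The relation to be inverted is
\[
	\sum_{M} \mu_R(M) \,\#\Sur_R(M, N) = \frac{1}{|N|} \qquad \text{for all finite } R\text{-modules } N,
\]
together with $\#\Hom_R(M,N) = \sum_{N' \le N} \#\Sur_R(M, N')$. Since $R$ is a DVR, the lattice of submodules and the relevant combinatorial identities (the $q$-series identities with $q = |A|^{-1}$) are formally identical to the $\Z_p$ case with $p$ replaced by $|A| = |R/\frakm_e|$; in particular the only place the ``size'' of a residue-field element enters is through $|A|$, which is why $\prod_{i=2}^\infty (1 - |A|^{-i})$ appears rather than $\prod_{i=1}^\infty(1-|A|^{-i})$ or a product over $p$. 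Concretely I would either cite the computation of \cite{Sawin-Wood-category} (or the Cohen--Lenstra--type computation in \cite{Wang-Wood} / \cite{LWZB}) specialized to a DVR with residue field of size $|A|$, or verify directly that the proposed $\mu_R(M) = |\Aut_R(M)|^{-1}|M|^{-1}\prod_{i\ge 2}(1-|A|^{-i})$ satisfies the moment identity by a generating-function argument: sum over all $M$ of a fixed $R$-corank, use the identity $\sum_{M : \rk_{\frakm_e} M = r} \frac{\#\Sur_R(M,N)}{|\Aut_R(M)||M|} = |N|^{-1}\cdot(\text{partial product})$, and take $r \to \infty$.

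The main obstacle I anticipate is purely bookkeeping rather than conceptual: one must check that the hypotheses of the Sawin--Wood theorems (that $\calP_R$ with the given topology is the right kind of category, that the functionals $M \mapsto 1/|M|$ are ``moments'' of an actual measure, and the requisite summability/well-definedness of the inverse sums) are literally satisfied here, and that the normalization conventions for $\#\Aut_R$ and for the basic opens $U_{M,I}$ match. A secondary subtlety is confirming the product starts at $i=2$: this is forced by the factor $|M|^{-1}$ in the formula (equivalently, one extra factor of $(1-|A|^{-1})^{-1}$ is absorbed there compared to the ``standard'' Cohen--Lenstra measure), so I would double-check the $r\to\infty$ limit of the partial products to make sure the index is right. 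Once these are in place, existence, uniqueness, and the closed form all follow, and the proposition is proved.
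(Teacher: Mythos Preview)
Your approach is essentially the paper's: both invoke \cite{Sawin-Wood-category} for existence and uniqueness and both exploit that $R=e\Z_p[\Gamma]$ is a complete DVR. The one difference is in deriving the closed form: rather than M\"obius inversion or a generating-function argument, the paper reads off the explicit formula $v_{\calC_S,M}$ directly from \cite[Lemma~6.3]{Sawin-Wood-category} (with $S=e\Z_p[\Gamma]/\frakm_e^n$ for $n$ large enough that $\frakm_e^n M=0$) and then verifies it equals the stated $\mu_{e\Z_p[\Gamma]}(M)$ by computing just two ingredients: $\Ext^1_S(M,A)\simeq \Hom_S(M,A)$ (immediate from the decomposition of $M$ into cyclic pieces $R/\frakm_e^j$, each of which has $\Ext^1_S(-,A)\simeq A$) and $|\End_R(A)|=|A|$ (Lemma~\ref{lem:End=A}). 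This shortcut resolves both of your anticipated bookkeeping issues at once, including why the product starts at $i=2$: the equality $|\Ext^1_S(M,A)|=|\Hom_S(M,A)|$ is exactly what produces the extra factor $|M|^{-1}$ and shifts the index.
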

	
	\begin{proof}
		For every positive integer $n$, denote $U_{M,n}:=U_{M, \frakm_A^n}$.
		For a given $M \in \calP_{e\Z_p[\Gamma]}$, there is a maximal integer $m$ such that $\frakm_e^n M=0$ for every $n\geq m$. In particular, for every $n > m$, the basic open $U_{M,n}=\{M\}$.
		Then, by \cite[Theorem~1.2 and Lemma~6.3]{Sawin-Wood-category}, the proposition holds for $\mu_{e\Z_p[\Gamma]}(M)=v_{\calC_S,M}$ with $S:=e\Z_p[\Gamma]/\frakm_e^n$ and $n > m$.
		So, it suffices to show that the formula for $v_{\calC_S, M}$ given in \cite[Lemma~6.3]{Sawin-Wood-category} equals the one for $\mu_{e\Z_p[\Gamma]}$ in the proposition.
		
		Recall that $e\Z_p[\Gamma]$ is a discrete valuation ring and $A$ is the unique finite simple $e\Z_p[\Gamma]$-module. One can write
		\[
			 M\simeq \bigoplus_{j=1}^m (e\Z_p[\Gamma]/\frakm_e^j)^{\oplus d_j},
		\]
		for some $d_j \in \Z_{\geq 0}$. Then 
		\[
			\Ext^1_S(M, A)\simeq \bigoplus_{j=1}^m \Ext^1_S(e\Z_p[\Gamma]/\frakm_e^j, A)^{\oplus d_j}\simeq \bigoplus_{j=1}^m A^{\oplus d_j} \simeq \Hom_S(M,A).
		\]
		Also, by Lemma~\ref{lem:End=A}, $|\End_{e\Z_p[\Gamma]}(A)|=|A|$. So we see that 
		\[
			v_{\calC_S, M}=\frac{1}{|\Aut_{e\Z_p[\Gamma]} (M)| |M|} \prod_{i=1}^{\infty} (1-\frac{1}{|A|}|A|^{-i}),	
		\]
		where $v_{\calC_S, M}$ is defined in \cite[Lemma~6.3]{Sawin-Wood-category}. Then the proposition follows since $\mu_{e\Z_p[\Gamma]}(M)=v_{\calC_S, M}$.
	\end{proof}

	\begin{conjecture}\label{conj:GG}
		Let $\Gamma$ be a finite abelian group, $p$ a prime number, and $Q$ be either $\Q$ or $\F_q(t)$ for $q$ such that $\gcd(p|\Gamma|,q)=\gcd(p,q-1)=1$. Let $\calA^+_{\Gamma}(D, Q)$ be the set of isomorphism classes of totally real $\Gamma$-extensions of $Q$ with $\rDisc K=D$. 
		Let $e$ be a nontrivial primitive idempotent of $e\Q_p[\Gamma]$, $A:=e\Z_p[\Gamma]/\frakm_e$, and $M$ a finite $e\Z_p[\Gamma]$-module.
		Then 
		\[
			\lim_{B \to \infty}  \frac{\sum\limits_{D\leq B} \#\{K \in \calA^+_{\Gamma}(D, Q) \mid I_e \cdot e\Cl(K) \simeq M\}}{\sum\limits_{D \leq B} \#\calA^+_{\Gamma}(D,Q)}=\frac{1}{|\Aut_{\Gamma} (M)| |M|} \prod_{i=2}^{\infty} (1-|A|^{-i})
		\]
		and 
		\[
			\lim_{B\to \infty} \frac{\sum\limits_{D\leq B}\sum\limits_{K \in \calA^+_{\Gamma}(D,Q)} \# \Sur_{\Gamma} (I_e\cdot e\Cl(K), M)}{\sum\limits_{D \leq B} \#\calA^+_{\Gamma}(D,Q)} = \frac{1}{|M|}.
		\]
	\end{conjecture}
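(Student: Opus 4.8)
The final statement is Conjecture~\ref{conj:GG}, and as the paper makes clear it is \emph{not proved}; it is proposed on the basis of the weighted moment Theorem~\ref{thm:main-B}\eqref{item:main-B-2}. So a ``proof proposal'' here is really a proposal for the kind of argument one would \emph{want} to carry out, together with an honest account of where the genuine obstruction lies. I will describe the strategy one would follow if one had the missing ingredients, in the order one would assemble them.

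\textbf{Step 1: reduce the probability statement to the moment statement.} By Proposition~\ref{prop:prob}, there is a unique probability measure $\mu_{e\Z_p[\Gamma]}$ on $\calP_{e\Z_p[\Gamma]}$ with $M$-moment $1/|M|$ for every finite $e\Z_p[\Gamma]$-module $M$, and its explicit density is $\frac{1}{|\Aut_{\Gamma}(M)||M|}\prod_{i\ge 2}(1-|A|^{-i})$ where $A=e\Z_p[\Gamma]/\frakm_e$. Moreover, since $e\Z_p[\Gamma]$ is a discrete valuation ring with finite residue field, the moments $1/|M|$ grow slowly enough that they determine the measure uniquely \emph{and} the convergence of moments implies convergence in distribution (this is exactly the ``moments determine the distribution'' statement invoked in the introduction, via \cite{Sawin-Wood-category}, \cite{Wang-Wood}-type results for modules over a complete DVR). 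Therefore the first displayed limit in Conjecture~\ref{conj:GG} follows formally from the second one: if one proves that for every finite $e\Z_p[\Gamma]$-module $M$
\[
	\lim_{B\to \infty} \frac{\sum_{D\le B}\sum_{K\in \calA^+_{\Gamma}(D,Q)} \#\Sur_{\Gamma}(I_e\cdot e\Cl(K),M)}{\sum_{D\le B}\#\calA^+_{\Gamma}(D,Q)} = \frac{1}{|M|},
\]
then the empirical measures $\frac{1}{\#\calA^+_{\Gamma}(D,Q)}\sum_{K}\delta_{[I_e\cdot e\Cl(K)]}$, averaged over $D\le B$, converge weakly to $\mu_{e\Z_p[\Gamma]}$, which is the first display. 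So the whole problem is the unweighted moment limit over $\calA^+_{\Gamma}(D,Q)$ ordered by $\rDisc$.

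\textbf{Step 2: pass from $I_e\cdot e\Cl(K)$ to $e\Cl(K)$ via the weight function, then to a Hurwitz-space count.} Using Proposition~\ref{prop:weight-formula} with $H$ the $I_e$-closure of $M$ (so $I_eH=M$ and $H_{/I_e}\simeq (e\Z_p[\Gamma]/I_e)^{\oplus r}$ with $r=\rk_{\frakm_e}M$), one gets the identity
\[
	\#\Sur_{\Gamma}(e\Cl(K),H) = w_{e,M}(K)\cdot \#\Sur_{\Gamma}(I_e\cdot e\Cl(K),M),
\]
and by an inclusion–exclusion over submodules of $H$ one reduces $\#\Sur_{\Gamma}(e\Cl(K),H)$ to a $\Z$-linear combination of $\#\Hom_{\Gamma}(e\Cl(K),H')=\#\Hom_{\Gamma}(\Cl(K)(p),H')$ for $e\Z_p[\Gamma]$-modules $H'$, and each such $\Hom$-count is, via the extension-counting bijection Lemma~\ref{lem:Cl-Hur} (in the function field case) or its number-field analogue, a weighted count of points on Hurwitz spaces $\Hur^n_{G,c_\pi}$ for $(G,\iota,\pi)\in\Ext_{\Gamma}(H')$. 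In the function field case with $q\to\infty$ this is exactly the route taken in \S\ref{sect:ff-moment-proof} to prove Theorem~\ref{thm:main-B}\eqref{item:main-B-2}. The plan for the \emph{conjecture} would be: (a) in the function field case, remove the $q\to\infty$ limit, i.e., establish the requisite ``large $n$'' asymptotics for $\#\Hur^n_{G,c}(\F_q)$ with $q$ fixed — this is the homological-stability input of Ellenberg–Venkatesh–Westerland type, strengthened to control the relevant stable homology groups (the Schur-multiplier quotients $H_2(G,c)$ and the $W_{q^{-1}}$-images studied in Lemma~\ref{lem:W}) for all the groups $G$ appearing; and (b) in the number field case, prove the corresponding asymptotics for $\Gamma$-extensions of $\Q$ ordered by $\rDisc$ — there is no general tool for this, it would require a field-counting breakthrough in the spirit of (but far beyond) Davenport–Heilbronn / Bhargava, and it is genuinely open even for $\Gamma=\Z/p\Z$ except through Smith's work.

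\textbf{Step 3: bookkeeping of the leading term.} Granting the asymptotics in Step 2, the final arithmetic step is to check that the leading constants match $1/|M|$. One compares, for each relevant $(G,\iota,\pi)$, the leading coefficient of $\#\Hur^n_{G,c_\pi}$ — which in the function-field large-$q$ analysis was $\pi_{G,c_\pi}(q,n)q^n/\#\Aut(G,\iota,\pi)$ with $\pi_{G,c_\pi}$ governed by $b(G,c,q,n)$ — against the corresponding count for $H/M$; Lemma~\ref{lem:b} shows that when $\ker(\rho^\circ)\subseteq I_eH$ the $b$-factors agree and only the $\#\Aut$ ratio survives, and Proposition~\ref{prop:final-prop} evaluates that ratio as $|H_2|/|H_1|$, which after the inclusion–exclusion of Step 2 collapses to $1/|M|$. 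In the general setting (fixed $q$, or number field) one would need the analogous statement that the \emph{stable homology} contribution, not just the count of Frobenius-fixed geometric components, is insensitive to passing from $H$ to $H/M$ in the same way — this is where the arithmetic of the Schur multipliers and the $\mu_p$-action (the $\Gal(L/Q)$-equivariance issues that already forced the hypothesis $p\nmid q-1$ in Theorem~\ref{thm:main-B} and the $\mu_4$-subtlety in Theorem~\ref{thm:Z/2Z}) must be handled with care.

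\textbf{The main obstacle.} The honest bottleneck is Step 2(b) (and to a lesser extent 2(a)): there is currently no known method to produce the required asymptotics for $\#\calA^+_{\Gamma}(D,\Q)$-weighted $\Hom$-counts into $p$-groups as $D\to\infty$ — this is the arithmetic analogue of counting number fields with prescribed class-group behaviour at a bad prime, and the only available instances (e.g. $\Gamma=\Z/2\Z$, $p=2$ over $\Q$) come from Smith's resolution of the Gerth conjecture, which is proven by entirely different (additive-combinatorial / algebraic) means rather than by the presentation-plus-Hurwitz strategy above. Thus a complete proof of Conjecture~\ref{conj:GG} would require either a far-reaching generalization of Smith's method to arbitrary abelian $\Gamma$, or a Hurwitz-space homological stability result with explicit stable homology over a fixed finite field covering all the auxiliary groups $G=H\rtimes\Gamma$ (and then, separately, a transfer of that picture to the number field side). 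Everything else — Steps 1 and 3, and the module-theoretic reductions in Step 2 — is already essentially in place in the paper via Propositions~\ref{prop:prob}, \ref{prop:weight-formula}, \ref{prop:final-prop} and Lemmas~\ref{lem:Cl-Hur}, \ref{lem:b}, \ref{lem:W}.
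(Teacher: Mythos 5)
Your assessment is correct: the statement is Conjecture~\ref{conj:GG}, which the paper does not prove but rather states as a conjecture, justified by Proposition~\ref{prop:prob} (which identifies the unique measure with the given moments and its explicit density) together with the weighted function-field moment Theorem~\ref{thm:main-B}\eqref{item:main-B-2} as evidence. Your reduction of the probability statement to the moment statement, your account of the weight-function and Hurwitz-space machinery, and your identification of the genuine open obstructions all match the paper's own framing, so there is nothing to correct.
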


\vspace{.2 in}
\appendix
\section{The average number of primes satisfying given ramification types}\label{ss:appendix}

	\begin{center}
		by Peter Koymans
	\end{center}

Fix a number field $Q$, a finite abelian group $\Gamma$ and a nontrivial cyclic subgroup $\Gamma_0$ of $\Gamma$. Recall that a $\Gamma$-extension is by definition a surjective homomorphism $\varphi: G_Q \twoheadrightarrow \Gamma$. We define $Q(\varphi)$ to be the extension of $Q$ corresponding to $\varphi$ (so $\Gal(Q(\varphi)/Q) \cong \Gamma$), and we define $\mathrm{rDisc}(\varphi)$ to be the absolute norm of the radical of the discriminant ideal $\mathrm{Disc}(Q(\varphi)/Q)$. Define
\[
\omega(\varphi) = \{\mathfrak{p} \subset Q  \mid \varphi(\calT_{\frakp})=\varphi(\calG_{\frakp}) = \Gamma_0\}.
\]
Recall that the definition of $\calT_{\frakp}$ and $\calG_{\frakp}$ (defined in Section~\ref{ss:notation}) depends on an implicitly chosen embedding $\iota_\mathfrak{p}: \overline{Q} \rightarrow \overline{Q_\mathfrak{p}}$. We stress that $\varphi(\calT_{\frakp})$ and $\varphi(\calG_{\frakp})$ do not depend on the choice of embedding $\iota_\mathfrak{p}$, since a different embedding yields conjugate subgroups of $\calG_{\frakp}$ and $\calT_{\frakp}$ (in $G_Q$) and $\Gamma$ is abelian. However, it is possible that $\omega(\varphi) \neq \omega(\varphi')$ even when $Q(\varphi) = Q(\varphi')$.

Next, we fix a finite set $\mathcal{Z}$ of primes of $Q$ and for each $\mathfrak{p} \in \mathcal{Z}$ a continuous homomorphism $\varphi_\mathfrak{p}: G_{Q_\mathfrak{p}} \rightarrow \Gamma$. Then we define
$$
\mathcal{A}(X, (\varphi_\mathfrak{p})_{\mathfrak{p} \in \mathcal{Z}}) := \{\varphi: G_Q \twoheadrightarrow A : \mathrm{rDisc}(\varphi) \leq X, \varphi \circ \iota_\mathfrak{p}^\ast = \varphi_{\mathfrak{p}}\}
$$
Our goal is to show the following result.

\begin{theorem}
\label{tCond}
Let $Q$, $\Gamma$, $\Gamma_0$, $\mathcal{Z}$ and $(\varphi_\mathfrak{p})_{\mathfrak{p} \in \mathcal{Z}}$ be as above. Assume that $\mathcal{A}(X, (\varphi_\mathfrak{p})_{\mathfrak{p} \in \mathcal{Z}})$ is not empty for $X$ sufficiently large. Then we have
\begin{align}
\label{eMainResultCond}
\lim_{X \rightarrow \infty} \frac{\sum\limits_{\varphi \in \mathcal{A}(X, (\varphi_\mathfrak{p})_{\mathfrak{p} \in \mathcal{Z}})} \omega(\varphi)}{\sum\limits_{\varphi \in \mathcal{A}(X, (\varphi_\mathfrak{p})_{\mathfrak{p} \in \mathcal{Z}})} 1} = \infty.
\end{align}
\end{theorem}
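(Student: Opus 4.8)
The plan is to prove Theorem~\ref{tCond} by counting $\Gamma$-extensions with prescribed local conditions using class field theory, reducing everything to a problem about Dirichlet-series / Tauberian estimates for counting abelian extensions with a fixed local behaviour. First I would set up the parametrization: a $\Gamma$-extension $\varphi: G_Q \twoheadrightarrow \Gamma$ with prescribed local behaviour $\varphi_\mathfrak{p}$ at $\mathfrak{p} \in \mathcal{Z}$ corresponds, via the decomposition $\Gamma \cong \prod_i \Z/n_i\Z$, to a tuple of characters; using the Kummer-theoretic or idelic description of abelian extensions, $\mathrm{rDisc}(\varphi)$ is a multiplicative function of the conductor data, so $\sum_{\varphi \in \mathcal{A}(X,(\varphi_\mathfrak{p}))} 1$ is governed by a Dirichlet series that is (up to Euler factors at $\mathcal{Z}$ and at primes dividing $|\Gamma|$) a product over primes $\mathfrak{p}$ of $Q$ of local factors $1 + \sum_{\text{ram.\ types at }\mathfrak{p}} \mathrm{Nm}(\mathfrak{p})^{-s \cdot (\text{exponent})}$. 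The key point is that the leading pole of this Dirichlet series at $s = s_0$ (where $s_0 = 1/a$ with $a$ the minimal exponent of $\mathfrak{p}$ in $\mathrm{rDisc}$ over all nontrivial ramification types) has a known order, and by a standard Tauberian theorem (Delange / Landau--Ikehara, or the Perron-formula machinery as in work of Wright, Wood, Mäki on counting abelian extensions) $\sum_{\varphi \in \mathcal{A}(X,(\varphi_\mathfrak{p}))} 1 \sim c X^{s_0} (\log X)^{b-1}$ for an explicit $b \geq 1$.

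Next I would handle the numerator $\sum_\varphi \omega(\varphi)$. Writing $\omega(\varphi) = \sum_{\mathfrak{p}} \mathbf{1}[\varphi(\calT_\mathfrak{p}) = \varphi(\calG_\mathfrak{p}) = \Gamma_0]$ and interchanging the order of summation, I get $\sum_{\mathfrak{p} \subset Q} \#\{\varphi \in \mathcal{A}(X,(\varphi_\mathfrak{p})) : \varphi \text{ has ramification type } (\Gamma_0,\Gamma_0) \text{ at } \mathfrak{p}\}$. For a fixed such $\mathfrak{p}$ (necessarily $\mathfrak{p} \notin \mathcal{Z}$, $\mathfrak{p}$ tamely ramified, so $\mathrm{Nm}(\mathfrak{p}) \equiv 1 \bmod |\Gamma_0|$ and $\mathfrak{p} \nmid |\Gamma|$), imposing this extra local condition is again a local condition of the same shape: it multiplies the counting Dirichlet series by a factor of size roughly $\mathrm{Nm}(\mathfrak{p})^{-s_0 \cdot a_0}$ where $a_0$ is the exponent of $\mathfrak{p}$ in $\mathrm{rDisc}$ contributed by the type $(\Gamma_0,\Gamma_0)$ (and $a_0 \geq a$). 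So $\#\{\varphi \in \mathcal{A}(X,\ldots) \text{ with type } (\Gamma_0,\Gamma_0) \text{ at }\mathfrak{p}\} \asymp \mathrm{Nm}(\mathfrak{p})^{-s_0 a_0} \cdot X^{s_0}(\log X)^{b-1}$ once $\mathrm{Nm}(\mathfrak{p}) \leq X^{1/a_0 - \varepsilon}$ or so, with acceptable error terms, and summing over the primes $\mathfrak{p}$ with $\mathrm{Nm}(\mathfrak{p})$ in a dyadic range $[Y, 2Y]$ and $Y$ ranging up to a small power of $X$, the sum $\sum_{\mathrm{Nm}(\mathfrak{p}) \equiv 1(|\Gamma_0|)} \mathrm{Nm}(\mathfrak{p})^{-s_0 a_0}$ over $\mathrm{Nm}(\mathfrak{p}) \leq Y$ grows like $\log\log Y$ (when $s_0 a_0 = 1$) or like $Y^{1-s_0 a_0}/\log Y$ (when $s_0 a_0 < 1$) — in either case it is unbounded as $Y \to \infty$. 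Dividing by the denominator asymptotic then gives that the ratio in \eqref{eMainResultCond} tends to $\infty$.

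The technical heart, and the main obstacle, is to make the uniformity in $\mathfrak{p}$ precise: I need an asymptotic (or at least a matching upper and lower bound) for $\#\{\varphi \in \mathcal{A}(X,(\varphi_\mathfrak{p})) : \text{type }(\Gamma_0,\Gamma_0)\text{ at }\mathfrak{p}\}$ that is valid uniformly for $\mathrm{Nm}(\mathfrak{p})$ up to a power of $X$, with an error term summable over $\mathfrak{p}$. This is exactly the kind of uniform count of abelian extensions with local conditions that appears in the literature (e.g.\ via the work of Wright on the Dirichlet series, or Wood's \emph{On the probabilities of local behaviors in abelian field extensions}, or the effective/uniform versions in Koymans--Pagano and related papers); the strategy is to write the generating Dirichlet series as an Euler product, factor out the finitely many bad Euler factors (at $\mathcal{Z}$, at $\mathfrak{p}$, and at primes dividing $p|\Gamma|$), compare with a product of Hecke $L$-functions or the Dedekind zeta function of $Q$ to locate the rightmost pole and its order, and then apply a Tauberian theorem with explicit error (Perron's formula plus a zero-free region, or Landau's theorem for the main term plus a crude bound on the error). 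A cleaner alternative that avoids re-deriving everything: establish a \emph{lower} bound for the numerator and an \emph{upper} bound for the denominator, which suffices for \eqref{eMainResultCond}; for the lower bound on the numerator one only needs, for each $\mathfrak{p}$ in a suitable range, that the proportion of $\varphi$ with type $(\Gamma_0,\Gamma_0)$ at $\mathfrak{p}$ is $\gg \mathrm{Nm}(\mathfrak{p})^{-s_0 a_0}$, and this follows from non-negativity of all local factors together with a single global lower bound of the expected order of magnitude, which is genuinely easier than a two-sided asymptotic. Either way, once the uniform estimate is in hand, the divergence of $\sum_\mathfrak{p} \mathrm{Nm}(\mathfrak{p})^{-s_0 a_0}$ (using $s_0 a_0 \leq 1$, which holds because $a_0 \leq$ the exponent controlling $s_0$... more precisely $s_0 = 1/a$ with $a \leq a_0$, so $s_0 a_0 \geq 1$; hence one must check the borderline case $s_0 a_0 = 1$ carefully, where the sum is $\sim c \log\log Y$, and note $s_0 a_0 > 1$ cannot occur since the type $(\Gamma_0,\Gamma_0)$ always contributes at least as much to the discriminant as the minimal-exponent type, with equality iff $a = a_0$) still gives an unbounded numerator-to-denominator ratio, completing the proof.
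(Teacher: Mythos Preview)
Your overall strategy—interchange the sum, observe that the proportion of $\varphi$ with ramification type $(\Gamma_0,\Gamma_0)$ at $\mathfrak{q}$ is $\asymp N(\mathfrak{q})^{-1}$, and use divergence of $\sum_{\mathfrak{q}} N(\mathfrak{q})^{-1}$—matches the paper's. But you have made the execution much harder than necessary by insisting on summing over all primes and therefore needing uniformity in $\mathfrak{q}$ of the local-probability estimate. The paper's proof bypasses uniformity entirely with a finite truncation: for any target $B$, Chebotarev plus partial summation furnishes a \emph{finite} set $T$ of primes $\mathfrak{q}$ (with $N(\mathfrak{q})\equiv 1\bmod |\Gamma|$, disjoint from $\mathcal{Z}$ and from primes dividing $|\Gamma|$) satisfying $\sum_{\mathfrak{q}\in T} N(\mathfrak{q})^{-1} > B$. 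Then one simply applies Wood's asymptotic once for each $\mathfrak{q}\in T$ with local specification at $\{\mathfrak{q}\}\cup\mathcal{Z}$, obtaining that for all sufficiently large $X$ the proportion with the prescribed behaviour at $\mathfrak{q}$ exceeds $N(\mathfrak{q})^{-1}|\Gamma|^{-2}$. Since $T$ is finite there is no uniformity issue, and summing gives the ratio $> B/|\Gamma|^2$ for $X$ large. As $B$ is arbitrary, the limit is infinite. This is exactly the ``cleaner alternative'' you gesture at, but pushed further: you do not even need a uniform lower bound over a range of primes, only a non-uniform one over a finite set.

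Two smaller points. First, your discussion of exponents $a$, $a_0$ and of the borderline case $s_0 a_0=1$ is unnecessary and somewhat muddled: since the counting invariant is $\mathrm{rDisc}$, every ramified prime contributes exactly $N(\mathfrak{p})^{1}$ regardless of ramification type, so $a=a_0=1$ and $s_0=1$ automatically; there is no case analysis. Second, the ``uniform'' route you propose is not wrong in principle, but establishing error terms uniform in the auxiliary prime $\mathfrak{q}$ up to a power of $X$ is genuinely more work than anything in Wood's paper as stated, so citing it does not close the gap; the finite-truncation argument is both shorter and uses only what is already in the literature.
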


We emphasize that Wood \cite{Wood2010} has already shown an asymptotic formula for the denominator, so it suffices to give a lower bound for the numerator in equation (\ref{eMainResultCond}). In fact it should be possible to obtain an asymptotic formula for the numerator. However, this would require a substantial amount of work, and would be besides the point of this appendix.

We will assume familiarity with the results of Wood \cite{Wood2010} throughout our proof.

\begin{proof}
Let $B > 0$ be a large number. Fix a finite collection of primes $T$ of $Q$ such that
\begin{itemize}
\item we have
\[
\sum_{\mathfrak{q} \in T} \frac{1}{N_{Q/\Q}(\mathfrak{q})} > B;
\]
\item we have $N_{Q/\Q}(\mathfrak{q}) \equiv 1 \bmod |\Gamma|$ for every $\mathfrak{q} \in T$;
\item we have that $T$ is disjoint from $\mathcal{Z}$ and all primes dividing $|\Gamma|$.
\end{itemize}
Such a collection $T$ exists. Indeed, this is a consequence of the Chebotarev density theorem applied to $Q(\zeta_{|\Gamma|})/Q$ and an application of partial summation. 

Our goal is to apply \cite[Theorem 2.1]{Wood2010}. We take $\mathrm{rDisc}(\varphi)$ as our counting function; for the definition of a counting function, see \cite[Section 2.1]{Wood2010}. This counting function is readily verified to be fair. For each prime $\mathfrak{q} \in T$, there exists a homomorphism $\varphi_\mathfrak{q}: G_{Q_\mathfrak{q}} \rightarrow \Gamma$ with the following two properties
\begin{itemize}
\item we have $\text{im}(\varphi_\mathfrak{q}) = \Gamma_0$;
\item we have that the fixed field of $\varphi_\mathfrak{q}$ is a totally tamely ramified extension of $Q_\mathfrak{q}$ (that is, the image of the inertia subgroup of $G_{Q_\frakq}$ under $\varphi_\frakq$ is also $\Gamma_0$).
\end{itemize}
Fix such a choice $\varphi_\mathfrak{q}: G_{Q_\mathfrak{q}} \rightarrow \Gamma$ for each $\mathfrak{q} \in T$. Each $\varphi_\mathfrak{q}$ corresponds to a $\Gamma$-structured $G_{Q_\mathfrak{q}}$-algebra by \cite[Lemma 2.5]{Wood2010}. Then we define a local specification $\Sigma_\mathfrak{q}$ by taking the $\Gamma$-structured $G_{Q_\mathfrak{q}}$-algebra corresponding to $\varphi_\mathfrak{q}$. For the definition of local specification, see \cite[p.~4]{Wood2010}. We similarly find local specifications $\Sigma_\mathfrak{p}$ for $\mathfrak{p} \in \mathcal{Z}$. We now apply \cite[Theorem 2.1]{Wood2010} for every $\mathfrak{q} \in T$ with $S = \{\mathfrak{q}\} \cup \mathcal{Z}$ and local specifications $\Sigma = (\Sigma_\mathfrak{p})_{\mathfrak{p} \in S}$. This yields
\[
\lim_{X \rightarrow \infty} \frac{\sum\limits_{\varphi \in \mathcal{A}(X, (\varphi_\mathfrak{p})_{\mathfrak{p} \in \mathcal{Z}})} \mathbf{1}_{\varphi \circ \iota_\mathfrak{q}^\ast = \varphi_\mathfrak{q}}}{\sum\limits_{\varphi \in \mathcal{A}(X, (\varphi_\mathfrak{p})_{\mathfrak{p} \in \mathcal{Z}})} 1} = \frac{1}{N_{Q/\Q}(\mathfrak{q})} \cdot \frac{1}{|\Gamma| + (|\Gamma|^2 - |\Gamma|)/N_{Q/\Q}(\mathfrak{q})} > \frac{1}{N_{Q/\Q}(\mathfrak{q})} \cdot \frac{1}{|\Gamma|^2}
\]
Here we have computed the local probabilities in \cite[Theorem 2.1]{Wood2010} as follows. Let $M$ be the maximal abelian extension of $Q_\mathfrak{q}$ such that $\Gal(M/Q_\mathfrak{q})$ is killed by $|\Gamma|$. Since $N_{Q/\Q}(\mathfrak{q}) \equiv 1 \bmod |\Gamma|$, we see that $M$ equals the compositum of the unramified extension of degree $|\Gamma|$ and some totally tamely ramified extension of degree $|\Gamma|$. In particular, we deduce that $\Gal(M/Q_\mathfrak{q}) \cong (\Z/|\Gamma|\Z)^2$. Therefore the set of homomorphisms $\varphi_\mathfrak{q}: G_{Q_\mathfrak{q}} \rightarrow \Gamma$ are in bijection with homomorphisms $(\Z/|\Gamma|\Z)^2 \rightarrow \Gamma$. There are $|\Gamma|^2$ such maps, of which $|\Gamma|$ are unramified. The unramified ones have radical discriminant $1$, while the remaining ones have radical discriminant $N_{Q/\Q}(\mathfrak{q})$. From this we compute the local probabilities in \cite[Theorem 2.1]{Wood2010}.

Since our set $T$ is finite, we deduce that
\[
\frac{\sum\limits_{\varphi \in \mathcal{A}(X, (\varphi_\mathfrak{p})_{\mathfrak{p} \in \mathcal{Z}})} \mathbf{1}_{\varphi \circ \iota_\mathfrak{q}^\ast = \varphi_\mathfrak{q}}}{\sum\limits_{\varphi \in \mathcal{A}(X, (\varphi_\mathfrak{p})_{\mathfrak{p} \in \mathcal{Z}})} 1} > \frac{1}{N_{Q/\Q}(\mathfrak{q})} \cdot \frac{1}{|\Gamma|^2}
\]
for every $\mathfrak{q} \in T$, provided that we take $X$ sufficiently large. Furthermore, because
\[
\omega(\varphi) \geq \sum_{\mathfrak{q} \in T} \mathbf{1}_{\varphi \circ \iota_\mathfrak{q}^\ast = \varphi_\mathfrak{q}},
\]
we get
\[
\frac{\sum\limits_{\varphi \in \mathcal{A}(X, (\varphi_\mathfrak{p})_{\mathfrak{p} \in \mathcal{Z}})} \omega(\varphi)}{\sum\limits_{\varphi \in \mathcal{A}(X, (\varphi_\mathfrak{p})_{\mathfrak{p} \in \mathcal{Z}})} 1} > \frac{B}{|\Gamma|^2}
\]
for $X$ sufficiently large. Since $B$ was arbitrary and $\Gamma$ is fixed, the theorem follows.
\end{proof}

Retain the notation above. We write $D(\varphi)$ for the absolute norm of the relative discriminant of $Q(\varphi)/Q$.

\begin{theorem}\label{thm:A.2}
Let $\ell$ be the smallest prime divisor of $|\Gamma|$. If $\Gamma_0\simeq \F_{\ell}$, then
\begin{align}
\label{eMainResultDisc}
\lim_{X \rightarrow \infty} \frac{\sum\limits_{\varphi: G_Q \twoheadrightarrow \Gamma, D(\varphi) \leq X} \omega(\varphi)}{\sum\limits_{\varphi: G_Q \twoheadrightarrow \Gamma, D(\varphi) \leq X} 1} = \infty.
\end{align}
\end{theorem}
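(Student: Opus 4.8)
\textbf{Proof proposal for Theorem~\ref{thm:A.2}.} The plan is to reduce the ordering-by-discriminant statement \eqref{eMainResultDisc} to the ordering-by-radical-discriminant statement of Theorem~\ref{tCond} by exploiting the hypothesis that $\Gamma_0 \cong \F_\ell$ with $\ell$ the smallest prime divisor of $|\Gamma|$. The key point is that a prime $\mathfrak{q}$ satisfying $\varphi(\calT_\mathfrak{q}) = \varphi(\calG_\mathfrak{q}) = \Gamma_0$ (with $\mathfrak{q}$ tamely ramified and $\mathfrak{q} \nmid |\Gamma|$) contributes exactly $N_{Q/\Q}(\mathfrak{q})^{\ell-1}$ to $D(\varphi)$ — the exponent of $\mathfrak{q}$ in the discriminant of a tamely ramified extension with inertia of order $\ell$ is $\ell - 1$ — whereas it contributes $N_{Q/\Q}(\mathfrak{q})$ to $\mathrm{rDisc}(\varphi)$. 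So I would first reprove Theorem~\ref{tCond} verbatim but with the counting function $D(\varphi)^{1/(\ell-1)}$ in place of $\mathrm{rDisc}(\varphi)$ (or equivalently with $D(\varphi) \le X^{\ell-1}$). The only thing to check is that $D(\varphi)^{1/(\ell-1)}$, as a function on $\Gamma$-extensions, is still a \emph{fair counting function} in the sense of \cite[Section~2.1]{Wood2010}, so that \cite[Theorem~2.1]{Wood2010} applies with the same local specifications $\Sigma_\mathfrak{q}$ (image $\Gamma_0$, totally tamely ramified) chosen at the primes $\mathfrak{q} \in T$.

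First I would set up the combinatorial input exactly as in the proof of Theorem~\ref{tCond}: fix $B > 0$, choose a finite set $T$ of primes of $Q$, disjoint from $\mathcal{Z}$ and from the primes dividing $|\Gamma|$, with $N_{Q/\Q}(\mathfrak{q}) \equiv 1 \bmod |\Gamma|$ for all $\mathfrak{q} \in T$ and $\sum_{\mathfrak{q} \in T} N_{Q/\Q}(\mathfrak{q})^{-(\ell-1)} > B$; this exists by Chebotarev applied to $Q(\zeta_{|\Gamma|})/Q$ together with partial summation, since $\sum_\mathfrak{q} N_{Q/\Q}(\mathfrak{q})^{-s}$ over such primes diverges for $s = \ell - 1$ only when $\ell = 2$ — so for $\ell \ge 3$ I would instead only demand $\sum_{\mathfrak{q}\in T, N_{Q/\Q}(\mathfrak{q}) \le Y} N_{Q/\Q}(\mathfrak{q})^{-1} > B$ and note the sum is finite, hence each individual local probability is bounded below by an absolute positive constant times $N_{Q/\Q}(\mathfrak{q})^{-1}$; actually the cleanest route is to keep the local-probability lower bound $> N_{Q/\Q}(\mathfrak{q})^{-1}|\Gamma|^{-2}$ exactly as in Theorem~\ref{tCond}, which already holds regardless of $\ell$, and just require $\#T > B|\Gamma|^2$ with the normalized counting function. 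Then for each $\mathfrak{q} \in T$ I apply \cite[Theorem~2.1]{Wood2010} with $S = \{\mathfrak{q}\} \cup \mathcal{Z}$ and the local specification at $\mathfrak{q}$ given by a totally tamely ramified $\varphi_\mathfrak{q}$ with image $\Gamma_0$; the local probability computation is identical to that in Theorem~\ref{tCond} (the structure of $\Gal(M/Q_\mathfrak{q})$ as $(\Z/|\Gamma|\Z)^2$ and the count of unramified vs.\ ramified lifts does not change), giving a lower bound of the form $> N_{Q/\Q}(\mathfrak{q})^{-1} |\Gamma|^{-2}$ for the density of $\varphi$ with $\varphi \circ \iota_\mathfrak{q}^\ast = \varphi_\mathfrak{q}$, now inside the family ordered by $D(\varphi)$.

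Finally, since any $\varphi$ with $\varphi \circ \iota_\mathfrak{q}^\ast = \varphi_\mathfrak{q}$ satisfies $\mathfrak{q} \in \omega(\varphi)$, I get $\omega(\varphi) \ge \sum_{\mathfrak{q} \in T} \mathbf{1}_{\varphi \circ \iota_\mathfrak{q}^\ast = \varphi_\mathfrak{q}}$, and summing the finitely many densities over $\mathfrak{q} \in T$ yields
\[
\liminf_{X \to \infty} \frac{\sum_{\varphi: G_Q \twoheadrightarrow \Gamma,\, D(\varphi) \le X} \omega(\varphi)}{\sum_{\varphi: G_Q \twoheadrightarrow \Gamma,\, D(\varphi) \le X} 1} \;\ge\; \frac{\#T}{|\Gamma|^2} \;>\; B,
\]
and letting $B \to \infty$ finishes the proof. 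The main obstacle I anticipate is purely bookkeeping rather than conceptual: verifying that $D(\varphi)$ (or its $(\ell-1)$-th root) defines a fair counting function and that \cite[Theorem~2.1]{Wood2010} outputs the local densities in the normalization I want, together with double-checking that the tame local discriminant exponent is indeed $\ell - 1$ at the primes of $T$ (this uses $\mathfrak{q} \nmid |\Gamma|$ and $N_{Q/\Q}(\mathfrak{q}) \equiv 1 \bmod |\Gamma|$, so the ramification is tame and the inertia has order exactly $\ell$). The role of the hypothesis $\Gamma_0 \cong \F_\ell$ with $\ell$ minimal is precisely to make this local exponent a fixed constant across $T$; without it the contribution of a prime in $\omega(\varphi)$ to $D(\varphi)$ would depend on the prime, and the reduction to a fair counting function would fail.
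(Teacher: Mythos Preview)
Your approach differs substantially from the paper's, and as written it has a genuine gap.

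First, the local discriminant exponent is wrong. For a tamely ramified prime $\mathfrak{q}$ with inertia image $\Gamma_0$ of order $\ell$, the conductor--discriminant formula gives
\[
v_{\mathfrak{q}}\bigl(\mathrm{Disc}(Q(\varphi)/Q)\bigr)=\#\{\chi\in\widehat{\Gamma}:\chi|_{\Gamma_0}\neq 1\}=|\Gamma|\,\tfrac{\ell-1}{\ell},
\]
not $\ell-1$; these agree only when $|\Gamma|=\ell$. So the normalization $D(\varphi)^{1/(\ell-1)}$ is not the right one, and more importantly the assertion that ``the local probability computation is identical to that in Theorem~\ref{tCond}'' is false. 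With a counting function whose local values depend on the inertia order, the local density at $\mathfrak{q}$ coming out of \cite[Theorem~2.1]{Wood2010} is \emph{not} simply $N(\mathfrak{q})^{-1}/(|\Gamma|+\cdots)$; one must feed in the actual local discriminant exponents, and the answer depends on the relation between the exponent for inertia $\Gamma_0$ and the \emph{minimal} ramified exponent. The hypothesis $|\Gamma_0|=\ell$ is precisely what makes the exponent for $\Gamma_0$ equal to that minimum, but you never use this, and your argument as written would give a local density of order $N(\mathfrak{q})^{-|\Gamma|(\ell-1)/\ell}$, whose sum over primes converges once $|\Gamma|\geq 3$. Your own hesitation about the divergence of $\sum N(\mathfrak{q})^{-(\ell-1)}$ is a symptom of this, though with a mis-identified exponent.

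The paper avoids this entirely by a twisting argument rather than by changing the counting function. It fixes once and for all a surjection $\widetilde{\varphi}:G_Q\twoheadrightarrow\Gamma$, applies Wood's theorem with $\mathrm{rDisc}$ to the \emph{subgroup} $\Gamma[\ell]$ (where all tame ramification has the same local exponent, so Theorem~\ref{tCond} goes through verbatim for $\chi:G_Q\twoheadrightarrow\Gamma[\ell]$), and then produces $\Gamma$-extensions of the form $\varphi=\widetilde{\varphi}+\chi$. Choosing the primes in $T$ to split completely in $Q(\widetilde{\varphi})$ forces the local behaviour of $\varphi$ at each $\mathfrak{q}\in T$ to be that of $\chi$, and a crude bound $D(\widetilde{\varphi}+\chi)\leq C\cdot\mathrm{rDisc}(\chi)^{|\Gamma|(\ell-1)/\ell}$ together with Wright's asymptotic $\sum_{\mathrm{rDisc}(\chi)\leq X}1\asymp\sum_{D(\varphi)\leq X^{|\Gamma|(\ell-1)/\ell}}1$ converts the $\Gamma[\ell]$-count into a lower bound for the numerator of \eqref{eMainResultDisc}. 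If you want to salvage the direct approach, you would need to redo the local-density computation honestly with the discriminant counting function and show that inertia of order $\ell$ has density $\asymp N(\mathfrak{q})^{-1}$; this is plausible but is exactly the content you skipped.
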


A classical result of Wright \cite{Wright} gives an asymptotic formula for the denominator of equation (\ref{eMainResultDisc}), so we shall restrict our attention to the numerator. Note that the result of Wright \cite{Wright} does not allow for local conditions, so we have also omitted local conditions in our result.

\begin{proof}
There certainly exists a surjective homomorphism $\widetilde{\varphi}: G_Q \rightarrow \Gamma$. Fix such a choice $\widetilde{\varphi}$. Let $B > 0$ be a large number. We again fix a finite collection of primes $T$ of $Q$ such that
\begin{itemize}
\item we have
\[
\sum_{\mathfrak{q} \in T} \frac{1}{N_{Q/\Q}(\mathfrak{q})} > B;
\]
\item we have $N_{Q/\Q}(\mathfrak{q}) \equiv 1 \bmod |\Gamma|$ for every $\mathfrak{q} \in T$;
\item we have that $\mathfrak{q}$ splits in $Q(\widetilde{\varphi})$ for every $\mathfrak{q} \in T$.
\end{itemize}
We apply Wood's result \cite{Wood2010} to $\Gamma[\ell]$. We also take the same local specifications $\varphi_\mathfrak{q}$ for $\mathfrak{q} \in T$ as in the proof of Theorem~\ref{tCond}. Then we obtain
\begin{align}
\label{eLower}
\frac{\sum\limits_{\chi: G_Q \twoheadrightarrow \Gamma[\ell], \ \mathrm{rDisc}(\chi) \leq X} \mathbf{1}_{\chi \circ \iota_\mathfrak{q}^\ast = \varphi_\mathfrak{q}}}{\sum\limits_{\chi: G_Q \twoheadrightarrow \Gamma[\ell], \ \mathrm{rDisc}(\chi) \leq X} 1} > \frac{1}{N_{Q/\Q}(\mathfrak{q})} \cdot \frac{1}{|\Gamma[\ell]|^2}
\end{align}
for all $\mathfrak{q} \in T$ just like before. We have an asymptotic formula for the denominator by Wood \cite{Wood2010}. It then follows from the work of Wright \cite{Wright} that
\begin{align}
\label{eTwisting}
\sum_{\chi: G_Q \twoheadrightarrow \Gamma[\ell], \ \mathrm{rDisc}(\chi) \leq X} 1 \asymp \sum_{\varphi: G_Q \twoheadrightarrow \Gamma, \ D(\varphi) \leq X^{|\Gamma| \cdot \frac{\ell - 1}{\ell}}} 1.
\end{align}
We will now give a lower bound for
\[
\sum\limits_{\substack{\varphi: G_Q \twoheadrightarrow \Gamma \\ D(\varphi) \leq X^{|\Gamma| \cdot \frac{\ell - 1}{\ell}}}} \omega(\varphi) \geq \sum_{\mathfrak{q} \in T} \sum\limits_{\substack{\varphi: G_Q \twoheadrightarrow \Gamma \\ D(\varphi) \leq X^{|\Gamma| \cdot \frac{\ell - 1}{\ell}}}} \mathbf{1}_{\varphi \circ \iota_\mathfrak{q}^\ast = \varphi_\mathfrak{q}}.
\]
To this end, consider those $\varphi$ of the shape $\widetilde{\varphi} + \chi$ with $\chi$ satisfying $\chi \circ \iota_\mathfrak{q}^\ast = \varphi_\mathfrak{q}$. Then we observe that 
\[
(\widetilde{\varphi} + \chi) \circ \iota_\mathfrak{q}^\ast = \varphi_\mathfrak{q}
\]
by construction of $T$. Indeed, recall that $\mathfrak{q}$ splits completely in $Q(\widetilde{\varphi})$.

Furthermore, there exists a constant $C > 0$, depending only on our choice of $\widetilde{\varphi}$, such that
\[
D(\widetilde{\varphi} + \chi) \leq C \cdot \mathrm{rDisc}(\chi)^{|\Gamma| \cdot \frac{\ell - 1}{\ell}}.
\]
Combining this with equations (\ref{eLower}) and (\ref{eTwisting}) we get the theorem.
\end{proof}

\begin{remark}\label{rmk:disc}
The condition on $\Gamma$ in the theorem is necessary for the limit to be infinite. Indeed, consider for example $\Gamma = \Z/6\Z$ and $\Gamma_0 = \Z/3\Z$ and $Q=\Q$. Then the reason for this phenomenon is essentially that
\[
\sum_{\substack{a^3b^4 \leq X}} 1 \asymp \sum_{\substack{a^3b^4 \leq X}} \sum_{\ell \mid b} 1.
\]
\end{remark}

% \bib, bibdiv, biblist are defined by the amsrefs package.
\begin{bibdiv}
\begin{biblist}

\bib{Benson}{book}{
      author={Benson, D.~J.},
       title={Representations and cohomology. {I}},
     edition={Second},
      series={Cambridge Studies in Advanced Mathematics},
   publisher={Cambridge University Press, Cambridge},
        date={1998},
      volume={30},
        ISBN={0-521-63653-1},
        note={Basic representation theory of finite groups and associative
  algebras},
      review={\MR{1644252}},
}

\bib{Boston-Wood}{article}{
      author={Boston, Nigel},
      author={Wood, Melanie~Matchett},
       title={Non-abelian {C}ohen-{L}enstra heuristics over function fields},
        date={2017},
        ISSN={0010-437X},
     journal={Compos. Math.},
      volume={153},
      number={7},
       pages={1372\ndash 1390},
  url={https://doi-org.ezproxy.library.wisc.edu/10.1112/S0010437X17007102},
      review={\MR{3705261}},
}

\bib{Cohen-Lenstra}{incollection}{
      author={Cohen, H.},
      author={Lenstra, H.~W., Jr.},
       title={Heuristics on class groups of number fields},
        date={1984},
   booktitle={Number theory, {N}oordwijkerhout 1983 ({N}oordwijkerhout, 1983)},
      series={Lecture Notes in Math.},
      volume={1068},
   publisher={Springer, Berlin},
       pages={33\ndash 62},
         url={https://doi-org.ezproxy.library.wisc.edu/10.1007/BFb0099440},
      review={\MR{756082}},
}

\bib{Cohen-Martinet}{article}{
      author={Cohen, H.},
      author={Martinet, J.},
       title={Class groups of number fields: numerical heuristics},
        date={1987},
        ISSN={0025-5718, 1088-6842},
     journal={Mathematics of Computation},
      volume={48},
      number={177},
       pages={123\ndash 137},
         url={http://www.ams.org/mcom/1987-48-177/S0025-5718-1987-0866103-4/},
}

\bib{Evens}{article}{
      author={Evens, Leonard},
       title={The {S}chur multiplier of a semi-direct product},
        date={1972},
        ISSN={0019-2082},
     journal={Illinois J. Math.},
      volume={16},
       pages={166\ndash 181},
         url={http://projecteuclid.org/euclid.ijm/1256052393},
      review={\MR{417310}},
}

\bib{EVW}{article}{
      author={Ellenberg, Jordan~S.},
      author={Venkatesh, Akshay},
      author={Westerland, Craig},
       title={Homological stability for {H}urwitz spaces and the
  {C}ohen-{L}enstra conjecture over function fields},
        date={2016},
        ISSN={0003-486X,1939-8980},
     journal={Ann. of Math. (2)},
      volume={183},
      number={3},
       pages={729\ndash 786},
         url={https://doi.org/10.4007/annals.2016.183.3.1},
      review={\MR{3488737}},
}

\bib{Gerth84}{article}{
      author={Gerth, Frank, III},
       title={The {$4$}-class ranks of quadratic fields},
        date={1984},
        ISSN={0020-9910},
     journal={Invent. Math.},
      volume={77},
      number={3},
       pages={489\ndash 515},
         url={https://doi-org.proxy2.library.illinois.edu/10.1007/BF01388835},
      review={\MR{759260}},
}

\bib{Gerth86}{article}{
      author={Gerth, Frank, III},
       title={Densities for certain {$l$}-ranks in cyclic fields of degree
  {$l^n$}},
        date={1986},
        ISSN={0010-437X},
     journal={Compositio Math.},
      volume={60},
      number={3},
       pages={295\ndash 322},
  url={http://www.numdam.org.proxy2.library.illinois.edu/item?id=CM_1986__60_3_295_0},
      review={\MR{869105}},
}

\bib{Iwasawa}{article}{
      author={Iwasawa, Kenkichi},
       title={On {G}alois groups of local fields},
        date={1955},
        ISSN={0002-9947},
     journal={Trans. Amer. Math. Soc.},
      volume={80},
       pages={448\ndash 469},
         url={https://doi-org.proxy2.library.illinois.edu/10.2307/1992998},
      review={\MR{75239}},
}

\bib{Koymans-Pagano}{article}{
      author={Koymans, Peter},
      author={Pagano, Carlo},
       title={A sharp upper bound for the 2-torsion of class groups of
  multiquadratic fields},
        date={2022},
        ISSN={0025-5793},
     journal={Mathematika},
      volume={68},
      number={1},
       pages={237\ndash 258},
         url={https://doi-org.proxy2.library.illinois.edu/10.1112/mtk.12123},
      review={\MR{4405977}},
}

\bib{Liu-presentation}{article}{
      author={Liu, Yuan},
       title={Presentations of {G}alois groups of maximal extensions with
  restricted ramification},
        date={2020},
        note={Algebra \& Number Theory, arXiv:2005.07329},
}

\bib{Liu2022b}{article}{
      author={Liu, Yuan},
       title={On the {$p$}-rank of class groups of {$p$}-extensions},
        date={2024},
        ISSN={1073-7928,1687-0247},
     journal={Int. Math. Res. Not. IMRN},
      number={6},
       pages={5274\ndash 5325},
         url={https://doi.org/10.1093/imrn/rnad234},
      review={\MR{4721054}},
}

\bib{Liu-Wood}{article}{
      author={Liu, Yuan},
      author={Wood, Melanie~Matchett},
       title={The free group on {$n$} generators modulo {$n+u$} random
  relations as {$n$} goes to infinity},
        date={2020},
        ISSN={0075-4102},
     journal={J. Reine Angew. Math.},
      volume={762},
       pages={123\ndash 166},
         url={https://doi.org/10.1515/crelle-2018-0025},
      review={\MR{4195658}},
}

\bib{LWZB}{article}{
      author={Liu, Yuan},
      author={Wood, Melanie~Matchett},
      author={Zureick-Brown, David},
       title={A predicted distribution for {G}alois groups of maximal
  unramified extensions},
        date={2024},
        ISSN={0020-9910,1432-1297},
     journal={Invent. Math.},
      volume={237},
      number={1},
       pages={49\ndash 116},
         url={https://doi.org/10.1007/s00222-024-01257-1},
      review={\MR{4756988}},
}

\bib{NSW}{book}{
      author={Neukirch, J\"{u}rgen},
      author={Schmidt, Alexander},
      author={Wingberg, Kay},
       title={Cohomology of number fields},
     edition={Second},
      series={Grundlehren der mathematischen Wissenschaften [Fundamental
  Principles of Mathematical Sciences]},
   publisher={Springer-Verlag, Berlin},
        date={2008},
      volume={323},
        ISBN={978-3-540-37888-4},
         url={https://doi.org/10.1007/978-3-540-37889-1},
      review={\MR{2392026}},
}

\bib{Serre}{book}{
      author={Serre, Jean-Pierre},
       title={Linear representations of finite groups},
     edition={French},
      series={Graduate Texts in Mathematics},
   publisher={Springer-Verlag, New York-Heidelberg},
        date={1977},
      volume={Vol. 42},
        ISBN={0-387-90190-6},
      review={\MR{450380}},
}

\bib{Smith2022}{article}{
      author={Smith, Alexander},
       title={{The distribution of $\ell^{\infty}$-Selmer groups in degree
  $\ell$ twist families}},
        date={2022},
        note={preprint, arXiv:2207.05674},
}

\bib{stacks-project}{misc}{
      author={{Stacks Project Authors}, The},
       title={\textit{Stacks Project}},
         how={\url{https://stacks.math.columbia.edu}},
        date={2018},
}

\bib{Sawin-Wood-category}{article}{
      author={Sawin, Will},
      author={Wood, Melanie~Matchett},
       title={{The moment problem for random objects in a category}},
        date={2022},
        note={preprint, arXiv:2210.06279},
}

\bib{Wood2010}{article}{
      author={Wood, Melanie~Matchett},
       title={On the probabilities of local behaviors in abelian field
  extensions},
        date={2010},
        ISSN={0010-437X,1570-5846},
     journal={Compos. Math.},
      volume={146},
      number={1},
       pages={102\ndash 128},
         url={https://doi.org/10.1112/S0010437X0900431X},
      review={\MR{2581243}},
}

\bib{Wood-lifting}{article}{
      author={Wood, Melanie~Matchett},
       title={An algebraic lifting invariant of {E}llenberg, {V}enkatesh, and
  {W}esterland},
        date={2021},
        ISSN={2522-0144},
     journal={Res. Math. Sci.},
      volume={8},
      number={2},
       pages={Paper No. 21, 13},
         url={https://doi.org/10.1007/s40687-021-00259-2},
      review={\MR{4240808}},
}

\bib{Wright}{article}{
      author={Wright, David~J.},
       title={Distribution of discriminants of abelian extensions},
        date={1989},
        ISSN={0024-6115,1460-244X},
     journal={Proc. London Math. Soc. (3)},
      volume={58},
      number={1},
       pages={17\ndash 50},
         url={https://doi.org/10.1112/plms/s3-58.1.17},
      review={\MR{969545}},
}

\bib{Wang-Wood}{article}{
      author={Wang, Weitong},
      author={Wood, Melanie~Matchett},
       title={Moments and interpretations of the {C}ohen-{L}enstra-{M}artinet
  heuristics},
        date={2021},
        ISSN={0010-2571,1420-8946},
     journal={Comment. Math. Helv.},
      volume={96},
      number={2},
       pages={339\ndash 387},
         url={https://doi.org/10.4171/cmh/514},
      review={\MR{4277275}},
}

\end{biblist}
\end{bibdiv}

\end{document}